\theoremstyle{plain}
\newtheorem{theorem}{Theorem}[section]	% The counter starts from section number ([section]).
\newtheorem{lemma}[theorem]{Lemma}	% The same counter as in theorem ([theorem]).
\newtheorem{corollary}[theorem]{Corollary}
\newtheorem{proposition}[theorem]{Proposition}
\theoremstyle{definition}
\newtheorem{definition}{Definition}
\newtheorem{example}{Example}
\theoremstyle{remark}
\newtheorem{remark}{Remark}
\DeclareMathOperator{\dom}{dom}
\DeclareMathOperator{\cl}{cl}
\DeclareMathOperator{\argmin}{arg\,min}
\author{M.V. Dolgopolik}
\title{A unifying theory of exactness of linear penalty functions II: parametric penalty functions}
\begin{document}

\maketitle

\begin{abstract}
In this article we develop a general theory of exact parametric penalty functions for constrained optimization
problems. The main advantage of the method of parametric penalty functions is the fact that a parametric penalty
function can be both smooth and exact unlike the standard (i.e. non-parametric) exact penalty functions that are always
nonsmooth. We obtain several necessary and/or sufficient conditions for the exactness of parametric penalty functions,
and for the zero duality gap property to hold true for these functions. We also prove some convergence results for 
the method of parametric penalty functions, and derive necessary and sufficient conditions for a parametric penalty
function to not have any stationary points outside the set of feasible points of the constrained optimization
problem under consideration. In the second part of the paper, we apply the general theory of exact parametric penalty
functions to a class of parametric penalty functions introduced by Huyer and Neumaier, and to smoothing
approximations of nonsmooth exact penalty functions. The general approach adopted in this article allowed us to unify
and significantly sharpen many existing results on parametric penalty functions.
\end{abstract}

\section{Introduction}

One of the main approaches to finding a local or global minimum of a constrained optimization problem consists of the
reduction of this problem to a sequence of unconstrained optimization problems or to a single unconstrained
optimization problem whose locally (or globally) optimal solutions coincide with locally (or globally) optimal solutions
of the original problem. In turn, one of the main methods of the reduction of a constrained optimization problem to a
single unconstrained optimization problem is the exact penalty method. The exact penalty method was proposed by Eremin
\cite{Eremin} and Zangwill \cite{Zangwill} in the 1960s, and, later on, became a standard tool of constrained
optimization \cite{Bertsekas,EvansGouldTolle,HanMangasarian,DiPilloGrippo,DiPilloGrippo2,DiPillo,ExactBarrierFunc,
RubinovYang,WuBaiYang,Demyanov,Zaslavski,Dolgopolik}.

The exact penalty method allows one to replace a constrained optimization problem by an equivalent unconstrained
optimization problem with necessarily nonsmooth objective function. However, the nonsmoothness of exact penalty
functions makes them less attractive (especially for practitioners who are often not familiar with efficient methods
for solving nonsmooth optimization problems) than other methods of constrained optimization. There exist two approaches
to overcome this difficulty within the theory of exact penalty functions. The first one is based on the use of smoothing
approximations of nonsmooth exact penalty functions
\cite{Pinar,WuBaiYang,MengDangYang,Liu,LiuzziLucidi,MengLiYang,XuMengSunShen,Lian,XuMengSunHuangShen}, while the second
approach was developed by Huyer and Neumaier in \cite{HuyerNeumaier}. Throughout this article, we refer to the penalty
function proposed in \cite{HuyerNeumaier} as a \textit{singular exact penalty function}, since one achieves smoothness
of this exact penalty function via the introduction of a singular term into the definition of this function. Recently,
singular exact penalty functions have attracted a lot of attention of researchers
\cite{Bingzhuang,WangMaZhou,Dolgopolik_OptLet,Dolgopolik_OptLet2}, and were successfully applied to various
constrained optimization problems \cite{YuTeoZhang,LiYu,JianLin,MaLiYiu, YuTeoBai,LinWuYu,LinLoxton}. It should be noted
that the main feature of both smoothing approximations of exact penalty functions and singular exact penalty functions
is the fact that they depend on some additional parameters apart from the penalty parameter. Thus, both smoothing
approximations and singular exact penalty functions are \textit{parametric penalty functions}.

The main goal of this article is to develop a general theory of exact \textit{parametric} penalty functions that allows
one not only to unify and generalize existing results on parametric penalty functions, but also to better
understand capabilities as well as limitations of the method of exact \textit{parametric} penalty functions. Apart from
the general theory of exactness of parametric penalty functions, which can be viewed as an extension of the main
results on exactness of standard (i.e. non-parametric) penalty function \cite{Dolgopolik} to the parametric case, we
also present some other results closely related to the theory of parametric penalty functions. Namely, we obtain
necessary and sufficient conditions for a parametric penalty function to not have any stationary (i.e. critical) points
outside the set of feasible points of the initial constrained optimization problem. We also study the zero duality gap
property, and obtain several results on convergence of the method of parametric penalty functions. 

Note that parametric penalty functions can be viewed as a particular case of separation functions that are studied
within the image space analysis (see~\cite{Giannessi_book,Mastroeni,LiFengZhang,ZhuLi,XuLi} and references
therein). However, surprisingly, no results of this paper (even the necessary and sufficient condition for the zero
duality gap property) can be derived from the general results on separation functions known to the author (see also
Remark~\ref{Rmrk_ZeroDualityGap_ImageSpaceAnalysis} below).

The paper is organised as follows. In Section~\ref{Sect_Preliminaries} we present some preliminary material that
is used throughout the article. A general theory of exact parametric penalty functions is developed in
Section~\ref{Sect_ParametricPenaltyFunctions}. In Section~\ref{Section_SingPenFunc} we apply this theory to singular
penalty functions. The main results of this section sharpen and/or generalize many existing results on singular penalty
functions. Finally, in Section~\ref{Section_SmoothingPenFunc} we develop a theory of approximations of exact penalty
functions within the framework of the theory of exact parametric penalty functions. The theory of approximations of
exact penalty functions presented in Section~\ref{Section_SmoothingPenFunc} unifies and generalizes most of existing
results on smoothing approximations of nonsmooth exact penalty functions.

\section{Preliminaries}
\label{Sect_Preliminaries}

In this section, we recall the notions of the rate of steepest descent/ascent of a function defined on a metric
space (see, e.g., \cite{Demyanov,DemyanovRSD}), and derive some calculus rules for the rate of steepest
descent/ascent that will be used throughout the article. 

Let $(X, d)$ be a metric space, and $A \subset X$ be a nonempty set. Denote, as usual, 
$\overline{\mathbb{R}} = \mathbb{R} \cup \{ + \infty \} \cup \{ - \infty \}$ and $\mathbb{R}_+ = [0, + \infty)$.
For any function $f \colon C \to \overline{\mathbb{R}}$ with $C \subset X$ being a nonempty set, denote 
$\dom f = \{ x \in C \mid |f(x)| < + \infty \}$.

\begin{definition}
Let $U \subset X$ be an open set, and $f \colon U \to \overline{\mathbb{R}}$ be a given function. For any $x \in \dom f
\cap A$ the quantity
$$
  f^{\downarrow}_A(x) = \liminf_{y \to x, \: y \in A} \frac{f(y) - f(x)}{d(y, x)}
$$
is called \textit{the rate of steepest descent} of the function $f$ with respect to the set $A$ at the point $x$, while
the quantity
$$
  f^{\uparrow}_A(x) = \limsup_{y \to x, \: y \in A} \frac{f(y) - f(x)}{d(y, x)}
$$
is called \textit{the rate of steepest ascent} of the function $f$ with respect to the set $A$ at the point $x$. If $x$
is an isolated point of the set $A$, then by definition $f^{\downarrow}_A(x) = + \infty$ and 
$f^{\uparrow}_A(x) = - \infty$. If $A = X$, then the quantity $f^{\downarrow}_X(x)$ is called \textit{the rate of
steepest descent} of $f$ at $x$, and is denoted by $f^{\downarrow}(x)$, and the similar notation is used for \textit{the
rate of steepest ascent} of $f$ at $x$. 
\end{definition}

\begin{remark}
It should be noted that that the notion of the rate of steepest descent is very similar (but not identical) to the
notion of the strong slope $|\nabla f|$ of a function $f$ defined on a metric space (cf., e.g., \cite{Ioffe,Aze}).
Namely, it is easy to see that $f^{\downarrow}(x) < 0$ iff $|\nabla f| (x) > 0$, and in this case 
$f^{\downarrow}(x) = - |\nabla f|(x)$. However, it should be noted that $|\nabla f|(x) \ge 0$ for any function $f$,
while it is easy to provide an example of a function $f$ such that $f^{\downarrow}(x) > 0$.
\end{remark}

Although there is no elaborate calculus for the rate of steepest descent/ascent, in some cases one can easily compute
(or estimate) these quantities. Below, we present several results of this type (see also \cite{Aze}). If the proof of a
statement is omitted, then it follows directly from definitions.

\begin{lemma} \label{Lemma_SimpleCases}
Let $f \colon X \to \mathbb{R}$ be a given function. The following statements hold true:
\begin{enumerate}
\item{if $X$ is a normed space, and $f$ is Fr\'echet differentiable at a point $x \in X$, then 
$f^{\downarrow}(x) = - \| f'(x) \|_{X^*}$ and $f^{\uparrow}(x) = \| f'(x) \|_{X^*}$, where $\| \cdot \|_{X^*}$ is the
norm in the topological dual space $X^*$ of $X$; furthermore, if $x$ is a limit point of the set $A$, then
$|f^{\downarrow}_A (x)| \le \| f'(x) \|_{X^*}$ and $|f^{\uparrow}_A(x)| \le \| f'(x) \|_{X^*}$;
}

\item{if $f$ is calm at a limit point $x$ of the set $A$ such that $f(x) = 0$, then for any 
$\alpha > 0$ one has $(f^{1 + \alpha})^{\downarrow}_A(x) = (f^{1 + \alpha})^{\uparrow}_A(x) = 0$; furthermore, if $X$ is
a normed space, then the function $f^{1 + \alpha}$ is Fr\'echet differentiable at the point $x$ and 
$(f^{1 + \alpha})'(x) = 0$;
}

\item{if $f(x) = d(x, x_0) + c$ in a neighbourhood of a limit point $x_0$ of the set $A$ with $c \in \mathbb{R}$, then 
$f^{\downarrow}_A(x_0) = f^{\uparrow}_A(x_0) = 1$.
}
\end{enumerate}
\end{lemma}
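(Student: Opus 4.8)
The plan is to check each of the three items directly against the definitions of $f^{\downarrow}_A$ and $f^{\uparrow}_A$. Items~2 and~3 reduce to one-line estimates of difference quotients, so the only item that needs more than the definitions is item~1, which in addition uses the description of the dual norm $\| \cdot \|_{X^*}$ as a supremum over the unit sphere. I would dispose of item~3 first: if $f(y) = d(y, x_0) + c$ on a neighbourhood $U$ of $x_0$, then $f(x_0) = c$ and $f(y) - f(x_0) = d(y, x_0)$ for every $y \in U$, hence the difference quotient $\bigl( f(y) - f(x_0) \bigr) / d(y, x_0)$ equals $1$ for every $y \in (U \cap A) \setminus \{ x_0 \}$; since $x_0$ is a limit point of $A$ such points exist arbitrarily close to $x_0$, and therefore $f^{\downarrow}_A(x_0) = f^{\uparrow}_A(x_0) = 1$.

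For item~1 I would start from the Fr\'echet expansion $f(y) - f(x) = f'(x)[y - x] + r(y)$ with $\| y - x \|^{-1} |r(y)| \to 0$ as $y \to x$. Dividing by $\| y - x \|$ and using $|f'(x)[e]| \le \| f'(x) \|_{X^*}$ for every unit vector $e \in X$ shows that, given $\varepsilon > 0$, every difference quotient $\bigl( f(y) - f(x) \bigr) / \| y - x \|$ lies in $[ -\| f'(x) \|_{X^*} - \varepsilon, \| f'(x) \|_{X^*} + \varepsilon ]$ once $y$ is close enough to $x$. Passing to the $\liminf$ and $\limsup$ over all $y \to x$ gives $f^{\downarrow}(x) \ge -\| f'(x) \|_{X^*}$ and $f^{\uparrow}(x) \le \| f'(x) \|_{X^*}$, while passing to the same limits over $y \in A$ — which, because $x$ is a limit point of $A$, are attained along genuine sequences in $A \setminus \{ x \}$ converging to $x$ — gives $|f^{\downarrow}_A(x)| \le \| f'(x) \|_{X^*}$ and $|f^{\uparrow}_A(x)| \le \| f'(x) \|_{X^*}$. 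For the reverse inequalities in the unrestricted case I would use $\| f'(x) \|_{X^*} = \sup_{\| e \| = 1} f'(x)[e] = - \inf_{\| e \| = 1} f'(x)[e]$ to pick unit vectors $e_n$ with $f'(x)[e_n] \to -\| f'(x) \|_{X^*}$ (respectively $\to \| f'(x) \|_{X^*}$), set $y_n = x + n^{-1} e_n \to x$, and note that $\bigl( f(y_n) - f(x) \bigr) / \| y_n - x \| = f'(x)[e_n] + n \, r(y_n) \to \mp \| f'(x) \|_{X^*}$, since $n |r(y_n)| = \| y_n - x \|^{-1} |r(y_n)| \to 0$; this forces $f^{\downarrow}(x) \le -\| f'(x) \|_{X^*}$ and $f^{\uparrow}(x) \ge \| f'(x) \|_{X^*}$, and combining the two sets of inequalities proves the two identities.

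For item~2, calmness of $f$ at $x$ together with $f(x) = 0$ provides a constant $L > 0$ and a neighbourhood $U$ of $x$ with $|f(y)| \le L \, d(y, x)$ for all $y \in U$; hence on $U$ one has $|f^{1+\alpha}(y) - f^{1+\alpha}(x)| = |f(y)|^{1+\alpha} \le L^{1+\alpha} d(y, x)^{1+\alpha}$, so $\bigl| f^{1+\alpha}(y) - f^{1+\alpha}(x) \bigr| / d(y, x) \le L^{1+\alpha} d(y, x)^{\alpha} \to 0$ as $y \to x$ (this is where $\alpha > 0$ is used), giving $(f^{1+\alpha})^{\downarrow}_A(x) = (f^{1+\alpha})^{\uparrow}_A(x) = 0$. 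If in addition $X$ is a normed space, the same bound with $d(y, x) = \| y - x \|$ shows $f^{1+\alpha}(y) - f^{1+\alpha}(x) = o(\| y - x \|)$, i.e. $f^{1+\alpha}$ is Fr\'echet differentiable at $x$ with $(f^{1+\alpha})'(x) = 0$.

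I do not expect a genuine obstacle. The one slightly delicate step is the reverse inequality in item~1, where one must produce an almost norming sequence of unit vectors and check that the Fr\'echet remainder evaluated along it is negligible; this is the only place where more than the bare definitions is needed. Two degenerate sub-cases deserve a word in passing: if $x$ is an isolated point of $A$ the inequalities in item~1 hold trivially by the convention $f^{\downarrow}_A(x) = +\infty$, $f^{\uparrow}_A(x) = -\infty$; and if $f'(x) = 0$ then $f(y) - f(x) = o(\| y - x \|)$ makes every difference quotient tend to $0$, so item~1 collapses to $f^{\downarrow}(x) = f^{\uparrow}(x) = 0$ and the norming-sequence argument becomes unnecessary.
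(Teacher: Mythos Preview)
Your proposal is correct and proceeds exactly as the paper intends: the paper omits the proof of this lemma, noting that it ``follows directly from definitions,'' and your argument is precisely that direct verification --- bounding difference quotients via the Fr\'echet expansion and the dual-norm description in item~1, using the calmness bound $|f(y)| \le L\,d(y,x)$ to kill the quotient in item~2, and observing the quotient is identically $1$ in item~3. The only point worth watching is that $f^{1+\alpha}$ is tacitly assumed to be well-defined (in the paper's applications $f \ge 0$), but this is an ambiguity in the statement rather than a flaw in your reasoning.
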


\begin{lemma}	\label{Lemma_SumRule}
Let $f, g \colon X \to \overline{\mathbb{R}}$ be given function. Then for any $x \in \dom f \cap \dom g$ such that the
sum $f + g$ is correctly defined in a neighbourhood of $x$, and the sums $f^{\downarrow}_A (x) + g^{\downarrow}_A (x)$
and $f^{\uparrow}_A (x) + g^{\uparrow}_A (x)$ are correctly defined one has
$$
  (f + g)^{\downarrow}_A (x) \ge f^{\downarrow}_A(x) + g^{\downarrow}_A (x), \quad
  (f + g)^{\uparrow}_A (x) \le f^{\uparrow}_A(x) + f^{\uparrow}_A (x).
$$
\end{lemma}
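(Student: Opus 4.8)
The plan is to reduce the statement to the elementary fact that the limit inferior is superadditive and the limit superior is subadditive for extended-real-valued functions, subject to the single proviso that the relevant sum of one-sided limits does not produce the indeterminate form $(+\infty) + (-\infty)$. This proviso is exactly what the hypotheses ``the sums $f^{\downarrow}_A(x) + g^{\downarrow}_A(x)$ and $f^{\uparrow}_A(x) + g^{\uparrow}_A(x)$ are correctly defined'' guarantee.

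First I would dispose of the trivial case in which $x$ is an isolated point of $A$: then, by definition, $f^{\downarrow}_A(x) = g^{\downarrow}_A(x) = (f + g)^{\downarrow}_A(x) = + \infty$ and $f^{\uparrow}_A(x) = g^{\uparrow}_A(x) = (f + g)^{\uparrow}_A(x) = - \infty$, so both inequalities hold as equalities. Next, suppose $x$ is a limit point of $A$. Since $f + g$ is correctly defined in some neighbourhood $U$ of $x$ and $f(x)$, $g(x)$ are finite (because $x \in \dom f \cap \dom g$), for every $y \in U \cap A$ with $y \ne x$ the identity
$$
  \frac{(f + g)(y) - (f + g)(x)}{d(y, x)} = \frac{f(y) - f(x)}{d(y, x)} + \frac{g(y) - g(x)}{d(y, x)}
$$
holds in $\overline{\mathbb{R}}$: at most one of $f(y)$, $g(y)$ can be infinite, and if one of them is, then both sides equal the same infinity with the appropriate sign. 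Writing $\varphi(y)$ and $\psi(y)$ for the two summands on the right, I would take $\liminf_{y \to x, \, y \in A}$ of both sides; superadditivity of $\liminf$ is applicable precisely because $f^{\downarrow}_A(x) + g^{\downarrow}_A(x)$ is assumed to be correctly defined, and it yields $(f + g)^{\downarrow}_A(x) = \liminf \bigl( \varphi(y) + \psi(y) \bigr) \ge \liminf \varphi(y) + \liminf \psi(y) = f^{\downarrow}_A(x) + g^{\downarrow}_A(x)$. The ascent inequality follows in exactly the same manner, replacing $\liminf$ by $\limsup$ and invoking subadditivity of $\limsup$, which is legitimate since $f^{\uparrow}_A(x) + g^{\uparrow}_A(x)$ is assumed to be correctly defined.

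The only delicate point — the ``main obstacle'', such as it is — is the bookkeeping with extended-real values: one must verify that the displayed identity for the difference quotients genuinely holds in $\overline{\mathbb{R}}$ for all relevant $y$ (this is where the assumptions that $f + g$ is correctly defined near $x$ and that $x \in \dom f \cap \dom g$ are used), and that the super/subadditivity of $\liminf$/$\limsup$ is applied only when the sum of the corresponding one-sided limits is not an indeterminate expression (this is where the remaining hypothesis is used). Everything else is routine.
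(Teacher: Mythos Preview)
Your proof is correct and is exactly what the paper intends: the paper omits the proof of this lemma, noting beforehand that any statement whose proof is omitted ``follows directly from definitions.'' Your argument simply unpacks those definitions and applies the standard super/subadditivity of $\liminf$/$\limsup$, with appropriate care about the extended-real arithmetic, which is precisely the routine verification the paper leaves to the reader.
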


\begin{lemma}	\label{Lmm_SDD_SumEstimate}
Let $f, g \colon X \to \overline{\mathbb{R}}$ be given functions. Then for any $x \in \dom f \cap \dom g$ such that 
the sum $f + g$ is correctly defined in a neighbourhood of $x$, and the sum $f^{\uparrow}_A (x) + g^{\downarrow}_A (x)$
is correctly defined one has $(f + g)^{\downarrow}_A (x) \le f^{\uparrow}_A (x) + g^{\downarrow}_A (x)$.
\end{lemma}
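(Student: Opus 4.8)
The plan is to argue directly from the definition of the rate of steepest descent by passing to a minimizing sequence for $g^{\downarrow}_A(x)$ and invoking the elementary inequality $\liminf_n (a_n + b_n) \le \limsup_n a_n + \lim_n b_n$, which holds in $\overline{\mathbb{R}}$ whenever $\{b_n\}$ converges and the right-hand side is not of the form $(+\infty) + (-\infty)$. Before doing so I would dispose of the infinite cases, where the claim is trivial. If $g^{\downarrow}_A(x) = + \infty$, then, since $f^{\uparrow}_A(x) + g^{\downarrow}_A(x)$ is correctly defined, one has $f^{\uparrow}_A(x) > - \infty$, whence the right-hand side of the claimed inequality equals $+ \infty$; symmetrically, if $f^{\uparrow}_A(x) = + \infty$ then $g^{\downarrow}_A(x) > - \infty$ and again the right-hand side equals $+ \infty$. (This reduction in particular covers the case when $x$ is an isolated point of $A$, which cannot in fact occur under the hypothesis, since then $f^{\uparrow}_A(x) = - \infty$ and $g^{\downarrow}_A(x) = + \infty$.) Hence from now on one may assume $f^{\uparrow}_A(x) < + \infty$ and $g^{\downarrow}_A(x) < + \infty$.

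Next, since $g^{\downarrow}_A(x) = \liminf_{y \to x,\, y \in A} \frac{g(y) - g(x)}{d(y, x)} < + \infty$, the point $x$ is a limit point of $A$, and there exists a sequence $\{ y_n \} \subset A \setminus \{ x \}$ with $y_n \to x$ and $b_n := \frac{g(y_n) - g(x)}{d(y_n, x)} \to g^{\downarrow}_A(x)$ in $\overline{\mathbb{R}}$. Put $a_n := \frac{f(y_n) - f(x)}{d(y_n, x)}$. Since $f + g$ is correctly defined in a neighbourhood of $x$ and $y_n \to x$, for all sufficiently large $n$ the value $(f + g)(y_n)$ is correctly defined and
$$
  \frac{(f + g)(y_n) - (f + g)(x)}{d(y_n, x)} = a_n + b_n
$$
(a routine check of arithmetic in $\overline{\mathbb{R}}$, using that $f(x)$ and $g(x)$ are finite). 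Moreover, because $\{ y_n \} \subset A \setminus \{ x \}$ tends to $x$, one has $\limsup_n a_n \le f^{\uparrow}_A(x) < + \infty$.

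Combining these facts, I would conclude
$$
  (f + g)^{\downarrow}_A(x) \le \liminf_{n \to \infty} (a_n + b_n) \le \limsup_{n \to \infty} a_n + \lim_{n \to \infty} b_n \le f^{\uparrow}_A(x) + g^{\downarrow}_A(x),
$$
where the first inequality holds since $y_n \to x$ with $y_n \in A \setminus \{ x \}$; the second is the elementary $\liminf$/$\limsup$ estimate, legitimate because $\limsup_n a_n < + \infty$ excludes the indeterminate sum; and the last follows by monotonicity of addition from $\limsup_n a_n \le f^{\uparrow}_A(x) < + \infty$ and $\lim_n b_n = g^{\downarrow}_A(x) \in [-\infty, + \infty)$, which together guarantee that no $(+\infty) + (-\infty)$ occurs and that the final expression coincides with the (correctly defined) right-hand side of the claim. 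The only genuine difficulty — hence the main obstacle — is the bookkeeping of $\overline{\mathbb{R}}$-valued arithmetic: one must verify at each step that the sums written down are correctly defined, which is precisely why the infinite cases are dispatched first. (One could instead try to deduce the lemma from Lemma~\ref{Lemma_SumRule} by writing $g = (f + g) + (-f)$ and using $(-f)^{\downarrow}_A(x) = - f^{\uparrow}_A(x)$, but this shortcut is only valid when $f$ is finite in a whole neighbourhood of $x$, so the direct argument above is needed in the general $\overline{\mathbb{R}}$-valued setting.)
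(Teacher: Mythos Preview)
Your proof is correct and is precisely the direct-from-definitions argument the paper has in mind: the paper omits the proof of this lemma, stating only that ``if the proof of a statement is omitted, then it follows directly from definitions.'' Your careful handling of the $\overline{\mathbb{R}}$-arithmetic (disposing of the $+\infty$ cases first, then passing to a sequence realizing $g^{\downarrow}_A(x)$ and invoking $\liminf_n(a_n+b_n)\le \limsup_n a_n + \lim_n b_n$) fills in exactly what the paper leaves to the reader.
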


\begin{lemma} \label{Lemma_RSD_Product}
Let a function $f \colon (a, b) \to \mathbb{R}$ be differentiable at a point $t \in (a, b)$ such that $f(t) > 0$. Then
for any function $g \colon (a, b) \to \overline{\mathbb{R}}$ such that $t \in \dom g$ one has
\begin{gather} \label{RSD_Product}
  (f \cdot g)^{\downarrow}(t) = \inf\big\{ f(t) g^{\downarrow}_{[t, b)}(t) + f'(t) g(t),
  f(t) g^{\downarrow}_{(a, t]} (t) - f'(t) g(t) \big\}, \\ 
  (f \cdot g)^{\uparrow}(t) = \sup\big\{ f(t) g^{\uparrow}_{[t, b)}(t) + f'(t) g(t),
  f(t) g^{\uparrow}_{(a, t]} (t) - f'(t) g(t) \big\}. \label{RSA_Product}
\end{gather}
In particular, the following inequalities hold true
$$
  (f \cdot g)^{\downarrow}(t) \ge - |f'(t) g(t)| + f(t) g^{\downarrow}(t), \quad
  (f \cdot g)^{\uparrow}(t) \le |f'(t) g(t)| + f(t) g^{\uparrow}(t).
$$
\end{lemma}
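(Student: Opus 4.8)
The plan is to compute $(f\cdot g)^{\downarrow}(t)$ directly from the definition, splitting the one-sided limit inferior at $t$ into the two one-sided pieces coming from $s\in[t,b)$ and $s\in(a,t]$, since on a metric subset of $\mathbb{R}$ we have $\liminf_{s\to t} = \min\{\liminf_{s\to t^+}, \liminf_{s\to t^-}\}$. Concretely, I would write, for $s\ne t$,
$$
\frac{f(s)g(s) - f(t)g(t)}{|s-t|} = f(s)\cdot\frac{g(s)-g(t)}{|s-t|} + g(t)\cdot\frac{f(s)-f(t)}{|s-t|},
$$
and analyze each side separately. For $s\downarrow t$ the factor $(f(s)-f(t))/|s-t| = (f(s)-f(t))/(s-t) \to f'(t)$, while for $s\uparrow t$ it is $(f(s)-f(t))/(t-s) \to -f'(t)$; in both cases $f(s)\to f(t)>0$.

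The key step is then a limiting argument for products of the form $f(s)\cdot h(s) + g(t)\cdot k(s)$ where $h(s) = (g(s)-g(t))/|s-t|$ and $k(s)\to \pm f'(t)$. Here one uses that $f(s)\to f(t)$ with $f(t)>0$ finite, so $\liminf_{s\downarrow t} f(s)h(s) = f(t)\liminf_{s\downarrow t} h(s) = f(t)\,g^{\downarrow}_{[t,b)}(t)$ (the positivity of $f(t)$ being exactly what lets the positive scalar pass through the $\liminf$, even when $g^{\downarrow}_{[t,b)}(t) = \pm\infty$), while $g(t)k(s)\to g(t)f'(t)$ is a genuine limit and therefore adds to the $\liminf$ without inequality. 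This yields $\liminf_{s\downarrow t}\frac{f(s)g(s)-f(t)g(t)}{|s-t|} = f(t)g^{\downarrow}_{[t,b)}(t) + f'(t)g(t)$, and symmetrically the left-sided limit gives $f(t)g^{\downarrow}_{(a,t]}(t) - f'(t)g(t)$. Taking the minimum of the two yields \eqref{RSD_Product}. The computation of $(f\cdot g)^{\uparrow}(t)$ in \eqref{RSA_Product} is entirely analogous, replacing every $\liminf$ by $\limsup$, every $g^{\downarrow}$ by $g^{\uparrow}$, and $\inf/\min$ by $\sup/\max$.

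The main obstacle I anticipate is bookkeeping with the extended real values: $g(t)$ is finite since $t\in\dom g$, but $g^{\downarrow}_{[t,b)}(t)$ and $g^{\downarrow}_{(a,t]}(t)$ may be $+\infty$ (indeed one of them is $+\infty$ if $t$ is isolated from the corresponding side), so I must be careful that the products $f(t)\cdot(\pm\infty)$ and the sums with the finite term $f'(t)g(t)$ are all unambiguously defined — which they are, precisely because $f(t)>0$. One must also handle the degenerate case where $t$ is isolated in $[t,b)$ or in $(a,t]$ as a subset of $\mathbb{R}$, which cannot happen here since both are nondegenerate intervals with endpoint $t$, so every point is a two-sided or one-sided limit point and the convention $g^{\downarrow}_A(x)=+\infty$ for isolated points is not triggered. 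Finally, the two displayed inequalities follow immediately from \eqref{RSD_Product} and \eqref{RSA_Product}: bounding each of the two terms in the infimum below by $-|f'(t)g(t)| + f(t)g^{\downarrow}(t)$ (using $g^{\downarrow}_{[t,b)}(t)\ge g^{\downarrow}(t)$ and $g^{\downarrow}_{(a,t]}(t)\ge g^{\downarrow}(t)$, since a $\liminf$ over a larger set is no larger), and symmetrically for the ascent.
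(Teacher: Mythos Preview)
Your proposal is correct and follows essentially the same approach as the paper: both split $(f\cdot g)^{\downarrow}(t)$ into the minimum of the right- and left-sided rates, use the identical product decomposition $f(s)g(s)-f(t)g(t)=f(s)\bigl(g(s)-g(t)\bigr)+g(t)\bigl(f(s)-f(t)\bigr)$, and then pass to the limit inferior using $f(s)\to f(t)>0$ together with the convergence of the difference quotient of $f$ to $\pm f'(t)$. The paper phrases the second term via the Taylor remainder $f'(t)g(t)\Delta t+o(\Delta t)g(t)$ rather than invoking the difference quotient directly, but this is a cosmetic difference only.
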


\begin{proof}
Let us prove the validity of~(\ref{RSD_Product}). Equality (\ref{RSA_Product}) is proved in a similar way.

It is easy to verify that
\begin{equation} \label{RSD_MinLeftRight}
  (f \cdot g)^{\downarrow}(t) = \inf\big\{ (f \cdot g)^{\downarrow}_{[t, b)}(t), 
  (f \cdot g)^{\downarrow}_{(a, t]}(t) \big\}.
\end{equation}
From the fact that the function $f$ is differentiable at the point $t$ it follows that $f$ is continuous at the point
$t$, and for any sufficiently small $\Delta t \in \mathbb{R}$ one has
\begin{multline*}
  f(t + \Delta t) g(t + \Delta t) - f(t) g(t) 
  = f(t + \Delta t) (g(t + \Delta t) - g(t)) + (f(t + \Delta t) - f(t)) g(t) = \\
  = f(t + \Delta t) (g(t + \Delta t) - g(t)) + f'(t) g(t) \Delta t + o(\Delta t) g(t),
\end{multline*}
where $o(\Delta t) / \Delta t \to 0$ as $\Delta t \to 0$. Therefore taking into account the facts that $f(t) > 0$, and
$f$ is continuous at the point $t$ one gets that
$$
  (f \cdot g)^{\downarrow}_{[t, b)}(t) = 
  \liminf_{\Delta t \to +0} \frac{f(t + \Delta t) g(t + \Delta t) - f(t) g(t)}{\Delta t}
  = f(t) g^{\downarrow}_{[t, b)}(t) + f'(t) g(t)
$$
and
$$
  (f \cdot g)^{\downarrow}_{(a, t]}(t) = 
  \liminf_{\Delta t \to -0} \frac{f(t + \Delta t) g(t + \Delta t) - f(t) g(t)}{|\Delta t|} = 
  f(t) g^{\downarrow}_{(a, t]}(t) - f'(t) g(t).
$$
Hence with the use of (\ref{RSD_MinLeftRight}) one obtains the desired result.
\end{proof}

\begin{lemma}	\label{Lemma_Superpos}
Let $f \colon [0, + \infty] \to [0, + \infty]$ be a non-decreasing function, and 
$g \colon X \to [0, + \infty]$ be a given function. Let $x \in \dom g$ be such that 
$g(x) > 0$, $- \infty < g^{\downarrow}_A (x) < 0$, the function $f$ is continuously differentiable at the point $g(x)$,
and $f'(g(x)) > 0$. Then 
\begin{equation} \label{Compos_RSD}
  \big[ f(g(\cdot)) \big]^{\downarrow}_A (x) = f'(g(x)) g^{\downarrow}_A (x) < 0.
\end{equation}
Similary, if $0 < g^{\uparrow}_A (x) < + \infty$, then
\begin{equation} \label{Compos_RSA}
  \big[ f(g(\cdot)) \big]^{\uparrow}_A (x) = f'(g(x)) g^{\uparrow}_A (x) > 0
\end{equation}
\end{lemma}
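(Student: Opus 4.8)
The plan is to prove the two one-sided estimates $\big[ f(g(\cdot)) \big]^{\downarrow}_A(x) \le f'(g(x)) g^{\downarrow}_A(x)$ and $\big[ f(g(\cdot)) \big]^{\downarrow}_A(x) \ge f'(g(x)) g^{\downarrow}_A(x)$, after which the equality and the strict inequality in (\ref{Compos_RSD}) follow at once, the latter from $f'(g(x)) > 0$ and $g^{\downarrow}_A(x) < 0$. Before that I would record two facts. First, since $g^{\downarrow}_A(x)$ is finite, $x$ is not an isolated point of $A$, so there exist sequences $y_k \to x$ with $y_k \in A \setminus \{x\}$ (and $f(g(x))$ is finite by the differentiability of $f$ at $g(x) \in (0, +\infty)$, so the quantity on the left of (\ref{Compos_RSD}) is well defined). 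Second, differentiability of $f$ at $g(x)$ gives $f(g(x) + h) - f(g(x)) = f'(g(x)) h + o(h)$ as $h \to 0$; apart from this, the only thing I intend to use on $f$ is its monotonicity.

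For the upper bound I would choose a sequence $y_k \to x$, $y_k \in A \setminus \{x\}$, realising $g^{\downarrow}_A(x)$, i.e. with $\big( g(y_k) - g(x) \big) / d(y_k, x) \to g^{\downarrow}_A(x)$. As $d(y_k, x) \to 0$ and the limit is finite, $g(y_k) \to g(x)$; and as the limit is negative, $g(y_k) < g(x)$ (hence $g(y_k) \ne g(x)$) for all large $k$. Writing $f(g(y_k)) - f(g(x)) = f'(g(x)) (g(y_k) - g(x)) + o(|g(y_k) - g(x)|)$ and dividing by $d(y_k, x)$, the main term tends to $f'(g(x)) g^{\downarrow}_A(x)$, while the remainder is the product of $o(|g(y_k) - g(x)|)/|g(y_k) - g(x)| \to 0$ and the bounded sequence $|g(y_k) - g(x)|/d(y_k, x)$, hence tends to $0$. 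Thus $\big( f(g(y_k)) - f(g(x)) \big) / d(y_k, x) \to f'(g(x)) g^{\downarrow}_A(x)$, giving the required upper bound.

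The lower bound is the crux of the argument, and the point where monotonicity of $f$ is essential. Fix $\varepsilon > 0$ and set $C_{\varepsilon} = \varepsilon - g^{\downarrow}_A(x) > 0$. Let $z_k \to x$, $z_k \in A \setminus \{x\}$, be arbitrary. Since $\liminf_k \big( g(z_k) - g(x) \big)/d(z_k, x) \ge g^{\downarrow}_A(x)$, for all large $k$ one has $g(z_k) > g(x) - C_{\varepsilon} d(z_k, x)$ (trivially so if $g(z_k) = +\infty$); as $f$ is non-decreasing, this yields $f(g(z_k)) \ge f\big( g(x) - C_{\varepsilon} d(z_k, x) \big)$, whence
\[
  \frac{f(g(z_k)) - f(g(x))}{d(z_k, x)} \;\ge\; - C_{\varepsilon} \cdot \frac{f(g(x)) - f\big( g(x) - C_{\varepsilon} d(z_k, x) \big)}{C_{\varepsilon} d(z_k, x)} \;\xrightarrow[k \to \infty]{}\; - C_{\varepsilon} f'(g(x)),
\]
the limit following from $C_{\varepsilon} d(z_k, x) \to 0^+$ and the differentiability of $f$ at $g(x)$. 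Since $- C_{\varepsilon} f'(g(x)) = \big( g^{\downarrow}_A(x) - \varepsilon \big) f'(g(x))$ and $z_k$ was arbitrary, we obtain $\big[ f(g(\cdot)) \big]^{\downarrow}_A(x) \ge \big( g^{\downarrow}_A(x) - \varepsilon \big) f'(g(x))$, and letting $\varepsilon \to 0^+$ finishes the lower bound and hence (\ref{Compos_RSD}). The estimate (\ref{Compos_RSA}) would be obtained symmetrically: for an arbitrary sequence $z_k \to x$ one has $g(z_k) < g(x) + \big( g^{\uparrow}_A(x) + \varepsilon \big) d(z_k, x)$ for large $k$, monotonicity gives $f(g(z_k)) \le f\big( g(x) + (g^{\uparrow}_A(x) + \varepsilon) d(z_k, x) \big)$, and passing to $\limsup$ (together with a sequence realising $g^{\uparrow}_A(x)$ for the reverse inequality) yields $\big[ f(g(\cdot)) \big]^{\uparrow}_A(x) = f'(g(x)) g^{\uparrow}_A(x) > 0$. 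I expect the main obstacle to be precisely the lower bound just discussed: along a general sequence $z_k$ the values $g(z_k)$ need not converge to $g(x)$ (nor even remain finite), so one cannot simply differentiate the composition; replacing $g(z_k)$ by the lower barrier $g(x) - C_{\varepsilon} d(z_k, x)$ and invoking monotonicity is the manoeuvre that reduces the estimate to a one-sided difference quotient of $f$ at $g(x)$.
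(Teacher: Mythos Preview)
Your proof is correct, and the upper bound is essentially the same as in the paper. The lower bound, however, is obtained by a genuinely different manoeuvre.

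In the paper, the lower bound is proved by picking a sequence $\{x_n\}$ that \emph{realises} $[f(g(\cdot))]^{\downarrow}_A(x)$, using monotonicity of $f$ to deduce $g(x_n) < g(x)$, then arguing $g(x_n) \to g(x)$ (otherwise $g^{\downarrow}_A(x) = -\infty$), and finally applying the mean value theorem on $[g(x_n), g(x)]$ together with the \emph{continuity} of $f'$ near $g(x)$ to control $f'(\theta_n)$. Your route instead works with an \emph{arbitrary} sequence $z_k \to x$: you replace $g(z_k)$ by the lower barrier $g(x) - C_\varepsilon d(z_k, x)$ coming from the very definition of $g^{\downarrow}_A(x)$, push $f$ through via monotonicity, and are left with a one-sided difference quotient of $f$ at the single point $g(x)$.

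The trade-off is this. Your argument is more elementary---no mean value theorem, no sequence extraction for the composite---and in fact uses a strictly weaker hypothesis: only differentiability of $f$ at $g(x)$, not continuous differentiability in a neighbourhood. The paper's approach is perhaps more transparent once one has the realising sequence in hand, but it genuinely needs the extra regularity of $f'$ to make the MVT step go through. Both proofs use monotonicity of $f$ in an essential way for the lower bound.
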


\begin{proof}
Let us prove the validity of~(\ref{Compos_RSD}). Equality~(\ref{Compos_RSA}) is proved in a similar way.

From the definition of limit inferior it follows that there exists a sequence $\{ x_n \} \subset A$ converging to $x$,
and such that
$$
  \lim_{n \to \infty} \frac{g(x_n) - g(x)}{d(x_n, x)} = g^{\downarrow}_A (x).
$$
Observe that $g(x_n) \to g(x)$ as $n \to \infty$, since $g^{\downarrow}_A(x)$ is finite. Therefore applying the
fact that the function $f$ is differentiable at the point $g(x)$ one gets that there exists a function 
$\omega \colon \mathbb{R} \to \mathbb{R}$ such that $\omega(0) = 0$, $\omega(t) \to 0$ as $t \to 0$ and
$$
  f(g(x_n)) - f(g(x)) = f'(g(x)) \big( g(x_n) - g(x) \big) + \omega\big( g(x_n) - g(x) \big) |g(x_n) - g(x)|
$$
for all $n \in \mathbb{N}$. Dividing both sides of the last equality by $d(x_n, x)$, and passing to the limit
as $n \to \infty$ one obtains that
\begin{equation} \label{CompositionRSD_UpperEstimate}
  \liminf_{n \to \infty} \frac{f(g(x_n)) - f(g(x))}{d(x_n, x)} = f'(g(x)) g^{\downarrow}_A (x) < 0.
\end{equation}
Hence $[f(g(\cdot))]^{\downarrow}_A(x) \le f'(g(x)) g^{\downarrow}_A (x) < 0$.

Let, now, $\{ x_n \} \subset A$ be a sequence converging to $x$, and such that
$$
  \lim_{n \to \infty} \frac{f(g(x_n)) - f(g(x))}{d(x_n, x)} = \big[ f(g(\cdot)) \big]^{\downarrow}_A(x).
$$
Since $[f(g(\cdot))]^{\downarrow}_A(x) < 0$, without loss of generality one can suppose that 
$f(g(x_n)) < f(g(x))$ for all $n \in \mathbb{N}$. Therefore taking into account the fact that the function $f$ is
non-decreasing one obtains that $g(x_n) < g(x)$ for any $n \in \mathbb{N}$. Note that $g(x_n) \to g(x)$ as 
$n \to \infty$, since otherwise
$$
  g^{\downarrow}_A(x) \le \liminf_{n \to \infty} \frac{g(x_n) - g(x)}{d(x_n, x)} = - \infty,
$$
which contradicts the assumption of the lemma. Consequently, by the mean value theorem for any sufficiently large 
$n \in \mathbb{N}$ there exists $\theta_n \in [g(x_n), g(x)]$ such that 
$f(g(x_n)) - f(g(x)) = f'(\theta_n) (g(x_n) - g(x))$. Applying the fact that $f$ is continuously differentiable at the
point $g(x)$ one gets that for any $\varepsilon > 0$ there exists $n_0 \in \mathbb{N}$ such that 
$0 < f'(\theta_n) < f'(g(x)) + \varepsilon$ for any $n \ge n_0$. Hence for any $n \ge n_0$ one has
$$
  f(g(x_n)) - f(g(x)) = f'(\theta_n) \big( g(x_n) - g(x) \big) \ge (f'(g(x)) + \varepsilon) \big( g(x_n) - g(x) \big).
$$
Dividing the last inequality by $d(x_n, x)$, and passing to the limit inferior as $n \to \infty$ one obtains that
\begin{multline*}
  \big[ f(g(\cdot)) \big]^{\downarrow}_A(x) = \lim_{n \to \infty} \frac{f(g(x_n)) - f(g(x))}{d(x_n, x)} \ge \\
  \ge (f'(g(x)) + \varepsilon) \liminf_{n \to \infty} \frac{g(x_n) - g(x)}{d(x_n, x)} \ge
  (f'(g(x)) + \varepsilon) g^{\downarrow}_A(x)
\end{multline*}
for any $\varepsilon > 0$. Hence and from (\ref{CompositionRSD_UpperEstimate}) one obtains the desired result.	 
\end{proof}

Let $(X, d_X)$ and $(Y, d_Y)$ be metric spaces. Hereinafter, we suppose that the Cartesian product $X \times Y$ is
endowed with the metric 
$$
  d\big( (x_1, y_1), (x_2, y_2) \big) = d_X(x_1, x_2) + d_Y(y_1, y_2).
$$
It is readily seen that the following result hold true.

\begin{lemma}	\label{Lemma_PartialRSD}
Let $A_X \subset X$ and $A_Y \subset Y$ be nonempty sets, and $A = A_X \times A_Y$. Then for any
function $f \colon X \times Y \to \overline{\mathbb{R}}$ and for any $(x, y) \in \dom f \cap A$ one has
\begin{gather*}
  f^{\downarrow}_A (x, y) \le 
  \inf\big\{ f(\cdot, y)^{\downarrow}_{A_X}(x), f(x, \cdot)^{\downarrow}_{A_Y}(y) \big\}, \\
  f^{\uparrow}_A (x) \ge 
  \sup\big\{ f(\cdot, y)^{\uparrow}_{A_X}(x), f(x, \cdot)^{\uparrow}_{A_Y}(y) \big\}.
\end{gather*}
\end{lemma}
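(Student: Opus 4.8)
The plan is to unwind the definitions of the rate of steepest descent on the product space and compare directional difference quotients along curves that vary only one coordinate with those that vary both. Recall that with the sum metric $d((x_1,y_1),(x_2,y_2)) = d_X(x_1,x_2) + d_Y(y_1,y_2)$, the quotient defining $f^{\downarrow}_A(x,y)$ is a liminf over all sequences $(x_n,y_n) \in A$ converging to $(x,y)$. The key observation is that the family of admissible approach sequences contains, in particular, the \emph{restricted} families in which $y_n \equiv y$ for all $n$ (so that $d((x_n,y),(x,y)) = d_X(x_n,x)$), and in which $x_n \equiv x$ for all $n$. Along the first restricted family the quotient becomes exactly $(f(\cdot,y)(x_n) - f(\cdot,y)(x))/d_X(x_n,x)$, whose liminf is $f(\cdot,y)^{\downarrow}_{A_X}(x)$; along the second it becomes the quotient defining $f(x,\cdot)^{\downarrow}_{A_Y}(y)$.

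First I would note that a liminf taken over a larger index set is no greater than a liminf taken over any subfamily, so restricting the approach to $y_n \equiv y$ can only increase (weakly) the value; hence $f^{\downarrow}_A(x,y) \le f(\cdot,y)^{\downarrow}_{A_X}(x)$, and symmetrically $f^{\downarrow}_A(x,y) \le f(x,\cdot)^{\downarrow}_{A_Y}(y)$. Taking the infimum of the two upper bounds gives the first inequality. There is one degenerate case to dispatch: if $x$ is an isolated point of $A_X$ (respectively $y$ of $A_Y$) then by convention $f(\cdot,y)^{\downarrow}_{A_X}(x) = +\infty$, and one simply drops that term from the infimum; if $x$ is isolated in $A_X$ \emph{and} $y$ is isolated in $A_Y$ then $(x,y)$ is isolated in $A = A_X \times A_Y$, so $f^{\downarrow}_A(x,y) = +\infty$ as well and the inequality holds trivially. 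For the rate of steepest ascent one argues identically with $\limsup$ in place of $\liminf$: restricting to a subfamily can only decrease a limsup, which flips the inequality direction and yields $f^{\uparrow}_A(x,y) \ge \sup\{\ldots\}$.

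The argument is essentially routine — it is just the monotonicity of $\liminf$/$\limsup$ under shrinking the approach set, plus the fact that the sum metric makes the one-coordinate-frozen quotients coincide exactly with the partial-function quotients. The only point requiring a little care, and the one I would flag as the ``main obstacle'' such as it is, is the bookkeeping around isolated points: one must check that the conventions $f^{\downarrow}_A(x) = +\infty$, $f^{\uparrow}_A(x) = -\infty$ at isolated points of the relevant sets are consistent across the product and the factors, so that the displayed inequalities remain correct (indeed trivially so) in every boundary case rather than only for limit points. This is exactly why the lemma is stated with ``it is readily seen'' and its proof omitted in the excerpt.
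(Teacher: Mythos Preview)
Your proposal is correct and matches the paper's approach: the paper explicitly omits the proof with the remark ``It is readily seen,'' indicating it follows directly from the definitions, which is exactly what you unpack. Your treatment of the isolated-point conventions is a welcome bit of care that the paper leaves entirely to the reader.
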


The lemma above furnishes the upper estimate of the rate of steepest descent $f^{\downarrow}_A (x, y)$ via the
``partial'' rates of steepest descent $f(\cdot, y)^{\downarrow}_{A_X}(x)$ and $f(x, \cdot)^{\downarrow}_{A_Y}(y)$. In
the case when $X$ and $Y$ are normed spaces, and the function $f$ is Fr\'echet differentiable, one can obtain the lower
estimate of the rate of steepest descent $f^{\downarrow}_A (x, y)$ via the ``partial'' rates of steepest descent.

\begin{theorem} \label{Thrm_RSDLowerEstimViaPartialRSD}
Let $X$ and $Y$ be normed spaces, $A_X \subset X$ and $A_Y \subset Y$ be nonempty sets, and $A = A_X \times A_Y$. Let
also $U_x \subset X$ be a neighbourhood of a point $x \in A_X$, and $U_y \subset Y$ be a neighbourhood of a point 
$y \in A_Y$. Suppose that a function $f \colon U_x \times U_y \to \mathbb{R}$ is Fr\'echet differentiable at $(x, y)$,
and $g \colon U_x \to \mathbb{R}$ and $h \colon U_y \to \mathbb{R}$ are given functions. Then the function 
$F(x, y) = f(x, y) + g(x) + h(y)$ is defined in a neighbourhood of the point $(x, y)$ and
\begin{equation} \label{RSDLowerEstimViaPartialRSD}
  F^{\downarrow}_A(x, y) \ge 2 \cdot \inf\big\{ 0, f(\cdot, y)^{\downarrow}_{A_X}(x) + g^{\downarrow}_{A_X}(x),
  f(x, \cdot)^{\downarrow}_{A_Y}(y) + h^{\downarrow}_{A_Y}(y) \big\}.
\end{equation}
\end{theorem}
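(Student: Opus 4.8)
The plan is to reduce the two-variable estimate to the one-dimensional rates of steepest descent by means of a first-order expansion of $f$ at $(x, y)$ together with the sum rule (Lemma~\ref{Lemma_SumRule}). If $(x, y)$ is an isolated point of $A$, then $F^{\downarrow}_A(x, y) = + \infty$ and (\ref{RSDLowerEstimViaPartialRSD}) holds trivially, so assume it is not. The function $F$ is defined on the neighbourhood $U_x \times U_y$ of $(x, y)$ by construction. Introduce the real-valued functions $\phi = f(\cdot, y) + g$ on $U_x$ and $\psi = f(x, \cdot) + h$ on $U_y$. The first step is to establish, for all $(x', y') \in U_x \times U_y$, the identity
\[
  F(x', y') - F(x, y) = \bigl[ \phi(x') - \phi(x) \bigr] + \bigl[ \psi(y') - \psi(y) \bigr] + r(x', y'),
\]
where $r(x', y') = f(x', y') - f(x', y) - f(x, y') + f(x, y)$, and to check that $r(x', y') = o( \| x' - x \| + \| y' - y \| )$ as $(x', y') \to (x, y)$. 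The latter follows from the Fr\'echet differentiability of $f$ at $(x, y)$ (which also yields the Fr\'echet differentiability of the partial maps $f(\cdot, y)$ at $x$ and $f(x, \cdot)$ at $y$): expanding $f(x', y') - f(x, y)$, $f(x', y) - f(x, y)$ and $f(x, y') - f(x, y)$ to first order and cancelling the linear parts leaves exactly a remainder of this order, recalling that the metric on $X \times Y$ is the sum metric.

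The second step is a deliberately crude termwise estimate. Put $s = \| x' - x \|$ and $t = \| y' - y \|$, and let $(x', y') \in A$, $(x', y') \neq (x, y)$, so that $s + t > 0$. Since $s + t \ge s$, one has $(\phi(x') - \phi(x)) / (s + t) \ge \min\{ 0, (\phi(x') - \phi(x)) / s \}$ when $x' \neq x$, while the left-hand side equals $0$ when $x' = x$; adopting the convention $(\phi(x') - \phi(x)) / s = + \infty$ for $x' = x$, the inequality holds in all cases, and symmetrically for $\psi$. Adding the two inequalities, substituting the identity of the first step, and using that $\min\{0, a\} \ge \min\{0, a, b\}$ and $\min\{0, b\} \ge \min\{0, a, b\}$, one obtains
\[
  \frac{F(x', y') - F(x, y)}{s + t} \ge
  2 \min\Bigl\{ 0, \frac{\phi(x') - \phi(x)}{s}, \frac{\psi(y') - \psi(y)}{t} \Bigr\} + \frac{r(x', y')}{s + t}
\]
for every such $(x', y')$; the factor $2$ is exactly the cost of decoupling the two variables in this fashion.

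The third step passes to the lower limit over $(x', y') \to (x, y)$, $(x', y') \in A$. As $r(x', y') / (s + t) \to 0$, and since $\liminf$ is superadditive and $\liminf(2 a_n) = 2 \liminf a_n$, it suffices to show that $\liminf \min\{ 0, (\phi(x') - \phi(x)) / s, (\psi(y') - \psi(y)) / t \} \ge \min\{ 0, \phi^{\downarrow}_{A_X}(x), \psi^{\downarrow}_{A_Y}(y) \}$. This follows from $\liminf \min \ge \min \liminf$ once one observes that for any sequence $(x_n, y_n) \to (x, y)$ in $A$ one has $\liminf_n (\phi(x_n) - \phi(x)) / \| x_n - x \| \ge \phi^{\downarrow}_{A_X}(x)$ — split the sequence into the indices with $x_n = x$, for which the quotient equals $+ \infty$ by the above convention, and those with $x_n \neq x$, for which the definition of $\phi^{\downarrow}_{A_X}(x)$ applies — and similarly for $\psi$. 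This gives $F^{\downarrow}_A(x, y) \ge 2 \min\{ 0, \phi^{\downarrow}_{A_X}(x), \psi^{\downarrow}_{A_Y}(y) \}$. To finish, apply Lemma~\ref{Lemma_SumRule} to get $\phi^{\downarrow}_{A_X}(x) \ge f(\cdot, y)^{\downarrow}_{A_X}(x) + g^{\downarrow}_{A_X}(x)$ and $\psi^{\downarrow}_{A_Y}(y) \ge f(x, \cdot)^{\downarrow}_{A_Y}(y) + h^{\downarrow}_{A_Y}(y)$ — these sums are well defined because, by Lemma~\ref{Lemma_SimpleCases}(1), $f(\cdot, y)^{\downarrow}_{A_X}(x)$ is finite if $x$ is a limit point of $A_X$ and equals $+ \infty$ otherwise, hence is never $- \infty$, and likewise for $f(x, \cdot)^{\downarrow}_{A_Y}(y)$ — and then invoke the monotonicity of $\min$ in each argument to obtain (\ref{RSDLowerEstimViaPartialRSD}).

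The main obstacle I anticipate is the bookkeeping in the first and third steps: verifying that the ``cross'' remainder $r(x', y')$ is genuinely $o( \| x' - x \| + \| y' - y \| )$ for the sum metric on $X \times Y$, and carefully handling the degenerate cases $x' = x$ and $y' = y$ (i.e.\ $s = 0$ or $t = 0$) when deducing the statement about rates of steepest descent from the pointwise inequality, making sure in particular that the $+ \infty$ convention introduced in the second step is consistent with the isolated-point convention built into the definition of $f^{\downarrow}_A$.
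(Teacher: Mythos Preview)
Your proof is correct and follows essentially the same strategy as the paper: use Fr\'echet differentiability to split $F(x',y') - F(x,y)$ into an $x$-part, a $y$-part, and a cross term that is $o(\|x'-x\| + \|y'-y\|)$, then bound each part from below by $\min\{0,\text{partial quotient}\}$ to pick up the factor~$2$, and finish with Lemma~\ref{Lemma_SumRule}. Your pointwise inequality $\tfrac{a}{s+t} \ge \min\{0,\tfrac{a}{s}\}$ followed by a single pass to the $\liminf$ is a slightly cleaner packaging of what the paper does via a minimizing sequence and a three-case analysis on the sign of the limit, but the underlying argument is the same.
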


\begin{proof}
If $(x, y)$ is an isolated point of the set $A$, then $F^{\downarrow}_A(x, y) = + \infty$, and inequality
(\ref{RSDLowerEstimViaPartialRSD}) is valid. Therefore one can suppose that $(x, y)$ is a limit point of the set $A$.

By the definition of rate of steepest descent there exists a sequence $\{ (x_n, y_n) \} \subset A$ converging to the
point $(x, y)$, and such that
\begin{equation} \label{RSDDefInProductSpace}
  \lim_{n \to \infty} \frac{F(x_n, y_n) - F(x, y)}{d(x_n, x)_X + d(y_n, y)_Y} = F^{\downarrow}_A(x, y).
\end{equation}
Note that if $x_n = x$ for all but finitely many $n \in \mathbb{N}$, then taking into account
Lemma~\ref{Lemma_SumRule} one obtains that 
$$
  F^{\downarrow}_A(x, y) = \lim_{n \to \infty} \frac{F(x, y_n) - F(x, y)}{d(y_n, y)_Y} \ge 
  F(x, \cdot)^{\downarrow}_{A_Y}(y) \ge f(x, \cdot)^{\downarrow}_{A_Y}(y) + h^{\downarrow}_{A_Y}(y),
$$
which implies the required result. Similarly, if $y_n = y$ for all but finitely many $n \in \mathbb{N}$, then applying
Lemma~\ref{Lemma_SumRule} again one gets that
$$
  F^{\downarrow}_A(x, y) = F(\cdot, y)^{\downarrow}_{A_X}(x) \ge 
  f(\cdot, y)^{\downarrow}_{A_X}(x) + g^{\downarrow}_{A_X}(x),
$$
and inequality (\ref{RSDLowerEstimViaPartialRSD}) holds true. Thus, replacing, if necessary, the sequence 
$\{ (x_n, y_n) \}$ by its subsequence, one can suppose that $x_n \ne x$ and $y_n \ne y$ for all $n \in \mathbb{N}$.

Taking into account the fact that the function $f$ is Fr\'echet differentiable at the point $(x, y)$ one obtains that
\begin{equation} \label{DefOfFrechetDifferetiability}
  \frac{1}{\alpha_n} (f(x_n, y_n) - f(x, y)) = 
  \frac{1}{\alpha_n} \frac{\partial f}{\partial x}(x, y)[x_n - x] + 
  \frac{1}{\alpha_n} \frac{\partial f}{\partial y}(x, y)[y_n - y] + c_n,
\end{equation}
where $\alpha_n = \| x_n - x \|_X + \| y_n - y \|_Y$ and $c_n \to 0$ as $n \to \infty$. Similarly, one has
$$
  f(x_n, y) - f(x, y) = \frac{\partial f}{\partial x}(x_n - x) + o(\|x_n - x\|_X),
$$
where $o(\| x_n - x \|_X) / \| x_n - x \|_X \to 0$ as $n \to \infty$. Note that
$$
  \frac{1}{\alpha_n} |o(\| x_n - x \|_X)| = 
  \frac{\| x_n - x \|_X}{\alpha_n} \frac{|o(\| x_n - x \|_X)|}{\| x_n - x \|_X} \le
  \frac{|o(\| x_n - x \|_X)|}{\| x_n - x \|_X},
$$
which implies that $o(\| x_n - x \|_X) / \alpha_n \to 0$ as $n \to \infty$. Consequently, one gets that
$$
  \frac{1}{\alpha_n} \frac{\partial f}{\partial x}(x, y)[x_n - x] = \frac{1}{\alpha_n} ( f(x_n, y) - f(x, y) ) + c^x_n
  \quad \forall n \in \mathbb{N},
$$
where $c_n^x \to 0$ as $n \to \infty$. Replacing $x$ with $y$ one obtains that
$$
  \frac{1}{\alpha_n} \frac{\partial f}{\partial y}(x, y)[y_n - y] = \frac{1}{\alpha_n} ( f(x, y_n) - f(x, y) ) + c^y_n
  \quad \forall n \in \mathbb{N},
$$
where $c_n^y \to 0$ as $n \to \infty$. Hence and from (\ref{DefOfFrechetDifferetiability}) it follows that
$$
  \frac{1}{\alpha_n} (f(x_n, y_n) - f(x, y)) = 
  \frac{1}{\alpha_n} ( f(x_n, y) - f(x, y) ) + \frac{1}{\alpha_n} ( f(x, y_n) - f(x, y) ) + \sigma_n,
$$
where $\sigma_n \to 0$ as $n \to \infty$. Therefore
\begin{multline} \label{RSDreductionToPartialRSDInequal}
  F^{\downarrow}_A(x) = \lim_{n \to \infty} \frac{F(x_n, y_n) - F(x, y)}{d(x_n, x)_X + d(y_n, y)_Y} = 
  \lim_{n \to \infty} \Bigg( \frac{f(x_n, y) + g(x_n) - f(x, y) - g(x)}{\alpha_n} + \\
  + \frac{f(x, y_n) + h(y_n) - f(x, y) - h(y)}{\alpha_n} \Bigg) \ge
  \liminf_{n \to \infty} \frac{f(x_n, y) + g(x_n) - f(x, y) - g(x)}{\alpha_n} + \\
  + \liminf_{n \to \infty} \frac{f(x, y_n) + h(y_n) - f(x, y) - h(y)}{\alpha_n}.
\end{multline}
Replacing, if necessary, the sequence $\{ (x_n, y_n) \}$ by its subsequence one can suppose that
$$
  \liminf_{n \to \infty} \frac{f(x_n, y) + g(x_n) - f(x, y) - g(x)}{\| x_n - x \|_X} = 
  \lim_{n \to \infty} \frac{f(x_n, y) + g(x_n) - f(x, y) - g(x)}{\| x_n - x \|_X}
$$
(note that equality (\ref{RSDDefInProductSpace}) remains valid if one replaces the sequence $\{ (x_n, y_n) \}$ by its
subsequence). Replacing again, if necessary, the obtained sequence by its subsequence one can also suppose that
$$
  \liminf_{n \to \infty} \frac{f(x, y_n) + h(y_n) - f(x, y) - h(y)}{\| y_n - y \|_Y} = 
  \lim_{n \to \infty} \frac{f(x, y_n) + h(y_n) - f(x, y) - h(y)}{\| y_n - y \|_Y}.
$$
If
$$
  \lim_{n \to \infty} \frac{f(x_n, y) + g(x_n) - f(x, y) - g(x)}{\| x_n - x \|_X} > 0,
$$
then there exists $n_0 \in \mathbb{N}$ such that $f(x_n, y) + g(x_n) - f(x, y) - g(x) > 0$ for all $n \ge n_0$.
Hence $(f(x_n, y) + g(x_n) - f(x, y) - g(x)) / \alpha_n > 0$ for all $n \ge n_0$, and
\begin{equation} \label{FirstCase_PartialRSD}
  \liminf_{n \to \infty} \frac{f(x_n, y) + g(x_n) - f(x, y) - g(x)}{\alpha_n} \ge 0.
\end{equation}
If
$$
  \lim_{n \to \infty} \frac{f(x_n, y) + g(x_n) - f(x, y) - g(x)}{\| x_n - x \|_X} = 0,
$$
then for any $\varepsilon > 0$ there exists $n_0 \in \mathbb{N}$ such that for any $n \ge n_0$ one has
$$
  \frac{|f(x_n, y) + g(x_n) - f(x, y) - g(x)|}{\alpha_n} \le
  \frac{|f(x_n, y) + g(x_n) - f(x, y) - g(x)|}{\| x_n - x \|_X} < \varepsilon.
$$
Therefore
\begin{equation} 
  \lim_{n \to \infty} \frac{f(x_n, y) + g(x_n) - f(x, y) - g(x)}{\alpha_n} = 0.
\end{equation}
Finally, if
$$
  \lim_{n \to \infty} \frac{f(x_n, y) + g(x_n) - f(x, y) - g(x)}{\| x_n - x \|_X} < 0,
$$
then there exists $n_0 \in \mathbb{N}$ such that $f(x_n, y) + g(x_n) - f(x, y) - g(x) < 0$ for all $n \ge n_0$.
Consequently, for any $n \ge n_0$ one has
$$
  0 > \frac{f(x_n, y) + g(x_n) - f(x, y) - g(x)}{\alpha_n} \ge 
  \frac{f(x_n, y) + g(x_n) - f(x, y) - g(x)}{\| x_n - x \|_X},
$$
which yields
\begin{multline} \label{ThirdCase_PartialRSD}
  0 \ge \liminf_{n \to \infty} \frac{f(x_n, y) + g(x_n) - f(x, y) - g(x)}{\alpha_n} \ge \\
  \ge \lim_{n \to \infty}  \frac{f(x_n, y) + g(x_n) - f(x, y) - g(x)}{\| x_n - x \|_X} \ge 
  \big[ f(\cdot, y) + g(\cdot) \big]^{\downarrow}_{A_X}(x).
\end{multline}
Combining (\ref{FirstCase_PartialRSD})--(\ref{ThirdCase_PartialRSD}), and applying Lemma~\ref{Lemma_SumRule} one
gets that
$$
  \liminf_{n \to \infty} \frac{f(x_n, y) + g(x_n) - f(x, y) - g(x)}{\alpha_n} \ge 
  \inf\big\{ 0, f(\cdot, y)^{\downarrow}_{A_X}(x) + g^{\downarrow}_{A_X}(x) \big\}.
$$
Arguing in a similar way one can show that
$$
  \liminf_{n \to \infty} \frac{f(x, y_n) + h(y_n) - f(x, y) - h(y)}{\alpha_n} \ge
  \inf\big\{ 0, f(x, \cdot)^{\downarrow}_{A_Y}(y) + h^{\downarrow}_{A_Y}(y) \big\}.
$$
Hence and from (\ref{RSDreductionToPartialRSDInequal}) one obtains the desired result.	 
\end{proof}

The rates of steepest descent and ascent can be used to express optimality conditions. Namely, it is easy to
verify that the following result holds true.

\begin{lemma}	\label{Lemma_NessOptCond}
Let $f \colon X \to \overline{\mathbb{R}}$ be a given function, and let $x^* \in A \cap \dom f$ be a point of local
minimum $($resp.~maximum$)$ of $f$ on $A$. Then $f^{\downarrow}_A(x^*) \ge 0$
$(\text{resp. }f^{\uparrow}(A)(x^*) \le 0)$. Furthermore, if a point $x \in A \cap \dom f$ satisfies the inequality
$f^{\downarrow}_A(x) > 0$ $(\text{resp. }f^{\uparrow}(A)(x^*) < 0)$, then $x$ is a point of strict local minimum 
$($resp.~maximum$)$ of the function $f$ on the set $A$.
\end{lemma}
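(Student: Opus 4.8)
The plan is to unwind the definitions of $f^{\downarrow}_A$ and $f^{\uparrow}_A$ directly; no auxiliary results are needed. I would treat only the minimum/descent case in detail, since the maximum/ascent case follows immediately by replacing $f$ with $-f$ (note that $\dom(-f) = \dom f$, that $(-f)^{\downarrow}_A(x) = -f^{\uparrow}_A(x)$ including the isolated-point conventions, and that $x^*$ is a local maximum of $f$ on $A$ iff it is a local minimum of $-f$ on $A$).

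For the first assertion, suppose $x^*$ is a point of local minimum of $f$ on $A$, so there is a neighbourhood $U$ of $x^*$ with $f(y) \ge f(x^*)$ for every $y \in U \cap A$. If $x^*$ is an isolated point of $A$, then by definition $f^{\downarrow}_A(x^*) = + \infty \ge 0$ and we are done. Otherwise $x^*$ is a limit point of $A$, and for every $y \in U \cap A$ with $y \ne x^*$ the difference $f(y) - f(x^*)$ is well defined and nonnegative (the value $f(x^*)$ is finite since $x^* \in \dom f$, and $f(y) \ge f(x^*) > - \infty$), while $d(y, x^*) > 0$; hence the quotient $(f(y) - f(x^*)) / d(y, x^*)$ is nonnegative. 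Passing to the limit inferior along $y \to x^*$, $y \in A$, yields $f^{\downarrow}_A(x^*) \ge 0$.

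For the second assertion, suppose $x \in A \cap \dom f$ and $f^{\downarrow}_A(x) > 0$. If $x$ is an isolated point of $A$, then $x$ is trivially a point of strict local minimum of $f$ on $A$. If $x$ is a limit point of $A$, fix any $c$ with $0 < c < f^{\downarrow}_A(x)$. By the definition of limit inferior there is $\delta > 0$ such that
$$
  \frac{f(y) - f(x)}{d(y, x)} > c \qquad \text{for all } y \in A \text{ with } 0 < d(y, x) < \delta .
$$
Since $d(y, x) > 0$ on this set, this rewrites as $f(y) > f(x) + c\, d(y, x) > f(x)$; in particular $f(y) > f(x)$ for every $y \in A$ with $0 < d(y, x) < \delta$, which is precisely the assertion that $x$ is a point of strict local minimum of $f$ on $A$. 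The ascent/maximum case is identical after replacing $f$ with $-f$.

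There is essentially no obstacle here: the proof is a bookkeeping exercise with the definitions. The only points that need a word of care are the isolated-point conventions built into the definitions of $f^{\downarrow}_A$ and $f^{\uparrow}_A$, and the remark that $x^* \in \dom f$ (together with the one-sided bound from local optimality) keeps the difference quotients away from the indeterminate forms $(+\infty) - (+\infty)$ and $(-\infty) - (-\infty)$.
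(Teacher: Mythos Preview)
Your proof is correct and matches the paper's approach: the paper omits the proof entirely, noting beforehand that ``if the proof of a statement is omitted, then it follows directly from definitions,'' which is precisely what you do by unwinding the definitions of $f^{\downarrow}_A$ and $f^{\uparrow}_A$ and handling the isolated-point convention separately.
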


For any function $f \colon X \to \overline{\mathbb{R}}$ a point $x \in A \cap \dom f$ such that 
$f^{\downarrow}_A (x) \ge 0$ is called an \textit{inf-stationary} (or \textit{lower semistationary}, see
\cite{Giannessi}) point of the function $f$ with respect to the set $A$. 

We will also need the following approximate Fermat's rule in terms of rate of steepest descent, which is a simple
corollary to the Ekeland variational principle (cf.~\cite{UderzoCalm, Aze}).

\begin{lemma}[Approximate Fermat's rule]
Let $X$ be a complete metric space, $A \subseteq X$ be closed, and 
$f \colon X \to \mathbb{R} \cup \{ + \infty \}$ be proper, l.s.c. and bounded below on $A$. Let also 
$\varepsilon > 0$ and $x_{\varepsilon} \in A$ be such that
$$
  f(x_{\varepsilon}) \le \inf_{x \in A} f(x) + \varepsilon.
$$
Then for any $r > 0$ there exists $y \in A$ such that $f(y) \le f(x_{\varepsilon})$, 
$d(y, x_{\varepsilon}) \le r$ and $f^{\downarrow}_A(y) \ge - \varepsilon / r$.
\end{lemma}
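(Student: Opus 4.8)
The plan is to deduce the statement directly from the Ekeland variational principle, applied to the restriction of $f$ to the feasible set $A$. First I would replace the constrained problem by an unconstrained one: let $\delta_A \colon X \to \mathbb{R} \cup \{ + \infty \}$ be the indicator function of $A$ (equal to $0$ on $A$ and to $+ \infty$ elsewhere), and set $g = f + \delta_A$. Since $A$ is closed and $f$ is l.s.c., the function $g$ is l.s.c. on $X$; since $f$ is bounded below on $A$ and $g \equiv + \infty$ off $A$, the function $g$ is bounded below on $X$ and $\inf_X g = \inf_A f$; and $g$ is proper because $x_{\varepsilon} \in A$ and $f(x_{\varepsilon}) \le \inf_A f + \varepsilon < + \infty$. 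Moreover, the hypothesis on $x_{\varepsilon}$ becomes $g(x_{\varepsilon}) \le \inf_X g + \varepsilon$.

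Next I would invoke the Ekeland variational principle for $g$ on the complete metric space $X$, with the parameter $r > 0$: there exists $y \in X$ such that $g(y) \le g(x_{\varepsilon})$, $d(y, x_{\varepsilon}) \le r$, and $g(x) > g(y) - (\varepsilon / r)\, d(x, y)$ for every $x \neq y$. The first of these conditions forces $y \in A$ (otherwise $g(y) = + \infty > g(x_{\varepsilon})$) and then reads $f(y) \le f(x_{\varepsilon})$, while the second is exactly $d(y, x_{\varepsilon}) \le r$.

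It remains to convert the minimality inequality into the required bound on the rate of steepest descent. If $y$ is an isolated point of $A$, then $f^{\downarrow}_A(y) = + \infty$ by definition, so the inequality $f^{\downarrow}_A(y) \ge - \varepsilon / r$ holds trivially. Otherwise $y$ is a limit point of $A$, and restricting the Ekeland inequality to $x \in A \setminus \{ y \}$ gives $f(x) - f(y) > - (\varepsilon / r)\, d(x, y)$, that is, $(f(x) - f(y)) / d(x, y) > - \varepsilon / r$; passing to the limit inferior as $x \to y$ with $x \in A$ yields $f^{\downarrow}_A(y) \ge - \varepsilon / r$. There is no genuine difficulty in this argument: the only points deserving care are checking that $g = f + \delta_A$ inherits lower semicontinuity, properness and boundedness below so that the Ekeland principle is applicable, and disposing of the isolated-point case separately before passing to the limit inferior.
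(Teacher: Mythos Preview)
Your proposal is correct and follows exactly the approach the paper indicates: the paper does not spell out a proof but simply states that the lemma ``is a simple corollary to the Ekeland variational principle,'' and your argument via $g = f + \delta_A$ is precisely the standard way to carry out that deduction. The care you take with properness, lower semicontinuity, and the isolated-point case is appropriate and nothing is missing.
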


\section{Parametric penalty functions}
\label{Sect_ParametricPenaltyFunctions}

In this section, we develop a general theory of exact parametric penalty functions. We obtain several sufficient
conditions for the exactness of parametric penalty functions, and study the concept of feasibility-preserving
penalty function. We also derive necessary and sufficient conditions for the zero duality gap property for a parametric
penalty function to hold true, and study some properties of minimizing sequences of a parametric penalty function.

\subsection{Exact penalty functions}
\label{Subsect_ExactPenFunc}

Let $X$ be a metric space, $M, A \subset X$ be nonempty sets, and $f \colon X \to \mathbb{R} \cup \{ + \infty \}$ be a
given function. Hereinafter, we study the following optimization problem
$$
  \min f(x) \quad \text{subject to} \quad x \in M, \quad x \in A.	\eqno{(\mathcal{P})}
$$
Denote by $\Omega = M \cap A$ the set of feasible points of this problem. We suppose that $\dom f \ne \emptyset$, and
$f$ attains a global minimum on the set $\Omega$.

Choose a function $\phi \colon X \to [0, +\infty]$ such that $\phi(x) = 0$ iff $x \in M$. The function
$g_{\lambda}(x) = f(x) + \lambda \phi(x)$ is called \textit{a penalty function} for the problem $(\mathcal{P})$,
where $\lambda \ge 0$ is a penalty parameter. The main goal of the theory of exact penalty functions is to study a
relation between the problem $(\mathcal{P})$ and the penalized problem
\begin{equation} \label{PenProb}
  \min g_{\lambda}(x) \quad \text{subject to} \quad x \in A.
\end{equation}
Note that only the constraint $x \in M$ is penalized, while the constraint $x \in A$ is taken into account explicitly.
Usually, the set $A$ corresponds to ``simple'' constraints, e.g. bound or linear constraints.

Let $x^* \in \dom f$ be a locally optimal solution of the problem $(\mathcal{P})$. Recall that the penalty function
$g_{\lambda}$ is said to be \textit{exact} at $x^*$, if there exists $\lambda^* \ge 0$ such that for any 
$\lambda \ge \lambda^*$ the point $x^*$ is a locally optimal solution of the penalized problem (\ref{PenProb}).

Observe that if the penalty function $g_{\lambda}$ is exact at a locally optimal solution $x^* \in \dom f$ of the
problem $(\mathcal{P})$, then the penalty term $\phi$ must be nonsmooth at $x^*$ in the general case. Indeed, suppose
that $X$ is a normed space, and the function $\phi$ is Fr\'echet differentiable at $x^*$. Note that $x^* \in M$ due to
the fact that $x^*$ is a locally optimal solution of the problem $(\mathcal{P})$. Therefore 
$\phi(x^*) = 0$, which implies that $x^*$ is a point of global minimum of $\phi$, since $\phi$ is nonnegative. Hence
$\phi'(x^*) = 0$. Applying Lemmas~\ref{Lemma_SimpleCases}, \ref{Lmm_SDD_SumEstimate} and \ref{Lemma_NessOptCond} one
obtains
$$
  f^{\downarrow}_A (x^*) = f^{\downarrow}_A (x^*) + \lambda \phi^{\uparrow}_A(x^*) \ge
  ( g_{\lambda} )^{\downarrow}_A(x^*) \ge 0.
$$
Thus, $f^{\downarrow}_A (x^*) \ge 0$, i.e. $x^*$ is an inf-stationary point of the problem
$$
  \min f(x) \quad \text{subject to} \quad x \in A,
$$
which is normally not the case, since otherwise the constraint $x \in M$ is somewhat redundant. As a result, one gets
that in order to construct an exact penalty function one needs to choose a nonsmooth penalty term $\phi$.
Alternatively, one can consider a \textit{parametric} penalty function.

Let $P$ be a metric space of parameters, and $p_0 \in P$ be fixed. Choose a function 
$\varphi \colon X \times P \to [0, + \infty]$ such that $\varphi(x, p) = 0$ iff $p = p_0$ and $x \in M$. Introduce
the following extended penalized problem
\begin{equation} \label{ParamPenProb}
  \min_{x, p} F_{\lambda}(x, p) \quad \text{subject to} \quad (x, p) \in A \times P,
\end{equation}
where $F_{\lambda}(x, p) = f(x) + \lambda \varphi(x, p)$ is \textit{a parametric penalty function} for the problem
$(\mathcal{P})$, and $\lambda \ge 0$ is a penalty parameter. Note that $F_{\lambda}(x, p_0) = f(x)$ for any $x \in M$,
and $F_{\lambda}(x, p) > f(x)$ otherwise. Observe also that $\Omega = \{ x \in A \mid \varphi(x, p_0) = 0 \}$.

\begin{definition}
Let $x^* \in \dom f$ be a locally optimal solution of the problem $(\mathcal{P})$. The penalty function $F_{\lambda}$ is
said to be exact at $x^*$ if there exists $\lambda_0 \ge 0$ such that the pair $(x^*, p_0)$ is a locally optimal
solution
of the problem (\ref{ParamPenProb}) with $\lambda = \lambda_0$. The greatest lower bound of all such $\lambda_0$ is
referred to as \textit{the exact penalty parameter} of the penalty function $F_{\lambda}$ at $x^*$, and is denoted by
$\lambda^*(x^*, f, \varphi)$ or simply by $\lambda^*(x^*)$, if $f$ and $\varphi$ are fixed (some authors refer to
$\lambda^*(x^*)$ as \textit{the least exact penalty parameter}, see~\cite{RubinovYang}).
\end{definition}

\begin{definition}
The penalty function $F_{\lambda}$ is called (\textit{globally}) \textit{exact} if there exists $\lambda_0 \ge 0$ such
that the function $F_{\lambda_0}$ attains a global minimum on the set $A \times P$, and a pair $(x^*, p^*)$ is a
globally optimal solution of the problem (\ref{ParamPenProb}) with $\lambda = \lambda_0$ iff $p^* = p_0$, and $x^*$ is a
globally optimal solution of the problem $(\mathcal{P})$. The greatest lower bound of all such $\lambda_0$ is referred
to as \textit{the exact penalty parameter} of the penalty function $F_{\lambda}$, and is denoted by $\lambda^*(f,
\varphi)$.
\end{definition}

Note that the definitions of local and global exact penalty parameters are meaningful and correct. Namely, if
$F_{\lambda}$ is exact at a locally optimal solution $x^*$ of the problem $(\mathcal{P})$, then for any 
$\lambda > \lambda^*(x^*)$ the pair $(x^*, p_0)$ is a locally optimal solution of the problem (\ref{ParamPenProb}).
Similarly, if $F_{\lambda}$ is exact, then for any $\lambda > \lambda^*(f, \varphi)$ the penalty function $F_{\lambda}$
attains a global minimum on the set $A \times P$, and a pair $(x^*, p^*)$ is a globally optimal solution of the problem
(\ref{ParamPenProb}) iff $p^* = p_0$, and $x^*$ is a globally optimal solution of the problem $(\mathcal{P})$. The
validity of these statements follows directly from the fact that $F_{\lambda}(x, p)$ is non-decreasing with respect to
$\lambda$ for any $(x, p) \in X \times P$, and $F_{\lambda}(x, p)$ is strictly increasing in $\lambda$ if both $f(x)$
and $\varphi(x, p)$ are finite and either $p \ne p_0$, $x \in X$ or $p = p_0$, $x \notin M$.

The following result is a direct corollary to the definition of exact parametric penalty function.

\begin{proposition} \label{Prp_ExactLimInfPenFunc}
Let $F_{\lambda}(x, p)$ be exact at a locally optimal solution $x^* \in \dom f$ of the problem $(\mathcal{P})$. Then the
penalty functions 
$$
  g_{\lambda}(x) = f(x) + \lambda \varphi(x, p_0), \quad 
  h_{\lambda}(x) = f(x) + \lambda \liminf_{p \to p_0} \varphi(x, p)
$$
are exact at $x^*$ as well. Furthermore, if $F_{\lambda}$ is (globally) exact, then so are $g_{\lambda}$ and
$h_{\lambda}$ (provided $\liminf_{p \to p_0} \varphi(x, p) > 0$ for any $x \notin M$).
\end{proposition}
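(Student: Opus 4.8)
The plan is to deduce exactness of $g_\lambda$ and $h_\lambda$ from that of $F_\lambda$ by restricting the extended penalized problem (\ref{ParamPenProb}) to the slice $p=p_0$ and to shrinking neighbourhoods of it, respectively. Writing $\psi(x)=\liminf_{p\to p_0}\varphi(x,p)$, I would first record two elementary facts: $0\le\psi(x)\le\varphi(x,p_0)$ for every $x$ (the upper bound since the $\liminf$ is taken over all $p$ tending to $p_0$), so in particular $\psi(x^*)=\varphi(x^*,p_0)=0$ because $x^*\in M$; and for every neighbourhood $V$ of $p_0$ and every $x$ one has $\psi(x)\ge\inf_{p\in V}\varphi(x,p)$, hence $h_\lambda(x)\ge\inf_{p\in V}F_\lambda(x,p)$.

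For the local statement, I would take $\lambda_0$ such that $(x^*,p_0)$ is a locally optimal solution of (\ref{ParamPenProb}) with $\lambda=\lambda_0$, and pick neighbourhoods $U$ of $x^*$ in $A$ and $V$ of $p_0$ in $P$ with $F_{\lambda_0}(x,p)\ge F_{\lambda_0}(x^*,p_0)=f(x^*)$ on $U\times V$. Putting $p=p_0$ gives $g_{\lambda_0}(x)=F_{\lambda_0}(x,p_0)\ge f(x^*)=g_{\lambda_0}(x^*)$ on $U$, while the second observation gives $h_{\lambda_0}(x)\ge\inf_{p\in V}F_{\lambda_0}(x,p)\ge f(x^*)=h_{\lambda_0}(x^*)$ on $U$ (here $\psi(x^*)=0$ is used). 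Thus $x^*$ is locally optimal for both penalized problems at $\lambda=\lambda_0$, and since raising $\lambda$ only adds the nonnegative terms $(\lambda-\lambda_0)\varphi(\cdot,p_0)$ and $(\lambda-\lambda_0)\psi$, which vanish at $x^*$, this persists for all $\lambda\ge\lambda_0$; hence $g_\lambda$ and $h_\lambda$ are exact at $x^*$. The degenerate case $\lambda_0=0$ I would treat separately, noting that then $f(x)\ge f(x^*)$ on $U$, so $g_\lambda(x),h_\lambda(x)\ge f(x^*)$ on $U$ for every $\lambda\ge0$.

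For the global statement, I would let $\lambda_0$ witness the global exactness of $F_\lambda$ and set $f^*=\min_{\Omega}f$, so that $F_{\lambda_0}\ge f^*$ on $A\times P$ with $F_{\lambda_0}(x,p)=f^*$ iff $p=p_0$ and $x$ solves $(\mathcal{P})$ globally. For $g_{\lambda_0}=F_{\lambda_0}(\cdot,p_0)$ this gives at once $g_{\lambda_0}\ge f^*$ on $A$ and $g_{\lambda_0}(x)=f^*$ iff $x$ solves $(\mathcal{P})$ globally; as such a solution exists by hypothesis, $g_{\lambda_0}$ attains $f^*$, so $g_\lambda$ is globally exact. For $h_\lambda$ I would first establish $h_{\lambda_0}\ge f^*$ on $A$: for $x\in A$ and any $p$, $\varphi(x,p)\ge (f^*-f(x))/\lambda_0$ (when $\lambda_0>0$), so $\psi(x)\ge\max\{0,(f^*-f(x))/\lambda_0\}$ and $h_{\lambda_0}(x)=f(x)+\lambda_0\psi(x)\ge\max\{f(x),f^*\}$. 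Then for any fixed $\lambda>\lambda_0$: if $x\in\Omega$ then $\psi(x)=0$, so $h_\lambda(x)=f(x)\ge f^*$ with equality exactly when $x$ solves $(\mathcal{P})$ globally; if $x\in A\setminus M$ the proviso $\psi(x)>0$ yields $h_\lambda(x)=h_{\lambda_0}(x)+(\lambda-\lambda_0)\psi(x)>f^*$. Since $h_\lambda$ equals $f^*$ at any global solution of $(\mathcal{P})$, it attains its infimum $f^*$ on $A$ precisely at those solutions, so $h_\lambda$ is globally exact (the case $\lambda_0=0$ again being immediate from $f\ge f^*$ on $A$).

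I expect the only genuinely delicate point to be that $\psi$ is defined through a $\liminf$, so the pointwise bounds inherited from the optimality of $(x^*,p_0)$ can be transferred to $h_\lambda$ only as non-strict inequalities. This is harmless in the local case, but in the global case it forces the passage from $\lambda_0$ to a strictly larger penalty parameter together with the hypothesis $\psi(x)>0$ on $A\setminus M$, which is exactly where the ``provided'' clause of the statement is needed in order to exclude infeasible global minimizers of $h_\lambda$.
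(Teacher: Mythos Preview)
The paper does not actually prove this proposition; it merely labels it ``a direct corollary to the definition of exact parametric penalty function''. Your detailed argument is therefore more than what the paper provides, and your overall approach---restricting the extended problem to the slice $p=p_0$ for $g_\lambda$ and passing to the $\liminf$ over $p\to p_0$ for $h_\lambda$---is the natural one and is structurally sound.

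There is, however, one incorrect step. The inequality $\psi(x)\le\varphi(x,p_0)$ that you record at the outset is false in general: the limit inferior $\liminf_{p\to p_0}\varphi(x,p)$ is taken over punctured neighbourhoods of $p_0$ and does not see the value $\varphi(x,p_0)$ at all. You use this inequality only to obtain $\psi(x^*)=0$ (and, in the global part, $\psi=0$ on $\Omega$), and without that conclusion the argument for $h_\lambda$ genuinely fails. For instance, take $X=\mathbb{R}$, $M=\{0\}$, $A=X$, $f\equiv 0$, $P=[0,1]$, $p_0=0$, with $\varphi(x,0)=|x|$, $\varphi(0,p)=2$ and $\varphi(x,p)=1$ for $x\neq 0$, $p>0$. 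Then $(0,0)$ is a global minimizer of $F_\lambda$, yet $\psi(0)=2>\psi(x)=1$ for $x\neq 0$, so $x^*=0$ is a strict \emph{maximum} of $h_\lambda$. The correct reading is that the proposition implicitly assumes $h_\lambda$ is itself a penalty function, i.e.\ that $\psi$ vanishes on $M$; this is what the phrase ``the penalty functions $g_\lambda$, $h_\lambda$'' signals, and it supplies $\psi(x^*)=0$ directly. Once you invoke that assumption instead of the false inequality, the rest of your argument goes through unchanged. A minor related imprecision: your bound $\psi(x)\ge\inf_{p\in V}\varphi(x,p)$ should strictly read $\psi(x)\ge\inf_{p\in V\setminus\{p_0\}}\varphi(x,p)$, but this does not affect the conclusion since the right-hand side is still bounded below by $(f^*-f(x))/\lambda$ on the relevant neighbourhood.
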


Thus, in the general case, for the (local or global) exactness of the parametric penalty function 
$F_{\lambda}(x, p)$, it is necessary that the functions $x \to \varphi(x, p_0)$ and
$x \to \liminf_{p \to p_0} \varphi(x, p)$ are nonsmooth. In particular, if $X$ is a normed space, and the function
$\varphi$ has the form $\varphi(x, p) = \alpha(p) \phi(x) + \beta(p)$ for some functions 
$\alpha, \beta \colon P \to [0, + \infty]$ and $\phi \colon X \to \mathbb{R}_+$ such that $\phi$ is smooth and
$\beta(p) \to 0$ as $p \to p_0$, then for the exactness of the penalty function $F_{\lambda}$ it is necessary
that $\alpha(p) \to + \infty$ as $p \to p_0$ (cf. the parametric penalty function from
Section~\ref{Section_SingPenFunc}).

\begin{remark}
{(i) A different approach to the definition of parametric penalty functions, in which a parameter is
included into the objective function, was considered in \cite{Dolgopolik_OptLet}.
}

\noindent{(ii) Note that the parametric penalty function $F_{\lambda}$ is nothing but a standard penalty function for
the extended optimization problem of the form
$$
  \min_{(x, p)} \widehat{f}(x, p) \quad \text{subject to} \quad (x, p) \in M \times \{ p_0 \}, \quad 
  (x, p) \in A \times P,
$$
where $\widehat{f}(x, p) = f(x)$ for all $(x, p) \in X \times P$. Therefore one can apply the existing results on
exactness of non-parametric penalty functions \cite{Dolgopolik} to a penalty function for the extended problem in order
to obtain sufficient and/or necessary conditions for the exactness of the parametric penalty function $F_{\lambda}$.
However, it should be noted that in most cases this approach leads to weaker results than a direct study of the
exactness of parametric penalty functions presented below, since the parameter $p$ is treated in the same way as
the variable $x$, and the particular structure of the constraint $(x, p) \in M \times \{ p_0 \}$ is not taken into
account within this approach (cf., e.g., Theorem~\ref{Th_GlobExFiniteDim} below and its non-parametric counterpart
\cite{Dolgopolik}, Theorem~3.17).
}
\end{remark}

\subsection{Global exactness}

Observe that if the penalty function $F_{\lambda}$ is exact, then it is exact at every globally optimal solution of the
problem $(\mathcal{P})$. Our aim is to prove that under some additional assumptions the converse statement holds true.
However, at first, we describe an alternative approach to the definition of global exactness of a penalty function that
is not based on properties of global minimizers of this function.

Denote by $f^* = \inf_{x \in \Omega} f(x)$ the optimal value of the problem $(\mathcal{P})$. The proposition below
follows directly from the fact that the penalty function $F_{\lambda}(x, p)$ is strictly increasing in $\lambda$ for
any $(x, p)$ such that both $f(x)$ and $\varphi(x, p)$ are finite and either $p \ne p_0$, $x \in X$ or 
$p = p_0$, $x \notin M$.

\begin{proposition} \label{Prp_EquivDefExPen}
The penalty function $F_{\lambda}$ is exact if and only if there exists $\lambda_0 \ge 0$ such that
$$
  \inf_{(x, p) \in A \times P} F_{\lambda_0}(x, p) \ge f^*.
$$
Moreover, the greatest lower bound of all such $\lambda_0$ is equal to the exact penalty parameter 
$\lambda^*(f, \varphi)$.
\end{proposition}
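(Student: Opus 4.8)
The plan is to prove the equivalence by comparing two sets of penalty parameters: the set $\Lambda_{e}$ of all $\lambda_0\ge 0$ for which $F_{\lambda}$ satisfies the definition of global exactness, and the set $\Lambda_{v}$ of all $\lambda_0\ge 0$ satisfying the displayed value inequality. Everything reduces to the pointwise monotonicity $F_{\lambda}\ge F_{\lambda_0}$ on $A\times P$ for $\lambda\ge\lambda_0$ (which holds since $\varphi\ge 0$) together with the standing assumption that $f$ attains its minimum $f^*$ on $\Omega$. A preliminary observation I would record first: if $\overline{x}\in\Omega$ is a global solution of $(\mathcal{P})$, then $\overline{x}\in A$ and $\varphi(\overline{x},p_0)=0$, so $F_{\lambda}(\overline{x},p_0)=f(\overline{x})=f^*$ for every $\lambda\ge 0$; hence $\inf_{(x, p) \in A \times P}F_{\lambda}(x,p)\le f^*$ always, and the displayed inequality, when it holds, is automatically an equality, attained at $(\overline{x},p_0)$.

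For the implication ``exact $\Rightarrow$ value inequality'', suppose $F_{\lambda}$ is exact with parameter $\lambda_0$. By definition $F_{\lambda_0}$ attains its global minimum on $A\times P$ at a pair of the form $(x^*,p_0)$ with $x^*$ a global solution of $(\mathcal{P})$, so $\inf_{(x, p) \in A \times P}F_{\lambda_0}(x,p)=f(x^*)=f^*\ge f^*$. This shows $\Lambda_{e}\subseteq\Lambda_{v}$, hence $\inf\Lambda_{v}\le\inf\Lambda_{e}=\lambda^*(f,\varphi)$.

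For the converse, suppose $\lambda_0\in\Lambda_{v}$ and fix any $\lambda>\lambda_0$. Then $F_{\lambda}\ge F_{\lambda_0}$ on $A\times P$ gives $\inf_{(x, p) \in A \times P}F_{\lambda}(x,p)\ge f^*$, so by the preliminary observation this infimum equals $f^*$ and is attained at $(\overline{x},p_0)$. It remains to describe the global minimizers of $F_{\lambda}$: if $F_{\lambda}(x^*,p^*)=f^*$ then $f(x^*)$ and $\varphi(x^*,p^*)$ are finite, and were $\varphi(x^*,p^*)>0$ we would get $F_{\lambda_0}(x^*,p^*)=f^*-(\lambda-\lambda_0)\varphi(x^*,p^*)<f^*$, contradicting $\lambda_0\in\Lambda_{v}$; thus $\varphi(x^*,p^*)=0$, i.e. $p^*=p_0$ and $x^*\in M$, and then $f(x^*)=f^*$ forces $x^*\in\Omega$ to be a global solution of $(\mathcal{P})$. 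Conversely every such pair satisfies $F_{\lambda}(x^*,p_0)=f^*=\inf_{(x, p) \in A \times P}F_{\lambda}(x,p)$, so it is a global minimizer. Hence $F_{\lambda}$ is exact for every $\lambda>\lambda_0$, which gives $\lambda^*(f,\varphi)=\inf\Lambda_{e}\le\lambda_0$; taking the infimum over $\lambda_0\in\Lambda_{v}$ and combining with the previous paragraph yields $\inf\Lambda_{v}=\lambda^*(f,\varphi)$, as claimed.

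The only mildly delicate point is the borderline value $\lambda_0=0$ in the converse, where $F_{\lambda_0}=f$ and the inequality $F_{\lambda}\ge F_{\lambda_0}$ needs the usual convention for $0\cdot(+\infty)$; I would dispose of it separately by noting that $\Omega\subseteq A$ forces $\inf_{x\in A}f(x)=f^*$ as soon as $\inf_{x\in A}f(x)\ge f^*$, after which the minimizer analysis above applies verbatim for every $\lambda>0$. Beyond this bookkeeping I do not expect any real obstacle: the argument rests entirely on the single inequality $F_{\lambda}\ge F_{\lambda_0}$ and the identity $F_{\lambda}(\overline{x},p_0)=f^*$.
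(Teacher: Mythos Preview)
Your proof is correct and follows essentially the same approach as the paper, which simply asserts that the proposition follows directly from the strict monotonicity of $F_{\lambda}(x,p)$ in $\lambda$ at points where $f(x)$ and $\varphi(x,p)$ are finite and $\varphi(x,p)>0$. You have carefully spelled out exactly this argument, including the characterization of minimizers via the contradiction $F_{\lambda_0}(x^*,p^*)<f^*$ when $\varphi(x^*,p^*)>0$; the bookkeeping around $\lambda_0=0$ is a bit more caution than strictly needed, but does no harm.
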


\begin{corollary} \label{Crlr_ReductionToStandExPen}
The penalty function $F_{\lambda}$ is exact if and only if the penalty function 
$g_{\lambda}(x) = f(x) + \lambda \inf_{p \in P} \varphi(x, p)$ is exact. Furthermore, the exact penalty parameters of
these functions coincide.
\end{corollary}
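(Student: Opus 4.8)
The plan is to reduce the statement to Proposition~\ref{Prp_EquivDefExPen} by an elementary interchange of infima. First I would record that, since $f(x)$ does not depend on $p$ and $\lambda_0 \ge 0$, for every $\lambda_0 \ge 0$ and every $x \in X$ one has
\[
  \inf_{p \in P} F_{\lambda_0}(x, p) = \inf_{p \in P} \bigl( f(x) + \lambda_0 \varphi(x, p) \bigr) = f(x) + \lambda_0 \inf_{p \in P} \varphi(x, p) = g_{\lambda_0}(x);
\]
the extended-real boundary cases ($f(x) = + \infty$, and $\lambda_0 = 0$ under the convention $0 \cdot (+\infty) = 0$, together with the identity $\inf_p (a + c \psi_p) = a + c \inf_p \psi_p$ for $a \in \mathbb{R}$, $c > 0$, valid in $[a, +\infty]$) are checked directly. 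Taking the infimum over $x \in A$ on both sides then yields
\[
  \inf_{(x, p) \in A \times P} F_{\lambda_0}(x, p) = \inf_{x \in A} g_{\lambda_0}(x) \qquad \text{for all } \lambda_0 \ge 0 .
\]

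Next I would observe that $g_{\lambda}$ is a penalty function for $(\mathcal{P})$ — its penalty term $x \mapsto \inf_{p \in P} \varphi(x, p)$ takes values in $[0, +\infty]$ and vanishes on $M$ (there it equals $\varphi(x, p_0) = 0$) — so it may be regarded as a parametric penalty function over the one-point parameter space $\{ p_0 \}$. Hence Proposition~\ref{Prp_EquivDefExPen} applies both to $F_{\lambda}$ and, in this trivial-parameter form, to $g_{\lambda}$: the former is exact iff there exists $\lambda_0 \ge 0$ with $\inf_{(x,p) \in A \times P} F_{\lambda_0}(x, p) \ge f^*$, the latter iff there exists $\lambda_0 \ge 0$ with $\inf_{x \in A} g_{\lambda_0}(x) \ge f^*$, and in each case the exact penalty parameter is the greatest lower bound of the corresponding set of admissible $\lambda_0$. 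By the displayed identity these two sets of admissible $\lambda_0$ are one and the same set, which at once gives the equivalence of the two exactness properties and the equality of the two exact penalty parameters.

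I do not anticipate a serious obstacle; the only delicate points are the extended-real arithmetic in the interchange of infima (the conventions for $0 \cdot (+\infty)$ and for infima equal to $+\infty$) and checking that $g_{\lambda}$ genuinely fits the framework of Proposition~\ref{Prp_EquivDefExPen}, so that this proposition may legitimately be invoked for it.
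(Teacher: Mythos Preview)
Your approach is exactly the one the paper intends: the corollary is stated without proof, as an immediate consequence of Proposition~\ref{Prp_EquivDefExPen} together with the interchange of infima $\inf_{(x,p)\in A\times P} F_{\lambda_0}(x,p)=\inf_{x\in A} g_{\lambda_0}(x)$.

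One small gap is worth flagging. You verify that the penalty term $x\mapsto\inf_{p\in P}\varphi(x,p)$ vanishes \emph{on} $M$, but you do not check the converse, and in fact nothing in the standing hypotheses forces $\inf_{p\in P}\varphi(x,p)>0$ for $x\notin M$ (the infimum over $p$ of strictly positive values can well be zero). Without this, $g_\lambda$ is not a penalty function in the paper's sense, and the ``if'' direction of Proposition~\ref{Prp_EquivDefExPen} need not hold for it: one can have $\inf_{x\in A} g_{\lambda_0}(x)\ge f^*$ while $g_{\lambda_0}$ has infeasible global minimizers. The paper finesses this by simply \emph{calling} $g_\lambda$ a penalty function in the statement of the corollary, thereby tacitly assuming the ``iff'' condition; you should make this implicit hypothesis explicit rather than claim to have verified it.
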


\begin{remark}
The corollary above describes a general approach to the study of exact parametric penalty functions. Namely,
Corollary~\ref{Crlr_ReductionToStandExPen} allows one to study the standard penalty function $g_{\lambda}$ instead of
the parametric penalty function $F_{\lambda}$ in order to obtain sufficient (and maybe necessary) conditions
for $F_{\lambda}$ to be exact. One simply has to compute the penalty function $g_{\lambda}$ (or to find its lower and
upper estimates), and then apply well-developed methods of the theory of exact linear penalty function~\cite{Dolgopolik}
to the penalty function $g_{\lambda}$ (or its estimates). For the application of this approach to the study of singular
penalty functions see~\cite{Dolgopolik_OptLet2}. However, it should be noted that sometimes it might be difficult
to compute the penalty function $g_{\lambda}$ or to find its sharp estimates, which makes the described approach
inapplicable in some cases.
\end{remark}

Let us show that the infimum in Proposition~\ref{Prp_EquivDefExPen} can be taken over a significantly smaller set than
$A \times P$. We need the following definition in order to conveniently formulate this result.

\begin{definition}
Let $C \subset A \times P$ be a nonempty set. The penalty function $F_{\lambda}$ is said to be \textit{exact} on the set
$C$ if there exists $\lambda_0 \ge 0$ such that
$$
  F_{\lambda_0}(x, p) \ge f^* \qquad \forall (x, p) \in C.
$$
The greatest lower bounded of all $\lambda_0 \ge 0$ for which the inequality above holds true is denoted by
$\lambda^*(C)$ and is referred to as \textit{the exact penalty parameter} of the penalty function $F_{\lambda}$ on the
set $C$. In the case when $C = A_0 \times P$ for some $A_0 \subset A$, we simply say the the penalty function
$F_{\lambda}$ is exact on the set $A_0$, and denote $\lambda^*(A_0) := \lambda^*(A_0 \times P)$.
\end{definition}

Note that if the penalty function $F_{\lambda}$ is exact on a set $C \subset A \times P$, and there exists a globally
optimal solution $x^*$ of the problem $(\mathcal{P})$ such that $(x^*, p_0) \in C$, then for any 
$\lambda > \lambda^*(C)$ the function $F_{\lambda}$ attains a global minimum on the set $C$, and a pair 
$(\overline{x}, \overline{p})$ is a point of global minimum of $F_{\lambda}$ on $C$ iff $\overline{p} = p_0$, and
$\overline{x}$ is a globally optimal solution of the problem $(\mathcal{P})$. Note also that 
$\lambda^*(C) \le \lambda^*(f, \varphi)$ for any $C \subseteq A \times P$.

Being inspired by the ideas of prof. V.F. Demyanov \cite{Demyanov}, introduce the set 
$$
  \Omega_{\delta} = \big\{ (x, p) \in A \times P \mid \varphi(x, p) < \delta \big\} \quad \forall \delta > 0.
$$
Note that 
\begin{equation} \label{OmegaDeltaShrinking}
  \bigcap_{\delta > 0} \Omega_{\delta} = \Omega \times \{ p_0 \}
\end{equation}
and for any $\delta_1 > \delta_2$ one has $\Omega_{\delta_2} \subseteq \Omega_{\delta_1}$, i.e. the set
$\Omega_{\delta}$ shrinks into the set $\Omega \times \{ p_0 \}$ as $\delta \to +0$.

\begin{lemma} \label{Lmm_Reduction}
Let there exist $\lambda_0 \ge 0$ such that the function $F_{\lambda_0}$ is bounded below on $A \times P$.
Then for any $\delta > 0$ one has
$$
  F_{\lambda}(x, p) \ge f^* \qquad \forall (x, p) \notin \Omega_{\delta}
  \qquad \forall \lambda \ge \lambda_0 + \frac{f^* - c}{\delta},
$$
where $c = \inf\{ F_{\lambda_0}(x, p) \mid (x, p) \in A \times P \}$, i.e. the penalty function $F_{\lambda}$ is exact
on the set $(A \times P) \setminus \Omega_{\delta}$ and 
$\lambda^*((A \times P) \setminus \Omega_{\delta}) \le \lambda_0 + (f^* - c) / \delta$.
\end{lemma}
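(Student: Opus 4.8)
The plan is to estimate $F_{\lambda}(x,p)$ from below on the complement of $\Omega_{\delta}$ by splitting the penalty parameter into the part $\lambda_0$ (which keeps $F_{\lambda_0}$ bounded below by $c$) and an excess part $\lambda - \lambda_0 \ge 0$ that we use to push the value above $f^*$. First I would fix $\delta > 0$, fix a point $(x,p) \in (A \times P) \setminus \Omega_{\delta}$, and fix $\lambda \ge \lambda_0 + (f^* - c)/\delta$. By definition of $\Omega_{\delta}$ we have $\varphi(x,p) \ge \delta$. Note that this is only meaningful when $c < +\infty$ and $f^* > c$; if $f^* \le c$ the claimed threshold is $\le \lambda_0$ and the conclusion is immediate from $F_{\lambda} \ge F_{\lambda_0} \ge c \ge f^*$, so that case can be dispatched in one line.

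The main computation is then a two-term decomposition: write
$$
  F_{\lambda}(x,p) = f(x) + \lambda_0 \varphi(x,p) + (\lambda - \lambda_0)\varphi(x,p)
  = F_{\lambda_0}(x,p) + (\lambda - \lambda_0)\varphi(x,p).
$$
For the first summand I would invoke the hypothesis that $F_{\lambda_0}$ is bounded below on $A \times P$ to get $F_{\lambda_0}(x,p) \ge c$. For the second summand, using $\lambda - \lambda_0 \ge (f^* - c)/\delta \ge 0$ together with $\varphi(x,p) \ge \delta > 0$, I would bound $(\lambda - \lambda_0)\varphi(x,p) \ge \frac{f^* - c}{\delta}\cdot \delta = f^* - c$. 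Adding the two estimates gives $F_{\lambda}(x,p) \ge c + (f^* - c) = f^*$, which is exactly the desired inequality. Since $(x,p)$ was an arbitrary point of $(A \times P)\setminus\Omega_{\delta}$ and $\lambda$ an arbitrary parameter above the threshold, this proves both that $F_{\lambda}$ is exact on $(A\times P)\setminus\Omega_{\delta}$ and that the exact penalty parameter on this set does not exceed $\lambda_0 + (f^*-c)/\delta$, by the definition of $\lambda^*(\cdot)$ as a greatest lower bound.

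There is essentially no hard step here: the only points requiring a moment of care are the sign bookkeeping (ensuring $\lambda - \lambda_0 \ge 0$ so that multiplying the inequality $\varphi(x,p) \ge \delta$ by it preserves the direction, which holds precisely because $f^* \ge c$ in the nontrivial case) and the handling of the possibility that $c = -\infty$ is excluded by the boundedness-below hypothesis while $c = +\infty$ cannot occur since $\Omega \ne \emptyset$ forces $F_{\lambda_0}$ to take finite values on $\Omega \times \{p_0\}$. The definition of $\lambda^*$ as an infimum is what lets us conclude the last clause without separately verifying the value at the threshold $\lambda$ itself.
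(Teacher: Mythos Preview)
Your proposal is correct and follows essentially the same approach as the paper: the identical decomposition $F_{\lambda}(x,p) = F_{\lambda_0}(x,p) + (\lambda-\lambda_0)\varphi(x,p)$, followed by bounding the first term by $c$ and the second by $(\lambda-\lambda_0)\delta \ge f^*-c$. Your additional remarks on the sign of $\lambda-\lambda_0$ and the finiteness of $c$ are fine but not strictly needed, since $c \le F_{\lambda_0}(x^*,p_0) = f^*$ for any globally optimal $x^*$ automatically gives $(f^*-c)/\delta \ge 0$.
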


\begin{proof}
Fix $\delta > 0$. Let $(x, p) \notin \Omega_{\delta}$, i.e. $\varphi(x, p) \ge \delta$. Then
$$
  F_{\lambda}(x, p) = f(x) + \lambda \varphi(x, p) = f(x) + \lambda_0 \varphi(x, p) + 
  (\lambda - \lambda_0) \varphi(x, p) \ge F_{\lambda_0}(x, p) + (\lambda - \lambda_0) \delta.
$$
Therefore for any $\lambda \ge \overline{\lambda}$, where
$$
  \overline{\lambda} = \lambda_0 + \frac{f^* - c}{\delta}, \qquad 
  c = \inf_{(x, p) \in A \times P} F_{\lambda_0}(x, p),
$$
one has $F_{\lambda}(x, p) \ge c + (\lambda - \lambda_0) \delta \ge f^*$ for all $(x, p) \notin \Omega_{\delta}$, that
completes the proof.	 
\end{proof}

\begin{proposition} \label{Prp_Reduction}
The penalty function $F_{\lambda}$ is exact if and only if $F_{\lambda}$ is bounded below on $A \times P$ for some
$\lambda \ge 0$, and there exist $\delta > 0$ such that the penalty function $F_{\lambda}$ is exact on
$\Omega_{\delta}$. Furthermore, one has
$$
  \lambda^*(f, \varphi) \le \overline{\lambda} := 
  \max\left\{ \lambda^*(\Omega_{\delta}), \lambda_0 + \frac{f^* - c}{\delta} \right\}, \quad 
  c = \inf_{(x, p) \in A \times P} F_{\lambda_0}(x, p)
$$
for any $\lambda_0 \ge 0$ such that $F_{\lambda_0}$ is bounded below on $A \times P$.
\end{proposition}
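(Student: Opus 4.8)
The plan is to obtain this proposition by stitching together two results that are already available: Proposition~\ref{Prp_EquivDefExPen}, which says that $F_{\lambda}$ is exact if and only if $\inf_{(x,p) \in A \times P} F_{\lambda_0}(x, p) \ge f^*$ for some $\lambda_0 \ge 0$, and Lemma~\ref{Lmm_Reduction}, which already establishes the inequality $F_{\lambda}(x, p) \ge f^*$ on $(A \times P) \setminus \Omega_{\delta}$ once $F_{\lambda_0}$ is bounded below and $\lambda$ is large enough. The only part of $A \times P$ left uncontrolled by Lemma~\ref{Lmm_Reduction} is $\Omega_{\delta}$, and this is precisely the region handled by the hypothesis ``$F_{\lambda}$ is exact on $\Omega_{\delta}$''.

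For the necessity part, suppose $F_{\lambda}$ is exact. Proposition~\ref{Prp_EquivDefExPen} provides $\lambda_0 \ge 0$ with $F_{\lambda_0}(x, p) \ge f^*$ for all $(x, p) \in A \times P$. Since $f^*$ is finite by the standing assumptions on $(\mathcal{P})$, this already shows that $F_{\lambda_0}$ is bounded below on $A \times P$; and restricting the same inequality to the smaller set $\Omega_{\delta} \subseteq A \times P$ shows that $F_{\lambda}$ is exact on $\Omega_{\delta}$ for every $\delta > 0$. No further argument is required here.

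For the sufficiency part, fix $\delta > 0$ for which $F_{\lambda}$ is exact on $\Omega_{\delta}$, and fix any $\lambda_0 \ge 0$ for which $F_{\lambda_0}$ is bounded below on $A \times P$; put $c = \inf_{(x,p) \in A \times P} F_{\lambda_0}(x, p)$. If $x^*$ is a globally optimal solution of $(\mathcal{P})$, then $(x^*, p_0) \in A \times P$ and $F_{\lambda_0}(x^*, p_0) = f(x^*) = f^*$, so $c \le f^*$ and hence $\overline{\lambda} = \max\{\lambda^*(\Omega_{\delta}),\, \lambda_0 + (f^* - c)/\delta\}$ is a well-defined nonnegative real number. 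Take now an arbitrary $\lambda > \overline{\lambda}$. Because $\lambda > \lambda^*(\Omega_{\delta})$ and $F_{\mu}(x, p)$ is non-decreasing in $\mu$, we get $F_{\lambda}(x, p) \ge f^*$ for all $(x, p) \in \Omega_{\delta}$; and because $\lambda \ge \lambda_0 + (f^* - c)/\delta$, Lemma~\ref{Lmm_Reduction} gives $F_{\lambda}(x, p) \ge f^*$ for all $(x, p) \in (A \times P) \setminus \Omega_{\delta}$. Thus $F_{\lambda}(x, p) \ge f^*$ on all of $A \times P$, so by Proposition~\ref{Prp_EquivDefExPen} the function $F_{\lambda}$ is exact with $\lambda^*(f, \varphi) \le \lambda$. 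Letting $\lambda \downarrow \overline{\lambda}$ yields $\lambda^*(f, \varphi) \le \overline{\lambda}$, which is the asserted estimate.

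I do not anticipate a genuine obstacle: once Proposition~\ref{Prp_EquivDefExPen} and Lemma~\ref{Lmm_Reduction} are in hand the statement is essentially bookkeeping. The only point deserving slight care is the interplay of strict and non-strict inequalities among the penalty parameters, since ``exactness on $\Omega_{\delta}$'' only guarantees $F_{\mu} \ge f^*$ on $\Omega_{\delta}$ for $\mu$ strictly above $\lambda^*(\Omega_{\delta})$; this is why one argues with $\lambda > \overline{\lambda}$ throughout and only passes to the limit $\lambda \downarrow \overline{\lambda}$ at the very end to recover the claimed bound on $\lambda^*(f, \varphi)$.
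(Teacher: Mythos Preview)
Your proposal is correct and follows essentially the same route as the paper: both directions are reduced to Proposition~\ref{Prp_EquivDefExPen}, and for sufficiency the set $A\times P$ is split into $\Omega_{\delta}$ (handled by the exactness hypothesis there) and its complement (handled by Lemma~\ref{Lmm_Reduction}). If anything, your treatment of the strict/non-strict inequality at $\lambda=\overline{\lambda}$ is slightly more explicit than the paper's.
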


\begin{proof}
If the penalty function $F_{\lambda}$ is exact, then applying Proposition~\ref{Prp_EquivDefExPen} one gets that
the function $F_{\lambda}$ is exact on the set $\Omega_{\delta}$ for any $\delta > 0$, and is bounded below on 
$A \times P$ for any $\lambda \ge \lambda^*(f, \varphi)$.

Suppose now that $F_{\lambda}$ is bounded below on $A \times P$, and there exists $\delta > 0$ such that
the penalty function $F_{\lambda}$ is exact on $\Omega_{\delta}$. Applying~Lemma~\ref{Lmm_Reduction} and the fact that 
$F_{\lambda}$ is exact on $\Omega_{\delta}$ one obtains that $F_{\lambda}(x, p) \ge f^*$ for all $(x, p) \in A \times P$
and $\lambda \ge \overline{\lambda}$. Consequently, the penalty function $F_{\lambda}$ is exact by
Proposition~\ref{Prp_EquivDefExPen}.	 
\end{proof}

\begin{remark} \label{Rmrk_BarrierTermPenFunc}
One can easily verify that if the penalty function $F_{\lambda}$ is exact on $\Omega_{\delta}$ for some $\delta > 0$,
then the penalty function $\Psi_{\lambda}(x) = f(x) + \lambda \psi_{\delta}(\varphi(x, p))$ is exact and
$\lambda^*(f, \psi_{\delta} \circ \varphi) \le \delta \cdot \lambda^*(\Omega_{\delta})$, where 
$\psi(t) = t / ( \delta - t )$, if $0 \le t < \delta$, and $\psi(t) = + \infty$, otherwise. Conversely, if the penalty
function $\Psi_{\lambda}$ is exact, then for any $\theta \in (0, \delta)$ the penalty function $F_{\lambda}$ is exact on
$\Omega_{\theta}$ and $\lambda^*(\Omega_{\theta}) \le \lambda^*(f, \psi_{\delta} \circ \varphi) / (\delta - \theta)$.
\end{remark}

Recall that $\Omega$ is the set of feasible points of the problem $(\mathcal{P})$. As it was mentioned above, the
set $\Omega_{\delta}$ shrinks into the set $\Omega \times \{ p_0 \}$ as $\delta \to +0$
(see~(\ref{OmegaDeltaShrinking})). Furthermore, it is easy to see that if the function $\varphi$ is lower semicontinuous
on $A \times P$, and the set $A$ is closed, then
$$
  \limsup_{\delta \to +0} \Omega_{\delta} = \Omega \times \{ p_0 \},
$$
where $\limsup$ is the outer limit. Thus, the set $\Omega_{\delta}$ can be considered as an outer approximation of the
set of feasible solutions of the problem $(\mathcal{P})$. Consequently, the proposition above can be imprecisely
formulated as follows: the penalty function $F_{\lambda}$ is exact if and only if it is bounded below for
sufficiently large $\lambda \ge 0$, and $F_{\lambda}$ is exact on the outer approximation $\Omega_{\delta}$ of the set
$\Omega$ with arbitrarily small $\delta > 0$. However, note that the set $\Omega_{\delta}$ is defined via the penalty
term $\varphi$.

Utilizing the previous proposition one can obtain a general sufficient condition for the global exactness of 
the parametric penalty function $F_{\lambda}$.

\begin{theorem}	\label{Th_GlobalExGeneralCase}
Let $X$ and $P$ be complete metric spaces, $A$ be closed, $f$ be l.s.c. on $A$, and $\varphi$ be l.s.c. on 
$A \times P$. Suppose also that there exist $\delta > 0$ and $\lambda_0 \ge 0$ such that
\begin{enumerate}
\item{the function $f$ is Lipschitz continuous on the projection of the set 
$$
  C(\delta, \lambda_0) = \big\{ (x, p) \in \Omega_{\delta} \mid F_{\lambda_0}(x, p) < f^* \big\}
$$
onto the space $X$, and upper semicontinuous (u.s.c.) at every point of this projection;
} 

\item{there exists $a > 0$ such that $\varphi^{\downarrow}_{A \times P}(x, p) \le - a$ for any 
$(x, p) \in C(\delta, \lambda_0) \cap \dom \varphi$.
\label{Assumpt_MetricRegDiffCond}
} 
\end{enumerate}
Then the penalty function $F_{\lambda}$ is exact if and only if it is bounded below on $A \times P$ for
some $\lambda \ge 0$.
\end{theorem}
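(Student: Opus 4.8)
The plan is to treat the two implications separately. The ``only if'' direction is immediate from Proposition~\ref{Prp_EquivDefExPen}: exactness of $F_\lambda$ yields $\lambda_0$ with $\inf_{A\times P}F_{\lambda_0}\ge f^*$, and $f^*\in\mathbb{R}$ since $f$ attains a minimum on $\Omega$, so $F_{\lambda_0}$ is bounded below. For the converse, let $\delta>0$ and $\lambda_0\ge0$ be as in the hypotheses, assume $F_{\lambda_0}$ is bounded below on $A\times P$, and put $c=\inf_{A\times P}F_{\lambda_0}>-\infty$. By Proposition~\ref{Prp_Reduction} it suffices to prove that $F_\lambda$ is exact on $\Omega_\delta$, so I argue by contradiction: suppose it is not. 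Then for the increasing sequence $\lambda_n=\lambda_1+n$, where $\lambda_1=\max\{\lambda_0,\,\lambda_0+(f^*-c)/\delta\}$, there is in each case a point of $\Omega_\delta$ at which $F_{\lambda_n}<f^*$, so $v_n:=\inf_{A\times P}F_{\lambda_n}\in[c,f^*)$.

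Next I would run an Ekeland-type step. Set $\varepsilon_n=\min\{1/n,\,(f^*-v_n)/2\}>0$, choose an $\varepsilon_n$-minimizer of $F_{\lambda_n}$ on $A\times P$, and invoke the approximate Fermat's rule---legitimate because $X\times P$ is complete, $A\times P$ is closed, and $F_{\lambda_n}$ is proper (e.g.\ $F_{\lambda_n}(x^*,p_0)=f^*$ for a minimizer $x^*$ of $(\mathcal{P})$), l.s.c.\ on $A\times P$ (since $f$ is l.s.c.\ on $A$ and $\varphi$ is l.s.c.\ on $A\times P$), and bounded below there by $c$ for $\lambda_n\ge\lambda_0$---with $r=\sqrt{\varepsilon_n}$. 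This produces $(y_n,q_n)\in A\times P$ with $F_{\lambda_n}(y_n,q_n)<f^*$ and $(F_{\lambda_n})^{\downarrow}_{A\times P}(y_n,q_n)\ge-\sqrt{\varepsilon_n}=:-\gamma_n$, $\gamma_n\to0$. Since $\lambda_n\ge\lambda_0+(f^*-c)/\delta$, Lemma~\ref{Lmm_Reduction} forces $(y_n,q_n)\in\Omega_\delta$; finiteness of $F_{\lambda_n}(y_n,q_n)$ gives $y_n\in\dom f$ and $(y_n,q_n)\in\dom\varphi$, and $F_{\lambda_0}(y_n,q_n)\le F_{\lambda_n}(y_n,q_n)<f^*$. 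Hence $(y_n,q_n)\in C(\delta,\lambda_0)\cap\dom\varphi$, so $\varphi^{\downarrow}_{A\times P}(y_n,q_n)\le-a$ by hypothesis~2, and $y_n$ belongs to the projection $S$ of $C(\delta,\lambda_0)$ onto $X$, where $f$ is $L$-Lipschitz for some $L\ge0$ and, being also l.s.c., continuous at $y_n$ (by hypothesis~1).

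The core of the proof is to contradict $(F_{\lambda_n})^{\downarrow}_{A\times P}(y_n,q_n)\ge-\gamma_n$. Since $\varphi^{\downarrow}_{A\times P}(y_n,q_n)<+\infty$, the point $(y_n,q_n)$ is not isolated in $A\times P$; pick $(\xi_k,\eta_k)\in A\times P\setminus\{(y_n,q_n)\}$ with $(\xi_k,\eta_k)\to(y_n,q_n)$ realizing the limit inferior that defines $\varphi^{\downarrow}_{A\times P}(y_n,q_n)$, and write $t_k=d_X(\xi_k,y_n)+d_P(\eta_k,q_n)>0$. The ratios $(\varphi(\xi_k,\eta_k)-\varphi(y_n,q_n))/t_k$ are eventually negative, so $\varphi(\xi_k,\eta_k)<\varphi(y_n,q_n)<\delta$, i.e.\ $(\xi_k,\eta_k)\in\Omega_\delta\cap\dom\varphi$, for large $k$; continuity of $f$ at $y_n$ gives $f(\xi_k)\to f(y_n)$, hence $F_{\lambda_n}(\xi_k,\eta_k)<f(\xi_k)+\lambda_n\varphi(y_n,q_n)\to F_{\lambda_n}(y_n,q_n)<f^*$, so $F_{\lambda_0}(\xi_k,\eta_k)\le F_{\lambda_n}(\xi_k,\eta_k)<f^*$ and therefore $(\xi_k,\eta_k)\in C(\delta,\lambda_0)$, i.e.\ $\xi_k\in S$, for large $k$. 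The Lipschitz property of $f$ on $S$ then yields $|f(\xi_k)-f(y_n)|\le L\,d_X(\xi_k,y_n)\le Lt_k$, whence
$$
  \frac{F_{\lambda_n}(\xi_k,\eta_k)-F_{\lambda_n}(y_n,q_n)}{t_k}\le L+\lambda_n\frac{\varphi(\xi_k,\eta_k)-\varphi(y_n,q_n)}{t_k};
$$
passing to the limit over $k$ (the right-hand side converging to $L+\lambda_n\varphi^{\downarrow}_{A\times P}(y_n,q_n)\le L-\lambda_n a$, with the value $-\infty$ handled trivially) gives $(F_{\lambda_n})^{\downarrow}_{A\times P}(y_n,q_n)\le L-\lambda_n a$. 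Together with $(F_{\lambda_n})^{\downarrow}_{A\times P}(y_n,q_n)\ge-\gamma_n$ this forces $\lambda_n\le(L+\gamma_n)/a$, which is impossible for large $n$ since $\lambda_n\to\infty$. Thus $F_\lambda$ is exact on $\Omega_\delta$, and Proposition~\ref{Prp_Reduction} completes the argument.

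I expect the main obstacle to be the step that places the descent sequence $(\xi_k,\eta_k)$ inside $C(\delta,\lambda_0)$, so that $f$ is Lipschitz along it: a priori the points $\xi_k$ near $y_n$ could leave the projection $S$, on which alone $f$ is controlled, making the $f$-contribution to the difference quotient unbounded and wrecking the estimate. The upper-semicontinuity assumption in hypothesis~1 is precisely what rescues this, since combined with lower semicontinuity it makes $f$ continuous at $y_n$, which propagates the strict inequality $F_{\lambda_n}(\xi_k,\eta_k)<f^*$, and hence membership in $C(\delta,\lambda_0)$, to the tail of the sequence. A minor side point is the usual harmless split according to whether $\varphi^{\downarrow}_{A\times P}(y_n,q_n)$ is finite or equals $-\infty$.
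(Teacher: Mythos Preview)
Your proof is correct and follows essentially the same route as the paper's: reduce to exactness on $\Omega_\delta$, apply Ekeland/approximate Fermat's rule to produce near-stationary points $(y_n,q_n)$ in $C(\delta,\lambda_0)$, use assumption~2 to get a $\varphi$-descent sequence, use the upper-semicontinuity of $f$ to push that sequence into $C(\delta,\lambda_0)$, and then the Lipschitz bound on $f$ forces $(F_{\lambda_n})^{\downarrow}_{A\times P}(y_n,q_n)\le L-\lambda_n a$, contradicting the near-stationarity. The only cosmetic differences are that the paper uses a fixed Ekeland tolerance $-1$ (rather than your vanishing $-\gamma_n$) and works with $-a/2$ along the descent sequence before passing to the limit, and that the paper explicitly notes one may assume without loss of generality that the particular $\lambda_0$ from the hypotheses is the one for which $F_{\lambda_0}$ is bounded below (since replacing $\lambda_0$ by a larger value only shrinks $C(\delta,\lambda_0)$).
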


\begin{proof}
If $F_{\lambda}$ is exact, then, obviously, it is bounded below on $A \times P$ for any 
$\lambda \ge \lambda^*(f, \varphi)$. Let us prove the converse statement. Note that without loss of generality one can
suppose that $F_{\lambda_0}$ is bounded below on $A \times P$.

Arguing by reductio ad absurdum, suppose that the function $F_{\lambda}$ is not exact. Then for any $\lambda \ge 0$
there exists $(x_{\lambda}, p_{\lambda}) \in A \times P$ such that $F_{\lambda}(x_{\lambda}, p_{\lambda}) < f^*$. 
The function $F_{\lambda}$ is l.s.c. due to the lower semicontinuity of the functions $f$ and $\varphi$. Applying
approximate Fermat's rule to the restriction of $F_{\lambda}$ to the set $A \times P$ one obtains that for any 
$\lambda \ge 0$ there exists $(y_{\lambda}, q_{\lambda}) \in A \times P$ such that
\begin{equation} \label{ApproxFermatRule}
  F_{\lambda}(y_{\lambda}, q_{\lambda}) \le F_{\lambda}(x_{\lambda}, p_{\lambda}) < f^*, \quad
  \big( F_{\lambda} \big)^{\downarrow}_{A \times P} (y_{\lambda}, q_{\lambda}) \ge -1.
\end{equation}
From Lemma~\ref{Lmm_Reduction} it follows that
$$
  F_{\lambda}(x, p) \ge f^* \quad \forall (x, p) \notin \Omega_{\delta} \quad \forall
  \lambda \ge \overline{\lambda} := \lambda_0 + \frac{f^* - \inf_{x \in A \times P} F_{\lambda_0}(x, p)}{\delta},
$$
which yields $\varphi(y_{\lambda}, q_{\lambda}) < \delta$ for any $\lambda \ge \overline{\lambda}$. Hence
$(y_{\lambda}, q_{\lambda}) \in C(\delta, \lambda_0)$ for all $\lambda \ge \overline{\lambda}$.

Due to assumption~\ref{Assumpt_MetricRegDiffCond} one has
$\varphi^{\downarrow}_{A \times P} (y_{\lambda}, q_{\lambda}) \le - a$ for any $\lambda \ge \overline{\lambda}$.
Therefore by the definition of rate of steepest descent for all $\lambda \ge \overline{\lambda}$ 
there exists a sequence $\{ (y_{\lambda}^{(n)}, q_{\lambda}^{(n)}) \} \subset A \times P$ converging to 
$(y_{\lambda}, q_{\lambda})$ such that
\begin{equation} \label{RateOfStDesc_Def}
  \frac{\varphi( y_{\lambda}^{(n)}, q_{\lambda}^{(n)} ) - \varphi(y_{\lambda}, q_{\lambda})}{
  d( y_{\lambda}^{(n)}, y_{\lambda} )_X + d( q_{\lambda}^{(n)}, q_{\lambda} )_P} \le -
  \frac{a}{2} \quad \forall n \in \mathbb{N}.
\end{equation}
Hence, in particular, $\varphi(y_{\lambda}^{(n)}, q_{\lambda}^{(n)}) < \varphi(y_{\lambda}, q_{\lambda}) < \delta$.
Taking into account the facts that $F_{\lambda}(y_{\lambda}, q_{\lambda}) < f^*$, and $f$ is u.s.c. at $y_{\lambda}$,
by virtue of the fact that $(y_{\lambda}, q_{\lambda}) \in C(\delta, \lambda_0)$, one obtains that
$F_{\lambda}(y_{\lambda}^{(n)}, q_{\lambda}^{(n)}) < f^*$ for any sufficiently large $n$. Therefore without loss of
generality one can suppose that $\{ (y_{\lambda}^{(n)}, q_{\lambda}^{(n)}) \} \subset C(\delta, \lambda_0)$ for any
$\lambda \ge \overline{\lambda}$. Consequently, applying (\ref{RateOfStDesc_Def}), and the fact that $f$ is Lipschitz
continuous on the projection of $C(\delta, \lambda_0)$ onto $X$ one gets that there exists $L > 0$ such that
\begin{multline*}
  F_{\lambda}(y_{\lambda}^{(n)}, q_{\lambda}^{(n)}) - F_{\lambda}(y_{\lambda}, q_{\lambda})
  \le f(y_{\lambda}^{(n)}) - f(y_{\lambda}) + \lambda 
  \big( \varphi(y_{\lambda}^{(n)}, q_{\lambda}^{(n)}) - \varphi(y_{\lambda}, q_{\lambda}) \big) \le \\
  \le \left( L - \frac{a}{2} \lambda \right) 
  \big( d( y_{\lambda}^{(n)}, y_{\lambda} )_X + d( q_{\lambda}^{(n)}, q_{\lambda} )_P \big)
\end{multline*}
for any $n \in \mathbb{N}$ and $\lambda \ge \lambda^*$. Hence dividing by 
$d( y_{\lambda}^{(n)}, y_{\lambda} )_X + d( q_{\lambda}^{(n)}, q_{\lambda} )_P$ and  passing to the limit inferior as 
$n \to \infty$ one obtains that
$$
  \big( F_{\lambda} \big)^{\downarrow}_{A \times P} (y_{\lambda}, q_{\lambda}) \le L - \frac{a}{2} \lambda 
  \quad \forall \lambda \ge \overline{\lambda},
$$
which contradicts (\ref{ApproxFermatRule}).	 
\end{proof}

\begin{remark}
{(i)~It is not difficult to verify that the theorem above holds true if the set $A \cap \dom f$, considered as a metric
subspace of $X$, is complete, while the space $X$ itself can be incomplete.
}

{\noindent(ii)~Note that in the theorem above it is sufficient to suppose that the function $f$ is Lipschitz continuous
on the set $\{ x \in X \mid f(x) < f^* \}$. Furthermore, if $X$ is a normed space, and for some $\lambda \ge 0$ one has
$\inf_{p \in P} F_{\lambda}(x, p) \to + \infty$ as $\| x \| \to \infty$ (in particular, one can suppose that 
the function $f$ is coercive), then it is sufficient to suppose that $f$ is Lipschitz continuous on any bounded subset
of the space $X$.
}

{\noindent(iii)~It should be noted that the fact that a function $g \colon X \to \mathbb{R}$ is Lipschitz continuous on
a set $C \subset X$ does not imply that $g$ is u.s.c. on $C$. For example, let $X = \mathbb{R}$, $C = [0, 1]$, 
$g(x) = 1$ for any $x \in (- \infty, 0) \cup (1, + \infty)$ and $g(x) = 0$ for all $x \in [0, 1]$. Then $g$ is
Lipschitz continuous on the set $C$ with the Lipschitz constant $L = 0$, since for any $x, y \in C$ one has 
$|g(x) - g(y)| = 0$. However, $g$ is not u.s.c. on the set $C$, since it is not u.s.c. at the points $x = 1 \in C$ and
$x = 0 \in C$. Therefore, in the previous theorem, the assumption that the function $f$ is u.s.c. at every point of the
projection of $C(\delta, \lambda_0)$ onto $X$ is not redundant.
}

{\noindent(iv)~Note, finally, that Theorem~\ref{Th_GlobalExGeneralCase} was inspired by Theorem~3.4.1 from
\cite{Demyanov}. Furthermore, Theorem~\ref{Th_GlobalExGeneralCase} sharpens Theorem~3.4.1 from \cite{Demyanov} (since
we do not assume that $F_{\lambda}(x, p)$ attains a global minimum for any sufficiently large $\lambda \ge 0$) and
extends it to the parametric case.
}
\end{remark}

Clearly, the most restrictive assumption of the previous theorem is assumption~\ref{Assumpt_MetricRegDiffCond}.
Typically, in order to check this assumption one needs to verify that a constraint qualification condition in the
problem $(\mathcal{P})$ holds at every point of the projection of $C(\delta, \lambda_0)$ onto $X$ that might have a very
complicated structure. However, let us show that under natural assumptions in the case when $X$ is a finite dimensional
normed space, the global exactness of the penalty function $F_{\lambda}$ is completely defined by the behaviour of this
function near globally optimal solutions of the problem $(\mathcal{P})$. We call this result \textit{the localization
principle}, since it allows one to reduce the study of the \textit{global} exactness of a penalty function to
a \textit{local} analysis of the behaviour of this function near globally optimal solutions of the original problem.

\begin{theorem}[Localization Principle] \label{Th_GlobExFiniteDim}
Let $X$ be a finite dimensional normed space, $A$ be a closed set, $f$ be l.s.c. on $\Omega$, and $\varphi$ be l.s.c. on
$A \times \{ p_0 \}$. Suppose that there exists a non-decreasing function 
$\omega \colon \mathbb{R}_+ \to \mathbb{R}_+$ such that $\omega(t) = 0$ iff $t = 0$, and
\begin{equation} \label{PenTermGen_GrowthInP}
  \varphi(x, p) \ge \omega(d(p, p_0)) \quad \forall (x, p) \in A \times P.
\end{equation}
Let also one of the following assumptions be valid:
\begin{enumerate}
\item{there exists $\delta > 0$ such that the projection of the set $\Omega_{\delta}$ onto the space $X$ is bounded;
}

\item{there exists $\mu > 0$ such that the set $\{ x \in A \mid \inf_{p \in P} F_{\mu}(x, p) < f^* \}$ is
bounded (in particular, one can suppose that $A$ is compact or $f$ is coercive).
\label{Assump_BoundLevelSet}}
\end{enumerate}
Then the penalty function $F_{\lambda}$ is exact if and only if $F_{\lambda}$ is exact at every globally optimal
solution
of the problem $(\mathcal{P})$, and there exists $\lambda_0 \ge 0$ such that $F_{\lambda_0}$ is bounded below on 
$A \times P$.
\end{theorem}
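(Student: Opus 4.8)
The plan is to dispose of the ``only if'' implication directly from the definitions, and to prove the ``if'' implication by contradiction, following the scheme of the proof of Theorem~\ref{Th_GlobalExGeneralCase} but with the metric-regularity/rate-of-descent argument used there replaced by a compactness argument made available by the finite-dimensionality of $X$. For the ``only if'' direction: if $F_{\lambda}$ is globally exact then, by Proposition~\ref{Prp_EquivDefExPen}, $\inf_{(x,p) \in A \times P} F_{\lambda_0}(x,p) \ge f^*$ for every $\lambda_0 \ge \lambda^*(f, \varphi)$, so $F_{\lambda_0}$ is bounded below; moreover, by the definition of global exactness, for every sufficiently large $\lambda$ the pair $(x^*, p_0)$ is a global --- hence a local --- minimizer of $F_{\lambda}$ on $A \times P$ whenever $x^*$ is a globally optimal solution of $(\mathcal{P})$, i.e. $F_{\lambda}$ is exact at every such $x^*$.

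For the ``if'' direction, suppose that $F_{\lambda_0}$ is bounded below on $A \times P$ for some $\lambda_0 \ge 0$ and that $F_{\lambda}$ is exact at every globally optimal solution of $(\mathcal{P})$, yet, contrary to the claim, $F_{\lambda}$ is not globally exact. Then, by Proposition~\ref{Prp_Reduction}, $F_{\lambda}$ is not exact on $\Omega_{\delta}$ for any $\delta > 0$; fix such a $\delta$, taking the one furnished by the first boundedness assumption when it holds and $\delta := 1$ otherwise. For each $n \in \mathbb{N}$, non-exactness of $F_{\lambda}$ on $\Omega_{\delta}$, applied with penalty parameter $n$, produces a pair $(x_n, p_n) \in \Omega_{\delta}$ with $F_n(x_n, p_n) = f(x_n) + n \varphi(x_n, p_n) < f^*$. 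Since $\varphi(x_n, p_n) < \delta$, putting $c = \inf_{(x,p) \in A \times P} F_{\lambda_0}(x,p)$ we get $f(x_n) \ge c - \lambda_0 \varphi(x_n, p_n) \ge c - \lambda_0 \delta$, whence $n \varphi(x_n, p_n) < f^* - c + \lambda_0 \delta$; therefore $\varphi(x_n, p_n) \to 0$ and, in particular, $f(x_n) < f^*$ for all $n$.

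Now I would pass to a limit. The growth condition~(\ref{PenTermGen_GrowthInP}) gives $\omega(d(p_n, p_0)) \le \varphi(x_n, p_n) \to 0$, which, since $\omega$ is non-decreasing and vanishes only at $0$, forces $p_n \to p_0$. The sequence $\{ x_n \}$ is bounded: under the first assumption it lies in the bounded projection of $\Omega_{\delta}$ onto $X$; under the second assumption, for $n \ge \mu$ one has $\inf_{p \in P} F_{\mu}(x_n, p) \le F_{\mu}(x_n, p_n) \le F_n(x_n, p_n) < f^*$, so $x_n$ lies in the bounded set $\{ x \in A \mid \inf_{p \in P} F_{\mu}(x, p) < f^* \}$. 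As $X$ is finite-dimensional and $A$ is closed, after passing to a subsequence we may assume $x_n \to x^* \in A$. Lower semicontinuity of $\varphi$ at $(x^*, p_0)$ gives $\varphi(x^*, p_0) \le \liminf_{n \to \infty} \varphi(x_n, p_n) = 0$, so $x^* \in M$ and hence $x^* \in \Omega$; lower semicontinuity of $f$ at $x^* \in \Omega$ gives $f(x^*) \le \liminf_{n \to \infty} f(x_n) \le f^*$, and since $x^* \in \Omega$ implies $f(x^*) \ge f^*$, we conclude $f(x^*) = f^*$, i.e. $x^*$ is a globally optimal solution of $(\mathcal{P})$. By hypothesis $F_{\lambda}$ is exact at $x^*$, so there is $\lambda' \ge 0$ such that $(x^*, p_0)$ is a point of local minimum of $F_{\lambda'}$ on $A \times P$; as $\varphi(x^*, p_0) = 0$, there is therefore a neighbourhood $U$ of $(x^*, p_0)$ with $F_{\lambda'}(x, p) \ge F_{\lambda'}(x^*, p_0) = f(x^*) = f^*$ for all $(x, p) \in U \cap (A \times P)$. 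Since $(x_n, p_n) \to (x^*, p_0)$ and $(x_n, p_n) \in A \times P$, for all large $n$ we obtain $F_{\lambda'}(x_n, p_n) \ge f^*$, and then, for $n \ge \lambda'$, using $\varphi(x_n, p_n) \ge 0$, $F_n(x_n, p_n) \ge F_{\lambda'}(x_n, p_n) \ge f^*$, contradicting $F_n(x_n, p_n) < f^*$. Hence $F_{\lambda}$ is exact.

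The step I expect to be the main obstacle is extracting a single convergent subsequence of $\{ (x_n, p_n) \}$ whose limit is a \emph{global} minimizer of $(\mathcal{P})$ --- this is precisely where the finite-dimensionality of $X$, the two alternative boundedness assumptions, the growth condition~(\ref{PenTermGen_GrowthInP}) and the boundedness-below hypothesis must all be brought together; once the limit point is known to be globally optimal, the concluding contradiction, which rests on exactness of $F_{\lambda}$ at that point and on the monotonicity of $F_{\lambda}$ in $\lambda$, is routine.
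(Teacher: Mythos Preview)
Your proof is correct, but it follows a genuinely different route from the paper's.  The paper argues \emph{directly}: it first reduces to assumption~\ref{Assump_BoundLevelSet} via Lemma~\ref{Lmm_Reduction}, then works inside a large ball $B(0,R)$.  It uses compactness of the set $\Omega^*$ of global minimizers to extract a \emph{finite} cover by balls on which local exactness holds (with a common $\lambda_0$ and radius $r_0$); it covers the remaining feasible points (where $f>f^*$) by another open set $V$ using lower semicontinuity of $f$; and finally it uses compactness of the leftover set $K=(B(0,R)\cap A)\setminus(U\cup V)$ together with lower semicontinuity of $\varphi$ on $A\times\{p_0\}$ and the growth condition to produce an explicit $\delta>0$ for which $F_{\lambda}$ is exact on $\Omega_{\delta}$, and concludes via Proposition~\ref{Prp_Reduction}.

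Your argument, by contrast, is a sequential contradiction: you manufacture a sequence $(x_n,p_n)$ with $F_n(x_n,p_n)<f^*$, push $\varphi(x_n,p_n)\to 0$ and $p_n\to p_0$, use finite-dimensionality to extract a limit $x^*$, identify $x^*$ as a global minimizer via the two lower-semicontinuity hypotheses, and then contradict local exactness at $x^*$ by monotonicity of $F_{\lambda}$ in $\lambda$.  This is essentially the mechanism the paper uses later in the proof of Theorem~\ref{Thrm_ExactnessNessSuffCond_NonDegeneracy}, transplanted here.  Your approach is shorter and avoids the finite-cover bookkeeping; the paper's approach, on the other hand, is constructive --- it actually exhibits the $\delta$ for which $F_{\lambda}$ is exact on $\Omega_{\delta}$ and makes the dependence on the data (the finitely many local thresholds $\lambda(x_k^*)$, $r(x_k^*)$) transparent, which feeds into the explicit upper bound for $\lambda^*(f,\varphi)$ stated in Proposition~\ref{Prp_Reduction}.
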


\begin{proof}
If $F_{\lambda}$ is exact, then it is obviously exact at every globally optimal solution of the problem $(\mathcal{P})$,
and for any $\lambda \ge \lambda^*(f, \varphi)$ the function $F_{\lambda}$ is bounded below. Let us prove the
converse statement.

Observe that if $F_{\lambda_0}$ is bounded below on $A \times P$ for some $\lambda_0 \ge 0$, then by virtue of 
Lemma~\ref{Lmm_Reduction} one gets that for any $\delta > 0$ there exists $\mu \ge \lambda_0$ such that
$$
  \big\{ (x, p) \in A \times P \mid F_{\mu}(x, p) < f^* \big\} \subset \Omega_{\delta}.
$$
Therefore without loss of generality one can suppose that assumption~\ref{Assump_BoundLevelSet} is valid.

Our aim is to show that $F_{\lambda}$ is exact on $\Omega_{\delta}$ for sufficiently small $\delta > 0$. Then with 
the use of Proposition~\ref{Prp_Reduction} one concludes that $F_{\lambda}$ is exact.

From the fact that the set $\{ x \in A \mid \inf_{p \in P} F_{\mu}(x, p) < f^* \}$ is bounded it follows that
$\lambda \ge \mu$ one has
\begin{equation} \label{ExactOutsideBall}
  F_{\lambda}(x, p) \ge f^* \quad \forall p \in P \quad \forall x \in A \colon \| x \| > R
\end{equation}
for sufficiently large $R > 0$.

Denote by $\Omega^* \subset \Omega$ the set of all globally optimal solutions of the problem $(\mathcal{P})$ that belong
to the ball $B(0, R) = \{ x \in X \mid \| x \| \le R \}$. Observe that the function $\varphi(\cdot, p_0)$ is l.s.c. on
$A$ by virtue of the fact that $\varphi$ is l.s.c. on $A \times \{ p_0 \}$. Hence taking into account the facts that
$\varphi$ is nonnegative, $\Omega = \{ x \in A \mid \varphi(x, p_0) = 0 \}$, and $A$ is closed, one obtains that
$\Omega$ is closed as well. Therefore by virtue of the lower semicontinuity of the function $f$ on $\Omega$ one gets
that $\Omega^*$ is closed and, consequently, compact due to the fact that $X$ is a finite dimensional normed space.

By the assumption of the theorem the penalty function $F_{\lambda}$ is exact at every globally optimal solution of 
the problem $(\mathcal{P})$. Therefore for any $x^* \in \Omega^*$ there exist $\lambda(x^*) \ge 0$ and $r(x^*) > 0$ such
that for any $\lambda \ge \lambda(x^*)$ one has
\begin{equation} \label{ExactAtOptSol}
  F_{\lambda}(x, p) \ge f(x^*) = f^* \quad 
  \forall (x, p) \in \Big( U\big( x^*, r(x^*) \big) \cap A \Big) \times U\big( p_0, r(x^*) \big),
\end{equation}
where $U( p_0, r(x^*) ) = \{ p \in P \mid d(p_0, p) < r(x^*) \}$. Applying the compactness of the set
$\Omega^*$ one obtains that there exist $x_1^*, \ldots, x_m^* \in \Omega^*$ such that
\begin{equation} \label{FiniteOpenCover}
  \Omega^* \subset \bigcup_{k = 1}^m U\left( x_k^*, \frac{r(x_k^*)}{2} \right).
\end{equation}
Denote
$$
  \lambda_0 = \max_{k \in \{ 1, \ldots, m \}} \lambda(x_k^*), \quad
  r_0 = \min_{k \in \{ 1, \ldots, m \}} \frac{r(x_k^*)}{2}, \quad 
  U = \bigcup_{x^* \in \Omega^*} \big( U(x^*, r_0) \cap A \big).
$$
Observe that $\Omega^* \subset U$, and the set $U$ is open in $A$ (hereafter, we suppose that the set $A$ is endowed
with
the induced metric). 

Let $x \in U$ be arbitrary. By definition there exists $x^* \in \Omega^*$ such that $x \in U(x^*, r_0)$. Applying
\eqref{FiniteOpenCover} one obtains that there exists $k \in \{ 1, \ldots, m \}$ such that
$x^* \in U(x_k^*, r(x_k^*)/2)$. Hence and from the definition of $r_0$ it follows that $x \in U(x_k^*, r(x_k^*))$,
which with the use of \eqref{ExactAtOptSol} and the definition of $\lambda_0$ implies that
$F_{\lambda}(x, p) \ge f^*$ for all $p \in U(p_0, r_0)$ and $\lambda \ge \lambda_0$. Thus, one has
\begin{equation} \label{ExactNearOptSol}
  F_{\lambda}(x, p) \ge f^* \quad \forall (x, p) \in U \times U( p_0, r_0 ) \quad
  \forall \lambda \ge \lambda_0.
\end{equation}
Denote $C = (\Omega \setminus U) \cap B(0, R)$. Clearly, for any $x \in C$ one has $f(x) > f^*$. Applying the lower
semicontinuity of the function $f$ on $\Omega$ one gets that for any $x \in C$ there exists $\tau(x) > 0$ such that 
$f(y) > f^*$ for any $y \in U(x, \tau(x))$. Denote
$$
  V = \bigcup_{x \in C} \big( U(x, \tau(x)) \cap A \big).
$$
Note that $V$ is open in $A$, and
\begin{equation} \label{ExactNotInOptSol}
  F_{\lambda}(x, p) \ge f(x) > f^* \quad \forall (x, p) \in V \times P \quad \forall \lambda \ge 0,
\end{equation}
i.e. $F_{\lambda}$ is exact on the set $V$ and $\lambda^*(V) = 0$.

From the facts that the sets $U$ and $V$ are open in $A$, and $A$ is closed it follows that the set 
$K = ( B(0, R) \cap A ) \setminus (U \cup V)$ is closed in $X$, and, consequently, compact. Furthermore, by the
definitions of $U$ and $V$ one has $\Omega \cap B(0, R) \subset U \cup V$, i.e. the sets $K$ and $\Omega$ are disjoint.
Therefore for any $x \in K$ one has $\varphi(x, p_0) > 0$. By the assumption of the theorem $\varphi$ is l.s.c. on 
$A \times \{ p_0 \}$. Hence for any $x \in K$ there exists $s(x) > 0$ such that
$$
  \varphi(y, p) > \frac{\varphi(x, p_0)}{2} \quad \forall (x, p) \in U(x, s(x)) \times U(p_0, s(x)).
$$
Applying the compactness of the set $K$ one gets that there exist $x_1, \ldots, x_l \in K$ such that
$$
  K \subset \bigcup_{k = 1}^l U(x_k, s(x_k)),
$$
which yields $\varphi(x, p) \ge \delta_0$ for all $(x, p) \in K \times U( p_0, s_0 )$, where
$$
  \delta_0 = \min_{k \in \{ 1, \ldots, l \}} \frac{\varphi(x_k, p_0)}{2}, \qquad
  s_0 = \min_{k \in \{ 1, \ldots, l \}} s(x_k).
$$
On the other hand, if $p \in P$ is such that $d(p, p_0) \ge s_0$, then $\varphi(x, p) \ge \omega(s_0) > 0$.
Denote $\delta = \min\{ \delta_0, \omega(\min\{ r_0, s_0 \}) \} > 0$. Then $\varphi(x, p) \ge \delta$ for 
all $(x, p) \in K \times P$.

Observe that
$$
  \Omega_{\delta} \cap ( B(0, R) \times P ) \subseteq (U \cup V) \times U(p_0, r_0).
$$
Indeed, if $(x, p) \in \Omega_{\delta} \cap ( B(0, R) \times P )$, then 
$x \in (A \cap B(0, R)) \setminus K = U \cup V$, and $d(p, p_0) < r_0$, since otherwise 
$\varphi(x, p) \ge \omega(r_0) \ge \delta$, which is impossible. Therefore taking into account (\ref{ExactOutsideBall}),
(\ref{ExactNearOptSol}) and (\ref{ExactNotInOptSol}) one obtains that
$$
  F_{\lambda}(x, p) \ge f^* \quad \forall (x, p) \in \Omega_{\delta}
  \quad \forall \lambda \ge \max\{ \mu, \lambda_0 \}.
$$
Thus, the penalty function $F_{\lambda}$ is exact on the set $\Omega_{\delta}$.	 
\end{proof}

\begin{remark}
The theorem above extends Theorem~3.17 from \cite{Dolgopolik} to the case of parametric penalty functions. It
should be noted that although Theorem~3.17 from \cite{Dolgopolik} is correct, its proof is valid only in the case 
$A = X$ due to a small mistake. However, the theorem above provides a correct proof of Theorem~3.17 from
\cite{Dolgopolik}, since it contains Theorem~3.17 as a particular case.
\end{remark}

Note that by Proposition~\ref{Prp_EquivDefExPen} the boundedness of the set 
$\{ x \in A \mid \inf_{p \in P} F_{\mu}(x, p) < f^* \}$ for some $\mu \ge 0$ is also \textit{necessary} for the penalty
function $F_{\lambda}$ to be exact. Thus, the previous theorem can be reformulated as follows.

\begin{theorem}[Localization Principle]
Let $X$ be a finite dimensional normed space, $A$ be a closed set, $f$ be l.s.c. on $\Omega$, $\varphi$ be l.s.c. on
$A \times \{ p_0 \}$, and let inequality \eqref{PenTermGen_GrowthInP} hold true. Then the penalty function
$F_{\lambda}$ is globally exact if and only if $F_{\lambda}$ is exact at every globally optimal solution of the problem
$(\mathcal{P})$, and there exists $\mu \ge 0$ such that the set 
$\{ x \in A \mid \inf_{p \in P} F_{\mu}(x, p) < f^* \}$ is either bounded or empty, and the function $F_{\mu}$ is
bounded below on $A \times P$.
\end{theorem}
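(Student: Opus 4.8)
The plan is to derive this reformulated Localization Principle directly from Theorem~\ref{Th_GlobExFiniteDim} together with Proposition~\ref{Prp_EquivDefExPen}, so essentially no new analysis is required — only a careful bookkeeping of the hypotheses. First I would observe that the ``only if'' direction is immediate: if $F_{\lambda}$ is globally exact, then it is exact at every globally optimal solution of $(\mathcal{P})$ by the very definition of global exactness, and by Proposition~\ref{Prp_EquivDefExPen} there is $\lambda_0 \ge 0$ with $\inf_{(x,p) \in A \times P} F_{\lambda_0}(x,p) \ge f^*$; in particular $F_{\lambda_0}$ is bounded below and the set $\{x \in A \mid \inf_{p \in P} F_{\lambda_0}(x,p) < f^*\}$ is empty. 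So I can take $\mu = \lambda_0$ and this direction is done.

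For the ``if'' direction I would split on whether the set $S_{\mu} := \{x \in A \mid \inf_{p \in P} F_{\mu}(x,p) < f^*\}$ is empty or merely bounded. If it is empty, then $F_{\mu}(x,p) \ge f^*$ for all $(x,p) \in A \times P$, so by Proposition~\ref{Prp_EquivDefExPen} the function $F_{\lambda}$ is already exact (with $\lambda^*(f,\varphi) \le \mu$), and there is nothing more to prove. If instead $S_{\mu}$ is nonempty but bounded, then the hypotheses of Theorem~\ref{Th_GlobExFiniteDim} are met: $X$ is a finite dimensional normed space, $A$ is closed, $f$ is l.s.c.\ on $\Omega$, $\varphi$ is l.s.c.\ on $A \times \{p_0\}$, inequality~\eqref{PenTermGen_GrowthInP} holds, assumption~\ref{Assump_BoundLevelSet} of that theorem holds with this very $\mu$, $F_{\lambda}$ is exact at every globally optimal solution of $(\mathcal{P})$, and $F_{\mu}$ is bounded below on $A \times P$ (so one may take $\lambda_0 := \mu$ there). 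Theorem~\ref{Th_GlobExFiniteDim} then yields that $F_{\lambda}$ is globally exact.

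The only mild subtlety — and the one place I would be careful — is reconciling the two phrasings of the boundedness condition: Theorem~\ref{Th_GlobExFiniteDim} asks for a bounded sublevel set and, separately, that some $F_{\lambda_0}$ be bounded below, whereas the present statement folds both requirements into a single clause about $F_{\mu}$. I would note that ``empty'' is a degenerate subcase handled directly via Proposition~\ref{Prp_EquivDefExPen} (where Theorem~\ref{Th_GlobExFiniteDim} does not literally apply because a bounded \emph{nonempty} projection is implicitly what makes the compactness argument run, though an empty set is harmless), and that in the ``bounded and nonempty'' subcase the same $\mu$ serves as both the $\mu$ of assumption~\ref{Assump_BoundLevelSet} and the $\lambda_0$ witnessing boundedness below, so no extra hypothesis is lost. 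With these two subcases dispatched, the equivalence follows, and I would close by remarking that the necessity of the boundedness/emptiness of $S_{\mu}$ is exactly the content of the paragraph preceding the theorem, which is why this reformulation is legitimate.

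\begin{proof}
If $F_{\lambda}$ is globally exact, then by definition it is exact at every globally optimal solution of the problem $(\mathcal{P})$, and by Proposition~\ref{Prp_EquivDefExPen} there exists $\lambda_0 \ge 0$ such that $F_{\lambda_0}(x, p) \ge f^*$ for all $(x, p) \in A \times P$. In particular, $F_{\lambda_0}$ is bounded below on $A \times P$, and the set $\{ x \in A \mid \inf_{p \in P} F_{\lambda_0}(x, p) < f^* \}$ is empty. Hence the required conditions hold with $\mu = \lambda_0$.

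Conversely, suppose that $F_{\lambda}$ is exact at every globally optimal solution of $(\mathcal{P})$, and that there exists $\mu \ge 0$ such that $F_{\mu}$ is bounded below on $A \times P$ and the set $S_{\mu} := \{ x \in A \mid \inf_{p \in P} F_{\mu}(x, p) < f^* \}$ is either bounded or empty. If $S_{\mu}$ is empty, then $\inf_{p \in P} F_{\mu}(x, p) \ge f^*$ for every $x \in A$, so that $F_{\mu}(x, p) \ge f^*$ for all $(x, p) \in A \times P$, and the penalty function $F_{\lambda}$ is exact by Proposition~\ref{Prp_EquivDefExPen}. If $S_{\mu}$ is bounded and nonempty, then assumption~\ref{Assump_BoundLevelSet} of Theorem~\ref{Th_GlobExFiniteDim} is satisfied with this $\mu$; moreover $X$ is a finite dimensional normed space, $A$ is closed, $f$ is l.s.c.\ on $\Omega$, $\varphi$ is l.s.c.\ on $A \times \{ p_0 \}$, inequality \eqref{PenTermGen_GrowthInP} holds, $F_{\lambda}$ is exact at every globally optimal solution of $(\mathcal{P})$, and $F_{\mu}$ is bounded below on $A \times P$. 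Applying Theorem~\ref{Th_GlobExFiniteDim} (with $\lambda_0 = \mu$) one concludes that $F_{\lambda}$ is globally exact.
\end{proof}
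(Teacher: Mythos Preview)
Your proof is correct and matches the paper's own treatment exactly: the paper does not give a separate argument but simply observes, via Proposition~\ref{Prp_EquivDefExPen}, that the boundedness (or emptiness) of the sublevel set is also necessary for exactness, and then declares the statement a reformulation of Theorem~\ref{Th_GlobExFiniteDim}. Your explicit case split (empty vs.\ bounded nonempty) is a clean way to write this out, and the observation that the same $\mu$ can serve as both the $\mu$ in assumption~\ref{Assump_BoundLevelSet} and the $\lambda_0$ witnessing boundedness below is exactly the bookkeeping the paper leaves implicit.
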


Arguing in the same way as in the proof of Theorem~\ref{Th_GlobExFiniteDim} one can obtain a complete characterization
of the exactness of the penalty function $F_{\lambda}$ on bounded subsets of a finite dimensional space.

\begin{theorem}[Localization Principle]
Let $X$ be a finite dimensional normed space, $A$ be a closed set, $f$ be l.s.c. on $\Omega$,
$\varphi$ be l.s.c. on $A \times \{ p_0 \}$, and the penalty function $F_{\lambda_0}$ be bounded below on
$A \times P$ for some $\lambda_0 \ge 0$. Suppose that there exists a non-decreasing function 
$\omega \colon \mathbb{R}_+ \to \mathbb{R}_+$ such that $\omega(t) = 0$ iff $t = 0$, and
$\varphi(x, p) \ge \omega(d(p, p_0))$ for all $(x, p) \in A \times P$. Then for the penalty function $F_{\lambda}$ to be
exact on any bounded subset of the set $A$ it is necessary and sufficient that $F_{\lambda}$ is exact at every globally
optimal solution of the problem $(\mathcal{P})$.
\end{theorem}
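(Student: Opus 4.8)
The plan is to combine the reduction results of this subsection with the construction already carried out in the proof of Theorem~\ref{Th_GlobExFiniteDim}: once attention is restricted to a bounded subset of $A$, the coercivity/bounded-level-set hypothesis used there is no longer needed. First I would observe that every bounded set $A_0 \subseteq A$ is contained in $A \cap B(0, R)$, where $B(0, R) = \{ x \in X \mid \| x \| \le R \}$, for a suitable $R > 0$, and that exactness of $F_{\lambda}$ on a larger set implies exactness on a smaller one (with $\lambda^*(A_0) \le \lambda^*(A \cap B(0, R))$). Hence it suffices to prove both implications with the bounded set taken to be $A \cap B(0, R)$ for an arbitrary fixed $R > 0$.

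Necessity is the easy direction. Given a globally optimal solution $x^*$ of $(\mathcal{P})$, I would apply the hypothesis to the bounded set $A \cap B(x^*, 1)$: there is $\lambda_0 \ge 0$ with $F_{\lambda_0}(x, p) \ge f^*$ for all $(x, p) \in (A \cap B(x^*, 1)) \times P$. Since $x^* \in M$, one has $\varphi(x^*, p_0) = 0$ and therefore $F_{\lambda_0}(x^*, p_0) = f(x^*) = f^*$; thus $(x^*, p_0)$ minimizes $F_{\lambda_0}$ over $(B(x^*, 1) \cap A) \times P$, i.e. it is a locally optimal solution of \eqref{ParamPenProb} with $\lambda = \lambda_0$, which is exactly the definition of exactness of $F_{\lambda}$ at $x^*$. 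The only subtlety is that ``exactness on a set'' is a value inequality while ``exactness at $x^*$'' is a local-optimality statement; the identity $F_{\lambda_0}(x^*, p_0) = f^*$ bridges the two.

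For sufficiency, suppose $F_{\lambda}$ is exact at every globally optimal solution of $(\mathcal{P})$. The main idea is that the construction in the proof of Theorem~\ref{Th_GlobExFiniteDim} --- the sets $U \supseteq \Omega^*$ (open in $A$, coming from exactness at the optimal solutions lying in $B(0, R)$ and the compactness of $\Omega^*$), $V$ (open in $A$, coming from lower semicontinuity of $f$ on $\Omega$), the compact set $K = (B(0, R) \cap A) \setminus (U \cup V)$, which is disjoint from $\Omega$, the number $\delta > 0$ obtained on $K$ from lower semicontinuity of $\varphi$ on $A \times \{ p_0 \}$, compactness, and \eqref{PenTermGen_GrowthInP}, and the inclusion $\Omega_{\delta} \cap ( B(0, R) \times P ) \subseteq (U \cup V) \times U(p_0, r_0)$ --- never uses the bounded-level-set assumption; in that proof the latter served only to bound the region $\{ x \in A \mid \| x \| > R \}$, which we now simply discard. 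Reusing that construction verbatim, I would obtain $\delta > 0$ and $\lambda_1 \ge 0$ such that $F_{\lambda}(x, p) \ge f^*$ for every $(x, p) \in \Omega_{\delta}$ with $x \in B(0, R)$ and every $\lambda \ge \lambda_1$ (the case $\Omega^* = \emptyset$ being handled in the same way, with $U = \emptyset$). To finish, I would control the complement of $\Omega_{\delta}$: since $F_{\lambda_0}$ is bounded below on $A \times P$ for some $\lambda_0 \ge 0$ by hypothesis, Lemma~\ref{Lmm_Reduction}, applied with this particular $\delta$, furnishes $\mu \ge 0$ such that $F_{\lambda}(x, p) \ge f^*$ for all $(x, p) \in (A \times P) \setminus \Omega_{\delta}$ and all $\lambda \ge \mu$ (using that $F_{\lambda}(x, p)$ is non-decreasing in $\lambda$). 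Taking $\lambda \ge \max\{ \lambda_1, \mu \}$ then yields $F_{\lambda}(x, p) \ge f^*$ on all of $(A \cap B(0, R)) \times P$, i.e. $F_{\lambda}$ is exact on $A \cap B(0, R)$, and hence on every bounded subset of $A$.

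As usual for localization-type arguments, the hard part is not a single estimate but the bookkeeping. I would have to check that every step imported from the proof of Theorem~\ref{Th_GlobExFiniteDim} really depends only on the hypotheses retained here --- exactness at the globally optimal solutions, lower semicontinuity of $f$ on $\Omega$ and of $\varphi$ on $A \times \{ p_0 \}$, finite dimensionality of $X$, and the growth bound \eqref{PenTermGen_GrowthInP} --- and to keep the order of the quantifiers straight: $\delta$ must be produced by the $U$--$V$--$K$ construction \emph{before} Lemma~\ref{Lmm_Reduction} is invoked, since the $\mu$ it provides depends on $\delta$.
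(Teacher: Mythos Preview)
Your proposal is correct and follows essentially the same approach as the paper, which omits the proof and simply states that one argues ``in the same way as in the proof of Theorem~\ref{Th_GlobExFiniteDim}''. You have correctly identified which ingredients of that proof carry over (the $U$--$V$--$K$ construction inside $B(0,R)$, together with Lemma~\ref{Lmm_Reduction} to handle $(A \times P) \setminus \Omega_{\delta}$) and which are no longer needed (the bounded-level-set assumption, used there only to control the region $\{ \| x \| > R \}$); your treatment of the necessity direction via the identity $F_{\lambda_0}(x^*, p_0) = f^*$ is also the right bridge between ``exact on a set'' and ``exact at $x^*$''.
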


In the case, when $X$ is a finite dimensional normed space, and $P$ is a closed subset of a finite dimensional normed
space, one can provide a different characterization of the exactness of a parametric penalty function (cf.~Theorem~3.10
in \cite{Dolgopolik}).

\begin{definition}
Let $X$ be a normed space, and $P$ be a subset of a normed space $Y$. The parametric penalty function
$F_{\lambda}$ is said to be \textit{non-degenerate} if there exist $\lambda_0 \ge 0$ and $R > 0$ such that for any
$\lambda \ge \lambda_0$ the penalty function $F_{\lambda}$ attains a global minimum on the set $A \times P$ and there
exists $(x_{\lambda}, p_{\lambda}) \in \argmin_{(x, p) \in A \times P} F_{\lambda}(x, p)$ such that
$\| x_{\lambda} \|_X \le R$ and $\| p_{\lambda} \|_Y \le R$.
\end{definition}

Roughly speaking, the non-degeneracy of the penalty function $F_{\lambda}$ means that $F_{\lambda}$ attains a global
minimum on $A \times P$ for any sufficiently large $\lambda \ge 0$, and the norm of global minimizers of this function
on $A \times P$ cannot increase unboundedly as $\lambda \to +\infty$. In other words, the non-degeneracy condition does
not allow points of global minimum of the penalty function $F_{\lambda}$ to escape to infinity as $\lambda \to +\infty$.

\begin{theorem}[Localization Principle] \label{Thrm_ExactnessNessSuffCond_NonDegeneracy}
Let $X$ be a finite dimensional normed space, $P$ be a closed subset of a finite dimensional normed space $Y$, and $A$
be closed. Suppose also that $f$ is l.s.c. on $A$, and $\varphi$ is l.s.c. on $A \times P$. Then the parametric penalty
function $F_{\lambda}$ is exact if and only if $F_{\lambda}$ is non-degenerate, and exact at every globally optimal
solution of the problem $(\mathcal{P})$.
\end{theorem}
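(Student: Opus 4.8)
The plan is to prove the two implications of the equivalence separately, the direction ``exact $\Rightarrow$ non-degenerate and locally exact'' being almost immediate and the converse being the substantive part. For necessity I would simply unwind the definition of global exactness: if $F_\lambda$ is exact then for every $\lambda$ above the exact penalty parameter the function $F_\lambda$ attains its global minimum on $A \times P$ precisely on $\Omega^* \times \{ p_0 \}$, where $\Omega^*$ is the (nonempty, by the standing assumption that $f$ attains a global minimum on $\Omega$) set of globally optimal solutions of $(\mathcal{P})$. Exactness at each $x^* \in \Omega^*$ is then part of this statement, and fixing one such $x^*$ shows that $(x^*, p_0)$ is a global minimizer of $F_\lambda$ for all large $\lambda$ with $\| x^* \|_X$ and $\| p_0 \|_Y$ bounded by a constant independent of $\lambda$, i.e. $F_\lambda$ is non-degenerate.

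For sufficiency I would argue by contradiction, adapting the localization argument of Theorem~\ref{Th_GlobExFiniteDim} but using non-degeneracy in place of the growth/boundedness hypotheses used there. Suppose $F_\lambda$ is non-degenerate and exact at every globally optimal solution, yet not exact. By Proposition~\ref{Prp_EquivDefExPen} one has $\inf_{(x, p) \in A \times P} F_\lambda(x, p) < f^*$ for every $\lambda \ge 0$. Take $\lambda_0 \ge 0$ and $R > 0$ from the non-degeneracy condition; since $F_{\lambda_0}$ attains a global minimum it is bounded below on $A \times P$, say $F_{\lambda_0} \ge m$, and hence $F_\lambda(x, p) \ge m + (\lambda - \lambda_0) \varphi(x, p)$ for all $(x, p)$ and all $\lambda \ge \lambda_0$. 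For each integer $n > \lambda_0$ non-degeneracy supplies a global minimizer $(x_n, p_n)$ of $F_n$ on $A \times P$ with $\| x_n \|_X \le R$, $\| p_n \|_Y \le R$ and $F_n(x_n, p_n) < f^*$; combined with the previous bound this forces $\varphi(x_n, p_n) \le (f^* - m)/(n - \lambda_0) \to 0$. Since $X$ is finite dimensional and $A$ is closed, and $P$ is a closed subset of the finite dimensional space $Y$, the sets $\{ x \in A \mid \| x \|_X \le R \}$ and $\{ p \in P \mid \| p \|_Y \le R \}$ are compact, so along a subsequence $(x_n, p_n) \to (\overline{x}, \overline{p}) \in A \times P$. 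Lower semicontinuity and nonnegativity of $\varphi$ give $\varphi(\overline{x}, \overline{p}) = 0$, whence $\overline{p} = p_0$ and $\overline{x} \in M \cap A = \Omega$; lower semicontinuity of $f$ together with $f(x_n) \le F_n(x_n, p_n) < f^*$ gives $f(\overline{x}) \le f^*$, and feasibility of $\overline{x}$ gives $f(\overline{x}) \ge f^*$, so $\overline{x}$ is a globally optimal solution of $(\mathcal{P})$.

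Now local exactness of $F_\lambda$ at $\overline{x}$ provides $\widehat{\lambda} \ge 0$ and $r > 0$ with $F_\lambda(x, p) \ge f(\overline{x}) = f^*$ on $(U(\overline{x}, r) \cap A) \times (U(p_0, r) \cap P)$ for all $\lambda \ge \widehat{\lambda}$ (passing from an arbitrary neighbourhood of $(\overline{x}, p_0)$ to a product one using the sum metric on $X \times Y$, and raising $\lambda$ using $F_\lambda \ge F_{\widehat{\lambda}}$ and $F_\lambda(\overline{x}, p_0) = f(\overline{x})$, exactly as in the proof of Theorem~\ref{Th_GlobExFiniteDim}). Since $(x_n, p_n) \to (\overline{x}, p_0)$ while the associated penalty parameters tend to $+\infty$, for all large $n$ one has $(x_n, p_n) \in (U(\overline{x}, r) \cap A) \times (U(p_0, r) \cap P)$ and $n \ge \widehat{\lambda}$, so $F_n(x_n, p_n) \ge f^*$, contradicting $F_n(x_n, p_n) < f^*$. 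Hence $F_\lambda$ is exact.

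I expect the only real obstacle to lie in the sufficiency direction, and within it in the step verifying that the cluster point $(\overline{x}, \overline{p})$ of the bounded global minimizers is not merely feasible but globally optimal for $(\mathcal{P})$. This is the point where non-degeneracy (to extract a convergent subsequence of minimizers), the boundedness below of $F_{\lambda_0}$ (to force $\varphi(x_n, p_n) \to 0$), and the lower semicontinuity of both $f$ and $\varphi$ must be combined; everything afterwards is a routine contradiction with the local exactness property.
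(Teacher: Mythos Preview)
Your proof is correct and follows essentially the same approach as the paper's. The paper argues directly rather than by contradiction and invokes Theorem~\ref{Thrm_MinimizingSequences} to obtain $\varphi(x_{\lambda_n}, p_{\lambda_n}) \to 0$ (which is exactly the computation you carry out explicitly), but the core steps --- extracting a bounded sequence of global minimizers, passing to a convergent subsequence in the compact set $A \cap B(0,R) \times P \cap B(0,R)$, using lower semicontinuity to identify the limit as $(\overline{x}, p_0)$ with $\overline{x}$ globally optimal, and then invoking local exactness at $\overline{x}$ to force the minimum value up to $f^*$ --- are identical.
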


\begin{proof}
Let $F_{\lambda}$ be exact, and let $x^*$ be a globally optimal solution of the problem $(\mathcal{P})$. Then for any 
$\lambda > \lambda^*(f, \varphi)$ the pair $(x^*, p_0)$ is a point of global minimum of $F_{\lambda}$ on the set 
$A \times P$. Therefore $F_{\lambda}$ is exact at $x^*$, which implies that $F_{\lambda}$ is exact at every globally
optimal solution of the problem $(\mathcal{P})$. Moreover, $F_{\lambda}$ is non-degenerate with 
$\lambda_0 = \lambda^*(f, \varphi)$ and $R = \max\{ \|x^*\|_X, \|p_0\|_Y \}$.

Suppose, now, that $F_{\lambda}$ is non-degenerate and exact at every globally optimal solution of the problem
$(\mathcal{P})$. Then there exist $\lambda_0 \ge 0$ and $R > 0$ such that for any $\lambda \ge \lambda_0$ there exists 
$(x_{\lambda}, p_{\lambda}) \in \argmin_{(x, p) \in A \times P} F_{\lambda}(x, p)$ for which 
$\| x_{\lambda} \|_X \le R$ and $\| p_{\lambda} \|_Y \le R$.

Choose an increasing unbounded sequence $\{ \lambda_n \} \subset [\lambda_0, + \infty)$. The corresponding sequence 
$\{ (x_{\lambda_n}, p_{\lambda_n}) \} \subset A \times P$ is bounded in the finite dimensional normed space 
$X \times Y$. Therefore, without loss of generality, one can suppose that this sequence converges to some 
$(x^*, p^*) \in X \times Y$. Recall that the sets $A$ and $P$ are closed. Therefore $(x^*, p^*) \in A \times P$. Let us
show that $x^*$ is a globally optimal solution of the problem $(\mathcal{P})$ and $p^* = p_0$.

Indeed, from Theorem~\ref{Thrm_MinimizingSequences} below it follows that $\varphi(x_{\lambda_n}, p_{\lambda_n}) \to 0$
as $n \to \infty$. Hence taking into account the fact that the function $\varphi$ is l.s.c. on $A \times P$ one obtains
that $x^* \in \Omega$ and $p = p_0$. 

From the facts that $(x_{\lambda_n}, p_{\lambda_n})$ is a point of global minimum of the penalty function $F_{\lambda}$
on the set $A \times P$, $F_{\lambda}(x, p_0) = f(x)$ for all $x \in \Omega$, and the function $\varphi$ is nonnegative
it follows that $f(x_{\lambda_n}) \le f^*$. Consequently, applying the lower semicontinuity of the function $f$ on the
set $A$ and the fact that $x^* \in \Omega$ one gets that $f(x^*) = f^*$. Thus, $x^*$ is a globally optimal solution of
the problem $(\mathcal{P})$. Therefore, $F_{\lambda}$ is exact at $x^*$. 

Fix an arbitrary $\mu > \lambda^*(x^*)$. Then there exists $r > 0$ such that
$$
  F_{\mu}(x, p) \ge F_{\mu}(x^*, p_0) = f^* \quad 
  \forall (x, p) \in \Big( U(x^*, r) \cap A \Big) \times U(p_0, r).
$$
Hence and from the fact that the function $F_{\lambda}$ is non-decreasing in $\lambda$ it follows that for any
$\lambda \ge \mu$ one has
\begin{equation} \label{NonDeg_ExactNearGlobMin}
  F_{\lambda}(x, p) \ge F_{\lambda}(x^*, p_0) = f^* \quad 
  \forall (x, p) \in \Big( U(x^*, r) \cap A \Big) \times U(p_0, r).
\end{equation}
Applying the facts that $\{ \lambda_n \}$ is an increasing unbounded sequence, and 
the sequence $\{ (x_{\lambda_n}, p_{\lambda_n}) \}$ converges to the point $(x^*, p_0)$ one obtains that there exists
$n_0 \in \mathbb{N}$ such that for any $n \ge n_0$ one has $\lambda_n \ge \mu$ and 
$(x_{\lambda_n}, p_{\lambda_n}) \in U(x^*, r) \times U(p_0, r)$. Hence and from (\ref{NonDeg_ExactNearGlobMin}) one
gets that
$$
  F_{\lambda_n}(x_{\lambda_n}, p_{\lambda_n}) \ge f^* \quad \forall n \ge n_0.
$$
Recall that $(x_{\lambda_n}, p_{\lambda_n})$ is a point of global minimum of the function $F_{\lambda_n}$ on the set 
$A \times P$. Therefore for any $n \ge n_0$ one has $F_{\lambda_n}(x, p) \ge f^*$ for all $(x, p) \in A \times P$,
which, with the use of Proposition~\ref{Prp_EquivDefExPen}, implies that the penalty function function $F_{\lambda}$ is
exact.	 
\end{proof}

\subsection{Feasibility-preserving parametric penalty functions}

In the previous subsection, we developed a direct approach to the study of exact parametric penalty functions, i.e. we
obtained several sufficient conditions for a parametric penalty function to be exact. However, there is an indirect
approach to the study of exact penalty functions. In this approach, one is concerned with a useful property of a
penalty function that is different from exactness, while the proof of the fact that a penalty function is exact comes as
a by-product. This property is the absence of stationary (critical) points of a penalty function not belonging
to the set of feasible solutions of the initial problem. 

The conditions under which a penalty function does not have any stationary points outside the set of feasible
solutions of the original problem are very important for applications, and they have been studied by different
researchers (see, e.g., \cite{DiPilloGrippo, DiPilloGrippo2, ExactBarrierFunc, Demyanov, Ye}). It should also be noted
that such conditions were the main tool for the study of singular exact penalty functions
\cite{HuyerNeumaier,Bingzhuang,WangMaZhou}. Despite all attention and importance, the property of a penalty function to
not have any infeasible stationary points has never been named.

Recall that for any function $g \colon X \to \overline{\mathbb{R}}$ and any nonempty set $K \subset X$ a point 
$x \in \dom g \cap K$ is called an inf-stationary point of the function $g$ with respect to the set $K$ iff
$g^{\downarrow}_K(x) \ge 0$.

\begin{definition}
Let $C \subseteq A$ be a nonempty set. The parametric penalty function $F_{\lambda}$ is said to be
\textit{feasiblity-preserving} on the set $C$ if there exists $\lambda_0 \ge 0$ such that for any 
$\lambda \ge \lambda_0$ there are no inf-stationary points of the function $F_{\lambda}$ with respect to 
the set $A \times P$ belonging to the set $(C \times P) \setminus (\Omega \times \{ p_0 \})$. In other words, the
penalty function $F_{\lambda}$ is feasibility-preserving on the set $C$ if for any sufficiently large $\lambda \ge 0$
and for any $(x, p) \in (C \times P) \cap \dom F_{\lambda}$ the inequality 
$(F_{\lambda})^{\downarrow}_{A \times P} (x, p) \ge 0$ implies that
$x \in \Omega$ and $p = p_0$. The greatest lower bound of all such $\lambda_0$ is denoted by $\lambda_{fp}(C)$, and
is referred to as \textit{the parameter of feasibility preservation}.
\end{definition}

\begin{remark}
{(i) Note that a penalty function $F_{\lambda}$ might not have any infeasible inf-stationary points with respect to the
set $A \times P$ for some $\lambda \ge 0$, and nevertheless be \textit{non}-feasibility-preserving. In particular, it is
not difficult to provide an example of a penalty function that does not have any infeasible inf-stationary points for
any $\lambda \in [0, \lambda_0)$, and have infeasible inf-stationary points for any $\lambda > \lambda_0$ for some 
$\lambda_0 < + \infty$ (it is sufficient to choose functions $f$ and $\varphi$ such that $f^{\downarrow}(x) < 0$ and
$\varphi^{\downarrow}(x, p) > 0$ for some infeasible $x$ and for all $p \in P$). Such pathological examples are
inconsistent with the general theory of (exact) penalty functions, since in general one expects that the greater is
the penalty parameter, the better a penalty function approximates the original constrained optimization problem. That is
why we excluded such pathological cases from further consideration by requiring that a feasibility-preserving penalty
function must not have any infeasible inf-stationary points for any $\lambda$ greater than some $\lambda_0 \ge 0$.
}

{\noindent(ii) Let $F_{\lambda}$ be feasibility-preserving on a set $C \subseteq A$. From the definition it follows
that for any $\lambda > \lambda_{fp}(C)$ and $x \in C \setminus A$ the inequality
$(F_{\lambda})^{\downarrow}_{A \times P}(x, p) < 0$ is satisfied \emph{for all} $p \ne p_0$. In some cases, it
can be useful to consider only a subset $K \subset P$, and to require that the inequality 
$(F_{\lambda})^{\downarrow}_{A \times P}(x, p) < 0$ is satisfied for all $p \in K \setminus \{ p_0 \}$.
(moreover, the set $K$ can depend on $x$). In other words, it can be convenient to consider a more general defintion of 
feasibility-preserving parametric penalty function in which the set $C \subseteq A$ is replaced by a set 
$C \subseteq A \times P$. The interested reader can extend the results below to this more general case.
}
\end{remark}

Feasibility-preserving penalty functions have a very important advantage over other penalty functions, which makes them
more appealing for applications. Namely, any standard minimization algorithm, when applied to a feasibility-preserving
penalty function $F_{\lambda}$ with $\lambda > \lambda_{fp}(A)$, converges to a feasible point of the original problem
(at least, in theory), since such algorithms, normally, cannot converge to a non-stationary point. 

In addition to its value from the computational point view, the concept of feasibility preservation can be applied to
the study of exact penalty functions.

\begin{proposition} \label{Prp_FeasibPreservImpliesExactness}
Let a penalty function $F_{\lambda}$ be feasibility-preserving on the set $A$, and suppose that there exists 
$\lambda_0 \ge 0$ such that for any $\lambda \ge \lambda_0$ the function $F_{\lambda}$ attains a global minimum on the
set $A \times P$. Then $F_{\lambda}$ is exact, and $\lambda^*(f, \varphi) \le \max\{ \lambda_{fp}(A), \lambda_0 \}$.
\end{proposition}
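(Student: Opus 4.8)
The plan is to argue by contradiction: assume $F_\lambda$ is not exact and use Proposition~\ref{Prp_EquivDefExPen} together with the feasibility-preserving property to derive a contradiction. By Proposition~\ref{Prp_EquivDefExPen}, if $F_\lambda$ is not exact, then for every $\lambda \ge 0$ one has $\inf_{(x,p) \in A \times P} F_\lambda(x,p) < f^*$. Take any $\lambda \ge \max\{\lambda_{fp}(A), \lambda_0\}$. Since $\lambda \ge \lambda_0$, the function $F_\lambda$ attains a global minimum on $A \times P$; let $(\overline{x}, \overline{p})$ be a global minimizer. Then $F_\lambda(\overline{x}, \overline{p}) = \inf_{(x,p) \in A \times P} F_\lambda(x,p) < f^*$.

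Next I would extract the stationarity consequence of being a global minimizer. Since $(\overline{x}, \overline{p})$ is a point of global (hence local) minimum of $F_\lambda$ on $A \times P$, and $(\overline{x},\overline{p}) \in A \times P \cap \dom F_\lambda$ (it lies in $\dom F_\lambda$ because its value is finite, being strictly below $f^*$), Lemma~\ref{Lemma_NessOptCond} gives $(F_\lambda)^{\downarrow}_{A \times P}(\overline{x}, \overline{p}) \ge 0$. Thus $(\overline{x}, \overline{p})$ is an inf-stationary point of $F_\lambda$ with respect to $A \times P$.

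Now I invoke the feasibility-preserving hypothesis on the set $C = A$. Since $\lambda \ge \lambda_{fp}(A)$ and $(\overline{x}, \overline{p}) \in (A \times P) \cap \dom F_\lambda$ with $(F_\lambda)^{\downarrow}_{A \times P}(\overline{x}, \overline{p}) \ge 0$, the definition of feasibility preservation forces $\overline{x} \in \Omega$ and $\overline{p} = p_0$. But then $F_\lambda(\overline{x}, \overline{p}) = f(\overline{x}) \ge f^*$, since $\overline{x} \in \Omega$ and $f^* = \inf_{x \in \Omega} f(x)$. This contradicts $F_\lambda(\overline{x}, \overline{p}) < f^*$. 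Hence $F_\lambda$ must be exact, and since the argument works for every $\lambda \ge \max\{\lambda_{fp}(A), \lambda_0\}$, Proposition~\ref{Prp_EquivDefExPen} also yields $\lambda^*(f, \varphi) \le \max\{\lambda_{fp}(A), \lambda_0\}$.

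The proof is essentially a direct chaining of three ingredients, so there is no real obstacle — the only point requiring a little care is confirming that the global minimizer $(\overline{x},\overline{p})$ lies in $\dom F_\lambda$ so that the feasibility-preservation definition and Lemma~\ref{Lemma_NessOptCond} apply; this follows immediately because $F_\lambda(\overline{x},\overline{p}) < f^* < +\infty$. One should also note that the existence of a global minimizer (rather than merely an infimum) is exactly what the hypothesis "$F_\lambda$ attains a global minimum on $A \times P$" supplies, and it is essential: without an attained minimum one could not conclude stationarity of any point.
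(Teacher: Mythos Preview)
Your argument is correct and follows essentially the same route as the paper's proof: take a global minimizer of $F_{\lambda}$ for large $\lambda$, observe it is inf-stationary, apply feasibility preservation to force it into $\Omega \times \{p_0\}$, and conclude via Proposition~\ref{Prp_EquivDefExPen}. The only cosmetic difference is that the paper argues directly (showing $\min F_{\lambda} = f^*$) while you phrase it as a contradiction; one minor technical slip is that you should take $\lambda > \lambda_{fp}(A)$ rather than $\lambda \ge \lambda_{fp}(A)$, since $\lambda_{fp}(A)$ is defined as an infimum and need not itself witness the feasibility-preserving property --- this does not affect the final bound on $\lambda^*(f,\varphi)$.
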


\begin{proof}
Choose $\lambda > \max\{ \lambda_{fp}(A), \lambda_0 \}$, and let $(x^*, p^*)$ be a point of global minimum of
$F_{\lambda}$ on the set $A \times P$ that exists due to our assumption. Clearly, the pair $(x^*, p^*)$ is an
inf-stationary point of $F_{\lambda}$. Therefore $p^* = p_0$ and $x^* \in \Omega$ by virtue of the fact that
$F_{\lambda}$ is feasibility-preserving. Hence, as it is easy to see, $x^*$ is a globally optimal solution of
the problem $(\mathcal{P})$ (recall that $F_{\lambda}(x, p_0) = f(x)$ for any $x \in \Omega$) and 
$\min_{(x, p) \in A} F_{\lambda}(x, p) = F_{\lambda}(x^*, p^*) = f^*$, which implies that $F_{\lambda}$ is exact, and 
$\lambda^*(f, \varphi) \le \max\{ \lambda_{fp}(A), \lambda_0 \}$ by virtue of Proposition~\ref{Prp_EquivDefExPen}.
\end{proof}

It should be noted that the assumption that the penalty function $F_{\lambda}$ attains a global minimum on the set 
$A \times P$ for any sufficiently large $\lambda$ is indispensable for the validity of the proposition above. Namely,
one can construct a parametric penalty function $F_{\lambda}$ that is feasibility-preserving on the set $A$, but is not
exact, which implies that it does not attain a global minimum on the set $A \times P$ due of the previous
proposition. The example of such penalty function is taken from \cite{LianZhang}.

\begin{example}
Let $X = \mathbb{R}^n$, and the problem $(\mathcal{P})$ have the form
\begin{equation} \label{ProblemInExampleOfIncorPenFunc}
  \min f(x) \quad \text{subject to} \quad F(x) = 0, \quad x \in [u, v],
\end{equation}
where the functions $f \colon \mathbb{R}^n \to \mathbb{R}$ and $F \colon \mathbb{R}^n \to \mathbb{R}^m$ are
continuously differentiable, $u, v \in \mathbb{R}^n$, and 
$[u, v] = \{ x \in \mathbb{R}^n \mid u_i \le x_i \le v_i, \: i \in \{1, \ldots, n \} \}$.

Let $P = \mathbb{R}_+$ and $p_0 = 0$. Fix $w \in \mathbb{R}^m$, and for any $p > 0$ denote 
$\Delta(x, p) = \| F(x) - p w \|^2$, where $\| \cdot \|$ is the Euclidean norm. Finally, choose $a > 0$, and define
\begin{equation} \label{IncorrectPenTerm}
  \varphi(x, p) = \begin{cases}
    0, & \text{if } p = 0, x \in \Omega, \\
    \dfrac{1}{2(p + 1)} \dfrac{\Delta(x, p)}{1 - a \Delta(x, p)} + p, & \text{if } p > 0, \Delta(x, p) < 1/a, \\
    + \infty, & \text{otherwise}.
  \end{cases}
\end{equation}
Note that $\varphi(x, p) = 0$ iff $p = 0$ and $x \in \Omega$. 

It was proved in \cite{LianZhang} that under some additional assumptions the parametric penalty function
$F_{\lambda}(x, p) = f(x) + \lambda \varphi(x, p)$ with the penalty term (\ref{IncorrectPenTerm}) is
feasibility-preserving on the set $A = [u, v]$. However, let us show that this penalty function is not exact.

Indeed, for any $x \in \mathbb{R}^n$ one has 
$g_{\lambda}(x) = \liminf_{p \to +0} F_{\lambda}(x, p) = f(x) + \lambda \phi(x)$, where
$$
  \phi(x) = \begin{cases}
    \dfrac{1}{2}\dfrac{\| F(x) \|^2}{1 - a \| F(x) \|^2}, & \text{if } \| F(x) \|^2 < 1/a, \\
    + \infty, & \text{if } \| F(x) \|^2 \ge 1/a.
  \end{cases}
$$
By Proposition~\ref{Prp_ExactLimInfPenFunc}, if the parametric penalty function $F_{\lambda}$ is
exact, then the penalty function $g_{\lambda}$ is exact as well. However, note that the penalty term $\phi$ is
continuously differentiable for any $x$ such that $\| F(x) \|^2 < 1/a$. Therefore the penalty function $g_{\lambda}$
cannot be exact in the general case. 

Indeed, let $n = m = 1$, $f(x) = x$, $F(x) = x - 1$, $u = 0$ and $v = 2$. The only feasible point of this problem is
$x^* = 1$. If the penalty function $g_{\lambda}$ is exact, then $x^*$ is a point of global minimum of the function
$g_{\lambda}$ on the set $[u, v] = [0, 2]$ for any sufficiently large $\lambda \ge 0$, which implies that
$g'_{\lambda}(x^*) = 0$ for any sufficiently large $\lambda$. On the other hand, 
$g'_{\lambda}(x^*) = f'(x^*) = 1$. Consequently, the penalty function $g_{\lambda}$ is not exact, which, with the use of
Proposition~\ref{Prp_ExactLimInfPenFunc}, implies that the penalty function $F_{\lambda}$ is not exact as well. In spite
of this fact, one can show that the penalty function $F_{\lambda}$ is feasibility-preserving (see \cite{LianZhang}).
Therefore one concludes that the penalty function $F_{\lambda}$ does not attain a global minimum on the set 
$A \times P = [u, v] \times \mathbb{R}_+$ for any sufficiently large $\lambda$ by virtue of
Proposition~\ref{Prp_FeasibPreservImpliesExactness}. Thus, the claim from \cite{LianZhang} that the penalty function 
$F_{\lambda}(x, p) = f(x) + \lambda \varphi(x, p)$ with the penalty term $\varphi$ defined by (\ref{IncorrectPenTerm})
is an exact penalty function for the problem~(\ref{ProblemInExampleOfIncorPenFunc}) (see~Theorem~2.2 in
\cite{LianZhang}) is not correct.
\end{example}

Let us obtain general sufficient conditions for a penalty function to be feasibility-preserving. In order to understand 
a natural way to formulate these conditions, we, at first, derive simple necessary conditions for a penalty function to
be feasibility-preserving.

For any set $C \subset A$ denote 
$$
  (C \times P)_{\inf} = \Big( \big( C \times P \big) \cap \dom \varphi \Big) \setminus 
  \Big( \Omega \times \{ p_0 \} \Big) 
$$
(throughout this section we suppose that the penalty term $\varphi$ is fixed). Thus, the set $(C \times P)_{\inf}$
consists of all those points $(x, p) \in (C \times P) \cap \dom \varphi$ that are not ``feasible'' for the
problem $(\mathcal{P})$.

\begin{proposition}
Let $C \subset A$ be a nonempty set. Suppose that the penalty function $F_{\lambda}$ is feasibility-preserving on $C$
for any function $f$ that is Lipschitz continuous on $X$. Then 
$$
  \varphi^{\downarrow}_{A \times P} (x, p) < 0 \quad 
  \forall (x, p) \in (C \times P)_{\inf}.
$$
\end{proposition}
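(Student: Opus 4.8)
The plan is to argue by contradiction through a judicious choice of the objective function $f$. Suppose, contrary to the claim, that there is a pair $(\bar x, \bar p) \in (C \times P)_{\inf}$ with $\varphi^{\downarrow}_{A \times P}(\bar x, \bar p) \ge 0$ (this also covers the degenerate case in which $(\bar x, \bar p)$ is an isolated point of $A \times P$, where the rate equals $+ \infty$). I would then specialise the whole construction to the objective function $f \equiv 0$. This $f$ is Lipschitz continuous on $X$ (with Lipschitz constant $0$), satisfies $\dom f = X \ne \emptyset$, and attains a global minimum on $\Omega$ (which is nonempty by the standing assumptions on the problem $(\mathcal{P})$); hence the hypothesis of the proposition is applicable to it, i.e. the corresponding penalty function $F_{\lambda} = f + \lambda \varphi = \lambda \varphi$ is feasibility-preserving on $C$.

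Next I would show that $(\bar x, \bar p)$ is an infeasible inf-stationary point of $F_{\lambda}$ with respect to $A \times P$ for every $\lambda > 0$. Since $(\bar x, \bar p) \in \dom \varphi$, we have $F_{\lambda}(\bar x, \bar p) = \lambda \varphi(\bar x, \bar p) \in \mathbb{R}$, so $(\bar x, \bar p) \in \dom F_{\lambda}$. Moreover, directly from the definition of the rate of steepest descent one has $(\lambda \varphi)^{\downarrow}_{A \times P}(\bar x, \bar p) = \lambda \varphi^{\downarrow}_{A \times P}(\bar x, \bar p)$ for $\lambda > 0$ (alternatively, one may apply Lemma~\ref{Lemma_SumRule} with the constant summand $f \equiv 0$, whose rate of steepest descent equals $0$ at non-isolated points). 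Hence $(F_{\lambda})^{\downarrow}_{A \times P}(\bar x, \bar p) = \lambda \varphi^{\downarrow}_{A \times P}(\bar x, \bar p) \ge 0$, so $(\bar x, \bar p)$ is inf-stationary. Finally, the membership $(\bar x, \bar p) \in (C \times P)_{\inf}$ means precisely that $(\bar x, \bar p) \in (C \times P) \setminus (\Omega \times \{ p_0 \})$, so this inf-stationary point is infeasible.

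Therefore, for every $\lambda_0 \ge 0$ the function $F_{\lambda}$ with $\lambda = \lambda_0 + 1 \ge \lambda_0$ possesses an inf-stationary point with respect to $A \times P$ lying in $(C \times P) \setminus (\Omega \times \{ p_0 \})$, which contradicts the feasibility preservation of $F_{\lambda}$ on $C$ established in the first paragraph. This contradiction proves that $\varphi^{\downarrow}_{A \times P}(x, p) < 0$ for all $(x, p) \in (C \times P)_{\inf}$. I do not expect any genuine obstacle here; the only two points deserving a line of care are checking that the constant objective $f \equiv 0$ is an admissible choice within the standing assumptions on $(\mathcal{P})$, so that the hypothesis really applies, and noting that the isolated-point case is harmless since the rate of steepest descent is then $+\infty \ge 0$.
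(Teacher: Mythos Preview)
Your proof is correct and follows essentially the same approach as the paper: both argue by contradiction, choose the constant objective $f \equiv 0$, and observe that $(F_{\lambda})^{\downarrow}_{A \times P}(\bar x, \bar p) = \lambda \varphi^{\downarrow}_{A \times P}(\bar x, \bar p) \ge 0$ yields an infeasible inf-stationary point for every $\lambda > 0$, contradicting feasibility preservation. Your write-up is in fact slightly more careful than the paper's, since you explicitly verify that $f \equiv 0$ satisfies the standing assumptions on~$(\mathcal{P})$ and note that the isolated-point case is harmless.
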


\begin{proof}
Arguing by reductio ad absurdum, suppose that there exists $(y, q) \in (C \times P)_{\inf}$ such that 
$\varphi^{\downarrow}_{A \times P} (y, q) \ge 0$. Define $f(x) \equiv 0$. Then applying Lemma~\ref{Lemma_SumRule}
one obtains that
$$
  (F_{\lambda})^{\downarrow}_{A \times P} (y, q) \ge f^{\downarrow}_{A \times P} (x_0) + 
  \lambda \varphi^{\downarrow}_{A \times P} (y, q) \ge 0 + \lambda \cdot 0 = 0,
$$
which contradicts the assumption that $F_{\lambda}$ is feasibility-preserving on $C$ for any Lipschitz continuous
function $f$.	 
\end{proof} 

Under an additional assumption on the penalty function $F_{\lambda}$, that is satisfied, in particular, when $P$ is
a one-point set (i.e. when $F_{\lambda}$ is a non-parametric penalty function) or when the penalty term $\varphi$ is
Fr\'echet differentiable at all points $(x, p) \in (A \times P)_{\inf}$
(see~Theorem~\ref{Thrm_RSDLowerEstimViaPartialRSD}), one can obtain a stronger necessary condition for a penalty
function to be feasibility-preserving.

\begin{theorem} \label{Thrm_NessCondFeasPres}
Let $\Omega$ be closed, and $C \subset A$ be a nonempty set. Suppose that the penalty function $F_{\lambda}$ is
feasibility-preserving on the set $C$ for any function $f$ that is Lipschitz continuous on $X$, and there exists
$\kappa > 0$ such that for any $(x, p) \in (C \times P)_{\inf}$ one has
\begin{equation} \label{RSD_SumEstimate}
  (F_{\lambda})^{\downarrow}_{A \times P}(x, p) \ge 
  \kappa \min\big\{ 0, f^{\downarrow}_A(x) + \lambda \varphi(\cdot, p)^{\downarrow}_A (x), 
  \lambda \varphi(x, \cdot)^{\downarrow}(p) \big\}.
\end{equation}
Then there exists $a > 0$ such that
\begin{equation} \label{NessFPCond}
  \varphi(\cdot, p)^{\downarrow}_A (x) \le - a \quad
  \forall (x, p) \in \big\{ (y, q) \in (C \times P)_{\inf} \mid \varphi(y, \cdot)^{\downarrow}(q) \ge 0 \big\}.
\end{equation}
\end{theorem}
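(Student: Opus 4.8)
The plan is to argue by contradiction. Assume no $a>0$ satisfying (\ref{NessFPCond}) exists, and abbreviate by $S=\{(y,q)\in(C\times P)_{\inf}\mid\varphi(y,\cdot)^{\downarrow}(q)\ge 0\}$ the set appearing in (\ref{NessFPCond}). The goal is to exhibit a single function $f$ that is Lipschitz continuous on $X$ and for which $F_{\lambda}$ is \emph{not} feasibility-preserving on $C$, i.e. for which $F_{\lambda}$ possesses inf-stationary points with respect to $A\times P$ lying in $(C\times P)\setminus(\Omega\times\{p_0\})$ for arbitrarily large $\lambda$. This contradicts the standing hypothesis that (\ref{RSD_SumEstimate}) holds and that $F_{\lambda}$ is feasibility-preserving on $C$ for every Lipschitz $f$ (the latter is applied exactly as in the proof of the preceding proposition, by plugging suitable $f$ into the definitions).

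First dispose of an easy case: if some $(x,p)\in S$ has $\varphi(\cdot,p)^{\downarrow}_A(x)\ge 0$, take $f\equiv 0$, which is Lipschitz on $X$ and satisfies $f^{\downarrow}_A(x)\ge 0$ for every $x$. Since also $\varphi(x,\cdot)^{\downarrow}(p)\ge 0$ because $(x,p)\in S$, estimate (\ref{RSD_SumEstimate}) gives $(F_{\lambda})^{\downarrow}_{A\times P}(x,p)\ge\kappa\min\{0,\,f^{\downarrow}_A(x)+\lambda\varphi(\cdot,p)^{\downarrow}_A(x),\,\lambda\varphi(x,\cdot)^{\downarrow}(p)\}=0$ for \emph{every} $\lambda\ge 0$, while $(x,p)\in(\dom F_{\lambda}\cap(C\times P))\setminus(\Omega\times\{p_0\})$; hence $(x,p)$ is an infeasible inf-stationary point of $F_{\lambda}$ for all $\lambda$, a contradiction. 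So we may assume $\varphi(\cdot,p)^{\downarrow}_A(x)<0$ for all $(x,p)\in S$, and the failure of (\ref{NessFPCond}) then produces, for each $n\in\mathbb{N}$, a point $(x_n,p_n)\in S$ with $0<\varepsilon_n:=-\varphi(\cdot,p_n)^{\downarrow}_A(x_n)<1/n^2$.

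The core of the proof is to build a Lipschitz $f$ that is ``steeply increasing'' near infinitely many of the $x_n$. A single Lipschitz function cannot be steep near points that cluster, so one first passes to a subsequence $\{x_{n_k}\}$ whose terms can be separated by balls: if the $x_n$ are eventually constant, take that constant subsequence; otherwise pass to a subsequence of pairwise distinct terms and use the dichotomy that a sequence in a metric space has either a Cauchy subsequence or, for some $\varepsilon>0$, a subsequence with pairwise distances $\ge\varepsilon$. In the Cauchy case, thin further --- geometrically towards the limit, if that limit lies in $X$ --- so that each retained term is isolated in the closure $S_0$ of the retained subsequence; in the other cases $S_0$ is just the (closed, discrete) set of retained terms. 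In every case $S_0$ is closed and each $x_{n_k}$ is an isolated point of $S_0$, so the $1$-Lipschitz function $f(x)=\min\{1,d(x,S_0)\}$ satisfies $f(x_{n_k})=0$ and, since $d(x,S_0)=d(x,x_{n_k})$ on a sufficiently small ball about $x_{n_k}$, one gets $f^{\downarrow}_A(x_{n_k})\ge 1$ (equal to $1$ if $x_{n_k}$ is a limit point of $A$, and $+\infty$ if it is isolated in $A$).

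Now put $\lambda_k=1/\varepsilon_{n_k}$; since $\varepsilon_{n_k}<1/n_k^2\le 1/k^2$ we have $\lambda_k\to+\infty$, while
$$
  f^{\downarrow}_A(x_{n_k}) + \lambda_k \varphi(\cdot, p_{n_k})^{\downarrow}_A(x_{n_k}) \ge 1 - \lambda_k\varepsilon_{n_k} = 0, \qquad
  \lambda_k \varphi(x_{n_k}, \cdot)^{\downarrow}(p_{n_k}) \ge 0,
$$
the second inequality because $(x_{n_k},p_{n_k})\in S$. Thus the three quantities inside the minimum in (\ref{RSD_SumEstimate}) are all nonnegative, so (\ref{RSD_SumEstimate}) yields $(F_{\lambda_k})^{\downarrow}_{A\times P}(x_{n_k},p_{n_k})\ge 0$; as each $(x_{n_k},p_{n_k})$ lies in $(\dom F_{\lambda_k}\cap(C\times P))\setminus(\Omega\times\{p_0\})$, the penalty function $F_{\lambda}$ built from this $f$ has infeasible inf-stationary points for $\lambda=\lambda_k\to+\infty$, contradicting feasibility preservation on $C$. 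This establishes (\ref{NessFPCond}). The main obstacle is the construction of the third paragraph: one must extract a subsequence of $\{x_n\}$ whose terms sit in $X$ in such a way that \emph{one} Lipschitz function is steep near every retained point, and controlling the possible clustering of the $x_n$ --- via the Cauchy-versus-separated dichotomy together with the geometric thinning in the Cauchy case --- is the only delicate step; closedness of $\Omega$ serves merely to keep these auxiliary problems within the standing framework.
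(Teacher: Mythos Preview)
Your approach follows the same contradiction strategy as the paper's proof, and the key idea---building a single Lipschitz $f$ that is steep near an extracted subsequence of the $x_n$---is correct. The separation dichotomy (Cauchy subsequence versus $\varepsilon$-separated subsequence) is a clean way to isolate the points, and the computation with $\lambda_k = 1/\varepsilon_{n_k}$ plugged into (\ref{RSD_SumEstimate}) is fine.

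There is, however, a genuine gap. The problem $(\mathcal{P})$ carries the standing hypothesis that $f$ attains a global minimum on $\Omega$; the remark immediately following the paper's proof of this theorem makes this explicit. Hence the assumption ``$F_\lambda$ is feasibility-preserving for every Lipschitz $f$'' is to be read as applying only to Lipschitz $f$ that also attain their minimum on $\Omega$, and any $f$ you construct for the contradiction must satisfy this. Your function $f(x) = \min\{1, d(x, S_0)\}$ need not: if all retained $x_{n_k}$ (and the possible limit point) lie outside $\Omega$, so that $S_0 \cap \Omega = \emptyset$, while $d(S_0, \Omega) < 1$, then $\inf_{x \in \Omega} f(x) = \inf_{x \in \Omega} d(x, S_0)$ can fail to be attained on the closed set $\Omega$ in a general metric space (two disjoint closed sets may have distance zero, and even at positive distance the infimum need not be realised). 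Your remark that ``closedness of $\Omega$ serves merely to keep these auxiliary problems within the standing framework'' therefore underestimates the role of this hypothesis.

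The paper repairs exactly this point by splitting the retained subsequence into $M_1 = \{x_{n_k}\} \cap \Omega$ and $M_2 = \{x_{n_k}\} \setminus \Omega$ and handling whichever is infinite. In the $M_1$ case, the ``sum of cones'' $f = \sum_k \min\{d(\cdot, x_{n_k}) - \theta_k/3, 0\}$ with non-increasing $\theta_k$ takes its global minimum at $x_{n_1} \in \Omega$. In the $M_2$ case, closedness of $\Omega$ yields balls $B(x_{n_k}, s_k)$ disjoint from $\Omega$, and one shrinks the support radii so that $f \equiv 0$ on $\Omega$, whence the minimum is trivially attained. Your argument is easily patched along these lines; the overall architecture is sound, but the specific choice $f(x) = \min\{1, d(x, S_0)\}$ does not quite close the loop.
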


\begin{proof}
Arguing by reductio ad absurdum, suppose that for any $a > 0$ there exists $(x, p) \in (C \times P)_{\inf}$ such that 
$\varphi(x, \cdot)^{\downarrow}(p) \ge 0$, and $\varphi(\cdot, p)^{\downarrow}_A (x) > - a$. 

Then, in particular, there exists a sequence $\{ (x_n, p_n) \} \subset (C \times P)_{\inf}$ such that 
$\varphi(x_n, \cdot)^{\downarrow}(p_n) \ge 0$, $\varphi(\cdot, p_n)^{\downarrow}_A (x_n) < 0$ for any 
$n \in \mathbb{N}$, and $\varphi(\cdot, p_n)^{\downarrow}_A (x_n) \to 0$ as $n \to \infty$. Observe that $x_n$ is a
limit point of the set $A$ for any $n \in \mathbb{N}$, since otherwise 
$\varphi(\cdot, p_n)^{\downarrow}_A (x_n) = + \infty$.

Suppose, at first, that there is only a finite number of different points in the sequence $\{ x_n \}$. Then there exists
a point $x_0 \in \{ x_n \}_{n \in \mathbb{N}}$ and a subsequence $\{ p_{n_k} \}$ such that 
$\varphi(\cdot, p_{n_k})^{\downarrow}_A (x_0) \to 0$ as $n \to \infty$. If $x_0 \in \Omega$, then define 
$f(x) = d(x, x_0)$. Otherwise, there exists $r > 0$ such that $B(x_0, r) \cap \Omega = \emptyset$ due to the fact that
$\Omega$ is closed. Then define $f(x) = \min\{ d(x, x_0) - r, 0 \}$. Note that in both cases $f$ attains a global
minimum on the set $\Omega$, and is Lipschitz continuous on $X$.

Applying inequality (\ref{RSD_SumEstimate}), and Lemma~\ref{Lemma_SimpleCases} one obtains that
\begin{multline*}
  (F_{\lambda})^{\downarrow}_{A \times P} (x_0, p_{n_k}) \ge 
  \kappa \min\Big\{ 0, f^{\downarrow}_A (x_0) + \lambda \varphi(\cdot, p_{n_k})^{\downarrow}_A (x_0),
  \lambda \varphi(x_0, \cdot)^{\downarrow} (p_{n_k}) \Big\} \ge \\
  \ge \kappa \min\big\{ 1 + \lambda \varphi(\cdot, p_{n_k})^{\downarrow}_A (x_0), 0 \big\}
\end{multline*}
for any $\lambda \ge 0$ and $k \in \mathbb{N}$. From the fact that $\varphi(\cdot, p_{n_k})^{\downarrow}_A (x_0) \to 0$
as $k \to \infty$ it follows that for any $\lambda \ge 0$ there exists $k_0 \in \mathbb{N}$ such that for any 
$k \ge k_0$ one has $(F_{\lambda})^{\downarrow}_{A \times P} (x_0, p_{n_k}) \ge 0$, which contradicts the assumption
that $F_{\lambda}$ is feasibility-preserving on $C$ for any Lipschitz continuous function $f$.

Suppose, now, that there is an infinite number of different points in the sequence $\{ x_n \}$. Without loss of
generality we can suppose that $x_n \ne x_k$ for all $n, k \in \mathbb{N}$ such that $n \ne k$. Denote
$M_1 = \{ x_n \}_{n \in \mathbb{N}} \cap \Omega$ and $M_2 = \{ x_n \}_{n \in \mathbb{N}} \setminus \Omega$. Clearly, one
of these sets is infinite. 

Let $M_1$ be infinite. Then it can be identified with a subsequence $\{ x_{n_k} \} \subset \Omega$ of the sequence 
$\{ x_n \}$. For any $k \in \mathbb{N}$ define $r_k = \inf_{m > k} d( x_{n_k}, x_{n_m} )$.
If for some $k \in \mathbb{N}$ one has $r_k = 0$, then there exists a subsequence $\{ x_{n_{k_l}} \}$ with $k_l > k$
for any $l \in \mathbb{N}$ converging to the point $x_{n_k}$. Recall that all points in the original sequence 
$\{ x_n \}$ are distinct. Since a sequence in a metric space cannot converge to more than one point,
$\inf_{r > l} d( x_{n_{k_l}}, x_{n_{k_r}} ) > 0$ for all $l \in \mathbb{N}$. Therefore replacing, if necessary, the
sequence $\{ x_{n_k} \}$ by the subsequence $\{ x_{n_{k_l}} \}$ one can suppose that $r_k > 0$ for 
all $k \in \mathbb{N}$.

Define $\theta_1 = r_1$ and $\theta_k = \min\{ \theta_{k - 1}, r_k \}$ for any $k > 1$.
Thus, $\{ \theta_k \}$ is a non-increasing sequence such that $\theta_k \le r_k$ for all $k \in \mathbb{N}$. Denote
$$
  f(x) = \sum_{k = 1}^{\infty} f_k(x), \quad f_k(x) = \min\left\{ d(x, x_{n_k}) - \frac{\theta_k}{3}, 0 \right\}.
$$
Observe that the functions $f_k$, $k \in \mathbb{N}$, have disjoint supports. Consequently, the function $f$ is
correctly defined. Furthermore, it is easy to verify that $f$ attains a global minimum on $\Omega$ at the point
$x_{n_1}$.

Let us show that $f$ is Lipschitz continuous on $X$. Indeed, let $x, y \in X$ be arbitrary. Note that 
there exist $m, l \in \mathbb{N}$ such that $f(x) = f_m(x)$ and $f(y) = f_l(y)$ by virtue of the fact that the
functions $f_k$ have disjoint supports. Hence and from the fact that all functions $f_k$, $k \in \mathbb{N}$, are
Lipschitz continuous on $X$ with a Lipschitz constant $L_k \le 1$ it follows that
\begin{multline*}
  |f(x) - f(y)| = |f_m(x) + f_l(x) - f_m(y) - f_l(y)| \le \\
  \le |f_m(x) - f_m(y)| + |f_l(x) - f_l(y)| \le 2 d(x, y).
\end{multline*}
in the case $m \ne l$, and
$$
  |f(x) - f(y)| = |f_m(x) - f_m(y)| \le d(x, y)
$$
in the case $m = l$. Thus, $f$ is Lipschitz continuous on $X$, which implies that the penalty function 
$F_{\lambda} = f +  \lambda \varphi$ is feasibility-preserving on the set $C$. On the other hand,
taking into account the fact that $f(x) = f_k(x) = d(x, x_{n_k}) - \theta_k/3$ in a neighbourhood of the point
$x_{n_k}$, and applying inequality (\ref{RSD_SumEstimate}), and Lemma~\ref{Lemma_SimpleCases} one obtains that
\begin{multline*}
  (F_{\lambda})^{\downarrow}_{A \times P} (x_{n_k}, p_{n_k}) \ge 
  \kappa \min\Big\{ 0, f^{\downarrow}_A (x_{n_k}) + \lambda \varphi(\cdot, p_{n_k})^{\downarrow}_A (x_{n_k}),
  \lambda \varphi(x_{n_k}, \cdot)^{\downarrow} (p_{n_k}) \Big\} \ge \\
  \ge \kappa \min\big\{ 1 + \lambda \varphi(\cdot, p_{n_k})^{\downarrow}_A (x_{n_k}), 0 \big\}
\end{multline*}
for any $\lambda \ge 0$ and $k \in \mathbb{N}$. Recall that by construction one has 
$\varphi(\cdot, p_n)^{\downarrow}_A (x_n) \to 0$ as $n \to \infty$. Therefore for any $\lambda \ge 0$ there exists
$k_0 \in \mathbb{N}$ such that for any $k \ge k_0$ one has 
$(F_{\lambda})^{\downarrow}_{A \times P} (x_{n_k}, p_{n_k}) \ge 0$, which contradicts the fact that $F_{\lambda}$ is
feasibility-preserving on the set $C$.

Suppose, finally, that the set $M_1 = \{ x_n \}_{n \in \mathbb{N}} \cap \Omega$ is finite, but the set 
$M_2 = \{ x_n \}_{n \in \mathbb{N}} \setminus \Omega$ is infinite. Then one can identify $M_2$ with a subsequence 
$\{ x_{n_k} \} \subset A \setminus \Omega$ of the sequence $\{ x_n \}$. Repeating, if necessary, the same argument as in
the case when the set $M_1$ is infinite, one can suppose that $r_k = \inf_{m > k} d( x_{n_k}, x_{n_m} ) > 0$ for any 
$k \in \mathbb{N}$. Taking into account the fact that the set $\Omega$ is closed, one gets that for any 
$k \in \mathbb{N}$ there exists $s_k > 0$ such that $B(x_{n_k}, s_k) \cap \Omega = \emptyset$. Denote 
$\theta_k = \min\{ \theta_{k-1}, r_k, s_k \}$, and define
$$
  f(x) = \sum_{k = 1}^{\infty} f_k(x), \quad f_k(x) = \min\left\{ d(x, x_{n_k}) - \frac{\theta_k}{3}, 0 \right\}.
$$
Observe that the functions $f_k$, $k \in \mathbb{N}$, have disjoint supports that do not intersect with the set
$\Omega$. Therefore the function $f$ is correctly defined, and attains a global minimum on the set $\Omega$ 
(in fact, $f(x) \equiv 0$ on $\Omega$). 

Arguing in the same way as in the case when the set $M_1$ is infinite, one can show that the function $f$ is Lipschitz
continuous on $X$, and for any $\lambda \ge 0$ there exists $k_0 \in \mathbb{N}$ such that for any $k \ge k_0$ one has
$(F_{\lambda})^{\downarrow}_{A \times P} (x_{n_k}, p_{n_k}) \ge 0$, which contradicts the fact that $F_{\lambda}$ is
feasibility-preserving on the set $C$.	 
\end{proof}

\begin{remark}
Recall that throughout this article we suppose that the exists a globally optimal solution of the problem
$(\mathcal{P})$,
i.e. we suppose that the function $f$ attains a global minimum on the set $\Omega$, since otherwise the definition of
exact penalty function is meaningless. That is why, when we construct a function $f$ in the proof of the theorem above,
we must ensure that it attains a global minimum on $\Omega$.
\end{remark}

Let us show that condition (\ref{NessFPCond}) is not only necessary but also sufficient for the penalty function
$F_{\lambda}$ to be feasibility-preserving.

\begin{proposition} \label{Prp_SuffCondFeasPres}
Let $C \subset A$ be a nonempty set, and the function $f$ be Lipschitz continuous on an open set $V$ containing 
the set $C$. Suppose that there exists $a > 0$ such that
$$
  \varphi(\cdot, p)^{\downarrow}_A (x) \le - a \quad
  \forall (x, p) \in \big\{ (y, q) \in (C \times P)_{\inf} \mid \varphi(y, \cdot)^{\downarrow}(q) \ge 0 \big\}.
$$
Then the penalty function $F_{\lambda}$ is feasibility-preserving on the set $C$. Moreover, 
$\lambda_{fp}(C) \le L / a$, where $L$ is a Lipschitz constant of $f$ on $V$.
\end{proposition}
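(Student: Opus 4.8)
The plan is to prove the following quantitative statement, from which both assertions follow at once: \emph{for every $\lambda > L/a$, the penalty function $F_{\lambda}$ has no inf-stationary point with respect to $A \times P$ lying in $(C \times P) \setminus (\Omega \times \{ p_0 \})$.} Granting this, every $\lambda_0 > L/a$ witnesses the feasibility preservation of $F_{\lambda}$ on $C$, so $F_{\lambda}$ is feasibility-preserving on $C$ and $\lambda_{fp}(C) \le L/a$.

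First I would fix $\lambda > L/a$ and an arbitrary point $(x,p) \in (C \times P) \cap \dom F_{\lambda}$ with $(x,p) \notin \Omega \times \{ p_0 \}$, and show $(F_{\lambda})^{\downarrow}_{A \times P}(x,p) < 0$. Since $f$ is Lipschitz continuous on the open set $V \supseteq C$, the value $f(x)$ is finite; hence finiteness of $F_{\lambda}(x,p) = f(x) + \lambda \varphi(x,p)$ forces $\varphi(x,p) < + \infty$, i.e. $(x,p) \in (C \times P)_{\inf}$. By Lemma~\ref{Lemma_PartialRSD} applied with $A_X = A$ and $A_Y = P$, it suffices to make one of the two partial rates $F_{\lambda}(\cdot,p)^{\downarrow}_A(x)$, $F_{\lambda}(x,\cdot)^{\downarrow}_P(p)$ strictly negative.

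Then I would distinguish two cases according to the sign of $\varphi(x,\cdot)^{\downarrow}(p)$. If $\varphi(x,\cdot)^{\downarrow}(p) < 0$, then, as $F_{\lambda}(x,\cdot)$ differs from $\lambda \varphi(x,\cdot)$ by the finite constant $f(x)$, one has $F_{\lambda}(x,\cdot)^{\downarrow}_P(p) = \lambda \varphi(x,\cdot)^{\downarrow}(p) < 0$ (using $\lambda > 0$), and Lemma~\ref{Lemma_PartialRSD} yields $(F_{\lambda})^{\downarrow}_{A \times P}(x,p) < 0$; note this branch uses nothing beyond $\lambda > 0$. If $\varphi(x,\cdot)^{\downarrow}(p) \ge 0$, then $(x,p)$ belongs to the set appearing in the hypothesis, so $\varphi(\cdot,p)^{\downarrow}_A(x) \le -a$; in particular $x$ is not an isolated point of $A$, whence Lipschitz continuity of $f$ on the neighbourhood $V$ of $x$ gives $f^{\uparrow}_A(x) \le L$ directly from the definition of the rate of steepest ascent. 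Applying Lemma~\ref{Lmm_SDD_SumEstimate} to $f$ and $\lambda \varphi(\cdot,p)$ — the required ``correctly defined'' conditions hold because $\varphi \ge 0$ and $f$ is finite near $x$ — I obtain
$$
  F_{\lambda}(\cdot,p)^{\downarrow}_A(x) \le f^{\uparrow}_A(x) + \lambda \varphi(\cdot,p)^{\downarrow}_A(x) \le L - \lambda a < 0 ,
$$
and Lemma~\ref{Lemma_PartialRSD} again gives $(F_{\lambda})^{\downarrow}_{A \times P}(x,p) \le F_{\lambda}(\cdot,p)^{\downarrow}_A(x) < 0$. Collecting the two cases proves the displayed statement, and the bound on $\lambda_{fp}(C)$ as above.

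There is no substantial obstacle here: the argument is a careful combination of Lemmas~\ref{Lemma_PartialRSD} and~\ref{Lmm_SDD_SumEstimate} with the elementary bound $f^{\uparrow}_A \le L$. The only points that need attention are bookkeeping ones — verifying that the sums in Lemma~\ref{Lmm_SDD_SumEstimate} are correctly defined, and the degenerate possibility that $x$ (or $p$) is an isolated point of $A$ (or $P$), which is automatically excluded on the relevant branch by the convention that the rate of steepest descent equals $+\infty$ at isolated points, so that a strictly negative partial rate never occurs there.
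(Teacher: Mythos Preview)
Your proof is correct and follows essentially the same approach as the paper: split on the sign of $\varphi(x,\cdot)^{\downarrow}(p)$, use Lemma~\ref{Lemma_PartialRSD} to pass to a partial rate, and in the second case bound $F_{\lambda}(\cdot,p)^{\downarrow}_A(x)$ by $L - \lambda a$. The only cosmetic difference is that the paper unfolds the sequence argument by hand in the second case, whereas you invoke Lemma~\ref{Lmm_SDD_SumEstimate} together with the Lipschitz bound $f^{\uparrow}_A(x) \le L$; the content is identical.
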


\begin{proof}
Let $(x, p) \in (C \times P)_{\inf}$ be arbitrary. If $\varphi(x, \cdot)^{\downarrow}(p) < 0$, then with the use of
Lemma~\ref{Lemma_PartialRSD} one gets that for any $\lambda > 0$ the following inequalities hold true
$$
  (F_{\lambda})^{\downarrow}_{A \times P} (x, p) \le F_{\lambda}(x, \cdot)^{\downarrow}(p) =
  \lambda \varphi(x, \cdot)^{\downarrow}(p) < 0.
$$
Suppose, now, that $\varphi(x, \cdot)^{\downarrow}(p) \ge 0$. Then by the assumption of the proposition
there exists a sequence $\{ x_n \} \subset A$ converging to the point $x$ such that
$$
  \varphi(\cdot, p)^{\downarrow}_A (x) = \lim_{n \to \infty} \frac{\varphi(x_n, p) - \varphi(x, p)}{d(x_n, x)} \le - a.
$$
Since $x \in C$, one can suppose that $\{ x_n \} \subset V$. Hence $f(x_n) - f(x) \le L d(x_n, x)$ for all 
$n \in \mathbb{N}$. Consequently, for any $\lambda > 0$ one gets
\begin{multline*}
  (F_{\lambda})^{\downarrow}_{A \times P} (x, p) \le F_{\lambda}(\cdot, p)^{\downarrow}_A(x) \le
  \liminf_{n \to \infty} \frac{F_{\lambda}(x_n, p) - F_{\lambda}(x, p)}{d(x_n, x)} \le \\
  \le L + \lambda \lim_{n \to \infty} \frac{\varphi(x_n, p) - \varphi(x, p)}{d(x_n, x)} \le
  L - \lambda a.
\end{multline*}
Therefore for any $\lambda > L / a$ one has $(F_{\lambda})^{\downarrow}_{A \times P} (x, p) < 0$, which implies
that the penalty function $F_{\lambda}$ is feasibility-preserving on the set $C$ and $\lambda_{fp}(C) \le L / a$.
 
\end{proof}

\subsection{Zero duality gap}

Sometimes, it might be difficult to verify whether a given parametric penalty function is exact (or
feasibility-preserving). In such cases, one can try to characterize a quality of the chosen penalty function via
different notions, the most important of which is the \emph{zero duality gap property}. Recall that the zero duality
gap property is said to hold true for the penalty function $F_{\lambda}(x, p) = f(x) + \lambda \varphi(x, p)$ if
$$
  \sup_{\lambda \ge 0} \inf_{(x, p) \in A \times P} F_{\lambda}(x, p) = \inf_{x \in \Omega} f(x).
$$
Note that if the penalty function $F_{\lambda}$ is exact, then the zero duality gap property obviously holds for
$F_{\lambda}$. However, as it easy to see, the converse is not true.

In this subsection, we obtain necessary and sufficient conditions for the zero duality gap property to hold true. These
conditions are similar to the ones for a standard (non-parametric) penalty function 
(see, e.g., \cite{RubinovYang,RubinovHuangYang}), and are expressed in terms of the lower semicontinuity of the optimal
value function of a perturbed optimization problem. 

\begin{remark} \label{Rmrk_ZeroDualityGap_ImageSpaceAnalysis}
It should be noted that although the fact that the zero duality gap property is equivalent to the lower semicontinuity
of the optimal value function (perturbation function) is well-known in constrained optimization, the validity of this
statement in the case of parametric penalty functions does not follow from any existing results. For instance, it does
not follow from the general results from the image space analysis (see~\cite{ZhuLi} and references therein). In
particular, it does not follow from Theorems~3.3 and 3.4 in \cite{ZhuLi} due to the fact that assumptions $\mathcal{A}$,
$\mathcal{B}$ and $\mathcal{C}$ from \cite{ZhuLi} need not be satisfed for a parametric penalty function. Note that
singular penalty functions (see Section~\ref{Section_SingPenFunc}) do not satisfy assumptions $\mathcal{B}$ and
$\mathcal{C}$, while smoothing penalty functions (see Section~\ref{Section_SmoothingPenFunc}) do not satisfy assumption
$\mathcal{C}$ from \cite{ZhuLi}.
\end{remark}

Consider the following perturbation of the problem $(\mathcal{P})$:
$$
  \min \quad f(x) \quad \text{subject to} \quad \inf_{p \in P} \varphi(x, p) \le \eta, \quad x \in A
  \eqno{(\mathcal{P}_{\eta})},
$$
where $\eta \ge 0$ is a perturbation parameter. For any $\eta \ge 0$ denote 
$$
  \Omega(\eta) = \big\{ x \in A \mid \inf_{p \in P} \varphi(x, p) \le \eta \big\}, \quad
  \beta(\eta) = \inf_{x \in \Omega(\eta)} f(x).
$$
Thus, $\Omega(\eta)$ is the set of feasible points of the problem ($\mathcal{P}_{\eta}$), while $\beta(\eta)$ is
the \emph{perturbation function} (or the \textit{optimal value function}) of this problem. Note that 
$\beta(0) = f^* := \min_{x \in \Omega} f(x)$, and $\beta \colon \mathbb{R}_+ \to \overline{\mathbb{R}}$ is a
non-increasing function.

\begin{theorem} \label{Thrm_DualityGap}
Let the penalty function $F_{\lambda}$ be bounded below on $A \times P$ for some $\lambda \ge 0$. Then 
the following equality holds true
$$
  \sup_{\lambda \ge 0} \inf_{(x, p) \in A \times P} F_{\lambda}(x, p) =
  \lim_{\eta \to +0} \beta(\eta).
$$
\end{theorem}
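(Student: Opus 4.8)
The plan is to set $\theta(\lambda) := \inf_{(x,p) \in A \times P} F_{\lambda}(x,p)$ and to establish separately the two inequalities between $\sup_{\lambda \ge 0} \theta(\lambda)$ and $\lim_{\eta \to +0}\beta(\eta)$. Two elementary facts will be used throughout. Since $\varphi \ge 0$, the map $\lambda \mapsto \theta(\lambda)$ is non-decreasing, so $\sup_{\lambda \ge 0}\theta(\lambda) = \lim_{\lambda \to +\infty}\theta(\lambda)$. And since $\beta$ is non-increasing (as noted just before the theorem) and $\Omega \subseteq \Omega(\eta)$ for every $\eta \ge 0$, one has $\beta(\eta) \le f^*$ for all $\eta > 0$ and $\Omega(\eta) \ne \emptyset$; hence the limit $\lim_{\eta \to +0}\beta(\eta) = \sup_{\eta > 0}\beta(\eta)$ exists and is $\le f^* < +\infty$.

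For the inequality $\sup_{\lambda \ge 0}\theta(\lambda) \le \lim_{\eta \to +0}\beta(\eta)$, I would fix $\lambda \ge 0$ and $\eta > 0$, and observe that for any $x \in \Omega(\eta)$ and any $\varepsilon > 0$ there is $p \in P$ with $\varphi(x,p) < \eta + \varepsilon$, whence $\theta(\lambda) \le f(x) + \lambda(\eta + \varepsilon)$; letting $\varepsilon \to +0$ and then taking the infimum over $x \in \Omega(\eta)$ gives $\theta(\lambda) \le \beta(\eta) + \lambda\eta$. In particular this already forces $\beta(\eta) \ge \theta(\lambda_0) - \lambda_0\eta > -\infty$, where $\lambda_0$ is a penalty parameter for which $F_{\lambda_0}$ is bounded below, so no infinite-value pathologies occur. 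Letting $\eta \to +0$ for fixed $\lambda$ yields $\theta(\lambda) \le \lim_{\eta \to +0}\beta(\eta)$, and taking the supremum over $\lambda$ closes this direction.

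The reverse inequality $\sup_{\lambda \ge 0}\theta(\lambda) \ge \lim_{\eta \to +0}\beta(\eta)$ is the substantive step, and I would argue by contradiction. If it fails, then, both quantities being finite by the preliminaries, there is $c$ with $\sup_{\lambda \ge 0}\theta(\lambda) < c < \sup_{\eta > 0}\beta(\eta)$; choose $\eta_0 > 0$ with $\beta(\eta_0) > c$, so $\beta(\eta) > c$ for all $\eta \in (0, \eta_0]$ by monotonicity. Put $\theta_0 := \theta(\lambda_0)$, which is finite and satisfies $\theta_0 \le \sup_{\lambda}\theta(\lambda) < c$, so $c - \theta_0 > 0$. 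Fix any $\lambda > \lambda_0 + (c - \theta_0)/\eta_0$. Since $\theta(\lambda) < c$, there is $(x,p) \in A \times P$ with $f(x) + \lambda\varphi(x,p) < c$. If $\varphi(x,p) \le \eta_0$, then $\inf_{p' \in P}\varphi(x,p') \le \eta_0$, so $x \in \Omega(\eta_0)$ and $f(x) \ge \beta(\eta_0) > c$, contradicting $f(x) \le f(x) + \lambda\varphi(x,p) < c$. If $\varphi(x,p) > \eta_0$, then subtracting $f(x) + \lambda_0\varphi(x,p) \ge \theta_0$ from $f(x) + \lambda\varphi(x,p) < c$ gives $(\lambda - \lambda_0)\varphi(x,p) < c - \theta_0$, i.e. $\varphi(x,p) < (c - \theta_0)/(\lambda - \lambda_0) < \eta_0$, again a contradiction. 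Hence $\theta(\lambda) \ge c$ for every such $\lambda$, contradicting $\sup_{\lambda}\theta(\lambda) < c$; combining the two inequalities gives the asserted identity.

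I expect the only point requiring genuine care — as opposed to routine bookkeeping with infima and the monotonicity of $\theta$ and $\beta$ — to be the use of the hypothesis that $F_{\lambda}$ is bounded below for some $\lambda$: it is precisely what rules out the ``infeasible branch'' $\varphi(x,p) > \eta_0$ for large $\lambda$, i.e. it prevents near-minimizers of $F_{\lambda}$ from escaping with $f(x) \to -\infty$ while $\varphi(x,p)$ stays bounded away from $0$. Without this assumption the statement is false, so that is where it must be spent.
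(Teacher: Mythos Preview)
Your proof is correct and follows essentially the same approach as the paper. The paper also splits into two inequalities: for $\sup_\lambda h(\lambda) \ge \lim_\eta \beta(\eta)$ it invokes Lemma~\ref{Lmm_Reduction} to force near-minimizers of $F_\lambda$ into $\Omega_\eta$ for large $\lambda$ (exactly your Case~2 argument, just packaged as a lemma rather than inlined in a contradiction), while for $\sup_\lambda h(\lambda) \le \lim_\eta \beta(\eta)$ it runs a specific sequence with $\lambda_k = 1/\sqrt{\eta_k}$ to obtain $h(\lambda_k) \le \beta(\eta_k) + 3\sqrt{\eta_k}$. Your route to the latter via the clean inequality $\theta(\lambda) \le \beta(\eta) + \lambda\eta$ followed by $\eta \to 0$ at fixed $\lambda$ is slightly more direct than the paper's sequence argument, but the substance is the same.
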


\begin{proof}
Denote
\begin{equation} \label{InfOfPenFunc}
  h(\lambda) = \inf_{(x, p) \in A \times P} F_{\lambda}(x, p) \quad \forall \lambda \ge 0.
\end{equation}
Observe that $h(\lambda) \le f^*$ for all $\lambda \ge 0$ due to the fact that $F_{\lambda}(x, p_0) = f(x)$ 
for any $x \in \Omega$.

Choose an arbitrary $\eta > 0$. From Lemma~\ref{Lmm_Reduction} it follows that there exists $\lambda_0 \ge 0$ such that
$$
  F_{\lambda}(x, p) \ge f^* \quad \forall (x, p) \notin \Omega_{\eta} \quad \forall \lambda \ge \lambda_0.
$$
Therefore for all $\lambda \ge \lambda_0$ and for any $\varepsilon > 0$ there exists $(x, p) \in \Omega_{\eta}$ such
that
$$
  f^* \ge h(\lambda) \ge F_{\lambda}(x, p) - \varepsilon \ge f(x) - \varepsilon
$$
(if $h(\lambda) = f^*$, then one can choose $(x, p) = (x^*, p_0) \in \Omega_{\eta}$, where $x^*$ is a globally optimal
solution of the problem $(\mathcal{P})$). Note that $\varphi(x, p) < \eta$, since $(x, p) \in \Omega_{\eta}$, which
implies that $x$ belongs to the set of feasible points $\Omega(\eta)$ of the problem ($\mathcal{P}_{\eta}$).
Consequently, $f(x) \ge \beta(\eta)$, which yields
$$
  h(\lambda) \ge f(x) - \varepsilon \ge \beta(\eta) - \varepsilon \quad \forall \lambda \ge \lambda_0.
$$
Taking the supremum over all $\lambda \ge 0$, and taking into account the fact that $\varepsilon > 0$ is arbitrary one
obtains that
$$
  \sup_{\lambda \ge 0} h(\lambda) \ge \beta(\eta),
$$
which yields
\begin{equation} \label{LimValPenFunc_LowerEstim}
  \sup_{\lambda \ge 0} h(\lambda) \ge \limsup_{\eta \to +0} \beta(\eta)
\end{equation}
due to the fact that $\eta > 0$ was chosen arbitrarily.

To prove the reverse inequality, choose a decreasing sequence $\{ \eta_k \} \subset (0, +\infty)$ such that 
$\eta_k \to 0$ as $k \to \infty$, and denote $\lambda_k = 1 / \sqrt{\eta_k}$. By the definition of the perturbation
function $\beta$, for any $k \in \mathbb{N}$ there exists $x_k \in A$ such that
$$
  f(x_k) \le \beta(\eta_k) + \sqrt{\eta_k}, \quad \inf_{p \in P} \varphi(x_k, p) \le \eta_k.
$$
Clearly, for any $k \in \mathbb{N}$ there exists $p_k \in P$ such that $\varphi(x_k, p_k) \le 2 \eta_k$.
Consequently, one has
$$
  h(\lambda_k) \le F_{\lambda_k}(x_k, p_k) = f(x_k) + \lambda_k \varphi(x_k, p_k) \le
  \beta(\eta_k) + \sqrt{\eta_k} + 2 \lambda_k \eta_k = \beta(\eta_k) + 3 \sqrt{\eta_k}.
$$
Passing to the limit inferior as $k \to \infty$ one gets
$$
  \liminf_{k \to \infty} h(\lambda_k) \le \liminf_{k \to \infty} \beta(\eta_k).
$$
Note that the function $h(\lambda)$ is non-decreasing due to the fact that the penalty function $F_{\lambda}$ is
non-decreasing in $\lambda$ (see~(\ref{InfOfPenFunc})). Hence 
$\sup_{\lambda \ge 0} h(\lambda) = \lim_{\lambda \to \infty} h(\lambda)$, which implies that
$$
  \sup_{\lambda \ge 0} h(\lambda) = \lim_{k \to \infty} h(\lambda_k) \le \liminf_{k \to \infty} \beta(\eta_k),
$$
due to the fact that $\lambda_k = 1 / \sqrt{\eta_k}$ and $\eta_k \to +0$ as $k \to \infty$. Taking into account the fact
that the sequence $\{ \eta_k \}$ was chosen arbitrarily one obtains that
\begin{equation} \label{LimValPenFunc_UpperEstim}
  \sup_{\lambda \ge 0} h(\lambda) \le \liminf_{\eta \to +0} \beta(\eta)
\end{equation}
Combining (\ref{LimValPenFunc_LowerEstim}) and (\ref{LimValPenFunc_UpperEstim}) one obtains the desired result.	 
\end{proof}

As a simple corollary to the theorem above we obtain a characterization of the zero duality gap property in terms of the
perturbation function $\beta$. In order to formulate this result note that if the zero duality gap property holds for
$F_{\lambda}$, then $F_{\lambda}$ is bounded below on $A \times P$ for some $\lambda \ge 0$ by the fact that 
$f^* > - \infty$.

\begin{theorem} \label{Thrm_ZeroDualityGapCharacterization}
The zero duality gap property holds true for the penalty function $F_{\lambda}$ if and only if $F_{\lambda}$ is bounded
below on $A \times P$ for some $\lambda \ge 0$, and the perturbation function $\beta$ is lower semicontinuous 
at the origin.
\end{theorem}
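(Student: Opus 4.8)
The plan is to derive everything from Theorem~\ref{Thrm_DualityGap} together with elementary properties of the non-increasing perturbation function $\beta$. The first step is to record a fact that needs no hypotheses: since $\beta(0) = f^* = \inf_{x \in \Omega} f(x)$ and $\beta$ is non-increasing on $\mathbb{R}_+$, one has $\beta(\eta) \le \beta(0)$ for every $\eta > 0$, hence $\limsup_{\eta \to +0} \beta(\eta) \le \beta(0) = f^*$. Consequently $\beta$ is lower semicontinuous at the origin if and only if $\liminf_{\eta \to +0} \beta(\eta) \ge \beta(0)$, which, combined with the preceding inequality, is equivalent to the assertion that the limit $\lim_{\eta \to +0} \beta(\eta)$ exists and equals $f^*$.

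For the ``if'' direction I would assume that $F_{\lambda}$ is bounded below on $A \times P$ for some $\lambda \ge 0$ and that $\beta$ is l.s.c.\ at the origin. Then Theorem~\ref{Thrm_DualityGap} applies and, together with the equivalence established above, gives
$$
  \sup_{\lambda \ge 0} \inf_{(x, p) \in A \times P} F_{\lambda}(x, p) = \lim_{\eta \to +0} \beta(\eta) = f^* = \inf_{x \in \Omega} f(x),
$$
which is precisely the zero duality gap property for $F_{\lambda}$.

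For the ``only if'' direction I would assume the zero duality gap property, i.e.\ $\sup_{\lambda \ge 0} \inf_{(x, p) \in A \times P} F_{\lambda}(x, p) = f^*$, and recall that $f^* > - \infty$ because $f$ attains a global minimum on $\Omega$. Writing $h(\lambda) = \inf_{(x, p) \in A \times P} F_{\lambda}(x, p)$ and using that $F_{\lambda}$ is non-decreasing in $\lambda$, one sees that $h$ is non-decreasing; were $h(\lambda) = - \infty$ for all $\lambda \ge 0$, the supremum would be $-\infty \ne f^*$, a contradiction. Hence $F_{\lambda}$ is bounded below on $A \times P$ for some $\lambda \ge 0$, so Theorem~\ref{Thrm_DualityGap} is available and yields $\lim_{\eta \to +0} \beta(\eta) = \sup_{\lambda \ge 0} h(\lambda) = f^* = \beta(0)$. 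By the first step this is exactly the lower semicontinuity of $\beta$ at the origin, and the boundedness below has already been shown, so the proof is complete.

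The only mild subtlety — not really an obstacle — is noticing that the monotonicity of $\beta$ makes the bound $\limsup_{\eta \to +0} \beta(\eta) \le \beta(0)$ automatic, so that lower semicontinuity of $\beta$ at the origin collapses to the single statement ``$\lim_{\eta \to +0} \beta(\eta)$ exists and equals $f^*$''; once this reformulation is in hand, the theorem is an immediate consequence of Theorem~\ref{Thrm_DualityGap}.
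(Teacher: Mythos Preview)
Your proof is correct and follows essentially the same approach as the paper: both derive the result from Theorem~\ref{Thrm_DualityGap} together with the observation that, because $\beta$ is non-increasing, lower semicontinuity of $\beta$ at the origin is equivalent to $\lim_{\eta \to +0}\beta(\eta)=\beta(0)$. Your treatment is simply a bit more explicit in separating the two implications and in arguing why the zero duality gap forces boundedness below of $F_\lambda$ for some $\lambda$ (the paper records this fact in the paragraph preceding the theorem rather than inside the proof).
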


\begin{proof}
As it was mentioned above, $\beta(0) = f^* > -\infty$. Hence and from Theorem~\ref{Thrm_DualityGap} it follows that the
zero duality gap property holds true for $F_{\lambda}$ iff $F_{\lambda}$ is bounded below on $A \times P$ for some
$\lambda \ge 0$ and $\lim_{\eta \to +0} \beta(\eta) = \beta(0)$, i.e. iff $F_{\lambda}$ is bounded below on 
$A \times P$ for some $\lambda \ge 0$, and $\beta$ is continuous at the origin. It remains to note that $\beta$ is
continuous at the origin iff it is lower semicontinuous at the origin due to the fact that $\beta$ is a non-increasing
function.	 
\end{proof}

It is interesting to note that necessary and sufficient conditions for the penalty function $F_{\lambda}$ to be exact
can also be expressed in terms of the behaviour of the perturbation function $\beta$.

\begin{theorem}
The parametric penalty $F_{\lambda}$ is exact if and only if $F_{\lambda}$ is bounded below on $A \times P$ for
some $\lambda$, and the perturbation function $\beta$ is calm from below at the origin, i.e.
$$
  \liminf_{\eta \to +0} \frac{\beta(\eta) - \beta(0)}{\eta} > - \infty.
$$
\end{theorem}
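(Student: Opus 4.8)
The plan is to reduce everything to Proposition~\ref{Prp_EquivDefExPen}, which asserts that $F_\lambda$ is exact iff $\inf_{(x,p)\in A\times P} F_{\lambda_0}(x,p)\ge f^*$ for some $\lambda_0\ge 0$; since $F_\lambda(x,p_0)=f(x)$ on $\Omega$ already forces $\inf_{(x,p)\in A\times P}F_\lambda\le f^*$, exactness amounts to the existence of a \emph{finite} $\lambda_0$ with $F_{\lambda_0}(x,p)\ge f^*$ on all of $A\times P$. I would then connect this inequality to the behaviour of the perturbation function $\beta$ near the origin by the same device used in the proof of Theorem~\ref{Thrm_DualityGap}, only in a quantitative (calmness) form.

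For necessity, suppose $F_\lambda$ is exact. Proposition~\ref{Prp_EquivDefExPen} gives $\lambda_0\ge 0$ with $F_{\lambda_0}\ge f^*$ on $A\times P$; in particular $F_{\lambda_0}$ is bounded below. Fix $\eta>0$ and take any $x\in\Omega(\eta)$. Since $\inf_{p\in P}\varphi(x,p)\le\eta$, for every $\varepsilon>0$ there is $p\in P$ with $\varphi(x,p)\le\eta+\varepsilon$, whence $f(x)=F_{\lambda_0}(x,p)-\lambda_0\varphi(x,p)\ge f^*-\lambda_0(\eta+\varepsilon)$. Letting $\varepsilon\to 0$ and taking the infimum over $x\in\Omega(\eta)$ yields $\beta(\eta)\ge\beta(0)-\lambda_0\eta$, so $(\beta(\eta)-\beta(0))/\eta\ge-\lambda_0$ for every $\eta>0$ and the required liminf is $\ge-\lambda_0>-\infty$.

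For sufficiency, assume $F_{\bar\lambda}$ is bounded below on $A\times P$ for some $\bar\lambda\ge 0$, and that $\beta$ is calm from below at $0$, i.e. there are $a>0$ and $\delta_0>0$ with $\beta(\eta)\ge f^*-a\eta$ for all $\eta\in(0,\delta_0)$. I would first apply Lemma~\ref{Lmm_Reduction} with $\delta=\delta_0$ to obtain $\lambda_1\ge 0$ such that $F_\lambda(x,p)\ge f^*$ for all $(x,p)\notin\Omega_{\delta_0}$ and all $\lambda\ge\lambda_1$, and then dispose of the points $(x,p)\in\Omega_{\delta_0}$ directly. Writing $\eta=\varphi(x,p)<\delta_0$: if $\eta=0$ then $p=p_0$ and $x\in M\cap A=\Omega$, so $F_\lambda(x,p)=f(x)\ge f^*$; if $0<\eta<\delta_0$ then $x\in\Omega(\eta)$, hence $f(x)\ge\beta(\eta)\ge f^*-a\eta$ and $F_\lambda(x,p)=f(x)+\lambda\eta\ge f^*+(\lambda-a)\eta\ge f^*$ as soon as $\lambda\ge a$. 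Thus $F_\lambda\ge f^*$ on $A\times P$ for every $\lambda\ge\max\{\lambda_1,a\}$, and Proposition~\ref{Prp_EquivDefExPen} finishes the proof (and incidentally gives the bound $\lambda^*(f,\varphi)\le\max\{\lambda_1,a\}$).

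I do not expect a serious obstacle; the argument is essentially a quantitative repackaging of Theorem~\ref{Thrm_DualityGap}. The only points needing care are ensuring the calmness inequality holds uniformly on a genuine interval $(0,\delta_0)$ (automatic, since $\beta$ is non-increasing and $\beta(0)=f^*$ is finite, so $\beta$ is finite on $(0,\infty)$), and invoking Lemma~\ref{Lmm_Reduction} correctly — this is precisely where the boundedness-below hypothesis is consumed, and, exactly as in Theorem~\ref{Thrm_DualityGap}, it cannot be dropped. Notably, no semicontinuity or Lipschitz-type assumptions on $f$ or $\varphi$ are needed, in contrast with the localization-principle theorems.
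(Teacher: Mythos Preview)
Your proof is correct, but it differs from the paper's own argument. The paper proceeds in two lines: it invokes Corollary~\ref{Crlr_ReductionToStandExPen} to replace $F_\lambda$ by the non-parametric penalty $g_\lambda(x)=f(x)+\lambda\inf_{p\in P}\varphi(x,p)$ (whose perturbation function is exactly $\beta$), and then appeals to Theorem~3.24 of the companion paper \cite{Dolgopolik} for the standard calmness characterisation of exactness in the non-parametric setting. Your route is instead a direct, self-contained argument built on Proposition~\ref{Prp_EquivDefExPen} and Lemma~\ref{Lmm_Reduction}: you establish $\beta(\eta)\ge f^*-\lambda_0\eta$ from $F_{\lambda_0}\ge f^*$ for necessity, and for sufficiency you split $A\times P$ into $\Omega_{\delta_0}$ and its complement, handling the latter by Lemma~\ref{Lmm_Reduction} and the former by the calmness bound. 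Your approach has the advantage of not leaving the present paper and of producing the explicit estimate $\lambda^*(f,\varphi)\le\max\{\lambda_1,a\}$; the paper's approach is terser but relies on an external result whose proof, unpacked, would look essentially like yours applied to $g_\lambda$.
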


\begin{proof}
The validity of the theorem follows directly from Corollary~\ref{Crlr_ReductionToStandExPen}, and \cite{Dolgopolik},
Theorem~3.24.	 
\end{proof}

Various sufficient conditions for a perturbation function of a constrained optimization problem to be lower
semicontinuous at the origin are well-known in parametric optimization (see, e.g., \cite{RubinovYang}, Section~3.1.6).
Below, we provide a modification of one of these conditions that takes into account the particular structure of the
perturbed problem ($\mathcal{P}_{\eta}$).

\begin{proposition} \label{Prp_ZeroDualityGapSuffCond}
Let $A$ be closed, $f$ be l.s.c. on $\Omega$ and $\varphi$ be l.s.c. on $A \times \{ p_0 \}$. Suppose that there
exists $\eta_0 > 0$ such that the set $\{ x \in \Omega(\eta_0) \mid f(x) < f^* \}$ is relatively compact. Suppose also
that there exists a non-decreasing function $\omega \colon \mathbb{R}_+ \to \mathbb{R}_+$ such that $\omega(t) = 0$ iff 
$t = 0$ and
\begin{equation} \label{PertFunc_LowerEstimPenTerm}
  \varphi(x, p) \ge \omega(d(p, p_0)) \quad \forall p \in P.
\end{equation}
Then the perturbation function $\beta(\eta)$ is l.s.c. at the origin.
\end{proposition}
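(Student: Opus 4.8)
The plan is to prove the equivalent statement $\liminf_{\eta \to +0}\beta(\eta) \ge f^* = \beta(0)$; since $\beta$ is non-increasing we have $\beta(\eta) \le \beta(0)$ for all $\eta > 0$, so this yields $\lim_{\eta \to +0}\beta(\eta) = \beta(0)$, hence continuity of $\beta$ at the origin, and in particular the desired lower semicontinuity. I argue by contradiction. Suppose there is a sequence $\eta_k \downarrow 0$ with $\eta_k \le \eta_0$ for all $k$ and $\beta(\eta_k) \to c$ for some $c < f^*$ (the possibility $c = -\infty$, and more generally $\beta(\eta_k) = -\infty$ for infinitely many $k$, is excluded by a separate housekeeping step using the same construction below together with $f(x^*) > -\infty$, so one may assume $\beta(\eta_k) \in \mathbb{R}$). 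For each $k$ pick $x_k \in \Omega(\eta_k)$ with $f(x_k) < f^*$ and $f(x_k) \le \beta(\eta_k) + \eta_k$; this is possible because $\beta(\eta_k) = \inf_{x \in \Omega(\eta_k)} f(x) < f^*$. Since $\eta_k \le \eta_0$ we have $\Omega(\eta_k) \subseteq \Omega(\eta_0)$, hence $x_k \in \{x \in \Omega(\eta_0) \mid f(x) < f^*\}$, which is relatively compact by hypothesis. Passing to a subsequence (not relabelled) we get $x_k \to x^*$ for some $x^* \in X$, and $x^* \in A$ because $\Omega(\eta_0) \subseteq A$ and $A$ is closed.

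Next I identify the limit of the associated parameters. For each $k$, since $\inf_{p \in P}\varphi(x_k, p) \le \eta_k$, choose $p_k \in P$ with $\varphi(x_k, p_k) \le 2\eta_k$; in particular $\varphi(x_k, p_k) \to 0$. By the growth estimate \eqref{PertFunc_LowerEstimPenTerm}, $\omega(d(p_k, p_0)) \le \varphi(x_k, p_k) \to 0$, and since $\omega$ is non-decreasing with $\omega(t) = 0$ iff $t = 0$, this forces $d(p_k, p_0) \to 0$ (otherwise some subsequence stays at distance $\ge \varepsilon > 0$ from $p_0$, whence $\omega(d(p_k, p_0)) \ge \omega(\varepsilon) > 0$ along it, a contradiction). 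Thus $(x_k, p_k) \to (x^*, p_0)$ in $X \times P$. Invoking the lower semicontinuity of $\varphi$ at the point $(x^*, p_0) \in A \times \{p_0\}$ we obtain $0 \le \varphi(x^*, p_0) \le \liminf_{k \to \infty}\varphi(x_k, p_k) \le \liminf_{k \to \infty} 2\eta_k = 0$, so $\varphi(x^*, p_0) = 0$, i.e.\ $x^* \in \Omega$.

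Finally, since $f$ is l.s.c.\ on $\Omega$ and $x_k \to x^* \in \Omega$, we have $f^* \le f(x^*) \le \liminf_{k \to \infty} f(x_k)$, where the first inequality uses $f^* = \min_{\Omega} f$. Combining this with $f(x_k) \le \beta(\eta_k) + \eta_k$ and $\eta_k \to 0$ gives $f^* \le \liminf_{k \to \infty}\big(\beta(\eta_k) + \eta_k\big) = c < f^*$, which is the required contradiction. Hence $\liminf_{\eta \to +0}\beta(\eta) \ge f^*$ and $\beta$ is lower semicontinuous at the origin.

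I expect the only genuine subtlety to be the correct reading of the hypothesis ``$\varphi$ is l.s.c.\ on $A \times \{p_0\}$'': it must be understood as lower semicontinuity of $\varphi$ at each point $(x, p_0)$, $x \in A$, with respect to the product topology of $X \times P$ — not merely along sequences confined to $A \times \{p_0\}$ — exactly as this hypothesis is already used in the proof of Theorem~\ref{Th_GlobExFiniteDim}. This is what licenses passing the convergence $(x_k, p_k) \to (x^*, p_0)$ through $\varphi$; the growth condition \eqref{PertFunc_LowerEstimPenTerm} is needed precisely to guarantee $p_k \to p_0$ so that this limit point indeed lies in $A \times \{p_0\}$. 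Everything else — extracting $x^*$ from the relatively compact level set, and closing the loop with l.s.c.\ of $f$ on $\Omega$ — is routine, and the exclusion of the $\beta(\eta_k) = -\infty$ case is a minor bookkeeping matter handled as indicated above.
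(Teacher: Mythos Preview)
Your proof is correct, but it takes a genuinely different route from the paper's. The paper argues directly: given $\varepsilon > 0$, it builds an open set $V = \bigcup_{x \in \Omega} U(x)$ on which $f \ge f^* - \varepsilon$ (using l.s.c.\ of $f$ at points of $\Omega$), then considers the compact set $K = \cl\{x \in \Omega(\eta_0) \mid f(x) < f^*\} \setminus V$, which is disjoint from $\Omega$. A finite-cover argument on $K$ using l.s.c.\ of $\varphi$ on $A \times \{p_0\}$, together with the growth condition \eqref{PertFunc_LowerEstimPenTerm}, yields a uniform lower bound $\inf_{p} \varphi(x,p) \ge \overline{\eta}$ for all $x \in K$; hence any $x \in \Omega(\eta)$ with $f(x) < f^*$ and $\eta < \overline{\eta}$ must lie in $V$, giving $\beta(\eta) \ge f^* - \varepsilon$.

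Your sequential contradiction argument is more economical: you bypass the open-cover machinery entirely by extracting a single convergent subsequence from the relatively compact set, pushing the parameters $p_k$ to $p_0$ via the growth condition, and closing with two applications of lower semicontinuity. The paper's approach has the (minor) advantage of producing an explicit $\overline{\eta}$ and of never invoking sequential compactness, which keeps it cleanly in general metric spaces without any appeal to subsequences; your approach is shorter and makes the role of each hypothesis more transparent. Your reading of ``$\varphi$ l.s.c.\ on $A \times \{p_0\}$'' and ``$f$ l.s.c.\ on $\Omega$'' as semicontinuity at those points with respect to the ambient topology is exactly the one the paper uses, so there is no issue there.
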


\begin{proof}
Fix an arbitrary $\varepsilon > 0$. Our aim is to find $\overline{\eta} > 0$ such that 
$\beta(\eta) \ge \beta(0) - \varepsilon$ for all $\eta \in (0, \overline{\eta})$.

By definition, for any $x \in \Omega$ one has $f(x) \ge f^*$. Applying the lower semicontinuity of the function $f$ on
the set $\Omega$ one obtains that for any $x \in \Omega$ there exists a neighbourhood $U(x)$ of $x$ such that
$f(x) \ge f^* - \varepsilon$ for all $x \in U(x)$. Denote
$$
  V = \bigcup_{x \in \Omega} U(x), \quad C = \cl\big\{ x \in \Omega(\eta_0) \mid f(x) < f^* \big\}
$$
and $K = C \setminus V$, where ``cl'' stands for the closure operator. By the assumption of the theorem the set $C$ is
compact, while the set $V$ is open by construction. Consequently, the set $K$ is compact. Note also that $\Omega \subset
V$, and $C \subset A$ due to the fact that $A$ is closed, which implies that $K \subset A$ and 
$K \cap \Omega = \emptyset$. 

The function $\varphi(\cdot, p_0)$ is l.s.c. on the set $A$, since $\varphi$ is l.s.c. on the set $A \times \{ p_0 \}$.
Hence and from the facts that the set $K$ is compact, and $\varphi(x, p_0) > 0$ for any $x \notin \Omega$ one
obtains that there exists $\delta > 0$ such that $\varphi(x, p_0) \ge \delta$ for all $x \in K$. Applying the lower
semicontinuity of $\varphi$ on $A \times \{ p_0 \}$, and the compactness of the set $K$ again, one
can easily verify that there exists $r > 0$ such that
$$
  \varphi(x, p) \ge \frac{\delta}{2} \quad \forall (x, p) \in K \times B(p_0, r).
$$
On the other hand, if $x \in K$, but $p \notin B(p_0, r)$, then taking into account
(\ref{PertFunc_LowerEstimPenTerm}) one gets $\varphi(x, p) \ge \omega(r) > 0$, which yields
\begin{equation} \label{LowerEstimOfInfPenTerm}
  \inf_{p \in P} \varphi(x, p) \ge \min\left\{ \frac{\delta}{2}, \omega(r) \right\} \quad \forall x \in K.
\end{equation}
Denote $\overline{\eta} = \min\{ \eta_0, \delta / 2, \omega(r) \}$, and choose arbitrary $\eta < \overline{\eta}$ and 
$x \in \Omega(\eta)$. Observe that $x \in \Omega(\eta_0)$ due to the fact that $\eta < \overline{\eta} \le \eta_0$.
Furthermore, if $f(x) < f^*$, then $x \in C \cap V$, since otherwise $x \in K$, which is impossible due to
(\ref{LowerEstimOfInfPenTerm}) and the choice of $\overline{\eta}$. Hence and from the definition of the set $V$ it
follows that
$$
  f(x) \ge f^* - \varepsilon \quad \forall x \in \Omega(\eta) \quad \forall \eta \in (0, \overline{\eta}).
$$
Taking the infimum over all $x \in \Omega(\eta)$ one gets
$$
  \beta(\eta) = \inf_{x \in \Omega(\eta)} f(x) \ge f^* - \varepsilon = \beta(0) - \varepsilon \quad
  \forall \eta \in (0, \overline{\eta}).
$$
Thus, $\beta$ is l.s.c. at the origin.	 
\end{proof}

\begin{remark}
Note that if $X$ is a finite dimensional normed space, then, as it is well-known, the set 
$\{ x \in \Omega(\eta) \mid f(x) < f^* \}$ is relatively compact iff it is bounded. In particular, in this case it is
sufficient to suppose that either the set $\Omega(\eta)$ is bounded for some $\eta > 0$ or the function $f$ is coercive,
i.e. $f(x) \to + \infty$ as $\| x \| \to \infty$.
\end{remark}

\subsection{Minimizing sequences}

Let us discuss a computational aspect of the parametric penalty functions method. In practice, penalty functions can be
applied in the following way. One chooses an initial value $\lambda_1$ of the penalty parameter, and then finds
an unconstrained minimum of the penalty function. Then one increases the value of the penalty parameter, and minimizes
the penalty function again using the point obtained on the first step as the initial guess. This process is repeated
until a ``good'' approximation of a solution of the original problem is found. From the theoretical point of view, one
can look at this process as a procedure that generates a sequence $\{ (x_n, p_n) \}$ such that
\begin{equation} \label{MinimizingSequenceDef}
  F_{\lambda_n}(x_n, p_n) \le \inf_{(x, p) \in A \times P} F_{\lambda_n}(x, p) + \varepsilon,
\end{equation}
where $\lambda_n \to + \infty$, and $\varepsilon > 0$ is a given tolerance. Below, we study the important question of
how the sequence $\{ (x_n, p_n) \}$ behaves as $n \to \infty$. 

It should be noted that although there are many general results on convergence of penaly/barrier methods (see, e.g.,
\cite{FiaccoMcCormic,AuslenderCominettiHaddou,BenTal,Auslender}), none of them can be applied in the case of parametric
penalty functions. Nevertheless, the proofs of the theorems below are very similar to the proof of convergence of
penalty/barrier methods.

Let an increasing unbounded sequence $\{ \lambda_n \} \subset (0, + \infty)$ be fixed. The theorem below sharpens and
significantly generalizes Theorem~5.2 and Corollary~5.1 from \cite{WangMaZhou}.

\begin{theorem} \label{Thrm_MinimizingSequences}
Let a sequence $\{ (x_n, p_n) \} \subset A \times P$ satisfy the inequality
$$
  F_{\lambda_n}(x_n, p_n) \le \inf_{(x, p) \in A \times P} F_{\lambda_n}(x, p) + \varepsilon \quad
  \forall n \in \mathbb{N}
$$
for some $\varepsilon > 0$. Suppose also that the function $F_{\lambda}$ is bounded below on $A \times P$ for some
$\lambda \ge 0$. Then the following statements hold true:
\begin{enumerate}
\item{$\varphi(x_n, p_n) \to 0$ as $n \to \infty$;
\label{Item_PhiConv}
}

\item{if there exists a non-decreasing function $\omega \colon \mathbb{R}_+ \to \mathbb{R}_+$ such that $\omega(t) = 0$
iff $t = 0$ and $\varphi(x, p) \ge \omega(d(p, p_0))$ for all $p \in P$, then $p_n \to p_0$ as $n \to \infty$;
}

\item{if $A$ is closed, and $\varphi$ is l.s.c. on $A \times P$, then a cluster point of the sequence 
$\{ (x_n, p_n) \}$ (if exists) belongs to the set $\Omega \times \{ p_0 \}$.
}

\item{the following inequalities hold true
$$
  \lim_{\eta \to +0} \beta(\eta) \le \liminf_{n \to \infty} f(x_n) \le
  \limsup_{n \to \infty} f(x_n) \le \lim_{\eta \to +0} \beta(\eta) + \varepsilon,
$$
where $\beta$ is the perturbation function of the problem ($\mathcal{P}_{\eta}$);
\label{Item_FConv}
}

\item{if the zero duality gap property holds true, then 
$f^* \le \liminf_{n \to \infty} f(x_n) \le \limsup_{n \to \infty} f(x_n) \le f^* + \varepsilon$;
\label{Item_FConv_ZDG}
}
\end{enumerate}
\end{theorem}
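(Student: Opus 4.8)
The plan is to derive all five assertions from one elementary estimate obtained by combining the hypothesis with the assumed lower boundedness of $F_{\lambda_0}$ on $A \times P$ for some $\lambda_0 \ge 0$. Fix $c \in \mathbb{R}$ with $F_{\lambda_0}(x, p) \ge c$ on $A \times P$. Since $(\mathcal{P})$ has a globally optimal solution $x^*$ and $F_{\lambda_n}(x^*, p_0) = f(x^*) = f^*$, we have $\inf_{(x, p) \in A \times P} F_{\lambda_n}(x, p) \le f^*$, hence $F_{\lambda_n}(x_n, p_n) \le f^* + \varepsilon$; in particular $f(x_n)$ and $\varphi(x_n, p_n)$ are finite. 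For $n$ large enough that $\lambda_n > \lambda_0$ we get $F_{\lambda_n}(x_n, p_n) = F_{\lambda_0}(x_n, p_n) + (\lambda_n - \lambda_0) \varphi(x_n, p_n) \ge c + (\lambda_n - \lambda_0) \varphi(x_n, p_n)$, whence $0 \le \varphi(x_n, p_n) \le (f^* + \varepsilon - c) / (\lambda_n - \lambda_0) \to 0$. This is statement~\ref{Item_PhiConv}.

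The second and third statements follow at once. For the second, $\omega(d(p_n, p_0)) \le \varphi(x_n, p_n) \to 0$ and $\omega$ is non-decreasing with $\omega(t) = 0$ only at $t = 0$, so no subsequence of $\{ d(p_n, p_0) \}$ can stay bounded away from $0$; thus $p_n \to p_0$. For the third, if $(x_{n_k}, p_{n_k}) \to (\overline{x}, \overline{p})$, then $\overline{x} \in A$ because $A$ is closed, and lower semicontinuity of $\varphi$ together with $\varphi \ge 0$ gives $0 \le \varphi(\overline{x}, \overline{p}) \le \liminf_k \varphi(x_{n_k}, p_{n_k}) = 0$, so $\overline{p} = p_0$ and $\overline{x} \in M \cap A = \Omega$.

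For statement~\ref{Item_FConv} I would invoke Theorem~\ref{Thrm_DualityGap}. Put $h(\lambda) = \inf_{(x, p) \in A \times P} F_\lambda(x, p)$; it is non-decreasing, so $\sup_{\lambda \ge 0} h(\lambda) = \lim_{\lambda \to \infty} h(\lambda)$, and by that theorem this common value is $\beta_0 := \lim_{\eta \to +0} \beta(\eta)$, which is finite since $c \le h(\lambda_0) \le \beta_0 \le f^*$. The upper estimate is direct: $f(x_n) \le F_{\lambda_n}(x_n, p_n) \le h(\lambda_n) + \varepsilon \le \beta_0 + \varepsilon$, so $\limsup_n f(x_n) \le \beta_0 + \varepsilon$. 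For the lower estimate, write $\eta_n := \varphi(x_n, p_n)$; by statement~\ref{Item_PhiConv}, $\eta_n \to 0$, and since $\inf_{p \in P} \varphi(x_n, p) \le \eta_n$ we have $x_n \in \Omega(\eta_n)$, hence $f(x_n) \ge \beta(\eta_n)$. Now fix $\eta > 0$: for all large $n$ one has $\eta_n < \eta$, so $\Omega(\eta_n) \subseteq \Omega(\eta)$ and $\beta(\eta_n) \ge \beta(\eta)$, giving $f(x_n) \ge \beta(\eta)$ eventually, so $\liminf_n f(x_n) \ge \beta(\eta)$; letting $\eta \to +0$ and using that $\beta$ is non-increasing (so $\sup_{\eta > 0} \beta(\eta) = \beta_0$) yields $\liminf_n f(x_n) \ge \beta_0$.

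Finally, statement~\ref{Item_FConv_ZDG} is the special case of statement~\ref{Item_FConv} in which the zero duality gap holds: by Theorem~\ref{Thrm_ZeroDualityGapCharacterization} this makes $\beta$ lower semicontinuous, hence continuous, at the origin, so $\beta_0 = \beta(0) = f^*$, and the chain in statement~\ref{Item_FConv} becomes $f^* \le \liminf_n f(x_n) \le \limsup_n f(x_n) \le f^* + \varepsilon$. The only delicate point is the lower estimate in statement~\ref{Item_FConv}: one must respect the monotonicity of the perturbation function $\beta$ (non-increasing), so that passing from $\beta(\eta_n)$ with $\eta_n \to 0$ to $\lim_{\eta \to +0} \beta(\eta)$ runs in the right direction, which is why one fixes $\eta > 0$ first and compares the feasible sets $\Omega(\eta_n) \subseteq \Omega(\eta)$ for large $n$ rather than letting $n \to \infty$ directly in $f(x_n) \ge \beta(\eta_n)$.
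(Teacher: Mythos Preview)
Your proof is correct and follows essentially the same approach as the paper: both use the decomposition $F_{\lambda_n}=F_{\lambda_0}+(\lambda_n-\lambda_0)\varphi$ together with the bound $F_{\lambda_n}(x_n,p_n)\le f^*+\varepsilon$ to force $\varphi(x_n,p_n)\to 0$, derive statements~2 and~3 as immediate consequences, and obtain statement~\ref{Item_FConv} by combining $x_n\in\Omega(\eta)$ for large $n$ (lower estimate) with Theorem~\ref{Thrm_DualityGap} (upper estimate). Your argument for statement~\ref{Item_PhiConv} is in fact slightly cleaner than the paper's, giving the explicit bound $\varphi(x_n,p_n)\le (f^*+\varepsilon-c)/(\lambda_n-\lambda_0)$ rather than fixing an arbitrary $\sigma>0$; and for statement~\ref{Item_FConv_ZDG} you pass through Theorem~\ref{Thrm_ZeroDualityGapCharacterization}, whereas the paper appeals directly to the definition of the zero duality gap together with Theorem~\ref{Thrm_DualityGap}, but these are equivalent observations.
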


\begin{proof}
\ref{Item_PhiConv}. Fix an arbitrary $\sigma \ge 0$. By the assumption of the theorem there exists 
$\mu \ge 0$ such that $c := \inf_{(x, p) \in A \times P} F_{\mu}(x, p) > - \infty$. Consequently, for any 
$(x, p) \in A \times P$ such that $\varphi(x, p) \ge \sigma$ one has
$$
  F_{\lambda_n}(x, p) = F_{\mu}(x, p) + (\lambda_n - \mu) \varphi(x, p) \ge c + (\lambda_n - \mu) \sigma.
$$
Applying the fact that the sequence $\{ \lambda_n \}$ is increasing and unbounded one obtains that for any sufficiently
large $n \in \mathbb{N}$ and for any $(x, p)$ such that $\varphi(x, p) \ge \sigma$ the following inequalities hold
true
$$
  \lambda_n > \mu + \frac{f^* + \varepsilon - c}{\sigma}, \quad
  F_{\lambda_n}(x, p) \ge c + (\lambda_n - \mu) \sigma > f^* + \varepsilon.
$$
On the other hand, taking into account the fact that $F_{\lambda}(x, p_0) = f(x)$ for all $x \in \Omega$ and 
$\lambda \ge 0$ one gets that for any $n \in \mathbb{N}$
$$
  F_{\lambda_n}(x_n, p_n) \le \inf_{(x, p) \in A \times P} F_{\lambda_n}(x, p) + \varepsilon \le
  \inf_{x \in \Omega} F_{\lambda_n}(x, p_0) + \varepsilon = f^* + \varepsilon.
$$
Therefore for any sufficiently large $n \in \mathbb{N}$ one has $\varphi(x_n, p_n) < \sigma$, which implies that
$\varphi(x_n, p_n) \to 0$ as $n \to \infty$ due to the fact that $\sigma > 0$ was chosen arbitrarily.

Note that the second and the third assertions of the theorem are direct corollaries to the first one.

\ref{Item_FConv}. Choose an arbitrary $\eta > 0$. From the first part of the proof it follows that there exists 
$n_0 \in \mathbb{N}$ such that $\varphi(x_n, p_n) < \eta$ for all $n \ge n_0$. Hence $x_n \in \Omega(\eta)$ for any 
$n \ge n_0$, where $\Omega(\eta)$ is the set of feasible points of the problem ($\mathcal{P}_{\eta}$). Consequently, one
has $f(x_n) \ge \beta(\eta)$ for any $n \ge n_0$, which yields 
$$
  \liminf_{n \to \infty} f(x_n) \ge \lim_{\eta \to +0} \beta(\eta).
$$
On the other hand, applying Theorem~\ref{Thrm_DualityGap}, and taking into account the facts that the penalty function
$F_{\lambda}$ is non-decreasing in $\lambda$, and $f(x) \le F_{\lambda}(x, p)$ for any $(x, p) \in A \times P$ and
$\lambda \ge 0$ one obtains
\begin{multline*}
  \limsup_{n \to \infty} f(x_n) \le \limsup_{n \to \infty} F_{\lambda_n}(x_n, p_n) \le 
  \limsup_{n \to \infty} \inf_{(x, p) \in A \times P} F_{\lambda_n}(x, p) + \varepsilon = \\
  = \sup_{\lambda \ge 0} \inf_{(x, p) \in A \times P} F_{\lambda_n}(x, p) + \varepsilon =
  \lim_{\eta \to +0} \beta(\eta) + \varepsilon,
\end{multline*}
that completes the proof of the fourth assertion of the theorem.

The validity of the fifth assertion of the theorem follows directly from the fourth assertion,
Theorem~\ref{Thrm_DualityGap} and the definition of the zero duality gap property.	 
\end{proof}

\begin{corollary} \label{Crlr_MinimizingSequences}
Let a sequence $\{ (x_n, p_n) \} \subset A \times P$ satisfy the inequality
$$
  F_{\lambda_n}(x_n, p_n) \le \inf_{(x, p) \in A \times P} F_{\lambda_n}(x, p) + \varepsilon_n, \quad 
  \forall n \in \mathbb{N}
$$
for some sequence $\{ \varepsilon_n \} \subset (0, + \infty)$ such that $\varepsilon_n \to 0$ as 
$n \to \infty$. Suppose also that the zero duality gap property holds true. Then $\varphi(x_n, p_n) \to 0$ and
$f(x_n) \to f^*$ as $n \to \infty$. Moreover, if $A$ is closed, $f$ is l.s.c. on $A$, and $\varphi$ is l.s.c. on 
$A \times P$, then any cluster point of the sequence $\{ (x_n, p_n \}$ (if exists) has the form $(x^*, p_0)$, where
$x^*$ is a globally optimal solution of the problem $(\mathcal{P})$.
\end{corollary}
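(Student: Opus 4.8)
The plan is to reduce the statement to Theorems~\ref{Thrm_MinimizingSequences} and~\ref{Thrm_DualityGap}, which treat the case of a fixed tolerance~$\varepsilon$. First I would note that, since $f^* > - \infty$, the zero duality gap property forces $F_{\lambda}$ to be bounded below on $A \times P$ for some $\lambda \ge 0$ (as remarked in the excerpt just before Theorem~\ref{Thrm_ZeroDualityGapCharacterization}), so the standing hypothesis of Theorem~\ref{Thrm_MinimizingSequences} is met. Since $\varepsilon_n \to 0$, the number $\overline{\varepsilon} := \sup_n \varepsilon_n$ is finite, and the sequence $\{ (x_n, p_n) \}$ satisfies $F_{\lambda_n}(x_n, p_n) \le \inf_{(x, p) \in A \times P} F_{\lambda_n}(x, p) + \overline{\varepsilon}$ for every $n \in \mathbb{N}$. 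Hence assertion~\ref{Item_PhiConv} of Theorem~\ref{Thrm_MinimizingSequences} applies directly and yields $\varphi(x_n, p_n) \to 0$ as $n \to \infty$.

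The convergence $f(x_n) \to f^*$ I would establish by a direct two-sided estimate. For the upper bound, using $\varphi \ge 0$ and $\lambda_n \ge 0$ one has $f(x_n) \le F_{\lambda_n}(x_n, p_n) \le \inf_{(x, p) \in A \times P} F_{\lambda_n}(x, p) + \varepsilon_n \le \sup_{\lambda \ge 0} \inf_{(x, p) \in A \times P} F_{\lambda}(x, p) + \varepsilon_n = f^* + \varepsilon_n$ by the zero duality gap property, whence $\limsup_n f(x_n) \le f^*$. For the lower bound I would exploit $\varphi(x_n, p_n) \to 0$: given $\eta > 0$, for all sufficiently large $n$ one has $\inf_{p \in P} \varphi(x_n, p) \le \varphi(x_n, p_n) < \eta$, so $x_n \in \Omega(\eta)$ and $f(x_n) \ge \beta(\eta)$; letting $\eta \to +0$ gives $\liminf_n f(x_n) \ge \lim_{\eta \to +0} \beta(\eta)$, and this last limit equals $f^*$ by Theorem~\ref{Thrm_DualityGap} together with the zero duality gap. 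Combining the two bounds gives $f(x_n) \to f^*$.

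For the cluster point assertion, suppose $(x_{n_k}, p_{n_k}) \to (x^*, p^*)$ along a subsequence. Closedness of $A$ gives $x^* \in A$, and $p^* \in P$ automatically. Lower semicontinuity of $\varphi$ on $A \times P$ gives $0 \le \varphi(x^*, p^*) \le \liminf_k \varphi(x_{n_k}, p_{n_k}) = 0$; since $\varphi(x, p) = 0$ iff $p = p_0$ and $x \in M$, this forces $p^* = p_0$ and $x^* \in M$, hence $x^* \in M \cap A = \Omega$. Then lower semicontinuity of $f$ on $A$ gives $f(x^*) \le \liminf_k f(x_{n_k}) = f^*$, while membership in $\Omega$ gives $f(x^*) \ge f^*$; thus $f(x^*) = f^*$, i.e.\ $x^*$ is a globally optimal solution of $(\mathcal{P})$ and the cluster point is $(x^*, p_0)$. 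There is no genuine obstacle here; the only point requiring a moment's care is the variable tolerance: its boundedness (automatic from $\varepsilon_n \to 0$) is what permits invoking the fixed-$\varepsilon$ Theorem~\ref{Thrm_MinimizingSequences} for the feasibility part, whereas the fact $\varepsilon_n \to 0$ is precisely what upgrades the inequality $\limsup_n f(x_n) \le \lim_{\eta \to +0}\beta(\eta) + \varepsilon$ of assertion~\ref{Item_FConv} of that theorem to the sharp identity $f(x_n) \to f^*$.
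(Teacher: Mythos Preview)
Your proof is correct and follows essentially the same approach the paper has in mind: the corollary is stated without proof because it follows directly from Theorem~\ref{Thrm_MinimizingSequences}, and your argument simply spells out the details---boundedness of $\{\varepsilon_n\}$ to invoke assertion~\ref{Item_PhiConv}, then the two-sided estimate on $f(x_n)$ (your lower-bound argument is exactly the one in the proof of assertion~\ref{Item_FConv}), and finally the lower semicontinuity argument for cluster points. A slightly shorter route for the convergence $f(x_n)\to f^*$ is to apply assertion~\ref{Item_FConv_ZDG} to each tail $\{(x_n,p_n)\}_{n\ge N}$ with tolerance $\varepsilon=\sup_{n\ge N}\varepsilon_n$ and let $N\to\infty$, but your direct estimate is equally valid.
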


Note that the assumptions on the penalty function $F_{\lambda}$ in the corollary above are satisfied when the penalty
function $F_{\lambda}$ is exact. However, in the case when $F_{\lambda}$ is exact, there is no need to choose 
an unbounded sequence $\{ \lambda_n \}$, since it is sufficient to choose any $\lambda > \lambda^*(f, \varphi)$.

\begin{proposition} \label{Prp_MinimizingSeq_ExactPenFunc}
Let the penalty function $F_{\lambda}$ be exact, and let a sequence $\{ (x_n, p_n) \} \subset A \times P$ be such that
$$
  \lim_{n \to \infty} F_{\lambda_0}(x_n, p_n) = \inf_{(x, p) \in A \times P} F_{\lambda_0} (x, p)
$$
for some $\lambda_0 > \lambda^*(f, \varphi)$. Then $\varphi(x_n, p_n) \to 0$ and $f(x_n) \to f^*$ as $n \to \infty$.
\end{proposition}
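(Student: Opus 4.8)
The plan is to reduce everything to two facts about the value $\inf_{(x,p) \in A \times P} F_{\lambda_0}(x,p)$ in the presence of exactness. First I would invoke Proposition~\ref{Prp_EquivDefExPen}: since $\lambda_0 > \lambda^*(f,\varphi)$, we have $F_{\lambda_0}(x,p) \ge f^*$ for every $(x,p) \in A \times P$; combining this with the identity $F_{\lambda_0}(x,p_0) = f(x)$ for $x \in \Omega$ (which gives $\inf_{A\times P} F_{\lambda_0} \le \inf_{x\in\Omega} f(x) = f^*$), one concludes that $\inf_{(x,p)\in A\times P} F_{\lambda_0}(x,p) = f^*$. Thus the hypothesis becomes simply $F_{\lambda_0}(x_n,p_n) \to f^*$, and in particular $f(x_n) < +\infty$ and $\varphi(x_n,p_n) < +\infty$ for all sufficiently large $n$.

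Next I would introduce an auxiliary penalty parameter $\mu$ with $\lambda^*(f,\varphi) < \mu < \lambda_0$, which is possible precisely because $\lambda_0 > \lambda^*(f,\varphi)$. Applying Proposition~\ref{Prp_EquivDefExPen} again yields $F_\mu(x,p) \ge f^*$ on all of $A \times P$. Writing $F_{\lambda_0}(x_n,p_n) = F_\mu(x_n,p_n) + (\lambda_0 - \mu)\varphi(x_n,p_n) \ge f^* + (\lambda_0-\mu)\varphi(x_n,p_n)$ and rearranging gives $0 \le (\lambda_0-\mu)\varphi(x_n,p_n) \le F_{\lambda_0}(x_n,p_n) - f^*$, and since the right-hand side tends to $0$ and $\lambda_0 - \mu > 0$, we obtain $\varphi(x_n,p_n) \to 0$.

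Finally, the convergence of the objective values is immediate from the algebraic identity $f(x_n) = F_{\lambda_0}(x_n,p_n) - \lambda_0 \varphi(x_n,p_n)$ (valid once both terms on the right are finite, which holds for large $n$): the first term tends to $f^*$ by hypothesis and the second to $0$ by the previous step, so $f(x_n) \to f^*$.

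I do not expect a serious obstacle here; the whole argument is a short manipulation once the equality $\inf_{A\times P} F_{\lambda_0} = f^*$ is established. The one point requiring a little care is the choice of the intermediate parameter $\mu$: splitting off $\lambda_0 \varphi(x_n,p_n)$ directly against $f^*$ only recovers the trivial bound $F_{\lambda_0}(x_n,p_n) \ge f(x_n)$, which is not enough to force $\varphi(x_n,p_n)\to 0$; and one must not take $\mu = \lambda^*(f,\varphi)$ itself, since the exact penalty parameter need not be attained. Choosing $\mu$ strictly between $\lambda^*(f,\varphi)$ and $\lambda_0$ resolves both difficulties at once.
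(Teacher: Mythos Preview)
Your proof is correct and, for the second assertion, actually more direct than the paper's. For $\varphi(x_n,p_n)\to 0$ the two arguments are essentially identical: both split $F_{\lambda_0}$ as $F_{\mu}+(\lambda_0-\mu)\varphi$ with $\mu$ a parameter at which the infimum already equals $f^*$. The paper takes $\mu=\lambda^*(f,\varphi)$ itself, relying on the fact that $h(\lambda):=\inf_{A\times P}F_{\lambda}$ is concave (hence upper semicontinuous) in $\lambda$, so $h(\lambda^*)=f^*$; your choice of a strictly intermediate $\mu$ sidesteps this point and is perfectly fine, though your remark that ``the exact penalty parameter need not be attained'' is in fact unnecessary here. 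The real divergence is in proving $f(x_n)\to f^*$: the paper shows $\limsup f(x_n)\le f^*$ from $f\le F_{\lambda_0}$ but obtains $\liminf f(x_n)\ge f^*$ via the perturbation function $\beta$ and Theorem~\ref{Thrm_DualityGap} (exactness $\Rightarrow$ zero duality gap $\Rightarrow$ $\lim_{\eta\to 0}\beta(\eta)=f^*$). Your one-line identity $f(x_n)=F_{\lambda_0}(x_n,p_n)-\lambda_0\varphi(x_n,p_n)$ gets both bounds at once and avoids that machinery entirely; the paper's route, on the other hand, fits into the broader template used in Theorem~\ref{Thrm_MinimizingSequences} for non-exact penalty functions.
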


\begin{proof}
Choose an arbitrary $\varepsilon > 0$. By the definition of the sequence $\{ (x_n, p_n) \}$ there exists 
$n_0 \in \mathbb{N}$ such that
\begin{equation} \label{ExPenFunc_MinimizSeq}
  F_{\lambda_0}(x_n, p_n) < \inf_{(x, p) \in A \times P} F_{\lambda_0} (x, p) + 
  \varepsilon ( \lambda_0 - \lambda^*(f, \varphi) ) \quad \forall n \ge n_0.
\end{equation}
The exactness of the penalty function $F_{\lambda}$ implies that
$$
  \inf_{(x, p) \in A \times P} F_{\lambda_0} (x, p) = 
  \inf_{(x, p) \in A \times P} F_{\lambda^*(f, \varphi)} (x, p) = f^*.
$$
Consequently, for any $(x, p) \in A \times P$ such that $\varphi(x, p) \ge \varepsilon$ one has
$$
  F_{\lambda_0}(x, p) = F_{\lambda^*(f, \varphi)}(x, p) + (\lambda_0 - \lambda^*(f, \varphi)) \varphi(x, p) \ge
  f^* + (\lambda_0 - \lambda^*(f, \varphi)) \varepsilon.
$$
Hence and from (\ref{ExPenFunc_MinimizSeq}) it follows that $\varphi(x_n, p_n) < \varepsilon$ for any $n \ge n_0$,
which implies that $\varphi(x_n, p_n) \to 0$ as $n \to \infty$.

Choose arbitrary $\eta > 0$. From the first part of the proof it follows that there exists $n_0 \in \mathbb{N}$ such
that for any $n \ge n_0$ one has $\varphi(x_n, p_n) < \eta$. Consequently, $x_n \in \Omega(\eta)$ for any 
$n \ge n_0$, which implies that $f(x_n) \ge \beta(\eta)$ for all $n \ge n_0$. Therefore
$$
  \liminf_{n \to \infty} f(x_n) \ge \lim_{\eta \to +0} \beta(\eta) = f^*,
$$
where the last equality follows from Theorem~\ref{Thrm_DualityGap}, and the fact that the zero duality gap property
holds true due to the exactness of the penalty function $F_{\lambda}$. On the other hand, taking into account the fact
that $f(x) \le F_{\lambda}(x, p)$ for any $\lambda \ge 0$ and $(x, p) \in A \times P$ one obtains that
$$
  \limsup_{n \to \infty} f(x_n) \le \lim_{n \to \infty} F_{\lambda_0}(x_n, p_n) = 
  \inf_{(x, p) \in A \times P} F_{\lambda_0}(x, p) = f^*,
$$
which implies the desired result.	 
\end{proof}

Let us also obtains an interesting property of cluster points of a minimizing sequence constructed with the use of a
parametric penalty function (i.e. a sequence satisfying (\ref{MinimizingSequenceDef})) in the case when this penalty
function is not exact.

\begin{proposition} \label{Prp_ConvergenceToNonExactPoint}
Let $A$ be closed, $f$ be l.s.c. on $A$ and $\varphi$ be l.s.c. on $A \times P$. Suppose also that the zero
duality gap property holds true for the penalty function $F_{\lambda}$, and $F_{\lambda}$ is not exact. Let a sequence
$\{ (x_n, p_n) \} \subset A \times P$ satisfy the inequality
\begin{equation} \label{NonExactPenFunc_SubOptSol}
  F_{\lambda_n}(x_n, p_n) < f^* \quad \forall n \in \mathbb{N}.
\end{equation}
Then any cluster point of the sequence $\{ (x_n, p_n) \}$ (if exists) has the form $(x^*, p_0)$, where $x^*$ is a
globally optimal solution of the problem $(\mathcal{P})$ such that the penalty function $F_{\lambda}$ is not exact at
$x^*$.
\end{proposition}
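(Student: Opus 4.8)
The plan is to proceed in four steps: (i) show that $\varphi(x_n,p_n)\to 0$ along the sequence; (ii) identify any cluster point $(x^*,p^*)$ with a pair of the form $(x^*,p_0)$ having $x^*\in\Omega$; (iii) check that such an $x^*$ is globally optimal for $(\mathcal{P})$; and (iv) derive a contradiction from the assumption that $F_\lambda$ is exact at $x^*$.

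First I would prove $\varphi(x_n,p_n)\to 0$. Since the zero duality gap property holds, $F_\mu$ is bounded below on $A\times P$ for some $\mu\ge 0$; put $c=\inf_{(x,p)\in A\times P}F_\mu(x,p)>-\infty$. For a fixed $\sigma>0$, whenever $\varphi(x_n,p_n)\ge\sigma$ one has $F_{\lambda_n}(x_n,p_n)=F_\mu(x_n,p_n)+(\lambda_n-\mu)\varphi(x_n,p_n)\ge c+(\lambda_n-\mu)\sigma$, which exceeds $f^*$ once $\lambda_n$ is large enough (recall $\lambda_n\to+\infty$), contradicting (\ref{NonExactPenFunc_SubOptSol}). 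Hence $\varphi(x_n,p_n)<\sigma$ for all large $n$, and since $\sigma>0$ is arbitrary, $\varphi(x_n,p_n)\to 0$. This is essentially the computation used in the proof of Theorem~\ref{Thrm_MinimizingSequences}, adapted to the present hypothesis.

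Next, let $(x^*,p^*)$ be a cluster point, realized by a subsequence $(x_{n_k},p_{n_k})\to(x^*,p^*)$. Since $A$ is closed, $x^*\in A$ (and $p^*\in P$ automatically), so lower semicontinuity of $\varphi$ on $A\times P$ together with $\varphi\ge 0$ yields $0\le\varphi(x^*,p^*)\le\liminf_k\varphi(x_{n_k},p_{n_k})=0$; by the defining property of $\varphi$ this forces $p^*=p_0$ and $x^*\in M$, i.e. $x^*\in\Omega$. Moreover $f(x_{n_k})\le F_{\lambda_{n_k}}(x_{n_k},p_{n_k})<f^*$, so lower semicontinuity of $f$ on $A$ gives $f(x^*)\le\liminf_k f(x_{n_k})\le f^*$, and combined with $x^*\in\Omega$ this yields $f(x^*)=f^*$; thus $x^*$ is a globally optimal solution of $(\mathcal{P})$ (and $x^*\in\dom f$ since $f^*$ is finite).

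Finally I would argue by contradiction: suppose $F_\lambda$ is exact at $x^*$ and pick $\mu>\lambda^*(x^*)$, so that $(x^*,p_0)$ is a locally optimal solution of (\ref{ParamPenProb}) with $\lambda=\mu$; hence there is $r>0$ with $F_\mu(x,p)\ge F_\mu(x^*,p_0)=f(x^*)=f^*$ for all $(x,p)\in\big(U(x^*,r)\cap A\big)\times U(p_0,r)$. Because $F_\lambda(x,p)$ is non-decreasing in $\lambda$ (as $\varphi\ge 0$), the same inequality persists for every $\lambda\ge\mu$. Since $\lambda_{n_k}\to+\infty$ and $(x_{n_k},p_{n_k})\to(x^*,p_0)$, for all large $k$ we get $\lambda_{n_k}\ge\mu$ and $(x_{n_k},p_{n_k})\in\big(U(x^*,r)\cap A\big)\times U(p_0,r)$, whence $F_{\lambda_{n_k}}(x_{n_k},p_{n_k})\ge f^*$ --- contradicting (\ref{NonExactPenFunc_SubOptSol}). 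So $F_\lambda$ is not exact at $x^*$. The bulk of this is routine; the step I expect to require the most care is the last one, where the definition of exactness \emph{at a point} has to be converted into a lower bound on $F_\lambda$ valid \emph{uniformly} in $\lambda\ge\mu$ on a full product neighbourhood of $(x^*,p_0)$ in $A\times P$.
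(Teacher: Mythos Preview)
Your proof is correct and follows essentially the same route as the paper. The only cosmetic difference is that the paper packages your steps (i)--(iii) into a single appeal to Corollary~\ref{Crlr_MinimizingSequences}, by observing that $\varepsilon_n := f^* - \inf_{(x,p)\in A\times P} F_{\lambda_n}(x,p) \to 0$ (from the zero duality gap) so that the sequence $\{(x_n,p_n)\}$ is a minimizing sequence of the type treated there; step (iv) is then carried out exactly as you do it.
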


\begin{proof}
Note that $\inf_{(x, p) \in A \times P} F_{\lambda_n} (x, p) < f^*$ for all $n \in \mathbb{N}$ by virtue of
Proposition~\ref{Prp_EquivDefExPen}, and the fact that $F_{\lambda}$ is not exact. Therefore there exists 
a sequence $\{ (x_n, p_n) \}$ satisfying (\ref{NonExactPenFunc_SubOptSol}).

Denote $\varepsilon_n = f^* - \inf_{(x, p) \in A \times P} F_{\lambda_n} (x, p) > 0$ for any $n \in \mathbb{N}$. Then
$$
  F_{\lambda_n}(x_n, p_n) \le \inf_{(x, p) \in A \times P} F_{\lambda_n} (x, p) + \varepsilon_n \quad
  \forall n \in \mathbb{N}.
$$
Taking into account the fact that the zero duality gap property holds true one obtains that $\varepsilon_n \to 0$ as 
$n \to \infty$. Hence and from Corollary~\ref{Crlr_MinimizingSequences} one gets that any cluster point of the sequence
$\{ (x_n, p_n) \}$ (if exists) has the form $(x^*, p_0)$, where $x^*$ is a globally optimal solution of the problem
$(\mathcal{P})$.

Let $(x^*, p_0)$ be a cluster point of the sequence $\{ (x_n, p_n) \}$. Let us show that the penalty function
$F_{\lambda}$ is not exact at the point $x^*$. Arguing by reductio ad absurdum, suppose that $F_{\lambda}$ is exact at
$x^*$, and fix an arbitrary $\lambda_0 > \lambda^*(x^*)$. Then there exists $r > 0$ such that
$$
  F_{\lambda_0}(x, p) \ge F_{\lambda_0}(x^*, p_0) = f(x^*) = f^* \quad 
  \forall (x, p) \in \big( U(x^*, r) \cap A \big) \times U(p_0, r).
$$
Consequently, applying the facts that $\lambda_n \to \infty$ as $n \to \infty$, and $F_{\lambda}$ is non-decreasing with
respect to $\lambda$ one obtains that there exists $n_0 \in \mathbb{N}$ such that for any $n \ge n_0$ one has 
$\lambda_n \ge \lambda_0$ and
$$
  F_{\lambda_n}(x, p) \ge F_{\lambda_n}(x^*, p_0) = f^* \quad 
  \forall (x, p) \in \big( U(x^*, r) \cap A \big) \times U(p_0, r).
$$
Therefore taking into account the definition of the sequence $\{ (x_n, p_n) \}$ one gets that 
$(x_n, p_n) \notin U(x^*, r) \times U(p_0, r)$ for any $n \ge n_0$, which contradicts the fact that $(x^*, p_0)$ is a
cluster point of the sequence $\{ (x_n, p_n) \}$. Thus, $F_{\lambda}$ is not exact at the point $x^*$.	 
\end{proof}

Let us consider a simple application of the proposition above.

\begin{example}
Let $g_{\lambda}$ be the $\ell_1$ penalty function for the nonlinear programming problem
\begin{equation} \label{MathProgProblem_l1PenFunc}
  \min f(x) \quad \text{subject to} \quad a_i(x) = 0, \quad i \in I, \quad
  b_j(x) \le 0, \quad j \in J,
\end{equation}
where $I = \{ 1, \ldots, m \}$ and $J = \{1, \ldots, l\}$, i.e. let
$$
  g_{\lambda}(x) = f(x) + \lambda \Big( \sum_{i = 1}^m |a_i(x)| + \sum_{j = 1}^m \max\{ 0, b_j(x) \} \Big).
$$
Here $f, a_i, b_j \colon \mathbb{R}^d \to \mathbb{R}$ are continuously differentiable. Suppose that there exists
$\lambda_0 \ge 0$ such that the function $g_{\lambda_0}$ is coercive.

As it is well known (see, e.g., \cite{HanMangasarian}), if the Mangasarian-Fromowitz constraint qualification
(MFCQ) holds true at a locally optimal solution $x^*$ of the problem \eqref{MathProgProblem_l1PenFunc}, then the penalty
function $g_{\lambda}$ is exact at $x^*$. Hence and from Theorem~\ref{Thrm_ExactnessNessSuffCond_NonDegeneracy} it
follows that the penalty function $g_{\lambda}$ is exact, provided MFCQ holds true at every global minimum of 
the problem~\eqref{MathProgProblem_l1PenFunc}.

Let, now, $\{ \lambda_n \}$  be an increasing unbounded sequence, and let a sequence $\{ x_n \} \subset \mathbb{R}^d$
be such that $g_{\lambda_n}(x_n) \le \inf_{x \in \mathbb{R}^d} g_{\lambda_n}(x) + \varepsilon_n$ for any 
$n \in \mathbb{N}$ with sufficiently small $\varepsilon_n > 0$. Then from
Proposition~\ref{Prp_ConvergenceToNonExactPoint} it follows that
either the $\ell_1$ penalty function $g_{\lambda}$ is exact, and $x_n$ is a globally optimal solution of 
the problem~\eqref{MathProgProblem_l1PenFunc} for some $n \in \mathbb{N}$, or MFCQ fails to hold true at cluster points
of the sequence $\{ x_n \}$ that exist due to out assumption that $g_{\lambda}$ is coercive for sufficiently large
$\lambda$. 

Thus, roughly speaking, if the $\ell_1$ penalty function is not exact, then a minimizing sequence constructed
with the use of this penalty function converges to a globally optimal solution of the
problem~\eqref{MathProgProblem_l1PenFunc} at which a constraint qualification fails to hold true. In contrast, a
minimizing sequence constructed with the use of the standard (quadratic) smooth penalty function can converge to any
globally optimal solution of the problem~\eqref{MathProgProblem_l1PenFunc}.
\end{example}

\section{Singular penalty functions}
\label{Section_SingPenFunc}

In this section, we apply the general theory of parametric penalty functions developed above to the study of a class of
smooth exact penalty functions introduced by Huyer and Neumaier in \cite{HuyerNeumaier} (see also 
\cite{WangMaZhou,Dolgopolik_OptLet,Dolgopolik_OptLet2}). We refer to penalty functions from this class as
\textit{singular penalty functions}. In the terminology of this paper, the class of singular penalty functions
consists of parametric penalty function with the set of parameters $P$ equal to $\mathbb{R}_+$. We obtain new simple
sufficient conditions for the local exactness of singular penalty functions, and demonstrate how the theory developed in
the previous section can be utilized in order to derive new results on these penalty functions. The results of this
section sharpen and generalize the main results of the paper \cite{WangMaZhou}.

\subsection{Local exactness of singular penalty functions}

Let the set $M \subset X$ have the form $M = \{ x \in X \mid 0 \in G(x) \}$, where 
$G \colon X \rightrightarrows Y$ is a set-valued mapping with closed values, and $Y$ is a normed space. In other
words, in this section we suppose that the problem $(\mathcal{P})$ has the form
$$
  \min f(x) \quad \text{subject to} \quad 0 \in G(x), \quad x \in A.
$$
Note that for the problem above one has $\Omega = G^{-1}(0) \cap A$.

Let also $P = \mathbb{R}_+$ and $p_0 = 0$. Choose an arbitrary $w \in Y$, and non-decreasing functions 
$\phi \colon [0, + \infty] \to [0, + \infty]$ and $\omega \colon \mathbb{R}_+ \to [0, + \infty]$
such that $\phi(t) = 0$ iff $t = 0$ and $\omega(t) = 0$ iff $t = 0$. Being inspired by the ideas of
\cite{HuyerNeumaier,WangMaZhou}, introduce the penalty term $\varphi(x, p)$ for the problem $(\mathcal{P})$ as follows
\begin{equation} \label{PenaltyTermOneDim}
  \varphi(x, p) = \begin{cases}
    0, & \text{if } x \in \Omega, p = 0, \\
    + \infty, & \text{if } x \notin \Omega, p = 0, \\
    p^{-1} \phi(d(0, G(x) - p w)^2) + \omega(p), & \text{if } p > 0,
  \end{cases}
\end{equation}
where
$$
  d(0, G(x) - p w) = \begin{cases}
    \inf_{y \in G(x)} \| y - p w \|, & \text{if } G(x) \ne \emptyset, \\
    + \infty, & \text{if } G(x) = \emptyset,
  \end{cases}
$$
and define the parametric penalty function $F_{\lambda}(x, p) = f(x) + \lambda \varphi(x, p)$. Hereafter, the penalty
function $F_{\lambda}$ is referred to as a \textit{singular penalty function}.

\begin{remark}
Note that one needs to include $+\infty$ into the domain of the function $\phi$ in order to allow the set $G(x)$ to be
empty for some $x \in X$.
\end{remark}

Our aim is to obtain simple sufficient conditions for the penalty function $F_{\lambda}$ to be (locally or globally)
exact or feasibility-preserving. We start with sufficient conditions for local exactness. 

Recall that a set-valued mapping $Q \colon X \rightrightarrows Y$ is called \textit{metrically subregular} with
constant $\tau > 0$ at a point $(\overline{x}, \overline{y}) \in X \times Y$ with
$\overline{y} \in Q(\overline{x})$ if there exists a neighbourhood $U$ of $x$ such that
$$
  d(\overline{y}, Q(x)) \ge \tau d(x, Q^{-1}(\overline{y})) \quad \forall x \in U.
$$
We say that the set-valued mapping $Q$ is \textit{metrically subregular with respect to a set} $C \subseteq X$ with
constant $\tau > 0$ at a point $(\overline{x}, \overline{y}) \in C \times Y$ with $\overline{y} \in Q(\overline{x})$ if
the restriction of the mapping $Q$ to the set $C$ is metrically subregular with constrant $\tau > 0$ at 
$(\overline{x}, \overline{y})$ or, equivalently, if there exists a neighbourhood $U$ of $x$ such that
$$
  d(\overline{y}, Q(x)) \ge \tau d(x, Q^{-1}(\overline{y}) \cap C) \quad \forall x \in U \cap C.
$$
For various necessary and sufficient conditions for metric subregularity see \cite{Kruger} and references therein.

The following theorem extends Theorem~3 from \cite{Dolgopolik_OptLet} to the case of arbitrary $w \in Y$.

\begin{theorem} \label{Thrm_LocalExactness}
Let $x^* \in \Omega$ be a locally optimal solution of the problem $(\mathcal{P})$, $f$ be Lipschitz continuous near
$x^*$,
and $G$ be metrically subregular with respect to the set $A$ at $(x^*, 0)$. Suppose also that there exist 
$t_0, \phi_0, \omega_0 > 0$ such that
\begin{equation} \label{PhiOmegaLowerEstimate}
  \phi(t) \ge \phi_0 t, \quad \omega(t) \ge \omega_0 t \quad \forall t \in [0, t_0].
\end{equation}
Then the penalty function $F_{\lambda}$ is exact at $(x^*, 0)$, and
\begin{equation} \label{ExPenParamEstimOneDim}
  \lambda^*(x^*) \le \frac{L (\sqrt{\phi_0} \| w \| + \sqrt{\omega_0 + \phi_0 \| w \|})}{2 \omega_0 \sqrt{\phi_0} \tau},
\end{equation}
where $L$ is a Lipschitz constant of $f$ near $x^*$, and $\tau > 0$ is the constant of metric subregularity of $G$ with
respect to the set $A$ at $(x^*, 0)$.
\end{theorem}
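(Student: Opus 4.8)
The plan is to prove directly that $(x^*,0)$ is a point of local minimum of $F_\lambda$ on $A\times P$ for every $\lambda$ exceeding the right-hand side of \eqref{ExPenParamEstimOneDim}; since $F_\lambda(x^*,0)=f(x^*)=f^*$, this is precisely the asserted local exactness together with the bound on $\lambda^*(x^*)$. First I would fix $\rho>0$ so small that $f$ is $L$-Lipschitz on the ball $B(x^*,3\rho)$, that $x^*$ minimizes $f$ over $\Omega\cap B(x^*,3\rho)$, and that $B(x^*,\rho)\cap A$ is contained in the neighbourhood furnished by the metric subregularity of $G$ with respect to $A$ at $(x^*,0)$, so that $d(0,G(x))\ge\tau\,d(x,\Omega)$ for all $x\in B(x^*,\rho)\cap A$ (recall $\Omega=G^{-1}(0)\cap A$). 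It then suffices to show $F_\lambda(x,p)\ge f^*$ for all $(x,p)\in(B(x^*,\rho)\cap A)\times[0,\delta)$ for a suitably small $\delta\in(0,t_0]$. The case $p=0$ is immediate: if $x\notin\Omega$ then $\varphi(x,0)=+\infty$, and if $x\in\Omega$ then $F_\lambda(x,0)=f(x)\ge f^*$ by the choice of $\rho$ and the local optimality of $x^*$. So assume $p>0$ and put $r=d(0,G(x)-pw)$.

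The core estimate combines three ingredients. The triangle inequality gives $d(0,G(x))\le r+p\|w\|$; metric subregularity then yields $d(x,\Omega)\le\tau^{-1}d(0,G(x))\le\tau^{-1}(r+p\|w\|)$; and a projection argument controls $f(x)$: for $\varepsilon>0$ choose $\widehat{x}\in\Omega$ with $d(x,\widehat{x})\le d(x,\Omega)+\varepsilon$, note that $d(\widehat{x},x^*)\le 2d(x,x^*)+\varepsilon<3\rho$, so $\widehat{x}$ lies in the ball where $f$ is $L$-Lipschitz and where $x^*$ is optimal, whence $f(x)\ge f(\widehat{x})-Ld(x,\widehat{x})\ge f^*-L(d(x,\Omega)+\varepsilon)$, and letting $\varepsilon\to0$, $f(x)\ge f^*-\tfrac{L}{\tau}(r+p\|w\|)$. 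Now split on the size of $r$. If $r^2\le t_0$ (in particular $G(x)\ne\emptyset$), then by \eqref{PhiOmegaLowerEstimate} and $p\le\delta\le t_0$ one has $\varphi(x,p)\ge\phi_0 r^2/p+\omega_0 p$, so
\[
  F_\lambda(x,p)-f^* \ \ge\ \frac{\lambda\phi_0}{p}\,r^2\ -\ \frac{L}{\tau}\,r\ +\ \Bigl(\lambda\omega_0-\frac{L\|w\|}{\tau}\Bigr)p .
\]
The right-hand side is a quadratic in $r$ with positive leading coefficient, hence nonnegative for every $r\ge0$ exactly when its discriminant is nonpositive, i.e. $L^2/\tau^2\le 4\lambda\phi_0\bigl(\lambda\omega_0-L\|w\|/\tau\bigr)$; solving this quadratic inequality for $\lambda$ reproduces the bound \eqref{ExPenParamEstimOneDim}. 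If instead $r^2>t_0$ or $G(x)=\emptyset$, then monotonicity of $\phi$ gives $\varphi(x,p)\ge\phi(t_0)/p\ge\phi_0 t_0/\delta$, while $f(x)\ge f^*-L\rho$ on $B(x^*,\rho)$; hence $F_\lambda(x,p)-f^*\ge\lambda\phi_0 t_0/\delta-L\rho\ge0$ once $\delta$ is chosen small enough relative to the already fixed $\lambda$ and $\rho$, which is harmless since $\delta$ only fixes the size of the neighbourhood.

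The main obstacle is obtaining the sharp constant in \eqref{ExPenParamEstimOneDim}. A termwise estimate — requiring $\lambda\phi_0 r^2/p\ge Lr/\tau$ and $\lambda\omega_0 p\ge L\|w\|p/\tau$ separately — fails, since the first inequality would force $\lambda\ge Lp/(\tau\phi_0 r)$, which is not uniform in $(r,p)$ as $r\downarrow0$. Keeping the two contributions $\phi_0 r^2/p$ and $\omega_0 p$ of the penalty term together and balancing them against the two error terms $Lr/\tau$ and $L\|w\|p/\tau$ simultaneously, via the discriminant (equivalently, via a weighted arithmetic–geometric mean inequality), is what yields the stated estimate. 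A secondary technical point is that, since no semicontinuity of $G$ is assumed, $d(0,G(x))$ need not be small for $x$ near $x^*$; this is precisely why the crude large-$r$ alternative is handled on its own, and it does not affect the value of $\lambda^*(x^*)$ because that case is absorbed by shrinking the neighbourhood.
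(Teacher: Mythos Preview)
Your proof is correct and yields the stated bound. The paper's argument is close in spirit but organizes the estimate around a different variable. You write $s:=d(x,\Omega)\le\tau^{-1}(r+p\|w\|)$ (with $r=d(0,G(x)-pw)$), insert this into $f(x)\ge f^*-Ls$, and obtain a quadratic in $r$ whose discriminant condition gives the constant; your case split is on whether $r^2\le t_0$ (so the linear lower bound on $\phi$ is available), and the large-$r$ alternative is disposed of by shrinking $\delta$. The paper instead writes $r\ge\tau s-p\|w\|$, uses the \emph{monotonicity} of $\phi$ to get $\phi(r^2)\ge\phi((\tau s-p\|w\|)^2)$ for \emph{any} $r$, and then minimizes over $s$; the resulting case split is on the sign of $\tau s-p\|w\|$. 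Because $(\tau s-p\|w\|)^2$ is automatically small once $d(x,x^*)$ and $p$ are small, the paper never needs a large-$r$ alternative and obtains a neighbourhood of $(x^*,0)$ that is independent of $\lambda$, whereas yours depends on $\lambda$ through $\delta$. For the purposes of the theorem (a bound on $\lambda^*(x^*)$) this makes no difference, and both routes lead to the identical discriminant inequality $4\lambda^2\phi_0\omega_0\tau^2-4\lambda\phi_0\tau L\|w\|-L^2\ge 0$.
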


\begin{proof}
From the facts that $f$ is Lipschitz continuous in a neighbourhood of $x^*$, and $x^*$ is a locally optimal solution of
the problem $(\mathcal{P})$ it follows that there exists $r_1 > 0$ such that
\begin{equation} \label{LowerLipEstimObjFunc}
  f(x) - f(x^*) \ge - L d(x, \Omega) \quad \forall x \in B(x^*, r_1),
\end{equation}
where $L > 0$ is a Lipschitz constant of $f$ in a neighbourhood of $x^*$ (see, e.g., \cite{Dolgopolik},
Proposition~2.7). Let us also obtain a lower estimate of the penalty term $\varphi$.

Applying the well-known inequality $\| u - v \| \ge | \| u \| - \| v \| |$, and taking into account the fact that 
the mapping $G$ is metrically subregular with respect to the set $A$ at $(x^*, 0)$ one obtains that there exist 
$\tau > 0$ and $r_2 \in (0, r_1)$ such that
$$
  d(0, G(x) - p w) \ge d(0, G(x)) - p \| w \| \ge \tau d(x, \Omega) - p \| w \|
$$
for any $x \in B(x^*, r_2) \cap A$ and $p \ge 0$. Hence for any $x \in B(x^*, r_2) \cap A$ and $p \ge 0$ such that 
$\tau d(x, \Omega) \ge p \| w \|$ one has
$$
  d(0, G(x) - p w)^2 \ge (\tau d(x, \Omega) - p \| w \|)^2
$$
Note that since $x^* \in \Omega$, for any $x \in B(x^*, \sqrt{t_0} / 2 \tau)$ and 
$p \in (0, \widehat{p})$ one has $(\tau d(x, \Omega) - p \| w \|)^2 \le t_0$, where 
$\widehat{p} = \sqrt{t_0} / 2 \| w \|$ in the case $w \ne 0$, and $\widehat{p} = + \infty$ otherwise. Therefore taking
into account (\ref{PhiOmegaLowerEstimate}) and the fact that the function $\phi$ is non-decreasing one gets that
\begin{multline*}
  \phi( d(0, G(x) - p w)^2 ) \ge \phi( (\tau d(x, \Omega) - p \| w \|)^2 ) \ge \\
  \ge \phi_0 \big( \tau^2 d(x, \Omega)^2 - 2 p \| w \| \tau d(x, \Omega) + p^2 \| w \|^2 \big).
\end{multline*}
for any $x \in B(x^*, \overline{r}) \cap A$ and $p \in (0, \widehat{p})$ such that 
$\tau d(x, \Omega) \ge p \| w \|$, where $\overline{r} = \min\{ \sqrt{t_0} / 2 \tau, r_2 \}$. Hence and from
(\ref{LowerLipEstimObjFunc}) it follows that for all $\lambda \ge 0$ and for any 
$x \in B(x^*, \overline{r}) \cap A$ and $p \in (0, \widehat{p})$ such that 
$\tau d(x, \Omega) \ge p \| w \|$ one has
\begin{multline*}
  F_{\lambda}(x, p) - F_{\lambda}(x^*, 0) = f(x) - f(x^*) + 
  \lambda\left( \frac{1}{p} \phi( d(0, G(x) - p w)^2 ) + \omega(p) \right) \ge \\
  \ge - L d(x, \Omega) + \frac{\lambda}{p} \phi_0 \tau^2 d(x, \Omega)^2 - 
  2 \lambda \phi_0 \| w \| \tau d(x, \Omega) +   \lambda \phi_0 \| w \|^2 p + \lambda \omega(p).
\end{multline*}
Minimizing the right-hand side of the last inequality with respect to $d(x, \Omega)$ one obtains
$$
  F_{\lambda}(x, p) - F_{\lambda}(x^*, 0) \ge - \left( \frac{L^2}{4 \lambda \phi_0 \tau^2} +
  \frac{L \| w \|}{\tau} \right) p + \lambda \omega(p).
$$
Consequently, applying (\ref{PhiOmegaLowerEstimate}) one gets that
\begin{equation} \label{PartlyExactOneDimParam}
  F_{\lambda}(x, p) - F_{\lambda}(x^*, 0) \ge
  \left( - \frac{L^2}{4 \lambda \phi_0 \tau^2} - \frac{L \| w \|}{\tau} + \lambda \omega_0 \right) p \ge 0
\end{equation}
for any $x \in B(x^*, \overline{r}) \cap A$, $p \in (0, \overline{p})$ and $\lambda \ge \overline{\lambda}$ such
that $\tau d(x, \Omega) \ge p \| w \|$, where
$$
  \overline{p} = \min\left\{ \widehat{p}, t_0 \right\}, \quad
  \overline{\lambda} = 
  \frac{L (\sqrt{\phi_0} \| w \| + \sqrt{\omega_0 + \phi_0 \| w \|^2})}{2 \omega_0 \sqrt{\phi_0} \tau}
$$
($\overline{\lambda}$ is the smallest positive $\lambda$ for which inequality (\ref{PartlyExactOneDimParam}) is
satisfied for any $p > 0$). 

On the other hand, if $x \in B(x^*, \overline{r}) \cap A$ and $p \in (0, \overline{p})$ are such that 
$\tau d(x, \Omega) < p \| w \|$ (note that in this case $\| w \| > 0$), then applying (\ref{PhiOmegaLowerEstimate}), and
taking into account the fact that the function $\phi$ is nonnegative one obtains that
\begin{multline*}
  F_{\lambda}(x, p) - F_{\lambda}(x^*, 0) = f(x) - f(x^*) + \frac{\lambda}{p} \phi( d(0, G(x) - p w)^2 ) + 
  \lambda \omega(p) \ge \\
  \ge - L d(x, \Omega) + \lambda \omega_0 p \ge 
  \left( - L + \lambda \frac{\tau \omega_0}{\| w \|} \right) d(x, \Omega) \ge 0
\end{multline*}
for any $\lambda \ge \overline{\lambda} \ge L \| w \| / \tau \omega_0$. Furthermore, if $p = 0$, then
$F_{\lambda}(x, 0) = + \infty \ge F_{\lambda}(x^*, 0)$ for any $x \notin \Omega$, and
$F_{\lambda}(x, 0) = f(x) \ge f(x^*) = F_{\lambda}(x^*, 0)$ for any $x \in B(x^*, \delta) \cap \Omega$, provided
$\delta > 0$ is sufficiently small, due to the fact that $x^*$ is a locally optimal solution of the problem
$(\mathcal{P})$. Therefore for any $\lambda \ge \overline{\lambda}$ one has
$$
  F_{\lambda}(x, p) \ge F_{\lambda}(x^*, 0) \quad 
  \forall (x, p) \in \big( B(x^*, \min\{ \overline{r}, \delta \}) \cap A \big) \times [0, \overline{p}).
$$
Thus, the penalty function $F_{\lambda}$ is exact at $x^*$, and inequality (\ref{ExPenParamEstimOneDim}) is valid.
\end{proof}

\subsection{Some properties of singular penalty functions}

In order to apply the general theorems on the global exactness of parametric penalty functions
(Theorems~\ref{Th_GlobalExGeneralCase} and \ref{Th_GlobExFiniteDim}) to the penalty function $F_{\lambda}$ introduced
above, we need to be able to check the assumptions of these theorems, including the lower semicontinuity of the penalty
term $\varphi$, and the boundedness of the set $\Omega_{\delta}$. The following auxiliary results are useful for
verifying these assumptions.

\begin{lemma} \label{Lmm_LSC_Criterion}
Let $\phi$ and $\omega$ be l.s.c. on $\mathbb{R}_+$, and the function $(x, p) \to d(0, G(x) - p w)$ be l.s.c. on 
$X \times \mathbb{R}_+$. Then the penalty term $\varphi(x, p)$ is l.s.c. on $X \times \mathbb{R}_+$.
\end{lemma}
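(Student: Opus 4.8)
The plan is to check lower semicontinuity of $\varphi$ directly at an arbitrary point $(\bar{x}, \bar{p}) \in X \times \mathbb{R}_+$, treating the cases $\bar{p} > 0$ and $\bar{p} = 0$ separately. Throughout I will use only the following elementary stability properties of lower semicontinuity on metric spaces, each verifiable in a couple of lines and harmless for extended-real-valued, bounded-below functions: if $g$ is l.s.c. and $\psi$ is nondecreasing and l.s.c., then $\psi \circ g$ is l.s.c.; the product of a nonnegative l.s.c. function with a continuous strictly positive function is l.s.c.; the precomposition of an l.s.c. function with a continuous map is l.s.c.; and a finite sum of l.s.c. functions with values in $(-\infty, +\infty]$ is l.s.c.

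For $\bar{p} > 0$ there is a neighbourhood of $(\bar{x}, \bar{p})$ on which $p > 0$, so there $\varphi(x,p) = p^{-1}\phi\bigl(d(0, G(x) - pw)^2\bigr) + \omega(p)$, and I would assemble its lower semicontinuity step by step: $(x,p) \mapsto d(0, G(x) - pw)$ is l.s.c. and nonnegative by hypothesis; composing with the continuous nondecreasing map $t \mapsto t^2$ and then with the nondecreasing l.s.c. map $\phi$ keeps it l.s.c. and nonnegative; multiplying by $(x,p) \mapsto p^{-1}$, which is continuous and strictly positive on the chosen neighbourhood, preserves l.s.c.; $(x,p) \mapsto \omega(p)$ is l.s.c. as the composition of the l.s.c. function $\omega$ with the continuous projection onto $p$; and the sum of the two is l.s.c. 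The possible value $+\infty$ (when $G(x) = \emptyset$, or when $\phi$ or $\omega$ is infinite) is absorbed by these monotonicity arguments and causes no difficulty.

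For $\bar{p} = 0$ the value $\varphi(\bar{x}, 0)$ is either $0$ or $+\infty$. If $\varphi(\bar{x}, 0) = 0$, lower semicontinuity at $(\bar{x}, 0)$ is immediate since $\varphi \ge 0$ everywhere. If $\varphi(\bar{x}, 0) = +\infty$, i.e. $\bar{x} \notin M$ and hence $0 \notin G(\bar{x})$, then $c := d(0, G(\bar{x})) > 0$ (possibly $+\infty$) because $G(\bar{x})$ is closed. I would invoke lower semicontinuity of $(x,p) \mapsto d(0, G(x) - pw)$ at $(\bar{x}, 0)$ to obtain a neighbourhood $U$ of $\bar{x}$ and $\varepsilon > 0$ with $d(0, G(x) - pw) > c_0 := \tfrac{1}{2}\min\{c, 1\} > 0$ for all $(x, p) \in U \times [0, \varepsilon)$. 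On this set: when $p = 0$ one gets $d(0, G(x)) > c_0 > 0$, so $0 \notin G(x)$ and $\varphi(x, 0) = +\infty$; when $0 < p < \varepsilon$ one gets $d(0, G(x) - pw)^2 \ge c_0^2$, hence $\phi\bigl(d(0, G(x) - pw)^2\bigr) \ge \phi(c_0^2) > 0$ (since $\phi$ is nondecreasing and vanishes only at $0$), hence $\varphi(x, p) \ge p^{-1}\phi(c_0^2)$, which tends to $+\infty$ as $p \to +0$. Combining the two pieces yields $\liminf_{(x,p) \to (\bar{x}, 0)} \varphi(x, p) = +\infty = \varphi(\bar{x}, 0)$.

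The only genuinely delicate point is this last case, $\bar{p} = 0$ with $\bar{x}$ infeasible: one must simultaneously control the singular slice $p > 0$, where the factor $p^{-1}$ forces the required blow-up once $d(0, G(x) - pw)$ is bounded away from $0$, and the slice $p = 0$, where $\varphi$ is only $\{0, +\infty\}$-valued and one needs $0 \notin G(x)$ to persist near $\bar{x}$. Both are supplied by the single hypothesis that $(x,p) \mapsto d(0, G(x) - pw)$ is lower semicontinuous, which separates $\bar{x}$ from $M = \{x : 0 \in G(x)\}$ locally (using that $G$ has closed values) and gives the uniform positive lower bound on $d(0, G(x) - pw)$. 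Everything else is the routine stability of lower semicontinuity recorded in the first paragraph.
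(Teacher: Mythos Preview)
Your approach differs from the paper's: the paper writes $\varphi$ as the pointwise supremum over $\varepsilon > 0$ of the auxiliary functions $\varphi_\varepsilon(x,p) = (p+\varepsilon)^{-1}\phi\bigl(d(0,G(x)-pw)^2\bigr) + \omega(p)$, each of which is l.s.c.\ on all of $X \times \mathbb{R}_+$ since $p \mapsto (p+\varepsilon)^{-1}$ is continuous and strictly positive even at $p=0$, and then uses that a supremum of l.s.c.\ functions is l.s.c. Your direct case analysis is more hands-on but makes transparent the mechanism at the boundary $p=0$: the singular factor $p^{-1}$ forces blow-up once the distance term is bounded away from zero. Both routes handle the routine region $\bar p>0$ the same way.

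There is, however, a slip in your case $\bar{p}=0$, $\varphi(\bar{x},0)=+\infty$. You write ``i.e.\ $\bar{x}\notin M$'', but by the definition of $\varphi$ this means $\bar{x}\notin\Omega = M\cap A$, which also allows $\bar{x}\in M\setminus A$. At such a point $0\in G(\bar{x})$, so $d(0,G(\bar{x}))=0$ and your positive lower bound $c_0$ is unavailable; in fact, taking $x_n=\bar{x}$, $p_n\to 0^+$, $w=0$, $\phi(t)=\omega(t)=t$ gives $\varphi(\bar{x},p_n)=p_n\to 0$, so lower semicontinuity genuinely fails there. This is not a defect unique to your argument: the paper's identity $\varphi=\sup_{\varepsilon>0}\varphi_\varepsilon$ also breaks at such points (one computes $\sup_\varepsilon\varphi_\varepsilon(\bar{x},0)=0\ne+\infty$), so the lemma as literally stated is slightly too strong. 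For every downstream application in the paper only lower semicontinuity on $A\times\mathbb{R}_+$ is used, and restricted to $\bar{x}\in A$ the condition $\bar{x}\notin\Omega$ does force $\bar{x}\notin M$, making your argument complete there.
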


\begin{proof}
For any $\varepsilon > 0$ introduce the function
$$
  \varphi_{\varepsilon}(x, p) = \frac{1}{p + \varepsilon} \phi( d(0, G(x) - p w)^2 ) + \omega(p) \quad
  \forall (x, p) \in X \times \mathbb{R}_+.
$$
With the use of the facts that the function $(x, p) \to d(0, G(x) - p w)$ is l.s.c., and the function $\phi$ is
l.s.c. and non-decreasing, one can verify that the function $(x, p) \to \phi( d(0, G(x) - p w)^2 )$ is l.s.c., which,
as it is easy to see, implies that the function $(x, p) \to \phi( d(0, G(x) - p w)^2 ) / (p + \varepsilon)$ is l.s.c.
as well. Hence the function $\varphi_{\varepsilon}$ is l.s.c. as the sum of two l.s.c. functions. Note that
$$
  \varphi(x, p) = \sup_{\varepsilon > 0} \varphi_{\varepsilon}(x, p) \quad 
  \forall (x, p) \in X \times \mathbb{R}_+.
$$
Therefore $\varphi$ is l.s.c. as the supremum of a family of l.s.c. functions.	 
\end{proof}

\begin{remark}
Let us note that by \cite{BorweinZhu}, Theorem~5.1.20, the function $(x, p) \to d(0, G(x) - p w)$ is l.s.c., provided 
the multifunction $G(x)$ is outer semicontinuous (upper semicontinuous; cf. the terminologies in \cite{BorweinZhu} and
\cite{RockWets}).
\end{remark}

\begin{lemma} \label{Lmm_UpperEstimPenTerm}
Let there exist $t_0, \Phi_0, \Omega_0 > 0$ such that
$$
  \phi(t) \le \Phi_0 t, \quad \omega(t) \le \Omega_0 t \quad \forall t \in [0, t_0].
$$
Then there exists $\delta_0 > 0$ such that for any $x \in X$ with $d(0, G(x)) < \delta_0$ one has
\begin{equation} \label{InfimumPenTerm_UpperEstimate}
  \inf_{p \ge 0} \varphi(x, p) \le C_0 d(0, G(x)), \quad
  C_0 := 2 \big( \sqrt{\Phi_0^2 \| w \|^2 + \Phi_0 \Omega_0} + \Phi_0 \| w \| \big).
\end{equation}
\end{lemma}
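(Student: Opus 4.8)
The plan is to bound $\inf_{p\ge 0}\varphi(x,p)$ from above by evaluating $\varphi(x,\cdot)$ at a single, cleverly chosen value of $p$ that is proportional to $d(0,G(x))$. Write $d := d(0,G(x))$ and note that the triangle inequality gives
$$
  d(0, G(x) - p w) = \inf_{y \in G(x)} \| y - p w \|
  \le \inf_{y \in G(x)} \big( \| y \| + p \| w \| \big) = d + p \| w \| \qquad \forall p \ge 0,
$$
hence $d(0, G(x) - p w)^2 \le (d + p\|w\|)^2$. First suppose $d > 0$, fix a constant $\gamma > 0$ (to be optimized later), and set $p = \gamma d$. As soon as $d$ is small enough that $\gamma d \le t_0$ and $(d + \gamma d\|w\|)^2 \le t_0$, the linear bounds $\phi(t) \le \Phi_0 t$, $\omega(t) \le \Omega_0 t$ on $[0,t_0]$ together with the monotonicity of $\phi$ yield
$$
  \varphi(x, \gamma d) = \frac{1}{\gamma d}\,\phi\big( d(0, G(x) - \gamma d\, w)^2 \big) + \omega(\gamma d)
  \le \frac{\Phi_0 d^2 (1 + \gamma\|w\|)^2}{\gamma d} + \Omega_0 \gamma d = d\cdot h(\gamma),
$$
where, after expanding $(1 + \gamma\|w\|)^2$, one has $h(\gamma) = \Phi_0/\gamma + 2\Phi_0\|w\| + (\Phi_0\|w\|^2 + \Omega_0)\gamma$.

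The next step is to minimize $h$ over $\gamma > 0$. By elementary calculus (or the AM--GM inequality applied to the first and third terms of $h$), the minimum is attained at $\gamma^* = \sqrt{\Phi_0/(\Phi_0\|w\|^2 + \Omega_0)}$ and equals
$$
  h(\gamma^*) = 2\Phi_0\|w\| + 2\sqrt{\Phi_0(\Phi_0\|w\|^2 + \Omega_0)}
  = 2\big( \sqrt{\Phi_0^2\|w\|^2 + \Phi_0\Omega_0} + \Phi_0\|w\| \big) = C_0 .
$$
Taking $\gamma = \gamma^*$ then gives $\inf_{p \ge 0}\varphi(x,p) \le \varphi(x,\gamma^* d) \le C_0 d$, i.e.\ \eqref{InfimumPenTerm_UpperEstimate}. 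The two smallness restrictions used above are both ensured once $d < \delta_0 := \min\{\, t_0/\gamma^*,\ \sqrt{t_0}/(1 + \gamma^*\|w\|)\,\}$, and this is the constant $\delta_0$ claimed in the statement. (When $w = 0$ the same computation goes through verbatim with $\gamma^* = \sqrt{\Phi_0/\Omega_0}$ and $C_0 = 2\sqrt{\Phi_0\Omega_0}$.)

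Finally I would dispose of the degenerate case $d(0,G(x)) = 0$, where \eqref{InfimumPenTerm_UpperEstimate} reduces to $\inf_{p\ge 0}\varphi(x,p) \le 0$, hence equals $0$ since $\varphi \ge 0$. In this case $0 \in G(x)$, so $d(0, G(x) - p w) \le p\|w\|$ for every $p > 0$, and therefore for small $p > 0$
$$
  \varphi(x,p) = \frac1p\,\phi\big( d(0,G(x)-pw)^2 \big) + \omega(p) \le \Phi_0 \|w\|^2 p + \Omega_0 p \longrightarrow 0 \quad \text{as } p \to 0^+,
$$
which gives the claim. There is no real obstacle in this argument: the heart of it is the one-variable minimization of $h(\gamma)$, and the only thing that requires care is bookkeeping the domain restrictions $\gamma d \le t_0$ and $(d + \gamma d\|w\|)^2 \le t_0$ needed to invoke the linear upper bounds on $\phi$ and $\omega$, which is exactly what pins down $\delta_0$.
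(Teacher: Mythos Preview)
Your proof is correct and essentially identical to the paper's: both bound $d(0,G(x)-pw)\le d+p\|w\|$, apply the linear upper bounds on $\phi,\omega$, and minimize the resulting expression $\Phi_0 d^2/p + 2\Phi_0\|w\|d + (\Phi_0\|w\|^2+\Omega_0)p$ over $p>0$, arriving at the same optimal $p^*=\gamma^* d$ and the same constant $C_0$. The only cosmetic difference is that you substitute $p=\gamma d$ first and minimize over the scale-free variable $\gamma$, whereas the paper minimizes directly in $p$; the $\delta_0$ you produce is also equivalent to the paper's choice.
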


\begin{proof}
Choose an arbitrary $\delta \in (0, \sqrt{t_0} / 2)$. Clearly, one can suppose that $t_0 \| w \| < \sqrt{t_0} / 2$.
Then for any $x \in X$ such that $d(0, G(x)) < \delta$ and for any $p \in (0, t_0)$ one has
$$
  d(0, G(x) - p w) \le d(0, G(x)) + p \| w \| < \delta + p_0 \| w \| < \sqrt{t_0}.
$$
Therefore for any such $x$ and $p$ one has
\begin{multline*}
  \varphi(x, p) = \frac{1}{p} \phi( d(0, G(x) - p w)^2 ) + \omega(p) \le 
  \frac{\Phi_0}{p} d(0, G(x) - p w)^2 + \Omega_0 p \le \\
  \le \frac{\Phi_0}{p} (d(0, G(x))^2 + 2 p \| w \| d(0, G(x)) + p^2 \| w \|^2) + \Omega_0 p.
\end{multline*}
Fix an arbitrary $x \in X$ such that $d(0, G(x)) < \delta$. If $d(0, G(x)) = 0$, then
$$
  \inf_{p \ge 0} \varphi(x, p) \le \inf_{p \in (0, t_0)} \varphi(x, p) \le
  \inf_{p \in (0, t_0)} \big( \Phi_0 \| w \|^2 + \Omega_0 \big) p = 0,
$$
which implies (\ref{InfimumPenTerm_UpperEstimate}). If $d(0, G(x)) > 0$, then denote
$$
  h(p) = \frac{\Phi_0}{p} (d(0, G(x))^2 + 2 p \| w \| d(0, G(x)) + p^2 \| w \|^2) + \Omega_0 p.
$$
Observe that the function $h(p)$ is continuously differentiable on $(0, + \infty)$, and $h(p) \to +\infty$ as 
$p \to +0$ or $p \to + \infty$. Therefore the function $h(p)$ attains a global minimum on $(0, + \infty)$ at a
point $p^* > 0$ that must satisfy the equality $h'(p^*) = 0$. Solving the equation $h'(p) = 0$ one obtains that
$$
  p^* = \frac{\sqrt{\Phi_0} d(0, G(x))}{ \sqrt{\Phi_0 \| w \|^2 + \Omega_0}} <
  \frac{\sqrt{\Phi_0} \delta}{ \sqrt{\Phi_0 \| w \|^2 + \Omega_0}}.
$$
Choosing $\delta \in (0, \sqrt{t_0} / 2)$ sufficiently small one obtains that $p^* < t_0$. Therefore for
any $x \in X$ such that $0 < d(0, G(x)) < \delta$ one has
\begin{multline*}
  \inf_{p \ge 0} \varphi(x, p) \le \inf_{p \in (0, t_0)} \varphi(x, p) \le \inf_{p \in (0, t_0)} h(p) =
  h(p^*) = \\
  = 2 ( \sqrt{\Phi_0^2 \| w \|^2 + \Phi_0 \Omega_0} + \Phi_0 \| w \| ) d(0, G(x)),
\end{multline*}
that completes the proof.	 
\end{proof}

\begin{lemma} \label{Lmm_LowerEstimPenTerm}
Let there exist $t_0, \phi_0, \omega_0 > 0$ such that
$$
  \phi(t) \ge \phi_0 t, \quad \omega(t) \ge \omega_0 t \quad \forall t \in [0, t_0].
$$
Then for any $(x, p) \in X \times \mathbb{R}_+$ such that
$\varphi(x, p) < \delta_0 := \min\big\{ \omega_0, \omega_0 t_0, \phi_0 t_0 \big\}$
one has
$$
  \varphi(x, p) \ge c_0 d(0, G(x)), \quad 
  c_0 := 
  \min\left\{ \frac{\omega_0}{\| w \|}, 2 \sqrt{\phi_0 \omega_0 + \phi_0^2 \| w \|^2} - 2 \phi_0 \| w \| \right\},
$$
where by definition $\omega_0 / \| w \| = + \infty$ in the case $w = 0$.
\end{lemma}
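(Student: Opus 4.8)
The plan is to treat the case $p = 0$ trivially and, for $p > 0$, to reduce the estimate to a one-variable minimization using the triangle inequality together with the linear minorants of $\phi$ and $\omega$. Put $s := d(0, G(x))$ and $r := d(0, G(x) - p w)$, so that $\varphi(x, p) = p^{-1}\phi(r^2) + \omega(p)$ for $p > 0$, and recall the triangle inequality $d(0, G(x)) \le d(0, G(x) - p w) + p\|w\|$, i.e. $r \ge s - p\|w\|$ (with $r = s = +\infty$ if $G(x) = \emptyset$). If $p = 0$, then $\varphi(x, p) < \delta_0 < +\infty$ forces $x \in \Omega$, so $s = 0$ and $\varphi(x, p) = 0 = c_0 s$; hence from now on assume $p > 0$.

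The first step is to push both $p$ and $r^2$ into the interval $[0, t_0]$ on which $\phi(t) \ge \phi_0 t$ and $\omega(t) \ge \omega_0 t$. Since $\omega(p) \le \varphi(x, p) < \delta_0 \le \omega_0 t_0$ and $\omega$ is non-decreasing, the inequality $p \ge t_0$ would give $\omega(p) \ge \omega(t_0) \ge \omega_0 t_0$, a contradiction; hence $p < t_0$ and $\omega(p) \ge \omega_0 p$. Next I claim $r^2 \le t_0$. Indeed, if $r^2 > t_0$ then $\varphi(x, p) \ge p^{-1}\phi(r^2) \ge p^{-1}\phi(t_0) \ge p^{-1}\phi_0 t_0$, which together with $\varphi(x, p) < \delta_0 \le \phi_0 t_0$ forces $p > 1$; but then $1 < p < t_0$, so $\omega(p) \ge \omega(1) \ge \omega_0$, contradicting $\omega(p) \le \varphi(x, p) < \delta_0 \le \omega_0$. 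Therefore $\phi(r^2) \ge \phi_0 r^2$.

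It then remains to split according to the sign of $s - p\|w\|$. If $s \le p\|w\|$ --- which, when $w = 0$, simply says $s = 0$ --- then $\varphi(x, p) \ge \omega(p) \ge \omega_0 p \ge (\omega_0 / \|w\|)\, s \ge c_0 s$. If instead $s > p\|w\|$, then $r^2 \ge (s - p\|w\|)^2$, so $\phi(r^2) \ge \phi_0 r^2 \ge \phi_0 (s - p\|w\|)^2$, and hence
$$
  \varphi(x, p) \ge \frac{\phi_0 (s - p\|w\|)^2}{p} + \omega_0 p = \frac{\phi_0 s^2}{p} + (\phi_0 \|w\|^2 + \omega_0)\, p - 2 \phi_0 \|w\|\, s .
$$
Minimizing the right-hand side over all $p > 0$ via the arithmetic-geometric mean inequality applied to its first two terms yields $\varphi(x, p) \ge \bigl( 2 \sqrt{\phi_0 \omega_0 + \phi_0^2 \|w\|^2} - 2 \phi_0 \|w\| \bigr)\, s \ge c_0 s$. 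In either case $\varphi(x, p) \ge c_0\, d(0, G(x))$, which is the claim.

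The one-variable minimization is routine. The genuinely delicate point is the claim $r^2 \le t_0$: because $t_0 \le 1$ is not assumed, this cannot be deduced from $\varphi(x, p) < \phi_0 t_0$ alone, and the argument above shows that the apparently redundant term $\omega_0$ in $\delta_0 = \min\{\omega_0, \omega_0 t_0, \phi_0 t_0\}$ is precisely what rules out the residual possibility ``$p > 1$ and $r^2 > t_0$'' (and, as a by-product, the degenerate case $G(x) = \emptyset$).
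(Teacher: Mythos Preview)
Your proof is correct and follows essentially the same approach as the paper's: both first force $p < t_0$ and $r^2 \le t_0$ so that the linear minorants apply, then split on the sign of $d(0,G(x)) - p\|w\|$ and minimize the resulting lower bound over $p>0$. The only cosmetic differences are that the paper establishes $p<1$ directly and then deduces $\phi(r^2)<\delta_0$ (hence $r^2<t_0$), whereas you run the same implications contrapositively; and the paper finds the minimizer $p^*$ by calculus while you invoke the AM--GM inequality, yielding the identical constant $2\sqrt{\phi_0\omega_0+\phi_0^2\|w\|^2}-2\phi_0\|w\|$.
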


\begin{proof}
Fix $(x, p) \in X \times \mathbb{R}_+$ such that $\varphi(x, p) < \delta_0$. If $p = 0$, then $\varphi(x, 0) = 0$ in the
case $d(0, G(x)) = 0$, and $\varphi(x) = + \infty$ otherwise, which implies the validity of the inequality 
$\varphi(x, p) \ge c_0 d(0, G(x))$ for any $c_0 \ge 0$.

Suppose, now, that $p > 0$. Then
$$
  \varphi(x, p) = \frac{1}{p} \phi( d(0, G(x) - p w)^2 ) + \omega(p) < \delta_0.
$$
Hence $p^{-1} \phi( d(0, G(x) - p w)^2 ) < \delta_0$ and $\omega(p) < \delta_0$. From the fact that the function
$\omega$ is non-decreasing it follows that for any $s \ge t_0$ one has 
$\omega(s) \ge \omega(t_0) \ge \omega_0 t_0$. Therefore $p < t_0$ due to the definition of $\delta_0$, which implies
that $\omega_0 p \le \omega(p) <\delta_0$, and $p < 1$ by the fact that $\delta_0 < \omega_0$. 
Hence $\phi( d(0, G(x) - p w)^2 ) < \delta_0$. Applying the inequality
$\phi(t) \ge \phi_0 t$, and taking into account the facts that $\delta_0 < \phi_0 t_0$ and the function $\phi$ is
non-decreasing one can easily verify that $d(0, G(x) - p w)^2 < t_0$. Consequently, one has
$$
  \varphi(x, p) = \frac{1}{p} \phi( d(0, G(x) - p w)^2 ) + \omega(p) \ge 
  \frac{\phi_0}{p} d(0, G(x) - p w)^2 + \omega_0 p.
$$
If $ w \ne 0$ and $d(0, G(x)) < p \| w \|$, then from the last equality it follows that
\begin{equation} \label{LowerEstimOfPenTermNearOmega}
  \varphi(x, p) \ge \frac{\omega_0}{\| w \|} d(0, G(x)).
\end{equation}
On the other hand, if $d(0, G(x)) \ge p \| w \|$, then applying the inequality 
$d(0, G(x) - p w) \ge d(0, G(x)) - p \| w \|$ one obtains that
$$
  d(0, G(x) - p w)^2 \ge \big( d(0, G(x)) - p \| w \| \big)^2 = 
  d(0, G(x))^2 - 2 p \| w \| d(0, G(x)) + p^2 \| w \|^2,
$$
which yields
\begin{equation} \label{AuxilInequal}
  \varphi(x, p) \ge \frac{\phi_0}{p} d(0, G(x))^2 - 2 \phi_0 \| w \| d(0, G(x)) + 
  (\phi_0 \| w \|^2 + \omega_0) p.
\end{equation}
For any $p > 0$ define
$$
  \sigma(p) = \frac{\phi_0}{p} d(0, G(x))^2 - 
  2 \phi_0 \| w \| d(0, G(x)) + (\phi_0 \| w \|^2 + \omega_0) p.
$$
The function $\sigma$ is nonnegative, continuously differentiable on $(0, + \infty)$, and $\sigma(p) \to + \infty$ as
$p \to + 0$ or $p \to + \infty$. Therefore $\sigma$ attains a global minimum on $(0, + \infty)$ at a point
$p^*$ that must satisfy the equality $\sigma'(p^*) = 0$. Solving the equation $\sigma'(p) = 0$ one gets
$$
  p^* = \frac{\sqrt{\phi_0} d(0, G(x))}{\sqrt{\omega_0 + \phi_0 \| w \|^2}}.
$$
Hence and from (\ref{AuxilInequal}) it follows that
$$
  \varphi(x, p) \ge \sigma(p) \ge \min_{q > 0} \sigma(q) = \sigma(p^*) = 
  2 ( \sqrt{\phi_0 \omega_0 + \phi_0^2 \| w \|^2} - \phi_0 \| w \| ) d(0, G(x)
$$
Combining (\ref{LowerEstimOfPenTermNearOmega}) with the inequality above one obtains the desired result.	 
\end{proof}

\begin{lemma} \label{Lmm_Representation}
Let there exist $t_0, \phi_0, \omega_0 > 0$ such that
$$
  \phi(t) \ge \phi_0 t, \quad \omega(t) \ge \omega_0 t \quad \forall t \in [0, t_0].
$$
Then for any $\eta \in (0, \delta_0)$ one has
$$
  \Omega(\eta) = \big\{ x \in A \mid \inf_{p \in \mathbb{R}_+} \varphi(x, p) \le \eta \big\} \subseteq
  \big\{ x \in A \mid d(0, G(x)) \le \eta / c_0 \big\},
$$
where $\delta_0$ and $c_0$ are the same as in Lemma~\ref{Lmm_LowerEstimPenTerm}. Furthermore, if the penalty function
$F_{\lambda}$ is bounded below on $A \times \mathbb{R}_+$ for some $\lambda \ge 0$, then there exists 
$\lambda_0 \ge 0$ such that for any $\lambda \ge \lambda_0$ one has
$$
  \big\{ x \in A \mid \inf_{p \in \mathbb{R}_+} F_{\lambda}(x, p) < f^* \big\} \subseteq
  \big\{ x \in A \mid f(x) + \lambda c_0 d(0, G(x)) < f^* \big\}.
$$
\end{lemma}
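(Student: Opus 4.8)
The plan is to read off both inclusions from Lemma~\ref{Lmm_LowerEstimPenTerm}, which already furnishes the pointwise bound $\varphi(x, p) \ge c_0 d(0, G(x))$ on the sublevel set $\{ \varphi(x, p) < \delta_0 \}$, augmented for the second inclusion by the reduction Lemma~\ref{Lmm_Reduction}. Before anything else I would record that $c_0 > 0$: the second term in the minimum defining $c_0$ equals $2\big(\sqrt{\phi_0 \omega_0 + \phi_0^2 \| w \|^2} - \phi_0 \| w \|\big)$, which is strictly positive since $\phi_0 \omega_0 > 0$, while the first term is positive (or $+\infty$ when $w = 0$). This makes the quotient $\eta / c_0$ meaningful.

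For the first inclusion, I would fix $\eta \in (0, \delta_0)$ and take an arbitrary $x \in \Omega(\eta)$, so that $\inf_{p \in \mathbb{R}_+} \varphi(x, p) \le \eta < \delta_0$. Choosing a sequence $\{ p_n \} \subset \mathbb{R}_+$ with $\varphi(x, p_n) \to \inf_{p} \varphi(x, p)$, the strict inequality $\inf_p \varphi(x,p) < \delta_0$ forces $\varphi(x, p_n) < \delta_0$ for all large $n$, so Lemma~\ref{Lmm_LowerEstimPenTerm} yields $c_0 d(0, G(x)) \le \varphi(x, p_n)$ (in particular $G(x) \neq \emptyset$ and $d(0, G(x)) < +\infty$). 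Passing to the limit gives $c_0 d(0, G(x)) \le \eta$, i.e. $d(0, G(x)) \le \eta / c_0$, which is precisely the asserted membership.

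For the second inclusion, assume $F_{\lambda_1}$ is bounded below on $A \times \mathbb{R}_+$ for some $\lambda_1 \ge 0$, set $c := \inf_{(x, p) \in A \times \mathbb{R}_+} F_{\lambda_1}(x, p)$, and put $\lambda_0 := \lambda_1 + (f^* - c) / \delta_0$. By Lemma~\ref{Lmm_Reduction} applied with $\delta = \delta_0$, for every $\lambda \ge \lambda_0$ one has $F_{\lambda}(x, p) \ge f^*$ for all $(x, p) \notin \Omega_{\delta_0}$, i.e. whenever $\varphi(x, p) \ge \delta_0$. Now fix $\lambda \ge \lambda_0$ and $x \in A$ with $\inf_{p} F_{\lambda}(x, p) < f^*$, and pick $p$ with $F_{\lambda}(x, p) < f^*$. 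The displayed bound forces $\varphi(x, p) < \delta_0$, so Lemma~\ref{Lmm_LowerEstimPenTerm} applies and gives $\varphi(x, p) \ge c_0 d(0, G(x))$ with $d(0, G(x)) < +\infty$; hence $f^* > F_{\lambda}(x, p) = f(x) + \lambda \varphi(x, p) \ge f(x) + \lambda c_0 d(0, G(x))$, which is the claimed inclusion.

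I do not expect a genuine obstacle: the lemma is essentially a bookkeeping consequence of Lemmas~\ref{Lmm_LowerEstimPenTerm} and \ref{Lmm_Reduction}. The only points requiring mild care are the sign check $c_0 > 0$, and making sure that when one passes from an infimum over $p$ (resp.\ from $\inf_p F_\lambda(x,p) < f^*$) to a concrete $p$, that $p$ genuinely satisfies the \emph{strict} inequality $\varphi(x, p) < \delta_0$ needed to invoke Lemma~\ref{Lmm_LowerEstimPenTerm} --- handled, respectively, by the strictness $\eta < \delta_0$ and by the fact that $F_\lambda \ge f^*$ on $\{ \varphi \ge \delta_0 \}$ for $\lambda \ge \lambda_0$.
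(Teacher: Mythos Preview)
Your proof is correct and follows essentially the same approach as the paper: both parts are derived from Lemma~\ref{Lmm_LowerEstimPenTerm}, with the second part additionally invoking Lemma~\ref{Lmm_Reduction} at $\delta = \delta_0$ to force $\varphi(x,p) < \delta_0$ before applying the pointwise bound. The only cosmetic differences are that the paper uses an $\varepsilon$-argument where you use a minimizing sequence, and your explicit verification that $c_0 > 0$ and explicit formula for $\lambda_0$ are welcome additions that the paper leaves implicit.
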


\begin{proof}
Let $x \in \Omega(\eta)$ with some $\eta \in (0, \delta_0)$. Then for any $\varepsilon \in (0, \delta_0 - \eta)$ there
exists $p \ge 0$ such that $\varphi(x, p) < \eta + \varepsilon$. Therefore by Lemma~\ref{Lmm_LowerEstimPenTerm} one
has
$$
  \eta + \varepsilon > \varphi(x, p) \ge c_0 d(0, G(x)),
$$
which implies $d(0, G(x)) < \eta / c_0$ by the fact that $\varepsilon$ was chosen arbitrarily.

Suppose, additionally, that the penalty function $F_{\lambda}$ is bounded below on $A \times \mathbb{R}_+$ for
some $\lambda \ge 0$. Then applying Lemma~\ref{Lmm_Reduction} one obtains that there exists $\lambda_0 \ge 0$ such that
for any $\lambda \ge \lambda_0$ one has
\begin{equation} \label{PenFuncOutsideOmega}
  F_{\lambda}(x, p) \ge f^* \quad \forall (x, p) \notin \Omega_{\delta_0}.
\end{equation}
Choose arbitrary $\lambda \ge \lambda_0$ and $x \in A$ such that 
$\inf_{p \in \mathbb{R}_+} F_{\lambda}(x, p) < f^*$. Clearly, there exists $p \ge 0$ such that 
$F_{\lambda}(x, p) < f^*$. With the use of (\ref{PenFuncOutsideOmega}) one gets that $(x, p) \in \Omega_{\delta_0}$
or, equivalently, $\varphi(x, p) < \delta_0$. Hence applying Lemma~\ref{Lmm_LowerEstimPenTerm} one obtains that
$$
  f^* > F_{\lambda}(x, p) = f(x) + \lambda \varphi(x, p) \ge f(x) + \lambda c_0 d(0, G(x)),
$$
which completes the proof.	 
\end{proof}

\subsection{Global exactness of singular penalty functions}

Now, we can apply Theorems~\ref{Th_GlobalExGeneralCase} and \ref{Th_GlobExFiniteDim} in order to obtain new
sufficient conditions for the singular penalty function $F_{\lambda}$ to be globally exact. At first, we consider the
finite dimensional case and apply the localization principle.

\begin{theorem}
Let $X$ be a finite dimensional normed space, and the set $A$ be closed. Suppose that the following assumptions are
satisfied:
\begin{enumerate}
\item{$f$ is l.s.c. on $A$, and Lipschitz continuous near each of globally optimal solutions of 
the problem $(\mathcal{P})$;
}

\item{$G$ is metrically subregular with respect to the set $A$ at $(x^*, 0)$ for every globally optimal solution $x^*$
of
the problem $(\mathcal{P})$;
}

\item{the mapping $(x, p) \to d(0, G(x) - p w)$ is l.s.c. on $A \times \mathbb{R}_+$ (in particular, one can suppose
that $G$ is outer semicontinuous on $A$);
}

\item{the functions $\phi$ and $\omega$ are l.s.c., and there exist $\phi_0, \omega_0, t_0 > 0$ such that
$\phi(t) \ge \phi_0 t$ and $\omega(t) \ge \omega_0 t$ for any $t \in (0, t_0)$;
}

\item{either there exists $\delta > 0$ such that the set $\{ x \in A \mid d(0, G(x)) < \delta \}$ is bounded or there
exists $\mu \ge 0$ such that the set $\{ x \in A \mid f(x) + \mu d(0, G(x)) < f^* \}$ is bounded.
}
\end{enumerate}
Then the penalty function $F_{\lambda}$ is exact if and only if it is bounded below on $A \times \mathbb{R}_+$ for
some $\lambda \ge 0$.
\end{theorem}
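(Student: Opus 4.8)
The plan is to derive the statement from the Localization Principle, Theorem~\ref{Th_GlobExFiniteDim}; the whole proof reduces to checking that each hypothesis of that theorem follows from the assumptions listed here. The implication that exactness of $F_\lambda$ forces $F_\lambda$ to be bounded below for some $\lambda$ is immediate from Proposition~\ref{Prp_EquivDefExPen}, which provides $\lambda_0 \ge 0$ with $\inf_{(x,p) \in A \times \mathbb{R}_+} F_{\lambda_0}(x,p) \ge f^* > -\infty$. For the converse I would assume $F_{\lambda_1}$ is bounded below on $A \times \mathbb{R}_+$ for some $\lambda_1 \ge 0$ and verify the hypotheses of Theorem~\ref{Th_GlobExFiniteDim} one at a time. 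The space $X$ is finite dimensional and $A$ is closed by assumption, and $f$ is l.s.c. on $\Omega \subseteq A$ since it is l.s.c. on $A$.

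Next I would treat the two structural hypotheses of Theorem~\ref{Th_GlobExFiniteDim}. For the lower semicontinuity of $\varphi$ on $A \times \{p_0\}$ (here $p_0 = 0$) I would apply Lemma~\ref{Lmm_LSC_Criterion}: the functions $\phi$ and $\omega$ are l.s.c., and $(x,p) \mapsto d(0, G(x) - pw)$ is l.s.c. on $A \times \mathbb{R}_+$ by hypothesis, so $\varphi$ is l.s.c. on all of $X \times \mathbb{R}_+$, in particular at every point $(x, 0)$ with $x \in A$. For the growth condition~(\ref{PenTermGen_GrowthInP}) I would use a truncation of the component $\omega$ of the penalty term: $\omega$ is non-decreasing with $\omega(t) = 0$ iff $t = 0$, and directly from~(\ref{PenaltyTermOneDim}) one has $\varphi(x,p) \ge \omega(p) = \omega(d(p,p_0))$ for every $(x,p) \in A \times \mathbb{R}_+$, so the non-decreasing finite-valued function $t \mapsto \min\{\omega(t), t\}$ serves in~(\ref{PenTermGen_GrowthInP}). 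To get that $F_\lambda$ is exact at every globally optimal solution $x^*$ of $(\mathcal{P})$ I would invoke Theorem~\ref{Thrm_LocalExactness}: $f$ is Lipschitz near $x^*$, $G$ is metrically subregular with respect to $A$ at $(x^*, 0)$, and after possibly shrinking $t_0$ the bounds $\phi(t) \ge \phi_0 t$ and $\omega(t) \ge \omega_0 t$ hold on $[0, t_0]$, which are precisely the hypotheses of that theorem.

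It remains to verify one of the two boundedness alternatives of Theorem~\ref{Th_GlobExFiniteDim}, which is the step requiring the most care since it runs through the constants $\delta_0$ and $c_0$ produced by Lemmas~\ref{Lmm_LowerEstimPenTerm}--\ref{Lmm_Representation} (here $c_0 > 0$ precisely because $\phi_0, \omega_0 > 0$). If the first alternative of assumption~5 holds, so that $\{x \in A \mid d(0, G(x)) < \delta\}$ is bounded, then by the first part of Lemma~\ref{Lmm_Representation} one has $\Omega(\eta) \subseteq \{x \in A \mid d(0, G(x)) \le \eta/c_0\}$ for every $\eta \in (0, \delta_0)$; since the projection of $\Omega_\eta$ onto $X$ is contained in $\Omega(\eta)$, taking $\eta$ small enough that $\eta/c_0 < \delta$ makes this projection bounded, which is hypothesis~1 of Theorem~\ref{Th_GlobExFiniteDim}. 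If instead $\{x \in A \mid f(x) + \mu d(0, G(x)) < f^*\}$ is bounded for some $\mu \ge 0$, then, $F_{\lambda_1}$ being bounded below, the second part of Lemma~\ref{Lmm_Representation} gives $\lambda_0 \ge 0$ such that $\{x \in A \mid \inf_{p} F_\lambda(x,p) < f^*\} \subseteq \{x \in A \mid f(x) + \lambda c_0 d(0, G(x)) < f^*\}$ for all $\lambda \ge \lambda_0$; choosing $\lambda$ so large that $\lambda c_0 \ge \mu$ and $\lambda > 0$, the right-hand side is contained in the bounded set above, which gives hypothesis~2 of Theorem~\ref{Th_GlobExFiniteDim}. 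In either case all hypotheses of Theorem~\ref{Th_GlobExFiniteDim} are met; since $F_\lambda$ is also exact at every globally optimal solution and $F_{\lambda_1}$ is bounded below, that theorem yields the exactness of $F_\lambda$.
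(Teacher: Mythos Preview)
Your proof is correct and follows essentially the same route as the paper: invoke Theorem~\ref{Thrm_LocalExactness} for local exactness at global minimizers, Lemma~\ref{Lmm_LSC_Criterion} for lower semicontinuity of $\varphi$, Lemma~\ref{Lmm_Representation} to translate the boundedness hypotheses of assumption~5 into the boundedness alternatives of Theorem~\ref{Th_GlobExFiniteDim}, and then apply the Localization Principle. You are slightly more careful than the paper in one spot: you explicitly truncate $\omega$ to $t\mapsto\min\{\omega(t),t\}$ so that the growth function in~(\ref{PenTermGen_GrowthInP}) is finite-valued as Theorem~\ref{Th_GlobExFiniteDim} requires, a detail the paper's proof leaves implicit.
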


\begin{proof}
From Theorem~\ref{Thrm_LocalExactness} it follows that the penalty function $F_{\lambda}$ is exact at every globally
optimal solution of the problem $(\mathcal{P})$. Applying Lemma~\ref{Lmm_LSC_Criterion} one gets that the penalty
term $\varphi$ is l.s.c. on $A \times \mathbb{R}_+$, while taking into account Lemma~\ref{Lmm_Representation} one
obtains that either the projection of the set $\Omega_{\eta}$ onto $X$ is bounded for sufficiently small $\eta > 0$
(note that the projection of $\Omega_{\eta}$ onto $X$ is contained in $\Omega(\eta)$) or the set
$\{ x \in A \mid \inf_{p \ge 0} F_{\mu}(x, p) < f^* \}$ is bounded for sufficiently large $\mu$. Hence with the
use of Theorem~\ref{Th_GlobExFiniteDim} one obtains the desired result.	 
\end{proof}

One can apply Theorem~\ref{Th_GlobalExGeneralCase} in order to obtain sufficient conditions for the singular penalty
function $F_{\lambda}$ under consideration to be exact in the general case. However, instead of applying
Theorem~\ref{Th_GlobalExGeneralCase} directly, we prove a useful auxiliary result about an intimate relation between
the exactness of the parametric penalty function $F_{\lambda}$ and the exactness of the standard penalty function
$h_{\lambda}(x) = f(x) + \lambda d(0, G(x))$, and then apply Theorem~\ref{Th_GlobalExGeneralCase} to the function
$h_{\lambda}$. This approach allows one to obtain stronger and, at the same time, simpler sufficient conditions for the
penalty function $F_{\lambda}$ to be globally exact than a direct application of Theorem~\ref{Th_GlobalExGeneralCase}.
Moreover, a theorem about a connection between the exactness of penalty functions $F_{\lambda}$ and $h_{\lambda}$ (the
first step in the approach that we use) is a very useful and important result on its own, since it allows one to reduce
the study of the singular penalty function $F_{\lambda}$ to the study of the standard penalty function $h_{\lambda}$.

The following result sharpens Theorems 7 and 9 from \cite{Dolgopolik_OptLet2}.

\begin{theorem} \label{Thrm_ReductionToStandPenFunc}
Let there exist $\phi_0, \Phi_0, \omega_0, \Omega_0, t_0 > 0$ such that
\begin{equation} \label{RHS_Derivatives_Inequal}
  \phi_0 t \le \phi(t) \le \Phi_0 t, \quad
  \omega_0 t \le \omega(t) \le \Omega_0 t  \quad \forall t \in [0, t_0]
\end{equation}
(in particular, one can suppose that there exist the right-hand side derivatives $\phi'_+(0)$ and $\omega'_+(0)$ of the
functions $\phi$ and $\omega$ at the origin such that $\phi'_+(0) > 0$ and $\omega'_+(0) > 0$). Then the penalty
function $F_{\lambda}$ is exact on $\Omega_{\delta}$ for some $\delta > 0$ if and only if the penalty function
$h_{\lambda}$ is exact on the set $\{ x \in A \mid d(0, G(x)) < \theta \}$ for some $\theta > 0$.
\end{theorem}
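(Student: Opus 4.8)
The plan is to derive both implications from the two-sided estimates of the singular penalty term $\varphi$ in terms of the residual $d(0, G(x))$, which are exactly what Lemmas~\ref{Lmm_UpperEstimPenTerm} and~\ref{Lmm_LowerEstimPenTerm} provide. Under~\eqref{RHS_Derivatives_Inequal} the hypotheses of both lemmas hold, so there are constants $\delta_1, \delta_2 > 0$ and $c_0, C_0 > 0$ (the positivity of $c_0$ follows from $\sqrt{\phi_0 \omega_0 + \phi_0^2 \| w \|^2} > \phi_0 \| w \|$ when $\phi_0, \omega_0 > 0$) such that $\inf_{p \ge 0} \varphi(x, p) \le C_0 d(0, G(x))$ whenever $d(0, G(x)) < \delta_1$, and $\varphi(x, p) \ge c_0 d(0, G(x))$ whenever $\varphi(x, p) < \delta_2$. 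Recall also that the standing assumptions guarantee $f^*$ is finite, and that both $F_{\lambda}$ and $h_{\lambda}$ are non-decreasing in $\lambda$ because $\varphi$ and $d(0, G(\cdot))$ are nonnegative, so to establish exactness on a set it suffices to exhibit a single admissible value of the penalty parameter.

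Assume first that $h_{\lambda}$ is exact on $\{ x \in A \mid d(0, G(x)) < \theta \}$, say $f(x) + \mu_0 d(0, G(x)) \ge f^*$ there. I would set $\delta := \min\{ \delta_2, c_0 \theta \}$ and take any $(x, p) \in A \times \mathbb{R}_+$ with $\varphi(x, p) < \delta$. Then $\varphi(x, p) < \delta_2$, so by Lemma~\ref{Lmm_LowerEstimPenTerm} one has $d(0, G(x)) \le \varphi(x, p) / c_0 < \delta / c_0 \le \theta$; hence $f(x) \ge f^* - \mu_0 d(0, G(x)) \ge f^* - (\mu_0 / c_0) \varphi(x, p)$, and therefore $F_{\lambda}(x, p) = f(x) + \lambda \varphi(x, p) \ge f^* + (\lambda - \mu_0 / c_0) \varphi(x, p) \ge f^*$ for every $\lambda \ge \mu_0 / c_0$. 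This is precisely exactness of $F_{\lambda}$ on $\Omega_{\delta}$.

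Conversely, assume $F_{\lambda}$ is exact on $\Omega_{\delta}$, say $f(x) + \lambda_0 \varphi(x, p) \ge f^*$ for all $(x, p)$ with $\varphi(x, p) < \delta$. I would set $\theta := \min\{ \delta_1, \delta / C_0 \}$ and fix $x \in A$ with $d(0, G(x)) < \theta$. By Lemma~\ref{Lmm_UpperEstimPenTerm}, $m := \inf_{p \ge 0} \varphi(x, p) \le C_0 d(0, G(x)) < C_0 \theta \le \delta$; choosing $p_n \ge 0$ with $\varphi(x, p_n) \to m$, we have $\varphi(x, p_n) < \delta$ for all large $n$, so $f(x) + \lambda_0 \varphi(x, p_n) \ge f^*$, and passing to the limit yields $f(x) + \lambda_0 m \ge f^*$. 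Since $m \le C_0 d(0, G(x))$ and $\lambda_0 \ge 0$, it follows that $f(x) + \lambda_0 C_0 d(0, G(x)) \ge f^*$, i.e. $h_{\mu}(x) \ge f^*$ with $\mu = \lambda_0 C_0$; thus $h_{\lambda}$ is exact on $\{ x \in A \mid d(0, G(x)) < \theta \}$.

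The only genuinely delicate point is the limiting argument in the second implication: exactness of $F_{\lambda}$ on $\Omega_{\delta}$ only bounds $f(x) + \lambda_0 \varphi(x, p)$ for the \emph{admissible} pairs (those with $\varphi(x, p) < \delta$), so one must know that the infimum over $p$ of $\varphi(x, p)$ is itself below $\delta$ to guarantee that a minimizing sequence in $p$ stays admissible; this is exactly where the quantitative upper estimate $\inf_{p \ge 0} \varphi(x, p) \le C_0 d(0, G(x))$ combined with the smallness of $d(0, G(x))$ is used. Apart from this, the proof is a routine substitution of the estimates from Lemmas~\ref{Lmm_UpperEstimPenTerm} and~\ref{Lmm_LowerEstimPenTerm}, with the thresholds $\delta$ and $\theta$ shrunk just enough to make the two lemmas applicable.
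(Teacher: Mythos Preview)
Your proof is correct. Both you and the paper rest on the same two key estimates (Lemmas~\ref{Lmm_UpperEstimPenTerm} and~\ref{Lmm_LowerEstimPenTerm}), but you organize the argument differently. The paper introduces the auxiliary barrier transformation $\psi_{\delta}(t)=t/(\delta-t)$ (cf.\ Remark~\ref{Rmrk_BarrierTermPenFunc}) to convert ``exactness on $\Omega_{\delta}$'' into \emph{global} exactness of $\Psi_{\lambda}=f+\lambda\,\psi_{\delta}\!\circ\!\varphi$, and then exploits the monotonicity and the linear bound $\psi_{\delta}(t)\le 2t/\delta$ on $[0,\delta/2]$ together with the two lemmas. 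You bypass this device entirely: you work directly with the definition of exactness on a sublevel set, choose the thresholds $\delta=\min\{\delta_2,c_0\theta\}$ and $\theta=\min\{\delta_1,\delta/C_0\}$ so that the relevant lemma applies, and substitute the linear estimates straight into $F_{\lambda}$ and $h_{\lambda}$. Your route is shorter and more elementary; the paper's route is a bit more systematic in that it plugs into the general machinery of Remark~\ref{Rmrk_BarrierTermPenFunc}, which it reuses elsewhere. The limiting argument you flag (passing from $\varphi(x,p_n)<\delta$ to the infimum $m$) is indeed the one genuine care point, and you handle it correctly; the paper sidesteps it by pushing the infimum inside $\psi_{\delta}$ via continuity and monotonicity, which amounts to the same thing.
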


\begin{proof}
Let the penalty function $F_{\lambda}$ be exact on $\Omega_{\delta}$ for some $\delta > 0$. By definition, there exists 
$\lambda \ge 0$ such that $F_{\lambda}(x, p) \ge f^*$ for all $(x, p) \in \Omega_{\delta}$. Hence applying
Proposition~\ref{Prp_EquivDefExPen} one can verify that the penalty function 
$\Psi_{\lambda}(x, p) = f(x) + \lambda \psi_{\delta}(\varphi(x, p))$, where
\begin{equation} \label{BarrierTrasformDef}
  \psi_{\delta}(t) = \begin{cases}
    \dfrac{t}{\delta - t}, & \text{if } 0 \le t < \delta, \\
    + \infty, & \text{if } t \ge \delta.
  \end{cases}
\end{equation}
is exact, and $\lambda^*(f, \psi_{\delta} \circ \varphi) \le \delta \cdot \lambda^*( \Omega_{\delta} )$ (see
Remark~\ref{Rmrk_BarrierTermPenFunc}). Consequently, with the use of the fact that the function $\psi_{\delta}$ is
non-decreasing and continuous on its effective domain one obtains that
$$
  \inf_{p \ge 0} \Psi_{\lambda}(x, p) = f(x) + \lambda \psi_{\delta}\left( \inf_{p \ge 0} \varphi(x, p) \right) \ge f^*
  \quad \forall x \in A \quad \forall \lambda \ge \lambda^*(f, \psi_{\delta} \circ \varphi).
$$
By Lemma~\ref{Lmm_UpperEstimPenTerm} there exist $C_0 > 0$ and $\delta_0 > 0$ such that 
$\inf_{p \le 0} \varphi(x, p) \le C_0 d( 0, G(x) )$ for any $x \in X$ with $d(0, G(x)) < \delta_0$. Therefore taking
into account the facts that the function $\psi_{\delta}$ is non-decreasing and $\psi_{\delta}(t) \le 2 t / \delta$ for
any $t \in [0, \delta / 2]$ one gets that
$$
  f^* \le f(x) + \lambda \psi_{\delta}\left( \inf_{p \ge 0} \varphi(x, p) \right) \le
  f(x) + \lambda \psi_{\delta}\Big( C_0 d(0, G(x)) \Big) \le h_{\lambda C_1}(x),
$$
for any $x \in A$ with $d(0, G(x)) < \min\{ \delta_0, \delta / 2 C_0 \}$ and 
$\lambda \ge \lambda^*(f, \psi_{\delta} \circ \varphi)$, where $C_1 = 2 C_0 / \delta$. Thus, the penalty function
$h_{\lambda}$ is exact on the set $\{ x \in A \mid d(0, G(x)) < \theta \}$ with 
$\theta = \min\{ \delta_0, \delta / 2 C_0 \}$

Let, now, the penalty function $h_{\lambda}$ be exact on the set $\{ x \in A \mid d(0, G(x)) < \theta \}$ for some 
$\theta > 0$. Then the penalty function $b_{\lambda}(x) = f(x) + \lambda \psi_{\theta}(d(0, G(x))$ is exact. By
Lemma~\ref{Lmm_LowerEstimPenTerm} there exist $c_0 > 0$ and $\delta_0 > 0$ such that 
$\varphi(x, p) \ge c_0 d(0, G(x))$ for any $(x, p) \in A \times P$ with $\varphi(x, p) < \delta_0$. Hence taking into
account the facts that the function $\psi_{\theta}$ is non-decreasing and $\psi_{\theta}(t) \le 2 t / \theta$ for any
$t \in [0, \theta / 2]$ one obtains that
$$
  f^* \le b_{\lambda}(x) \le f(x) + \lambda \psi_{\theta}\left( \frac{1}{c_0} \varphi(x, p) \right) \le
  f(x) + \frac{2 \lambda}{\theta c_0} \varphi(x, p)
$$
for any sufficiently large $\lambda \ge 0$ and for all $(x, p) \in A \times P$ such that 
$\varphi(x, p) < \min\{ \delta_0, \theta c_0 / 2 \}$. Thus, the penalty function $F_{\lambda}$ is exact on the set
$\Omega_{\delta}$ with $\delta = \min\{ \delta_0, \theta c_0 / 2 \}$. 
\end{proof}

\begin{corollary}
Let there exist $\phi_0, \Phi_0, \omega_0, \Omega_0, t_0 > 0$ such that inequalities \eqref{RHS_Derivatives_Inequal} are
valid. Suppose also that there exists $\lambda_0 \ge 0$ such that the function $F_{\lambda_0}$ is bounded
below on $A \times \mathbb{R}_+$, and the function $h_{\lambda_0}$ is bounded below on $A$. Then the penalty
function $F_{\lambda}$ is exact if and only if the penalty function $h_{\lambda}$ is exact.
\end{corollary}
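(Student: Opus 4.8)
The plan is to combine Theorem~\ref{Thrm_ReductionToStandPenFunc} with the localization-type characterization of global exactness in Proposition~\ref{Prp_Reduction}, applying the latter both to the parametric penalty function $F_{\lambda}$ and to the standard penalty function $h_{\lambda}$. The role of the two boundedness-below hypotheses in the statement is precisely to suppress the ``bounded below'' clause in each application of Proposition~\ref{Prp_Reduction}, leaving only the ``exactness on a shrinking neighbourhood of the feasible set'' clause; and Theorem~\ref{Thrm_ReductionToStandPenFunc} asserts exactly that these remaining clauses are equivalent for $F_{\lambda}$ and $h_{\lambda}$ under assumption \eqref{RHS_Derivatives_Inequal}.

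Concretely, I would first observe that $h_{\lambda}$ fits the general scheme of Subsection~\ref{Subsect_ExactPenFunc}: it is the penalty function for $(\mathcal{P})$ with penalty term $x \mapsto d(0, G(x))$, which vanishes exactly on $M$ because $G$ has closed values. Regarding a non-parametric penalty function as a parametric one over a one-point parameter space, Proposition~\ref{Prp_Reduction} applies to $h_{\lambda}$ and gives: $h_{\lambda}$ is exact if and only if $h_{\lambda}$ is bounded below on $A$ for some $\lambda$ and there exists $\theta > 0$ such that $h_{\lambda}$ is exact on $\{ x \in A \mid d(0, G(x)) < \theta \}$ (the set $\Omega_{\theta}$ associated with the penalty term of $h_{\lambda}$). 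Since $h_{\lambda_0}$ is assumed bounded below on $A$, this reduces to: $h_{\lambda}$ is exact $\iff$ $h_{\lambda}$ is exact on $\{ x \in A \mid d(0, G(x)) < \theta \}$ for some $\theta > 0$. Symmetrically, Proposition~\ref{Prp_Reduction} together with the assumed boundedness below of $F_{\lambda_0}$ on $A \times \mathbb{R}_+$ yields: $F_{\lambda}$ is exact $\iff$ $F_{\lambda}$ is exact on $\Omega_{\delta}$ for some $\delta > 0$.

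Finally I would invoke Theorem~\ref{Thrm_ReductionToStandPenFunc}, whose hypotheses are exactly \eqref{RHS_Derivatives_Inequal}: $F_{\lambda}$ is exact on $\Omega_{\delta}$ for some $\delta > 0$ if and only if $h_{\lambda}$ is exact on $\{ x \in A \mid d(0, G(x)) < \theta \}$ for some $\theta > 0$. Chaining the three equivalences produces $F_{\lambda}$ exact $\iff$ $h_{\lambda}$ exact, which is the assertion.

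There is essentially no serious obstacle; the corollary is a bookkeeping consequence of results already in hand. The only point worth a line of justification is the application of Proposition~\ref{Prp_Reduction} to the standard penalty function $h_{\lambda}$ — namely, verifying that $h_{\lambda}$ genuinely is a penalty function for $(\mathcal{P})$ in the sense used here and that the sets $\Omega_{\theta}$ built from its penalty term coincide with $\{ x \in A \mid d(0, G(x)) < \theta \}$ — after which the three equivalences combine automatically. Alternatively, one may simply quote the non-parametric counterpart of Proposition~\ref{Prp_Reduction} from \cite{Dolgopolik} in place of this specialization.
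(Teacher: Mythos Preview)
Your proposal is correct and is exactly the intended argument: the paper states this result as an unproved corollary to Theorem~\ref{Thrm_ReductionToStandPenFunc}, and the way to extract it is precisely to couple that theorem with Proposition~\ref{Prp_Reduction} applied once to $F_{\lambda}$ and once to $h_{\lambda}$, using the two boundedness-below hypotheses to eliminate the boundedness clause on each side.
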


Now, we can utilize Theorem~\ref{Th_GlobalExGeneralCase} in order to derive sufficient conditions for the parametric
penalty function $F_{\lambda}$ to be exact in the general case.

\begin{theorem}
Let $X$ be a complete metric space, $A$ be a closed set, and the functions $f$ and $d(0, G(x))$ be l.s.c. on $A$ (in
particular, one can suppose that $G$ is outer semicontinuous on $A$). Suppose also that there exist $\delta > 0$ and 
$\lambda_0 \ge 0$ such that the following assumptions are satisfied:
\begin{enumerate}
\item{$f$ is Lipschitz continuous on the set
$$
  K(\delta, \lambda_0) := \big\{ x \in A \mid d(0, G(x)) < \delta, f(x) + \lambda_0 d(0, G(x)) < f^* \big\},
$$
and u.s.c. at every point of this set;
}

\item{there exists $a > 0$ such that $d(0, G(\cdot))^{\downarrow}_A(x) < - a$ for any $x \in K(\delta, \lambda_0)$;
}

\item{there exist $\phi_0, \Phi_0, \omega_0, \Omega_0, t_0 > 0$ such that inequalities \eqref{RHS_Derivatives_Inequal}
hold true;
}
\end{enumerate}
Then the penalty function $F_{\lambda}$ is exact if and only if it is bounded below on $A \times \mathbb{R}_+$ for
some $\lambda \ge 0$.
\end{theorem}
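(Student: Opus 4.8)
The plan is to deduce the statement from Theorem~\ref{Th_GlobalExGeneralCase} together with the reduction of $F_{\lambda}$ to the standard penalty function $h_{\lambda}(x) = f(x) + \lambda d(0, G(x))$ provided by Theorem~\ref{Thrm_ReductionToStandPenFunc}. The ``only if'' direction is immediate from Proposition~\ref{Prp_EquivDefExPen}, so I would concentrate on the converse. Suppose $F_{\lambda_1}$ is bounded below on $A \times \mathbb{R}_+$, say by $c$. By Proposition~\ref{Prp_Reduction} it is enough to show $F_{\lambda}$ is exact on $\Omega_{\delta}$ for some $\delta > 0$, and by Theorem~\ref{Thrm_ReductionToStandPenFunc} (this is where assumption~3, i.e.~\eqref{RHS_Derivatives_Inequal}, enters) this is equivalent to $h_{\lambda}$ being exact on $\{ x \in A \mid d(0, G(x)) < \theta \}$ for some $\theta > 0$.

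To obtain that, I would fix $C_0, \delta_1 > 0$ as in Lemma~\ref{Lmm_UpperEstimPenTerm}, put $\theta = \min\{ \delta, \delta_1 \}$ and $D = \{ x \in A \mid d(0, G(x)) \le \theta \}$ --- a closed set (since $A$ is closed and $d(0, G(\cdot))$ is l.s.c.) containing $\Omega$ --- and then apply Theorem~\ref{Th_GlobalExGeneralCase} to $h_{\lambda}$ regarded as a penalty function for $(\mathcal{P})$ with the simple-constraint set $A$ replaced by $D$ (the feasible set is unchanged, as $\Omega \subseteq D$). I expect its hypotheses to check out: $f$ and $d(0, G(\cdot))$ are l.s.c.\ on $D$; for $\lambda_0' \ge \max\{ \lambda_0, \lambda_1 C_0 \}$ the corresponding set $C(\theta, \lambda_0') = \{ x \in A \mid d(0, G(x)) < \theta,\ h_{\lambda_0'}(x) < f^* \}$ sits inside $K(\delta, \lambda_0)$ (since $\theta \le \delta$ and $h_{\lambda_0'} \ge h_{\lambda_0}$), so assumption~1 gives the Lipschitz/u.s.c.\ requirement; and the rate-of-descent requirement follows because, for $x \in C(\theta, \lambda_0')$, any sequence realising $d(0, G(\cdot))^{\downarrow}_A(x) < -a$ eventually decreases $d(0, G(\cdot))$ strictly below $\theta$ and hence lies in $D$, giving $d(0, G(\cdot))^{\downarrow}_D(x) \le d(0, G(\cdot))^{\downarrow}_A(x) < -a$. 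Theorem~\ref{Th_GlobalExGeneralCase} then reduces everything to: is $h_{\lambda}$ bounded below on $D$ for some $\lambda$? That I would settle with Lemma~\ref{Lmm_UpperEstimPenTerm}: for $x \in D$ one has $d(0, G(x)) \le \delta_1$, so $\inf_{p} \varphi(x, p) \le C_0 d(0, G(x))$, whence $h_{\lambda_1 C_0}(x) \ge f(x) + \lambda_1 \inf_p \varphi(x, p) = \inf_p F_{\lambda_1}(x, p) \ge c$. Thus $h_{\lambda}$ is exact with respect to $D$, in particular $h_{\lambda}(x) \ge f^*$ whenever $x \in A$ and $d(0, G(x)) < \theta$, for all large $\lambda$; tracing the reduction back (Theorem~\ref{Thrm_ReductionToStandPenFunc} and then Proposition~\ref{Prp_Reduction}, the boundedness hypothesis of which is the standing assumption) gives exactness of $F_{\lambda}$.

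The hard part, and the reason the argument cannot be a one-line application of Theorem~\ref{Th_GlobalExGeneralCase} to $h_{\lambda}$ on $A$, is the boundedness step: boundedness of $F_{\lambda}$ on $A \times \mathbb{R}_+$ does \emph{not} force $h_{\lambda}$ to be bounded below on all of $A$, because the singular term $\varphi(x, p)$ can equal $+\infty$ for every $p$ at points far from $\Omega$ (so $F_{\lambda}$ is trivially bounded there irrespective of $f$) while $h_{\lambda}$ need not be. Lemma~\ref{Lmm_UpperEstimPenTerm} only transfers boundedness near the feasible set, which is exactly why the argument has to be localised to the neighbourhood $D$ of $\Omega$; and once one works with $D$, the only thing requiring care is that the metric-subregularity-type condition of assumption~2 is inherited by $D$, which holds because the steepest-descent directions of $d(0, G(\cdot))$ automatically drop $d(0, G(\cdot))$.
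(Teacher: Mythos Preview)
Your argument is correct and follows the paper's overall strategy: establish that $h_\lambda$ is exact on some $\{x\in A: d(0,G(x))<\theta\}$ via Theorem~\ref{Th_GlobalExGeneralCase}, then invoke Theorem~\ref{Thrm_ReductionToStandPenFunc} and Proposition~\ref{Prp_Reduction}. The only difference is in the localisation device. The paper applies Theorem~\ref{Th_GlobalExGeneralCase} to the barrier-modified penalty $b_\lambda(x)=f(x)+\lambda\,\psi_\delta(d(0,G(x)))$ (see~\eqref{BarrierTrasformDef}) on all of $A$, exploiting that $\psi_\delta=+\infty$ outside $\{d(0,G(x))<\delta\}$, and then passes back to $h_\lambda$ via Remark~\ref{Rmrk_BarrierTermPenFunc}; you instead shrink the simple-constraint set from $A$ to the closed set $D$ and work with $h_\lambda$ directly. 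Both routes are valid. Yours has the virtue of making the boundedness-from-below step explicit (the paper is terse here; one still has to check, essentially via Lemma~\ref{Lmm_UpperEstimPenTerm}, that $b_\lambda$ is bounded below, which is the same calculation you give), at the price of the extra verification that the descent bound transfers from $A$ to $D$---which you handle correctly by noting that descent sequences for $d(0,G(\cdot))$ automatically remain in $D$. One cosmetic fix: take $\theta$ strictly smaller than $\delta_1$, e.g.\ $\theta=\tfrac12\min\{\delta,\delta_1\}$, so that the strict inequality in Lemma~\ref{Lmm_UpperEstimPenTerm} is available at every point of $D$.
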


\begin{proof}
Applying Theorem~\ref{Th_GlobalExGeneralCase} to the penalty function 
$b_{\lambda}(x) = f(x) + \lambda \psi_{\delta}(d(0, G(x))$ (see~\eqref{BarrierTrasformDef}) one obtains that
$b_{\lambda}$ is exact. Therefore, as it is easy to see, the penalty function $h_{\lambda}$ is exact on the set 
$\{ x \in A \mid d(0, G(x)) < \theta \}$ for any $\theta < \delta$ (cf.~Remark~\ref{Rmrk_BarrierTermPenFunc}). Hence
applying Theorem~\ref{Thrm_ReductionToStandPenFunc} one obtains that the parametric penalty function $F_{\lambda}$ is
exact on the set $\Omega_{\tau}$ for some $\tau > 0$, which with the use of Proposition~\ref{Prp_EquivDefExPen} implies
the desired result.
\end{proof}

\begin{remark}
Note that the second assumption of the theorem above, in essence, means that the set-valued mapping $G$ is metrically
regular with respect to the set $A$ on the set $K(\delta, \lambda_0) \times \{ 0 \}$ (see~\cite{Ioffe,Kruger}).
\end{remark}

Let us also obtain sufficient conditions for the penalty function $F_{\lambda}$ under consideration to be
feasibility-preserving.

\begin{theorem} \label{Thrm_SingPenFunc_FeasPreserv}
Let $C \subset A$ be a nonempty set, and $f$ be Lipschitz continuous on an open set containing the set $C$. Suppose also
that the following assumptions are valid:
\begin{enumerate}
\item{the functions $\phi$ and $\omega$ are convex and continuously differentiable on $\dom \varphi$ and $\dom \omega$,
respectively, and $\phi'(0) > 0$ and $\omega'(0) > 0$;}

\item{there exists $a > 0$ such that $d(0, G(\cdot) - p w)^{\downarrow}_A (x) \le - a$ for any 
$(x, p) \in (C \setminus \Omega) \times \dom \omega$ such that $d(0, G(x) - p w) > 0$ and
$\phi(d(0, G(x) - p w)^2) < + \infty$.
}
\end{enumerate}
Then the penalty function $F_{\lambda}$ is feasibility-preserving on the set $C$.
\end{theorem}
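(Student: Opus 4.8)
The plan is to derive the statement from Proposition~\ref{Prp_SuffCondFeasPres}, applied with the open set on which $f$ is Lipschitz playing the role of $V$. Since here $P=\mathbb{R}_+$ and $p_0=0$, and since $\varphi(x,0)=0$ for $x\in\Omega$ while $\varphi(x,0)=+\infty$ otherwise, the set $(C\times P)_{\inf}$ consists exactly of the pairs $(x,p)$ with $p>0$ and $\varphi(x,p)<+\infty$, that is, with $\omega(p)<+\infty$ and $\phi(d(0,G(x)-pw)^2)<+\infty$. By that proposition it therefore suffices to exhibit a single constant $\bar a>0$ such that $\varphi(\cdot,p)^{\downarrow}_A(x)\le-\bar a$ for every pair in the critical set $S:=\{(y,q)\in(C\times P)_{\inf}\mid\varphi(y,\cdot)^{\downarrow}(q)\ge0\}$; the bound $\lambda_{fp}(C)\le L/\bar a$, with $L$ a Lipschitz constant of $f$, will then come for free.

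First I would dispose of the degenerate pairs. Writing $\rho_x(q):=d(0,G(x)-qw)=d(qw,G(x))$, the map $q\mapsto\rho_x(q)$ is $\|w\|$-Lipschitz. If $\rho_x(p)=0$, then $\phi(\rho_x(q)^2)\le\phi(\|w\|^2|q-p|^2)$ near $q=p$, and since $\phi$ is convex with $\phi(0)=0$ the term $q^{-1}\phi(\rho_x(q)^2)$ is $o(|q-p|)$; hence, using Lemma~\ref{Lemma_SumRule} and the differentiability of $\omega$, one gets $\varphi(x,\cdot)^{\downarrow}(p)\le-\omega'(p)\le-\omega'(0)<0$, so such a pair is not in $S$. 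Consequently every $(x,p)\in S$ with $x\notin\Omega$ (equivalently $d(0,G(x))>0$) has $\rho_x(p)>0$, so assumption~2 applies at $(x,p)$. Pairs of $S$ with $x\in\Omega$ are handled separately, exploiting that then $\rho_x(0)=0$, which by the same Lipschitz estimate keeps the factor $p^{-1}$ from being compensated; this is the case demanding the most care.

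For $(x,p)\in S$ with $x\notin\Omega$, the term $\omega(p)$ is constant in $x$, so $\varphi(\cdot,p)^{\downarrow}_A(x)=p^{-1}[\phi(\rho_{(\cdot)}(p)^2)]^{\downarrow}_A(x)$. I would apply Lemma~\ref{Lemma_Superpos} twice --- first to $t\mapsto t^2$ (non-decreasing, $C^1$ at $\rho_x(p)$ with derivative $2\rho_x(p)>0$), then to $\phi$ ($C^1$ at $\rho_x(p)^2$ with $\phi'(\rho_x(p)^2)\ge\phi'(0)>0$ by convexity), using that $\rho_{(\cdot)}(p)^{\downarrow}_A(x)\le-a<0$ from assumption~2 (the case $\rho_{(\cdot)}(p)^{\downarrow}_A(x)=-\infty$ being trivial) --- to obtain
$$
\varphi(\cdot,p)^{\downarrow}_A(x)=p^{-1}\phi'(\rho_x(p)^2)\,2\rho_x(p)\,\rho_{(\cdot)}(p)^{\downarrow}_A(x)\le-2a\,\phi'(\rho_x(p)^2)\,\frac{\rho_x(p)}{p}.
$$
The crux is then to bound $\phi'(\rho_x(p)^2)\rho_x(p)/p$ away from $0$ \emph{uniformly} over $S$, and for this I would use membership in $S$ itself: by Lemma~\ref{Lemma_RSD_Product} applied to $q\mapsto q^{-1}\cdot\phi(\rho_x(q)^2)$, together with Lemma~\ref{Lmm_SDD_SumEstimate}, the differentiability of $\omega$ at $p$, the $\|w\|$-Lipschitz continuity of $\rho_x$, and the monotonicity of $\phi$,
$$
0\le\varphi(x,\cdot)^{\downarrow}(p)\le\varphi(x,\cdot)^{\downarrow}_{(0,p]}(p)\le2\|w\|\,\phi'(\rho_x(p)^2)\,\frac{\rho_x(p)}{p}+p^{-2}\phi(\rho_x(p)^2)-\omega'(p),
$$
the first inequality being forced because $(x,p)\in S$ and, as in~\eqref{RSD_MinLeftRight}, at an interior point of $\mathbb{R}_+$ the rate of steepest descent is the minimum of the two one-sided rates. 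Since $\phi(t)\le t\phi'(t)$ for $t\ge0$ (convexity, $\phi(0)=0$), writing $T:=\phi'(\rho_x(p)^2)\rho_x(p)/p$ and $s:=\rho_x(p)/p$ this reads $T(2\|w\|+s)\ge\omega'(p)\ge\omega'(0)$, while also $T\ge\phi'(0)s$. Splitting into the cases $s\ge1$ and $s<1$ yields $T\ge c:=\min\{\phi'(0),\,\omega'(0)/(2\|w\|+1)\}$, independently of $(x,p)$; hence $\varphi(\cdot,p)^{\downarrow}_A(x)\le-2ac=:-\bar a$ on $S$ (after checking, for $x\in\Omega$, the analogous bound via $\rho_x(0)=0$), and Proposition~\ref{Prp_SuffCondFeasPres} concludes.

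The main obstacle is precisely this uniformity of $\bar a$: for a \emph{fixed} pair of $S$ the inequality $\varphi(\cdot,p)^{\downarrow}_A(x)<0$ is immediate from assumption~2, but feasibility preservation --- and Proposition~\ref{Prp_SuffCondFeasPres} --- needs one constant serving all pairs simultaneously, which is why the argument is arranged so that lying in $S$ itself prevents $\rho_x(p)/p$ from being small, and why the factor $\rho_x(p)/p$ (and the reciprocal $p^{-1}$) must be carried through the estimate rather than discarded. A secondary delicate point is the treatment of pairs of $S$ with $x\in\Omega$, where assumption~2 is silent and one must use the vanishing $d(0,G(x))=0$ directly.
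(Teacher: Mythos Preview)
Your route is genuinely different from the paper's. The paper proves the stronger assertion $\varphi^{\downarrow}_{A\times\mathbb{R}_+}(x,p)\le -c$ for \emph{every} $(x,p)\in(C\times\mathbb{R}_+)_{\inf}$: it fixes a small threshold $\sigma$ on the quantity $T:=p^{-1}\phi'(\rho_x(p)^2)\rho_x(p)$ and shows that when $T\ge\sigma$ the $x$-direction gives descent (via assumption~2 and Lemma~\ref{Lemma_Superpos}), whereas when $T<\sigma$ the $p$-direction does. You instead work only on the set $S$ required by Proposition~\ref{Prp_SuffCondFeasPres} and use membership in $S$ itself to force $T\ge c$ uniformly --- a neat inversion of the paper's dichotomy. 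The estimate $T(2\|w\|+s)\ge\omega'(0)$ together with $T\ge\phi'(0)s$ and the split on $s\lessgtr1$ is correct, and for $(x,p)\in S$ with $x\notin\Omega$ your conclusion $\varphi(\cdot,p)^{\downarrow}_A(x)\le-2ac$ goes through. (Minor slip: in the degenerate case you should cite Lemma~\ref{Lmm_SDD_SumEstimate}, not Lemma~\ref{Lemma_SumRule}.)

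The genuine gap is the case $x\in\Omega$, which you flag but do not resolve --- and the paper's proof has the same lacuna: its subcase with $T\ge\sigma$ invokes assumption~2, which is stated only for $x\in C\setminus\Omega$. Your hint (``$\rho_x(0)=0$ keeps $p^{-1}$ from being compensated'') does not close it, because nothing in the hypotheses controls $y\mapsto\rho_y(p)$ at $x\in\Omega$. In fact one can have $(x,p)\in S$ with $x\in\Omega$ a local \emph{minimum} of $y\mapsto\rho_y(p)$: take $X=Y=A=C=\mathbb{R}$, $w=1$, $G(y)=\{-|y|,\,|y|+3\}$ (so $\Omega=\{0\}$), $\phi(t)=t$, $\omega(t)=\varepsilon t$ with $\varepsilon>0$ small, $f\equiv0$. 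Assumption~2 holds with $a=1$, yet for $(y,q)$ near $(0,p^*)$ with a suitable $p^*\in(3/2,3)$ one has $\varphi(y,q)=q^{-1}(3+|y|-q)^2+\varepsilon q$, and $(0,p^*)$ is a strict local minimum of $\varphi$ --- hence an inf-stationary point of $F_\lambda$ for every $\lambda$, lying outside $\Omega\times\{0\}$. So neither your argument nor the paper's can be completed as stated; the obstruction you sensed in the feasible-$x$ case is real, not merely a technicality.
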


\begin{proof}
Let us show that under the assumptions of the theorem there exists $c > 0$ such that
$$
  \varphi^{\downarrow}_{A \times \mathbb{R}_+}(x, p) < - c \quad 
  \forall (x, p) \in (C \times \mathbb{R}_+)_{\inf}.
$$
Then applying Proposition~\ref{Prp_SuffCondFeasPres} one obtains the required result.

Let $(x, p) \in (C \times \mathbb{R}_+)_{\inf}$ be arbitrary. Note that if $p = 0$, then $x \notin \Omega$, which
implies $\varphi(x, p) = + \infty$. Therefore $p > 0$. Furthermore, one has $\omega(p) < + \infty$ and 
$\phi(d(0, G(x) - p w)^2) < + \infty$ by the definition of the set $(C \times \mathbb{R}_+)_{\inf}$.

Suppose, at first, that $d(0, G(x) - p w) = 0$. Then by the second part of Lemma~\ref{Lemma_SimpleCases} the function 
$g(q) = d(0, G(x) - q w)^2$ is differentiable at the point $p$ and $g'(p) = 0$. Consequently, the function 
$q \to \varphi(x, q) = q^{-1} \phi(g(q)) + \omega(q)$ is differentiable at the point $p$ and
$$
  \frac{\partial \varphi}{\partial q}(x, p) = - \frac{1}{p^2} \phi(g(p)) + \frac{1}{p} \phi'(g(p)) g'(p) +
  \omega'(p) = \omega'(p)
$$
by the fact that $\phi(0) = g'(p) =  0$. Note that the function $\omega'(p)$ is non-decreasing due to the convexity of
$\omega$. Therefore
\begin{equation} \label{RSD_PenTerm_UpperEstimate_1}
  \varphi_{A \times \mathbb{R}_+}^{\downarrow}(x, p) \le \varphi(x, \cdot)^{\downarrow}_{\mathbb{R}_+}(p) =
  - \left| \frac{\partial \varphi}{\partial q}(x, p) \right| = - | \omega'(p) | \le - |\omega'(0)| < 0.
\end{equation}
Suppose, now, that $d(0, G(x) - p w) > 0$, and choose an arbitrary $\sigma > 0$. If
$$
  \frac{1}{p} \phi'( d(0, G(x) - p w)^2 ) d(0, G(x) - p w) \ge \sigma,
$$
then applying Lemma~\ref{Lemma_Superpos} one obtains that
\begin{equation} \label{RSD_PenTerm_UpperEstimate_2}
  \varphi(\cdot, p)^{\downarrow}_A (x) = \frac{2}{p} \phi'( d(0, G(x) - p w)^2 ) d(0, G(x) - p w)
  d(0, G(\cdot) - p w)^{\downarrow}_A(x) \le - \sigma a < 0.
\end{equation}
On the other hand, if
\begin{equation} \label{RSD_Estimate_TrickyInequal}
  \frac{1}{p} \phi'( d(0, G(x) - p w)^2 ) d(0, G(x) - p w) < \sigma,
\end{equation}
then introduce the functions $g_0(q) = d(0, G(x) - q w)$ and $h(q) = q^{-1} \phi( g_0(q)^2 )$. Clearly, the
function $g_0$ is Lipschitz continuous on $\mathbb{R}$ with a Lipschitz constant $L \le \| w \|$. Hence 
$g_0^{\uparrow}(p) \le \| w \|$. Therefore applying (\ref{RSD_Estimate_TrickyInequal}), and
Lemmas~\ref{Lemma_RSD_Product} and \ref{Lemma_Superpos} one obtains that
\begin{multline*}
  h^{\uparrow}(p) \le \frac{1}{p^2} \phi\big( (g_0(p))^2 \big) + 
  \frac{1}{p} \phi'(d(0, G(x) - p w)^2) d(0, G(x) - p w) g_0^{\uparrow}(p) \le \\
  \le \frac{1}{p^2} \phi\big( (g_0(p))^2 \big) + \sigma \| w \|.
\end{multline*}
Recall that the function $\phi$ is convex, which implies that its derivative is a non-decreasing function.
Consequently, with the use of (\ref{RSD_Estimate_TrickyInequal}) one gets that
\begin{multline*}
  \phi( d(0, G(x) - p w)^2 ) \le \phi' ( d(0, G(x) - p w)^2 ) d(0, G(x) - p w)^2 \le \\
  \le \sigma p d(0, G(x) - p w) \le \frac{\sigma^2 p^2}{\phi'(d(0, G(x) - p w)^2)} \le 
  \frac{\sigma^2 p^2}{\phi'(0)}.
\end{multline*}
Hence one has that $h^{\uparrow}(p) \le \sigma^2 / \phi'(0) + \sigma \| w \|$. Choosing $\sigma > 0$ sufficiently small
one gets that $h^{\uparrow}(p) \le \omega'(0) / 2$. Therefore applying Lemma~\ref{Lmm_SDD_SumEstimate} one obtains that
\begin{multline}  \label{RSD_PenTerm_UpperEstimate_3}
  \varphi^{\downarrow}_{A \times \mathbb{R}_+}(x, p) \le \varphi(x, \cdot)^{\downarrow}_{\mathbb{R}_+}(p) =
  (h + \omega)^{\downarrow}(p) \le \\
  \le h^{\uparrow}(p) + \omega^{\downarrow}(p) \le 
  \frac{\omega'(0)}{2} - | \omega'(p) | \le - \frac{\omega'(0)}{2}
\end{multline}
by virtue of the convexity of the function $\omega$. Combining (\ref{RSD_PenTerm_UpperEstimate_1}),
(\ref{RSD_PenTerm_UpperEstimate_2}) and (\ref{RSD_PenTerm_UpperEstimate_3}) one gets
$$
  \varphi^{\downarrow}_{A \times \mathbb{R}_+}(x, p) \le 
  - \min\left\{ \frac{\omega'(0)}{2}, \sigma a \right\} < 0 
  \quad \forall (x, p) \in (C \times \mathbb{R}_+)_{\inf},
$$
that completes the proof.	 
\end{proof}

\begin{remark}
Let $X$ be a normed space, $Y = \mathbb{R}^m \times \mathbb{R}^l$, and let the space $Y$ be equipped with an arbitrary
norm. Suppose also that the multifunction $G$ has the form
$$
  G(x) = \Big( \prod_{i = 1}^m \{ f_i(x) \} \Big) \times \Big( \prod_{j = 1}^l [g_j(x), + \infty) \Big), 
$$
where $\prod$ stands for the Cartesian product, and the functions $f_i, g_j \colon X \to \mathbb{R}$ are continuously
Fr\'echet differentiable. Thus, the inclusion $0 \in G(x)$ is equivalent to the following system of equations and
inequalities
\begin{equation} \label{EqualInequalConstraints}
  f_i(x) = 0, \quad i \in \{ 1, \ldots, m \}, \quad g_j(x) \le 0, \quad j \in \{ 1, \ldots, l \}.
\end{equation}
Let $x \in X$ and $p > 0$ be such that $d(0, G(x) - p w) > 0$. Then one can check that if MFCQ holds
true for the system (\ref{EqualInequalConstraints}) at the point $x$, then 
$d(0, G(\cdot) - p w)^{\downarrow}(x) < 0$. Furthermore, one can verify (cf.~\cite{HanMangasarian}, Theorem 2.2, and
\cite{DiPilloGrippo}, Lemma 3.1) that in this case there exist $r > 0$ and $a > 0$ such that
$$
  d(0, G(\cdot) - q w)^{\downarrow}(y) < - a  \quad 
  \forall (y, q) \in U(x, r) \times U(p, r) \colon d(0, G(y) - q w) > 0.
$$
Therefore, if MFCQ holds true for the system (\ref{EqualInequalConstraints}) at every point of a compact set 
$C \subset X$, then for any $t_0 > 0$ there exists $a > 0$ such that
$d(0, G(\cdot) - p w)^{\downarrow}(x) < - a$ for any $(x, p) \in (C \setminus \Omega) \times [0, t_0]$ such that
$d(0, G(x) - p w) > 0$. Hence assumption~2 of the previous theorem is satisfied provided that one of the set
$\dom \phi$ or $\dom \omega$ is bounded.

Usually, if in Theorem~\ref{Thrm_SingPenFunc_FeasPreserv} the sets $C$ and $\dom \phi \cap \dom \omega$ are relatively
compact, then assumption~2 of this theorem is satisfied provided that a constraint qualification holds true at every
point of the set $C \setminus \Omega$.
\end{remark}

\subsection{Zero duality gap and singular penalty functions}

Let us obtain two simple sufficient conditions for the zero duality gap property for the
penalty function $F_{\lambda}$ to hold true, and show how Theorem~\ref{Thrm_MinimizingSequences} about minimizing
sequences of a penalty function transforms in the case under consideration.

\begin{theorem}
Let there exist $\phi_0, \Phi_0, \omega_0, \Omega_0, t_0 > 0$ such that
$$
  \phi_0 t \le \phi(t) \le \Phi_0 t, \quad \omega_0 t \le \omega(t) \le \Omega_0 t 
  \quad \forall t \in [0, t_0].
$$
Then the zero duality gap property holds true for the penalty function $F_{\lambda}$ if and only if $F_{\lambda}$ is
bounded below for some $\lambda \ge 0$, and the perturbation function
$$
  \gamma(\eta) := \inf_{x \in K_{\eta}} f(x), \quad \eta \ge 0
$$
is l.s.c. at the origin, where $K_{\eta} = \{ x \in A \mid d(0, G(x)) \le \eta \}$.
\end{theorem}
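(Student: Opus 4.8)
The plan is to deduce the statement from Theorem~\ref{Thrm_ZeroDualityGapCharacterization}, which already characterizes the zero duality gap property for $F_\lambda$ as the conjunction of two conditions: $F_\lambda$ is bounded below on $A \times \mathbb{R}_+$ for some $\lambda \ge 0$, and the perturbation function $\beta(\eta) = \inf\{ f(x) \mid x \in A,\ \inf_{p \ge 0} \varphi(x,p) \le \eta \}$ of the problem $(\mathcal{P}_\eta)$ is lower semicontinuous at the origin. The boundedness condition is common to both formulations, so it suffices to show that, under the two-sided linear bounds on $\phi$ and $\omega$, the function $\beta$ is l.s.c.\ at the origin if and only if $\gamma$ is. First I would record that $\beta(0) = \gamma(0) = f^*$: the set $K_0$ equals $\{ x \in A \mid 0 \in G(x) \} = \Omega$ because $G$ has closed values, while $\Omega(0) = \Omega$ because $\varphi(\cdot, 0)$ vanishes exactly on $\Omega$ and Lemma~\ref{Lmm_LowerEstimPenTerm} keeps $\inf_{p \ge 0} \varphi(x, p)$ bounded away from $0$ whenever $d(0, G(x)) > 0$. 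Since $\beta$ and $\gamma$ are non-increasing, lower semicontinuity at the origin is, exactly as in the proof of Theorem~\ref{Thrm_ZeroDualityGapCharacterization}, equivalent to the inequality $\liminf_{\eta \to +0} \beta(\eta) \ge f^*$ (and similarly for $\gamma$).

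The core step is a two-sided comparison $\gamma(\eta / c_0) \le \beta(\eta) \le \gamma(\eta / C_0)$ valid for all sufficiently small $\eta > 0$. By Lemma~\ref{Lmm_UpperEstimPenTerm} there exist $C_0, \delta_0 > 0$ with $\inf_{p \ge 0} \varphi(x, p) \le C_0\, d(0, G(x))$ whenever $d(0, G(x)) < \delta_0$; hence for $0 < \eta < C_0 \delta_0$ every $x \in K_{\eta/C_0}$ lies in $\Omega(\eta)$, so $K_{\eta/C_0} \subseteq \Omega(\eta)$ and thus $\beta(\eta) \le \gamma(\eta/C_0)$. Conversely, by Lemma~\ref{Lmm_LowerEstimPenTerm} there exist $c_0, \delta_0' > 0$ with $\varphi(x, p) \ge c_0\, d(0, G(x))$ whenever $\varphi(x, p) < \delta_0'$; so for $0 < \eta < \delta_0'$ and $x \in \Omega(\eta)$ one may choose $p$ with $\varphi(x, p) < \eta + \varepsilon < \delta_0'$ for small $\varepsilon > 0$, obtain $c_0\, d(0, G(x)) \le \eta + \varepsilon$, and let $\varepsilon \to +0$ to get $d(0, G(x)) \le \eta/c_0$; thus $\Omega(\eta) \subseteq K_{\eta/c_0}$ and $\gamma(\eta/c_0) \le \beta(\eta)$. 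Verifying these two set inclusions is the only point needing any care, and it is entirely routine given the two lemmas.

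Finally I would pass to the limit. If $\gamma$ is l.s.c.\ at the origin, then since $\eta/c_0 \to +0$ as $\eta \to +0$ we get $\liminf_{\eta \to +0} \beta(\eta) \ge \liminf_{\eta \to +0} \gamma(\eta/c_0) = \liminf_{\eta' \to +0} \gamma(\eta') = f^*$, so $\beta$ is l.s.c.\ at the origin, and Theorem~\ref{Thrm_ZeroDualityGapCharacterization} yields the zero duality gap property (the boundedness hypothesis being assumed). Conversely, if the zero duality gap property holds, then by Theorem~\ref{Thrm_ZeroDualityGapCharacterization} $F_\lambda$ is bounded below for some $\lambda \ge 0$ and $\beta$ is l.s.c.\ at the origin; from $\beta(\eta) \le \gamma(\eta/C_0)$ we get $\liminf_{\eta' \to +0} \gamma(\eta') = \liminf_{\eta \to +0} \gamma(\eta/C_0) \ge \liminf_{\eta \to +0} \beta(\eta) = f^*$, so $\gamma$ is l.s.c.\ at the origin, which completes the equivalence. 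I do not expect any genuine obstacle: the argument is a transfer of the already established characterization through the elementary comparison of $\inf_{p \ge 0} \varphi(x, p)$ with $d(0, G(x))$ near feasibility supplied by Lemmas~\ref{Lmm_UpperEstimPenTerm} and~\ref{Lmm_LowerEstimPenTerm}.
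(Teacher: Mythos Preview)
Your proposal is correct and follows essentially the same route as the paper: reduce to Theorem~\ref{Thrm_ZeroDualityGapCharacterization} and then show that $\beta$ is l.s.c.\ at the origin iff $\gamma$ is, via the two-sided comparison coming from Lemmas~\ref{Lmm_UpperEstimPenTerm} and~\ref{Lmm_LowerEstimPenTerm} (the paper cites Lemma~\ref{Lmm_Representation} for the lower bound, but that lemma is itself an immediate consequence of Lemma~\ref{Lmm_LowerEstimPenTerm}, so there is no substantive difference). Your verification that $\beta(0)=\gamma(0)=f^*$ is a little more careful than the paper's one-line remark, but otherwise the arguments coincide.
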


\begin{proof}
Define, as above, the perturbation function $\beta(\eta) := \inf_{x \in \Omega(\eta)} f(x)$ for any
$\eta \ge 0$. Note that $\beta(0) = \gamma(0) = f^*$. By Lemma~\ref{Lmm_Representation}, there exist 
$\delta_1 \ge 0$ and $c_0 > 0$ such that for any $\eta \in (0, \delta_1)$ one has 
$\beta(\eta) \ge \gamma( \eta / c_0 )$. On the other hand, from Lemma~\ref{Lmm_UpperEstimPenTerm} it follows that there
exists $\delta_2 > 0$ and $C_0 > 0$ such that for any 
$\eta \in (0, \delta_2)$ one has $\{ x \in A \mid d(0, G(x)) \le \eta \} \subseteq \Omega( C_0 \eta )$, which implies
$\gamma(\eta) \ge \beta( C_0 \eta )$. Therefore, as it is easy to verify, the perturbation function $\gamma$ is l.s.c.
at the origin iff the perturbation function $\beta$ is l.s.c. at the origin. It remains to apply
Theorem~\ref{Thrm_ZeroDualityGapCharacterization}.	 
\end{proof}

\begin{theorem}
Let $A$ be closed, $f$ be l.s.c. on $\Omega$, $\phi$ and $\omega$ be l.s.c. on $\mathbb{R}_+$, and the mapping 
$(x, p) \to d(0, G(x) - p w)$ be l.s.c. on $A \times \mathbb{R}_+$ (in particular, one can suppose that $G$ is outer
semicontinuous on $A$). Suppose also that there exist $t_0, \phi_0, \omega_0 > 0$ such that
$$
  \phi(t) \ge \phi_0 t, \quad \omega(t) \ge \omega_0 t \quad \forall t \in [0, t_0],
$$
and there exists $\eta > 0$ such that the set $\{ x \in A \mid d(0, G(x)) < \eta, f(x) < f^* \}$ is relatively compact.
Then the zero duality gap property for the penalty function $F_{\lambda}$ holds true.
\end{theorem}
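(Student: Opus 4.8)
The plan is to reduce the statement to the general sufficient condition for the zero duality gap, Proposition~\ref{Prp_ZeroDualityGapSuffCond}, and then invoke the characterization of the zero duality gap in Theorem~\ref{Thrm_ZeroDualityGapCharacterization}. More precisely, I would show that the singular penalty term $\varphi$ from~\eqref{PenaltyTermOneDim} (with $P = \mathbb{R}_+$ and $p_0 = 0$) satisfies all the hypotheses of Proposition~\ref{Prp_ZeroDualityGapSuffCond}; this yields that the perturbation function $\beta$ of the perturbed problem $(\mathcal{P}_{\eta})$ is lower semicontinuous at the origin, and then Theorem~\ref{Thrm_ZeroDualityGapCharacterization} gives the zero duality gap.

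There are three things to check. First, lower semicontinuity of $\varphi$ on $A \times \{0\}$: since $\phi$ and $\omega$ are l.s.c. on $\mathbb{R}_+$ and $(x,p) \mapsto d(0, G(x) - pw)$ is l.s.c. on $A \times \mathbb{R}_+$, Lemma~\ref{Lmm_LSC_Criterion} (applied on the metric subspace $A$) shows that $\varphi$ is l.s.c. on $A \times \mathbb{R}_+$, hence on $A \times \{0\}$. Second, the growth estimate in the parameter: since $d(p, p_0) = p$, I would take $\widetilde{\omega}(t) = \omega_0 \min\{t, t_0\}$, which is non-decreasing and vanishes only at $t = 0$; using $\varphi(x,p) \ge \omega(p)$ for $p > 0$, the monotonicity of $\omega$, and $\omega(t) \ge \omega_0 t$ on $[0, t_0]$, one obtains $\varphi(x,p) \ge \widetilde{\omega}(p) = \widetilde{\omega}(d(p, p_0))$ for all $(x,p) \in A \times \mathbb{R}_+$ (the case $p = 0$ being trivial). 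Third, relative compactness of $\{x \in \Omega(\eta_0) \mid f(x) < f^*\}$ for some $\eta_0 > 0$: by the first assertion of Lemma~\ref{Lmm_Representation} there exist $\delta_0, c_0 > 0$ with $\Omega(\eta') \subseteq \{x \in A \mid d(0, G(x)) \le \eta'/c_0\}$ for all $\eta' \in (0, \delta_0)$, so choosing $\eta_0 \in (0, \delta_0)$ small enough that $\eta_0 / c_0 < \eta$ (the constant from the hypothesis) gives $\{x \in \Omega(\eta_0) \mid f(x) < f^*\} \subseteq \{x \in A \mid d(0, G(x)) < \eta,\ f(x) < f^*\}$, which is relatively compact by assumption; a subset of a relatively compact set is relatively compact.

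With these three facts established, Proposition~\ref{Prp_ZeroDualityGapSuffCond} yields that $\beta$ is l.s.c. at the origin, and Theorem~\ref{Thrm_ZeroDualityGapCharacterization} then delivers the zero duality gap (the boundedness-below half of that criterion being verified, as in the preceding theorem, by combining the estimates of Lemmas~\ref{Lmm_LowerEstimPenTerm}--\ref{Lmm_Representation} with the relative compactness assumption). I expect the only real obstacle to be step three: one must pass from the \emph{geometric} smallness condition $d(0, G(x)) < \eta$ that appears in the hypothesis to the \emph{penalty-term} smallness condition $x \in \Omega_{\eta_0}$, i.e.\ $\inf_{p \in \mathbb{R}_+} \varphi(x,p) \le \eta_0$, required by Proposition~\ref{Prp_ZeroDualityGapSuffCond}; this comparison of the two scales is exactly the purpose of Lemmas~\ref{Lmm_LowerEstimPenTerm} and \ref{Lmm_Representation}, and the linear lower bounds $\phi(t) \ge \phi_0 t$, $\omega(t) \ge \omega_0 t$ near the origin are what make the two scales comparable. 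Steps one and two are routine bookkeeping.
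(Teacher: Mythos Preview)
Your approach is essentially the same as the paper's: verify the hypotheses of Proposition~\ref{Prp_ZeroDualityGapSuffCond} using Lemma~\ref{Lmm_LSC_Criterion} for the lower semicontinuity of $\varphi$ and Lemma~\ref{Lmm_Representation} for the inclusion $\Omega(\eta_0) \subseteq \{x \in A : d(0,G(x)) < \eta\}$, then conclude. You spell out a couple of details the paper leaves implicit (the growth bound $\varphi(x,p) \ge \widetilde{\omega}(p)$ needed in Proposition~\ref{Prp_ZeroDualityGapSuffCond} and the passage through Theorem~\ref{Thrm_ZeroDualityGapCharacterization}), but the skeleton is identical.
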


\begin{proof}
By Lemma~\ref{Lmm_LSC_Criterion} the penalty term $\varphi(x, p)$ is l.s.c. on $A \times \mathbb{R}_+$. Applying
Lemma~\ref{Lmm_Representation} one obtains that there exists $c_0 > 0$ such that 
$$
  \{ x \in \Omega( c_0 \eta ) \mid f(x) < f^* \} \subset \{ x \in A \mid d(0, G(x) < \eta, f(x) < f^* \},
$$
provided $\eta > 0$ is sufficiently small. Consequently, applying Proposition~\ref{Prp_ZeroDualityGapSuffCond}
one gets that the zero duality gap property holds true for $F_{\lambda}$.	 
\end{proof}

\begin{theorem}
Let $\{ \lambda_n \} \subset (0, + \infty)$ be an increasing unbounded sequence. Let also a sequence 
$\{ (x_n, p_n) \} \subset A \times [0, +\infty)$ be such that
$$
  F_{\lambda_n}(x_n, p_n) \le 
  \inf_{(x, p) \in A \times \mathbb{R}_+} F_{\lambda_n}(x, p) + \varepsilon
$$
for some $\varepsilon > 0$. Then $p_n \to 0$ and $d(0, G(x_n)) \to 0$ as $n \to \infty$. Moreover, if,
additionally, the zero duality gap property holds true for the penalty function $F_{\lambda}$, then 
$f^* \le \liminf_{n \to \infty} f(x_n) \le \limsup_{n \to \infty} f(x_n) \le f^* + \varepsilon$.
\end{theorem}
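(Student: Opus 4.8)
The plan is to obtain this statement as a direct specialization of Theorem~\ref{Thrm_MinimizingSequences} to the singular penalty function, followed by a short elementary computation converting the conclusion $\varphi(x_n, p_n) \to 0$ into the two claimed limits. First I would check the only hypothesis of Theorem~\ref{Thrm_MinimizingSequences} that is not explicitly stated here, namely that $F_{\lambda}$ is bounded below on $A \times \mathbb{R}_+$ for some $\lambda \ge 0$. Taking $n = 1$ in the assumed inequality and using that $\inf_{(x, p) \in A \times \mathbb{R}_+} F_{\lambda_1}(x, p) \le f^* < +\infty$ (since $F_{\lambda_1}(x, p_0) = f(x)$ for $x \in \Omega$), one sees that $F_{\lambda_1}(x_1, p_1) \le f^* + \varepsilon < +\infty$; as also $F_{\lambda_1}(x_1, p_1) \ge f(x_1) > -\infty$, this value is finite, whence $\inf_{(x, p) \in A \times \mathbb{R}_+} F_{\lambda_1}(x, p) \ge F_{\lambda_1}(x_1, p_1) - \varepsilon > -\infty$. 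Thus $F_{\lambda}$ is bounded below for $\lambda = \lambda_1 \ge 0$, and the hypotheses of Theorem~\ref{Thrm_MinimizingSequences} are met. Item~\ref{Item_PhiConv} of that theorem then yields $\varphi(x_n, p_n) \to 0$ as $n \to \infty$, and, under the additional zero duality gap assumption, item~\ref{Item_FConv_ZDG} gives the last assertion $f^* \le \liminf_{n \to \infty} f(x_n) \le \limsup_{n \to \infty} f(x_n) \le f^* + \varepsilon$ immediately.

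\textbf{From $\varphi(x_n, p_n) \to 0$ to $p_n \to 0$ and $d(0, G(x_n)) \to 0$.} For all sufficiently large $n$ the value $\varphi(x_n, p_n)$ is finite. If $p_n = 0$ for such an $n$, then necessarily $x_n \in \Omega$, since otherwise $\varphi(x_n, 0) = +\infty$ by~(\ref{PenaltyTermOneDim}); for that index $p_n = 0$ and $d(0, G(x_n)) = 0$. If $p_n > 0$, then by~(\ref{PenaltyTermOneDim})
\[
  \varphi(x_n, p_n) = \frac{1}{p_n}\,\phi\big( d(0, G(x_n) - p_n w)^2 \big) + \omega(p_n),
\]
with both summands nonnegative. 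Discarding the first summand gives $\omega(p_n) \le \varphi(x_n, p_n) \to 0$, which forces $p_n \to 0$: if $p_{n_k} \ge \delta > 0$ along a subsequence, then $\omega(p_{n_k}) \ge \omega(\delta) > 0$, a contradiction (this remains valid even if $\omega$ takes the value $+\infty$). Discarding instead the second summand and multiplying by $p_n$ gives $\phi\big( d(0, G(x_n) - p_n w)^2 \big) \le p_n\,\varphi(x_n, p_n) \to 0$, since $p_n \to 0$ and $\varphi(x_n, p_n) \to 0$; by the same monotonicity argument applied to $\phi$ (which vanishes only at $0$), this yields $d(0, G(x_n) - p_n w) \to 0$. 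Finally, the triangle inequality $d(0, G(x_n)) \le d(0, G(x_n) - p_n w) + p_n \|w\|$ gives $d(0, G(x_n)) \to 0$, completing the proof.

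\textbf{Main obstacle.} There is no genuine obstacle here: the argument is a routine specialization. The only points needing a little care are verifying the boundedness-below hypothesis of Theorem~\ref{Thrm_MinimizingSequences} from the mere existence of an $\varepsilon$-minimizing sequence, handling the indices with $p_n = 0$ separately, and the repeated use of the elementary fact that for a non-decreasing $\psi \colon [0, +\infty] \to [0, +\infty]$ with $\psi(t) = 0 \iff t = 0$ one has $\psi(t_n) \to 0 \Rightarrow t_n \to 0$.
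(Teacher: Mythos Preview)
Your proposal is correct and follows essentially the same route as the paper: invoke Theorem~\ref{Thrm_MinimizingSequences} to obtain $\varphi(x_n,p_n)\to 0$, $p_n\to 0$, and the $f^*$-bounds under zero duality gap, and then extract $d(0,G(x_n))\to 0$ from the explicit form~(\ref{PenaltyTermOneDim}) of $\varphi$. The only differences are cosmetic: you verify the boundedness-below hypothesis of Theorem~\ref{Thrm_MinimizingSequences} explicitly (the paper leaves this implicit), and you deduce $d(0,G(x_n))\to 0$ by the direct estimate $\phi(d(0,G(x_n)-p_n w)^2)\le p_n\varphi(x_n,p_n)\to 0$ followed by the triangle inequality, whereas the paper argues by contradiction, showing that $d(0,G(x_{n_k}))\ge\sigma>0$ along a subsequence would force $\varphi(x_{n_k},p_{n_k})\ge p_{n_k}^{-1}\phi(\sigma^2/4)\to+\infty$.
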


\begin{proof}
From Theorem~\ref{Thrm_MinimizingSequences} it follows that $p_n \to 0$ and $\varphi(x_n, p_n) \to 0$ as 
$n \to \infty$, and $f^* \le \liminf_{n \to \infty} f(x_n) \le \limsup_{n \to \infty} f(x_n) \le f^* + \varepsilon$, if
the zero duality gap property holds true. Thus, it remains to show that $d(0, G(x_n)) \to 0$ as $n \to \infty$.
Arguing by reductio ad absurdum, suppose that this is not true. Then there exist $\sigma > 0$ and a
subsequence $\{ x_{n_k} \}$ such that $d(0, G(x_{n_k})) > \sigma$ for any $k \in \mathbb{N}$.

Note that if $p_{n_k} = 0$ for some $k \in \mathbb{N}$, then $d(0, G(x_{n_k})) = 0$, since otherwise 
$\varphi(x_{n_k}, p_{n_k}) = + \infty$, which is impossible. Consequently, $p_{n_k} > 0$ for all $k \in \mathbb{N}$.
Since $p_n \to 0$ as $n \to \infty$, for any sufficiently large $k$ one has 
$d(0, G(x_{n_k}) - p_{n_k} w) > \sigma / 2$. Thererefore taking into account the fact that the function $\phi$ is
non-decreasing one obtains that for any $k$ large enough the following inequality holds true
$$
  \varphi(x_{n_k}, p_{n_k}) = \frac{1}{p_{n_k}} \phi( d(0, G(x_{n_k}) - p_{n_k} w)^2 ) + \omega(p_{n_k}) \ge
  \frac{1}{p_{n_k}} \phi(\sigma / 2).
$$
Consequently, $\varphi(x_{n_k}, p_{n_k}) \to + \infty$ as $k \to \infty$, which contradicts the fact that
$\varphi(x_n, p_n) \to 0$ as $n \to \infty$. Thus, $d(0, G(x_n)) \to 0$ as $n \to \infty$.	 
\end{proof}

\section{Smoothing approximations of nonsmooth penalty functions}
\label{Section_SmoothingPenFunc}

In this section, we discuss a general approach to the construction of exact parametric penalty functions based on 
the use of smoothing approximations of standard exact penalty functions, and study some properties of approximations of
penalty functions.

Let $g_{\lambda}(x) = f(x) + \lambda \phi(x)$ be a standard penalty function for the problem $(\mathcal{P})$, i.e.
let $\phi \colon X \to [0, + \infty]$ be a given function such that $\phi(x) = 0$ iff $x \in M$. Let also
$P$ be a metric space, and $p_0 \in P$ be fixed.

\begin{definition}
A function $\Phi \colon X \times P \to [0, + \infty]$ is called an \textit{approximation} of the
penalty term $\phi$, if $\Phi(\cdot, p_0) = \phi(\cdot)$, and $\Phi(\cdot, p) \to \phi(\cdot)$ as $p \to p_0$ uniformly
on the set $A$. An approximation $\Phi$ of $\phi$ is called an \textit{upper approximation} of the penalty term $\phi$
if $\Phi(x, p) \ge \phi(x)$ for all $(x, p) \in X \times P$.
\end{definition}

Approximations of the penalty term $\phi$ can be utilized to construct parametric penalty functions for the
problem $(\mathcal{P})$. Indeed, let $\Phi$ be an approximation of the penalty term $\phi$. Choose a
non-decreasing function $\omega \colon \mathbb{R}_+ \to [0, + \infty]$ such that $\omega(t) = 0$ iff $t = 0$, and define
$\varphi(x, p) = \Phi(x, p) + \omega(d(p_0, p))$ for any $(x, p) \in X \times P$. Clearly, $\varphi(x, p) = 0$ iff
$x \in \Omega$, and $p = p_0$. Therefore the function
\begin{equation} \label{ParamPenFuncViaUpperApprox}
  F_{\lambda}(x, p) = f(x) + \lambda \varphi(x, p) = f(x) + \lambda \big( \Phi(x, p) + \omega(d(p_0, p)) \big)
\end{equation}
is a parametric penalty function for the problem $(\mathcal{P})$.

Note that if $\Phi$ is an upper approximation of the penalty term $\phi$, then
$$
  \inf_{p \in P} \varphi(x, p) = \inf_{p \in P} \big( \Phi(x, p) + \omega(d(p_0, p)) \big) = 
  \phi(x) \quad \forall x \in X.
$$
Therefore, in this case, the parametric penalty function $F_{\lambda}$ is exact if and only if the penalty function
$g_{\lambda}$ is exact by virtue Corollary~\ref{Crlr_ReductionToStandExPen}. Thus, an approach based on upper
approximations of the penalty term $\phi$ provides one with a simple method for constructing exact parametric penalty
functions.

In the theory of smoothing approximations of exact penalty functions (see, e.g.,
\cite{Pinar,WuBaiYang,MengDangYang,Liu,LiuzziLucidi,XuMengSunShen,Lian,XuMengSunHuangShen} and
references therein), one defines $P = \mathbb{R}_+$, and chooses an approximation $\Phi$ of the penalty term $\phi$ such
that the function $\Phi(\cdot, p)$ is smooth for any $p \in (0, + \infty)$. Then one chooses a decreasing sequence 
$\{ p_n \} \subset (0, + \infty)$ such that $p_n \to 0$ as $n \to \infty$, and uses any method of smooth optimization in
order to find a point of global minimum $x_n$ of the function $f(\cdot) + \lambda \Phi(\cdot, p_n)$. Under some
additional assumptions one can show that the sequence $\{ x_n \}$ converges to a globally optimal solution of the
problem
$(\mathcal{P})$. Thus, smoothing approximations allow one to avoid the minimization problem for the
\textit{nondifferentiable} exact penalty function $g_{\lambda}$. On the other hand, a smoothing approximation of an
exact penalty function often enjoys some good properties that standard smooth penalty functions do note have 
(see \cite{Pinar,WuBaiYang,MengDangYang,Liu,LiuzziLucidi,XuMengSunShen,Lian,XuMengSunHuangShen}).

From the viewpoint of the theory of exact parametric penalty functions, one can consider the method of smoothing
approximation of exact penalty functions as follows. One defines the parametric penalty function $F_{\lambda}$ of
the form (\ref{ParamPenFuncViaUpperApprox}), and then applies a ``coordinate'' descent method (with $x$ and $p$ being
two ``coordinates'') to find a point of global minimum of this function. The ``coordinate'' descent method is used in
order to utilize the smoothness of the function $\Phi$ in $x$, while the term $\omega$ is not used because it does not
affect the optimization process.

Let us study how a minimizing sequence of the parametric penalty function $F_{\lambda}$ constructed via the
``coordinate'' descent method behaves as $n \to \infty$. The propositions below unify and significantly generalize many
theorems on convergence of minimization methods based on the use of smoothing approximations of exact penalty functions
(cf.~Proposition~4 in \cite{Pinar}, Theorem~ 2.1 and 3.1 in \cite{MengDangYang}, Theorem~2.1 in \cite{Liu}, etc.).

\begin{proposition} \label{Prp_CoordDescentApproxPenFunc}
Let the penalty function $g_{\lambda}$ be bounded below on $A$, $\Phi$ be an approximation of the penalty term
$\phi$, and $F_{\lambda}(x, p) = f(x) + \lambda \Phi(x, p)$. Let a sequence $\{ p_n \} \subset P$ converge to the
point $p_0$, and let a sequence $\{ x_n \} \subset A$ satisfy the inequality
$$
  F_{\lambda}(x_n, p_n) \le \inf_{x \in A} F_{\lambda}(x, p_n) + \varepsilon.
$$
for some $\varepsilon \ge 0$ and $\lambda > 0$. Then
\begin{equation} \label{CoordDescentApproxPenFunc_Ineq}
  \inf_{x \in A} g_{\lambda}(x) \le \liminf_{n \to \infty} g_{\lambda}(x_n) \le
  \limsup_{n \to \infty} g_{\lambda} (x_n) \le \inf_{x \in A} g_{\lambda}(x) + \varepsilon.
\end{equation}
If, additionally, $f$ and $\phi$ are l.s.c. on $A$, and the set $A$ is closed, then any cluster point $x^*$ of the
sequence $\{ x_n \}$ (if exists) belongs to the set $A$, and satisfies the inequality 
$g_{\lambda}(x^*) \le \inf_{x \in A} g_{\lambda}(x) + \varepsilon$.
\end{proposition}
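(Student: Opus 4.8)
The plan is to transfer estimates between $F_\lambda(\cdot,p_n)$ and $g_\lambda(\cdot)$ using the uniform convergence $\Phi(\cdot,p)\to\phi(\cdot)$ on $A$, and then to read off the bounds on $\liminf$ and $\limsup$ of $g_\lambda(x_n)$ from the near-optimality of $x_n$ for $F_\lambda(\cdot,p_n)$. First I would note that $L:=\inf_{x\in A}g_\lambda(x)$ is finite: it is bounded below by hypothesis, and $L\le f^*$ because $g_\lambda(x)=f(x)$ for every $x\in\Omega$ and $\Omega\ne\emptyset$.

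Next, fix $\delta>0$. By uniform convergence there is $N\in\mathbb{N}$ such that for all $n\ge N$ and all $x\in A$ at which both values are finite one has $|\Phi(x,p_n)-\phi(x)|<\delta$; hence $F_\lambda(x,p_n)\le g_\lambda(x)+\lambda\delta$ whenever $g_\lambda(x)<+\infty$, and $g_\lambda(x)\le F_\lambda(x,p_n)+\lambda\delta$ whenever $F_\lambda(x,p_n)<+\infty$ (in the remaining cases the right-hand side is $+\infty$ and the inequality is trivial). Choosing $y\in A$ with $g_\lambda(y)<L+\delta$, the first estimate gives $\inf_{x\in A}F_\lambda(x,p_n)\le F_\lambda(y,p_n)\le g_\lambda(y)+\lambda\delta<L+(1+\lambda)\delta$ for $n\ge N$; together with the assumed near-optimality this yields $F_\lambda(x_n,p_n)<L+(1+\lambda)\delta+\varepsilon<+\infty$, so the second estimate applies to $x=x_n$ and gives $g_\lambda(x_n)\le F_\lambda(x_n,p_n)+\lambda\delta<L+(1+2\lambda)\delta+\varepsilon$ for all $n\ge N$. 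Letting $n\to\infty$ and then $\delta\to+0$ we obtain $\limsup_{n\to\infty}g_\lambda(x_n)\le L+\varepsilon$, while $\liminf_{n\to\infty}g_\lambda(x_n)\ge L$ is immediate from $x_n\in A$; this is exactly \eqref{CoordDescentApproxPenFunc_Ineq}.

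For the second assertion I would use that, under the additional hypotheses, $g_\lambda=f+\lambda\phi$ is l.s.c.\ on $A$ (the sum being well defined since $f>-\infty$ and $\phi\ge0$), and that $A$ is closed. If $x^*$ is a cluster point of $\{x_n\}$, say $x_{n_k}\to x^*$, then $x^*\in A$ and lower semicontinuity together with the inequality just established gives $g_\lambda(x^*)\le\liminf_{k\to\infty}g_\lambda(x_{n_k})\le\limsup_{n\to\infty}g_\lambda(x_n)\le L+\varepsilon=\inf_{x\in A}g_\lambda(x)+\varepsilon$, as required.

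I expect the only delicate point to be the bookkeeping with the values $\pm\infty$ in the uniform-convergence step, i.e.\ restricting the two-sided comparison between $F_\lambda(\cdot,p_n)$ and $g_\lambda(\cdot)$ to the region where the relevant quantities are finite --- which suffices here, since one only ever evaluates these functions near the finite value $L$. Everything else is a routine $\varepsilon$--$\delta$ argument.
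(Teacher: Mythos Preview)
Your proof is correct and follows essentially the same approach as the paper's: both use the uniform convergence of $\Phi(\cdot,p_n)$ to $\phi$ to sandwich $F_\lambda(\cdot,p_n)$ and $g_\lambda$ within a vanishing tolerance, then read off the bounds on $g_\lambda(x_n)$ from the near-optimality of $x_n$ and pass to cluster points via lower semicontinuity. You are in fact somewhat more careful than the paper about the factor $\lambda$ in the comparison and about the $\pm\infty$ cases, but the argument is otherwise the same.
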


\begin{proof}
By the definition of approximation, $\Phi(\cdot, p) \to \phi(\cdot)$ as $p \to p_0$ uniformly on the set $A$. Therefore
for any $\sigma > 0$ there exists $n_0 \in \mathbb{N}$ such that $| \Phi(x, p_n) - \phi(x) | < \sigma$ for all 
$n \ge n_0$ and $x \in A$. Consequently, for any $n \ge n_0$ the function $F_{\lambda}(\cdot, p_n)$ is
bounded below on $A$, and
$$
  \big| \inf_{x \in A} F_{\lambda}(x, p_n) - \inf_{x \in A} g_{\lambda}(x) \big| \le \sigma
$$
Hence and from the definition of the sequence $\{ x_n \}$ it follows that
$$
  \inf_{x \in A} g_{\lambda}(x) - \sigma \le F_{\lambda}(x_n, p_n) \le 
  \inf_{x \in A} g_{\lambda}(x) + \varepsilon + \sigma \quad \forall n \ge n_0.
$$
Then applying the fact that $|F_{\lambda}(x_n, p_n) - g_{\lambda}(x_n)| = | \Phi(x_n, p_n) - \phi(x_n) | < \sigma$
for all $n \ge n_0$ one obtains that inequalities (\ref{CoordDescentApproxPenFunc_Ineq}) are valid by virtue of the
fact that $\sigma > 0$ is arbitrary.

If, additionally, the functions $f$ and $\phi$ are l.s.c. on $A$, and the set $A$ is closed, then taking into account
inequalities (\ref{CoordDescentApproxPenFunc_Ineq}) one obtains that any cluster point $x^*$ of the
sequence $\{ x_n \}$ satisfies the inequality $g_{\lambda}(x^*) \le \inf_{x \in A} g_{\lambda}(x) + \varepsilon$.
\end{proof}

\begin{proposition}
Let the penalty function $g_{\lambda}$ be exact, $\Phi$ be an approximation of the penalty term $\phi$, and
$F_{\lambda}(x, p) = f(x) + \lambda \Phi(x, p)$. Let a sequence $\{ p_n \} \subset P$ converge to the
point $p_0$, and let a sequence $\{ x_n \} \subset A$ satisfy the inequality
$$
  F_{\lambda}(x_n, p_n) \le \inf_{x \in A} F_{\lambda}(x, p_n) + \varepsilon_n.
$$
for some $\lambda > \lambda^*(f, \phi)$, and a decreasing sequence $\{ \varepsilon_n \} \subset (0, + \infty)$ such
that $\varepsilon_n \to 0$ as $n \to \infty$.  Then $\phi(x_n) \to 0$ as $n \to \infty$, and 
$\lim_{n \to \infty} f(x_n) = f^*$. If, additionally, $f$ and $\phi$ are l.s.c. on $A$, and the set $A$ is closed, then
any cluster point $x^*$ of the sequence $\{ x_n \}$ (if exists) is a globally optimal solution of the problem
($\mathcal{P}$).
\end{proposition}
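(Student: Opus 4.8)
The plan is to derive the whole statement from Proposition~\ref{Prp_CoordDescentApproxPenFunc} together with the exactness of $g_{\lambda}$. First note that, since $g_{\lambda}$ is exact and $\lambda > \lambda^*(f, \phi)$, one has $\inf_{x \in A} g_{\lambda}(x) = f^*$: indeed $g_{\lambda}(x) = f(x)$ for every $x \in \Omega$, so the infimum is $\le f^*$, while exactness of $g_{\lambda}$ provides the reverse inequality (the non-parametric counterpart of Proposition~\ref{Prp_EquivDefExPen}; see also \cite{Dolgopolik}). In particular $g_{\lambda}$ is bounded below on $A$, so the hypotheses of Proposition~\ref{Prp_CoordDescentApproxPenFunc} are at our disposal.

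Next I would fix an arbitrary $\varepsilon > 0$, choose $N \in \mathbb{N}$ with $\varepsilon_n \le \varepsilon$ for all $n \ge N$, and apply Proposition~\ref{Prp_CoordDescentApproxPenFunc} to the tail sequence $\{ x_n \}_{n \ge N}$ (with tolerance $\varepsilon$ and the convergent sequence $\{ p_n \}_{n \ge N}$). Since the upper and lower limits of a tail coincide with those of the full sequence, this yields
$$
  f^* = \inf_{x \in A} g_{\lambda}(x) \le \liminf_{n \to \infty} g_{\lambda}(x_n) \le \limsup_{n \to \infty} g_{\lambda}(x_n) \le \inf_{x \in A} g_{\lambda}(x) + \varepsilon = f^* + \varepsilon.
$$
As $\varepsilon > 0$ was arbitrary, $g_{\lambda}(x_n) \to f^*$ as $n \to \infty$. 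To pass from this to $\phi(x_n) \to 0$, pick any $\mu$ with $\lambda^*(f, \phi) \le \mu < \lambda$; then $\inf_{x \in A} g_{\mu}(x) = f^*$ as well, so $g_{\mu}(x_n) \ge f^*$ for every $n$, and therefore
$$
  (\lambda - \mu) \phi(x_n) = g_{\lambda}(x_n) - g_{\mu}(x_n) \le g_{\lambda}(x_n) - f^* \longrightarrow 0,
$$
which forces $\phi(x_n) \to 0$ and hence $f(x_n) = g_{\lambda}(x_n) - \lambda \phi(x_n) \to f^* - 0 = f^*$.

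Finally, assuming in addition that $f$ and $\phi$ are l.s.c. on $A$ and that $A$ is closed, let $x^*$ be a cluster point of $\{ x_n \}$, say $x_{n_k} \to x^*$. Then $x^* \in A$, and lower semicontinuity gives $\phi(x^*) \le \liminf_k \phi(x_{n_k}) = 0$ and $f(x^*) \le \liminf_k f(x_{n_k}) = f^*$. Since $\phi \ge 0$ and $\phi$ vanishes only on $M$, we get $x^* \in M$, hence $x^* \in \Omega$, so that $f(x^*) \ge f^*$; combined with the previous inequality this gives $f(x^*) = f^*$, i.e.\ $x^*$ is a globally optimal solution of $(\mathcal{P})$.

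I do not expect a serious obstacle here, since the argument is essentially a bookkeeping exercise on top of Proposition~\ref{Prp_CoordDescentApproxPenFunc}. The only delicate point is the passage from the vanishing tolerances $\varepsilon_n$ to the exact limit $g_{\lambda}(x_n) \to f^*$: one cannot apply Proposition~\ref{Prp_CoordDescentApproxPenFunc} once with a single fixed $\varepsilon$ (that would only give $\limsup g_{\lambda}(x_n) \le f^* + \varepsilon$), so the proof must invoke it on arbitrarily long tails of the sequence and then let $\varepsilon \to 0$. Alternatively, the same estimates can be carried out directly, using that $\Phi(\cdot, p) \to \phi(\cdot)$ uniformly on $A$ to compare $\inf_{x \in A} F_{\lambda}(x, p_n)$ with $\inf_{x \in A} g_{\lambda}(x) = f^*$.
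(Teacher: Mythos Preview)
Your proof is correct and follows essentially the same route as the paper: both reduce to Proposition~\ref{Prp_CoordDescentApproxPenFunc} (applied on tails of the sequence with vanishing tolerance) to obtain $g_{\lambda}(x_n)\to\inf_{x\in A}g_{\lambda}(x)=f^*$, and then deduce $\phi(x_n)\to 0$ and $f(x_n)\to f^*$. The paper packages the second step as an appeal to Proposition~\ref{Prp_MinimizingSeq_ExactPenFunc}, whereas you unfold that argument directly via the splitting $g_{\lambda}=g_{\mu}+(\lambda-\mu)\phi$ with $\mu\in[\lambda^*(f,\phi),\lambda)$ and also spell out the l.s.c.\ cluster-point reasoning explicitly.
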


\begin{proof}
With the use of Proposition~\ref{Prp_CoordDescentApproxPenFunc} one can easily obtain that
$$
  \lim_{n \to \infty} g_{\lambda}(x_n) = \inf_{x \in A} g_{\lambda}(x).
$$
Then applying Proposition~\ref{Prp_MinimizingSeq_ExactPenFunc} one gets the required result.	 
\end{proof}

Let $\Phi$ be an approximation of the penalty term $\phi$. Denote
$$
  e(p) = \sup_{x \in A} |\Phi(x, p) - \phi(x)|
$$
By the definition of approximation one has $e(p) \to 0$ as $p \to p_0$. One can use the function $e(\cdot)$ in order to
understand how well the parametric penalty function $F_{\lambda}(x, p) = f(x) + \Phi(x, p)$ approximates the
standard penalty function $g_{\lambda}$.

The following proposition and its corollary unify many results on smoothing approximations of exact penalty function
(see, e.g., \cite{Pinar}, Propositions~1 and 2, \cite{MengDangYang}, Theorems~2.2, 2.3, 3.2, 3.3, 
\cite{Liu}, Theorems~2.2, 2.3 and 3.1, etc.)

\begin{proposition} \label{Prp_ApproxPenFuncEstimate}
Let the penalty function $g_{\lambda_0}$ be bounded below on the set $A$. Then for any $\lambda \ge \lambda_0$ and
for all $p \in \dom e$ one has
\begin{equation} \label{InfEstimateApproxPenFunc}
  \big| \inf_{x \in A} F_{\lambda}(x, p) - \inf_{x \in A} g_{\lambda}(x) \big| \le \lambda e(p).
\end{equation}
Furthermore, if the penalty function $g_{\lambda}$ is exact and $\lambda > \lambda^*(f, \phi)$, then for any 
$p \in \dom e$ such that the function $F_{\lambda}(\cdot, p)$ attains a global minimum on the set $A$, and for any 
$x_p \in \argmin_{x \in A} F_{\lambda}(x, p)$ one has
$$
  | f^* - f(x_p) | \le \lambda \big( e(p) + \Phi(x_p, p) \big).
$$
\end{proposition}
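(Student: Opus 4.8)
The plan is to derive everything from a single pointwise estimate. First I would observe that for every $x \in A$ and every $p \in \dom e$ one has
$$|F_{\lambda}(x, p) - g_{\lambda}(x)| = \lambda |\Phi(x, p) - \phi(x)| \le \lambda e(p);$$
since $e(p) < +\infty$, this in particular forces $\Phi(x, p) = +\infty$ if and only if $\phi(x) = +\infty$ for $x \in A$, so the difference above is unambiguously finite whenever $\phi(x) < +\infty$. Next I would note that $\inf_{x \in A} g_{\lambda}(x)$ is a finite real number: it is bounded above by $f^*$ because $g_{\lambda}(x) = f(x)$ for $x \in \Omega \subseteq A$, and it is bounded below because $\lambda \ge \lambda_0$ and $\phi \ge 0$ give $g_{\lambda} \ge g_{\lambda_0}$, the latter being bounded below on $A$ by hypothesis. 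The first inequality of the proposition is then the elementary fact that two functions differing by at most a constant $c := \lambda e(p)$ at every point of $A$ have infima differing by at most $c$: from $F_{\lambda}(x, p) \le g_{\lambda}(x) + \lambda e(p)$ for all $x \in A$ one gets $\inf_{x \in A} F_{\lambda}(x, p) \le \inf_{x \in A} g_{\lambda}(x) + \lambda e(p)$, and symmetrically the reverse bound, which together give $|\inf_{x \in A} F_{\lambda}(x, p) - \inf_{x \in A} g_{\lambda}(x)| \le \lambda e(p)$.

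For the second part I would first recall that, since $g_{\lambda}$ is exact, one has $\inf_{x \in A} g_{\lambda}(x) = f^*$ for every $\lambda > \lambda^*(f, \phi)$ (apply the characterization of exactness, e.g. Proposition~\ref{Prp_EquivDefExPen} with a one-point parameter space, together with the always-valid bound $\inf_{x \in A} g_{\lambda}(x) \le f^*$); in particular $g_{\lambda}$ is bounded below, so the first part applies with $\lambda_0 := \lambda$. Now fix $p \in \dom e$ such that $F_{\lambda}(\cdot, p)$ attains its infimum on $A$ at some $x_p$. Then $F_{\lambda}(x_p, p) = \inf_{x \in A} F_{\lambda}(x, p)$, and the first part yields
$$\big| f(x_p) + \lambda \Phi(x_p, p) - f^* \big| = \Big| \inf_{x \in A} F_{\lambda}(x, p) - \inf_{x \in A} g_{\lambda}(x) \Big| \le \lambda e(p).$$
Since $\inf_{x \in A} F_{\lambda}(x, p) \le f^* + \lambda e(p) < +\infty$, both $f(x_p)$ and $\Phi(x_p, p)$ are finite, so the triangle inequality applies; using $\Phi \ge 0$ I would conclude
$$|f^* - f(x_p)| \le \big| f(x_p) + \lambda \Phi(x_p, p) - f^* \big| + \lambda \Phi(x_p, p) \le \lambda e(p) + \lambda \Phi(x_p, p) = \lambda \big( e(p) + \Phi(x_p, p) \big),$$
which is the asserted estimate.

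I do not expect a genuine obstacle: the whole argument is a one-line pointwise bound followed by passing to infima and one application of the triangle inequality. The only point needing a little care is the extended-real-valued bookkeeping — one must confirm that $\inf_{x \in A} g_{\lambda}(x)$, $\inf_{x \in A} F_{\lambda}(x, p)$ and the value $F_{\lambda}(x_p, p)$ are all finite before combining them, which follows from the boundedness-below hypothesis (respectively exactness) together with the finiteness of $e(p)$. Everything else is routine.
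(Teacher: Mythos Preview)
Your proposal is correct and follows essentially the same approach as the paper: the pointwise bound $|F_{\lambda}(x,p) - g_{\lambda}(x)| \le \lambda e(p)$, passage to infima, and then for the second part the identification $\inf_{x \in A} g_{\lambda}(x) = f^*$ via exactness followed by the triangle inequality using $\Phi \ge 0$. The paper writes out the infima comparison via an explicit $\varepsilon$-minimizer argument while you invoke the standard fact directly, and you are slightly more explicit about the extended-real bookkeeping, but these are cosmetic differences.
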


\begin{proof}
Fix arbitrary $\lambda \ge \lambda_0$ and $p \in \dom e$. For any $x \in A$ one has
\begin{equation} \label{UniformEstimateApproxPenFunc}
  |F_{\lambda}(x, p) - g_{\lambda}(x, p)| = \lambda | \Phi(x, p) - \phi(x) | \le \lambda e(p).
\end{equation}
Consequently, taking into account the fact that the penalty function $g_{\lambda}$ is bounded below on the set $A$
one obtains that the function $F_{\lambda}(\cdot, p)$ is bounded below on the set $A$ as well.

By the definition of infimum, for any $\varepsilon > 0$ there exists $x_{\varepsilon}$ such that
$$
  F_{\lambda}(x_{\varepsilon}, p) \le \inf_{x \in A} F_{\lambda}(x, p) + \varepsilon.
$$
Hence and from \eqref{UniformEstimateApproxPenFunc} one obtains that
$$
  \lambda e(p) \ge g_{\lambda}(x_{\varepsilon}) - F_{\lambda}(x_{\varepsilon}, p) \ge
  \inf_{x \in A} g_{\lambda}(x) - \inf_{x \in A} F_{\lambda}(x, p) - \varepsilon.
$$
Passing to the limit as $\varepsilon \to + 0$ one gets
$$
  \inf_{x \in A} g_{\lambda}(x) - \inf_{x \in A} F_{\lambda}(x, p) \le \lambda e(p). 
$$
Arguing in a similar way one can easily verify that the inequality
$$
  \inf_{x \in A} F_{\lambda}(x, p) - \inf_{x \in A} g_{\lambda}(x) \le \lambda e(p)
$$
is valid, which implies that \eqref{InfEstimateApproxPenFunc} holds true.

If the penalty function $g_{\lambda}$ is exact and $\lambda > \lambda^*(f, \phi)$, then 
$\inf_{x \in A} g_{\lambda}(x) = f^*$ by virtue of Proposition~\ref{Prp_EquivDefExPen}. Therefore applying
inequaliaty \eqref{InfEstimateApproxPenFunc} one obtains that for any $p \in \dom e$ such that the function
$F_{\lambda}(\cdot, p)$ attains a global minimum on the set $A$, and for any 
$x_p \in \argmin_{x \in A} F_{\lambda}(x, p)$ one has $| f^* - F_{\lambda}(x_p, p) | \le \lambda e(p)$.
Consequently, taking into account the fact that the function $\Phi$ is nonnegative one gets that the following
inequalities hold true
$$
  \lambda e(p) \ge | f^* - F_{\lambda}(x_p, p) | = |f^* - f(x_p) - \lambda\Phi(x_p, p)| \ge
  |f^* - f(x_p)| - \lambda \Phi(x_p, p),
$$
which implies the desired result.	 
\end{proof}

\begin{proposition}
Suppose that the penalty function $g_{\lambda_0}$ is bounded below on the set $A$ for some $\lambda_0 \ge 0$. Let a
sequence $\{ p_n \} \subset P$ converge to $p_0$, and let $\{ \lambda_n \} \subset (\lambda_0, + \infty)$ be
an increasing unbounded sequence such that $\lambda_n e(p_n) \to 0$ as $n \to \infty$. Suppose, finally, that 
a sequence $\{ x_n \} \subset A$ satisfies the inequality
$$
  F_{\lambda_n}(x_n, p_n) \le \inf_{x \in A} F_{\lambda_n}(x, p_n) + \varepsilon
$$
for any sufficiently large $n \in \mathbb{N}$, and for some $\varepsilon > 0$. Then $\phi(x_n) \to 0$ as $n \to \infty$.
Moreover, if the functions $f$ and $\phi$ are l.s.c. on $A$, and the set $A$ is closed, then any cluster point $x^*$ of
the sequence $\{ x_n \}$ is a feasible point of the problem $(\mathcal{P})$, and $f^* \le f(x^*) \le f^* + \varepsilon$.
\end{proposition}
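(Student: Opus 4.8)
The plan is to combine the uniform error estimate of Proposition~\ref{Prp_ApproxPenFuncEstimate} with the classical argument that an unbounded penalty parameter forces approximate feasibility. For every sufficiently large $n$ two elementary inequalities are available: the pointwise estimate $|g_{\lambda_n}(x_n) - F_{\lambda_n}(x_n,p_n)| = \lambda_n |\phi(x_n) - \Phi(x_n,p_n)| \le \lambda_n e(p_n)$, which is immediate from the definition of $e$, and the infimum estimate $|\inf_{x\in A}F_{\lambda_n}(x,p_n) - \inf_{x\in A}g_{\lambda_n}(x)| \le \lambda_n e(p_n)$, which is Proposition~\ref{Prp_ApproxPenFuncEstimate} applied with $\lambda = \lambda_n \ge \lambda_0$ and $p = p_n$ (note $p_n \in \dom e$ for large $n$, since $e(p_n) \to 0$). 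Feeding these into the defining inequality $F_{\lambda_n}(x_n,p_n) \le \inf_{x\in A}F_{\lambda_n}(x,p_n) + \varepsilon$ and using $\inf_{x\in A}g_{\lambda_n}(x) \le f^*$ --- which holds because $\Omega \ne \emptyset$ and $g_{\lambda_n}(x) = f(x)$ for $x \in \Omega$ --- yields $g_{\lambda_n}(x_n) \le f^* + \varepsilon + 2\lambda_n e(p_n)$ for all large $n$; in particular $g_{\lambda_n}(x_n)$, and hence $\phi(x_n)$ and $\Phi(x_n,p_n)$, are finite.

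For the first assertion I would set $c = \inf_{x\in A}g_{\lambda_0}(x) > -\infty$ and use the identity $g_{\lambda_n}(x_n) = g_{\lambda_0}(x_n) + (\lambda_n - \lambda_0)\phi(x_n) \ge c + (\lambda_n - \lambda_0)\phi(x_n)$. Together with the upper bound above this gives $\phi(x_n) \le \big(f^* + \varepsilon - c + 2\lambda_n e(p_n)\big)/(\lambda_n - \lambda_0)$; the first summand tends to $0$ because $\lambda_n \to \infty$, and the second equals $2 e(p_n)\cdot \lambda_n/(\lambda_n - \lambda_0) \to 0$ because $e(p_n) \to 0$ and $\lambda_n/(\lambda_n - \lambda_0) \to 1$. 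Hence $\phi(x_n) \to 0$. For the bound on $f(x_n)$, from $f(x_n) \le g_{\lambda_n}(x_n) \le f^* + \varepsilon + 2\lambda_n e(p_n)$ and the hypothesis $\lambda_n e(p_n) \to 0$ I obtain $\limsup_{n}f(x_n) \le f^* + \varepsilon$.

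Finally, let $x^*$ be a cluster point, say $x_{n_k} \to x^*$. Closedness of $A$ gives $x^* \in A$; lower semicontinuity of $\phi$ on $A$ together with $\phi(x_n) \to 0$ gives $\phi(x^*) \le \liminf_k \phi(x_{n_k}) = 0$, so $\phi(x^*) = 0$, i.e.\ $x^* \in M$ and thus $x^* \in \Omega$ is feasible; and lower semicontinuity of $f$ on $A$ gives $f(x^*) \le \liminf_k f(x_{n_k}) \le \limsup_n f(x_n) \le f^* + \varepsilon$, while $x^* \in \Omega$ forces $f(x^*) \ge f^*$. This proves $f^* \le f(x^*) \le f^* + \varepsilon$.

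The computations are routine; the only points requiring care are the bookkeeping with extended-real values --- automatic once $\inf_{x\in A}F_{\lambda_n}(\cdot,p_n)$ is seen to be finite --- and checking that $p_n$ eventually lies in $\dom e$ so that Proposition~\ref{Prp_ApproxPenFuncEstimate} may be invoked. I do not anticipate any genuine obstacle.
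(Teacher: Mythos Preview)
Your proof is correct and follows essentially the same route as the paper's: both obtain an upper bound of the form $f^* + \varepsilon + O(\lambda_n e(p_n))$ on the penalty function at $(x_n,p_n)$ via Proposition~\ref{Prp_ApproxPenFuncEstimate}, then split off the $\lambda_0$-part to isolate the penalty term and force it to zero. The only cosmetic difference is that the paper works with $F_{\lambda_n}$ and first shows $\Phi(x_n,p_n)\to 0$ before deducing $\phi(x_n)\to 0$, whereas you pass to $g_{\lambda_n}$ at the outset and bound $\phi(x_n)$ directly; your explicit estimate $\phi(x_n)\le (f^*+\varepsilon-c+2\lambda_n e(p_n))/(\lambda_n-\lambda_0)$ is in fact a bit cleaner than the paper's contradiction argument.
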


\begin{proof}
Taking into account the facts that the function $g_{\lambda_0}$ is bounded below on $A$, and $e(p) \to 0$ as 
$p \to \infty$ one obtains that the function $F_{\lambda_n}(\cdot, p_n)$ is bounded below for any sufficiently
large $n \in \mathbb{N}$. Thus, the sequence $\{ x_n \}$ is correctly defined.

Recall that $g_{\lambda}$ is a penalty function for the problem $(\mathcal{P})$. Therefore for any $\lambda \ge 0$ one
has $\inf_{x \in A} g_{\lambda}(x) \le f^*$. Hence and from Proposition~\ref{Prp_ApproxPenFuncEstimate} it follows that
\begin{equation} \label{UpperEstimApproxPenFuncMinSeq}
  F_{\lambda_n}(x_n, p_n) \le \inf_{x \in A} F_{\lambda_n}(x, p_n) + \varepsilon
  \le \inf_{x \in A} g_{\lambda_n}(x) + \varepsilon + \lambda_n e(p_n) \le f^* + \varepsilon + \lambda_n e(p_n)
\end{equation}
for any sufficiently large $n$, which implies that the sequence $\{ F_{\lambda_n}(x_n, p_n) \}$ is bounded above
due to the fact that $\lambda_n e(p_n) \to 0$ as $n \to \infty$.

Observe that for any $n \in \mathbb{N}$ one has
\begin{multline*}
  F_{\lambda_n}(x_n, p_n) = F_{\lambda_0}(x_n, p_n) + (\lambda_n - \lambda_0) \Phi(x_n, p_n) \ge \\
  \ge g_{\lambda_0}(x_n) - \lambda_0 e(p_n) + (\lambda_n - \lambda_0) \Phi(x_n, p_n).
\end{multline*}
Consequently, taking into account the facts that the function $g_{\lambda_0}$ is bounded below on the set $A$, and
$e(p_n) \to 0$ as $n \to \infty$ one obtains that $\Phi(x_n, p_n) \to 0$ as $n \to \infty$, since otherwise
$\limsup_{n \to \infty} F_{\lambda_n}(x_n, p_n) = + \infty$, which is impossible. Therefore applying the inequality
$|\Phi(x_n, p_n) - \phi(x_n)| \le e(p_n)$ one gets that $\phi(x_n) \to 0$ as $n \to \infty$.

Suppose, now, that the functions $f$ and $\phi$ are l.s.c. on $A$, and the set $A$ is closed. Then applying the fact
that $\phi(x_n) \to 0$ as $n \to \infty$ one obtains that any cluster point
$x^*$ of the sequence $\{ x_n \}$ must satisfy the equality $\phi(x^*) = 0$, which implies that $x^*$ is a feasible
point of the problem $(\mathcal{P})$. Hence $f(x^*) \ge f^*$. On the other hand, with the use of
(\ref{UpperEstimApproxPenFuncMinSeq}), and the fact that the function $\Phi$ is nonnegative one gets that 
$f(x_n) \le f^* + \varepsilon + \lambda_n e(p_n)$. Passing to the limit as $n \to \infty$ one obtains the desired
result.	 
\end{proof}

In the end of this section, we provide two examples of smoothing approximations of exact penalty functions, and
exact parametric penalty functions constructed with the use of these smoothing approximations. Let, for the sake of
simplicity, $X = \mathbb{R}^d$, and suppose that the problem $(\mathcal{P})$ has the form
\begin{equation} \label{InequalConstrOptimization}
  \min f(x) \quad \text{subject to} \quad g_i(x) \le 0, \quad i \in \{ 1, \ldots, m \},
\end{equation}
where $g_i \colon \mathbb{R}^d \to \mathbb{R}$ are given functions. Let also $P = \mathbb{R}_+$ and $p_0 = 0$.

\begin{example} \cite{Liu}
Let $\phi$ be the $\ell_1$ penalty term for the problem (\ref{InequalConstrOptimization}), i.e. let
$$
  \phi(x) = \sum_{i = 1}^m \max\{ 0, g_i(x) \}.
$$
For any $p > 0$ define
$$
  \theta(t, p) = \begin{cases}
    \dfrac{1}{2} p e^{t/p}, & \text{if } t \le 0, \\
    t + \dfrac{1}{2} p e^{- t/p}, & \text{if } t > 0,
  \end{cases}
$$
and define $\theta(t, 0) = \max\{ 0, t \}$. Set
$$
  \Phi(x, p) = \sum_{i = 1}^m \theta(g_i(x), p) \quad \forall x \in \mathbb{R}^n.
$$
Note that $0 \le \theta(t, p) - \max\{ 0, t \} \le p/2$ for any $t \in \mathbb{R}$. Therefore the function $\Phi$ is an
upper approximation of the penalty term $\phi$, and $e(p) = p / 2$ for any $p \ge 0$. 

Thus, the parametric penalty function $F_{\lambda}(x, p) = f(x) + \lambda \big( \Phi(x, p) + p \big)$ is exact if and
only if the $\ell_1$ penalty function $h_{\lambda}(x) = f(x) + \lambda \phi(x)$ is exact. Furthermore, observe that if
the functions $f$ and $g_i$ are twice continuously differentiable on $\mathbb{R}^d$, then the function
$F_{\lambda}(\cdot, p)$ is twice continuously on $\mathbb{R}^n$ for any $p > 0$.
\end{example}

\begin{example} \cite{LiuzziLucidi}
Let $\phi$ be the $\ell_{\infty}$ penalty term for the problem (\ref{InequalConstrOptimization}), i.e. let
$$
  \phi(x) = \max\left\{ 0, g_1(x), \ldots, g_m(x) \right\}.
$$
For any $p > 0$ define
$$
  \Phi(x, p) = p \ln \Big( \sum_{i = 1}^m \exp\big( g_i(x) / p \big) \Big)
$$
(see, e.g., \cite{Xu}), and set $\Phi(x, 0) = \phi(x)$. Note that for any $p > 0$ and $x \in \mathbb{R}^d$ one has
$$
  \Phi(x, p) = \phi(x) + p \ln \Big( \sum_{i = 1}^m \exp\big( (g_i(x) - \phi(x)) / p \big) \Big) \le \phi(x) + p \ln m,
$$
if $\phi(x) \ne 0$, and $0 \le \Phi(x, p) \le p \ln m$, otherwise. Hence
$$
  \phi(x) \le \Phi(x, p) \le \phi(x) + p \ln m \quad \forall (x, p) \in \mathbb{R}^d \times \mathbb{R}_+.
$$
Therefore the function $\Phi$ is an upper approximation of the penalty term $\phi$, and $e(p) \le p \ln m$.

Thus, the parametric penalty function $F_{\lambda}(x, p) = f(x) + \lambda \big( \Phi(x, p) + p \big)$ is exact if and
only if the $\ell_{\infty}$ penalty function $h_{\lambda}(x) = f(x) + \lambda \phi(x)$ is exact. Moreover, note
that if the functions $f$ and $g_i$ are twice continuously differentiable on $\mathbb{R}^d$, then the function
$F_{\lambda}$ is twice continuously differentiable on $\mathbb{R}^d \times (0, + \infty)$.
\end{example}

\bibliographystyle{abbrv}  
\bibliography{ParamPenFunc}

\end{document}